\newtheorem{thm}{Theorem}[section]
\newtheorem{conj}[thm]{Conjecture}
\newtheorem{cor}[thm]{Corollary}
\newtheorem{lem}[thm]{Lemma}
\newtheorem{prop}[thm]{Proposition}
\newtheorem{defin}[thm]{Definition}
\def\E{{\mathcal E}}
\def\T{{\mathcal T}}
\def\Z{{\mathbb Z}}
\def\L{{\langle}}
\def\R{{\rangle}}
\def\del{{\partial}}
\def\disc{{\textup{disc}}}
\def\mod{{\textup{mod} \;}}
\def\rk{{\mathrm{rk}}}
\def\supp{{\textup{supp}}}
\newcommand{\into}{\hookrightarrow}
\begin{document}

\title[The lens space realization problem]%
{The lens space realization problem}

\author[Joshua Evan Greene]{Joshua Evan Greene}

\address{Department of Mathematics, Columbia University\\ New York, NY 10027}

\thanks{Partially supported by an NSF Post-doctoral Fellowship.}

\email{josh@math.columbia.edu}

\maketitle

\begin{abstract}

We determine the lens spaces that arise by integer Dehn surgery along a knot in the three-sphere.  Specifically, if surgery along a knot produces a lens space, then there exists an equivalent surgery along a Berge knot with the same knot Floer homology groups.  This leads to sharp information about the genus of such a knot.  The arguments rely on tools from Floer homology and lattice theory, and are primarily combinatorial in nature.

\end{abstract}

{\it\hfill Dedicated to the memory of Professor Michael Moody}

% sent copies to: Berge, Elkies, Gerstein, Jabuka, Mark, Saito, Tange, Teragaito

\section{Introduction}  What are all the ways to produce the simplest closed 3-manifolds by the simplest 3-dimensional topological operation?  From the cut-and-paste point of view, the simplest 3-manifolds are the {\em lens spaces} $L(p,q)$, these being the spaces (besides $S^3$ and $S^1 \times S^2$) that result from identifying two solid tori along their boundaries, and the simplest operation is {\em Dehn surgery} along a knot $K \subset S^3$.   With these meanings in place, the opening question goes back forty years to Moser \cite{moser:torus}, and its definitive answer remains unknown.

By definition, a {\em lens space knot} is a knot $K \subset S^3$ that admits a lens space surgery.  Moser observed that all torus knots are lens space knots and classified their lens space surgeries.  Subsequently, Bailey-Rolfsen \cite{baileyrolfsen} and Fintushel-Stern \cite{fs:lenssurgery} gave more examples of lens space knots.  The production of examples culminated in an elegant construction due to Berge that at once subsumed all the previous ones and generated many more classes \cite{berge:lens}.  Berge's examples are the knots that lie on a Heegaard surface $\Sigma$ of genus two for $S^3$ and that represent a primitive element in the fundamental group of each handlebody.  For this reason, such knots are called {\em doubly primitive}.   Berge observed that performing surgery along such a knot $K$, with (integer) framing specified by a push-off of $K$ on $\Sigma$, produces a lens space.  Furthermore, he enumerated several different types of doubly primitive knots.  By definition, the {\em Berge knots} are the doubly primitive knots that Berge specifically enumerated in \cite{berge:lens}.  They are reproduced in Subsection \ref{ss: list} (more precisely, the {\em dual} Berge knots are reported there).

The most prominent question concerning lens space surgeries is the {\em Berge conjecture}.

\begin{conj}[Problem 1.78, \cite{kirby:problems}]\label{conj: berge}

If integer surgery along a knot $K \subset S^3$ produces a lens space, then it arises from Berge's construction.

\end{conj}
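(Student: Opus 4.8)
I outline a program aimed directly at the conjecture as stated, namely at exhibiting the given knot $K$ as a doubly primitive knot rather than merely matching a Berge knot's Floer homology. The organizing principle is to pass to the surgery dual. Suppose integer $p$-surgery along $K$ yields a lens space $L = L(p,q)$, and let $K^{*} \subset L$ be the core of the surgery solid torus, the \emph{dual knot}; then $K^{*}$ admits an integral surgery returning $S^{3}$. Berge's construction is equivalent to the statement that this dual is a \emph{simple} knot, that is, isotopic to the unique knot lying in standard position on a Heegaard torus of $L$ within its homology class: a knot is doubly primitive precisely when its dual is simple, and every simple knot admitting an $S^{3}$-surgery is dual to a Berge knot. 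So the conjecture becomes the assertion that the dual of every lens space surgery is a simple knot, and this is the form I would prove.

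First I would dispose of the non-hyperbolic cases so that the surgery slope is unique. By the cyclic surgery theorem of Culler--Gordon--Luecke--Shalen, together with the knot-complement theorem of Gordon--Luecke and Gabai's structural results, a knot with a lens space surgery is a torus knot, a satellite, or hyperbolic. Moser's classification places the torus knots on Berge's list, and the structure of lens space surgeries along satellites forces a cable pattern, again accounted for by Berge's families; this reduces matters to $K$ hyperbolic with its unique lens space slope. Next I would extract the Floer-theoretic constraints on $K^{*}$. By Ozsv\'ath--Szab\'o and Ni, $K$ is fibered of genus $g$ with $2g-1$ determined by $(p,q)$, and by Hedden and Rasmussen the dual $K^{*}$ is \emph{Floer simple}: the rank of $\widehat{HFK}(L,K^{*})$ equals $p$, with a single generator in each of the $p$ distinct $\Spc$ structures. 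Invoking the realization theorem of this paper, there is a Berge knot $K'$ producing the same lens space whose dual simple knot $K'^{*}$ carries the identical knot Floer homology in the matching homology class. Thus everything comes down to the rigidity statement: \emph{a Floer simple knot in a lens space is isotopic to the simple knot in its class}.

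To establish this rigidity I would reconstruct the knot from its complement. Floer simplicity forces the rational Seifert surface of $K^{*}$ to be of minimal genus and the sutured Floer homology of the complement to be monic; I would then analyze the monodromy of the resulting fibration, using the sutured decomposition together with the branched-surface or taut-foliation combinatorics to pin down the fiber and identify it with that of the model simple knot $K'^{*}$. The alternative route is a thin-position analysis of a one-bridge presentation of $K^{*}$ in $L$, showing that the monic condition leaves no room for the bridge arcs beyond the standard position. Either way one seeks a uniqueness theorem asserting that the monic sutured complement, constrained by the fillings reproducing both $L$ and $S^{3}$, is determined up to isotopy.

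This rigidity step is the main obstacle, and it is the crux of the entire conjecture. Knot Floer homology is not a complete invariant, so coincidence of $\widehat{HFK}(L,K^{*})$ with that of a simple knot does not by itself yield an isotopy; the argument must upgrade an algebraic equality to a geometric one, and it is exactly here that the present paper's methods stop short. The implication ``Floer simple $\Rightarrow$ simple'' is known only in restricted settings, and until it is proven in general for lens spaces the program establishes the realization statement but not the isotopy of $K^{*}$ with $K'^{*}$. I therefore expect the decisive and hardest input to be a general uniqueness theorem for Floer simple knots in lens spaces, and the difficulty of the Berge conjecture concentrates entirely in that gap.
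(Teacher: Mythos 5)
You have not produced a proof, and neither does the paper: the statement you were given is the Berge conjecture itself, which this paper explicitly leaves open. What the paper actually proves is strictly weaker than what your program requires, and the difference is exactly the step you defer. Theorems \ref{t: main} and \ref{c: main} establish that the dual knot $K^* \subset L(p,q)$ lies in the same \emph{homology class} as a Berge (simple) knot and that $K$ has the same knot Floer homology as the corresponding Berge knot in $S^3$; they do not establish an isotopy. Your reduction --- pass to the dual, use the equivalence ``doubly primitive $\Leftrightarrow$ dual is simple'' (Berge, \cite{berge:lens, berge:ppknots}), dispose of torus and satellite knots via Moser, CGLS, and the cable classification, note Floer simplicity of the dual, and invoke the realization theorem --- is an accurate account of the known state of affairs, and it matches the programs of Baker--Grigsby--Hedden \cite{bgh:lens} and J. Rasmussen \cite{r:Lspace} that the paper cites in its introduction. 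But the decisive claim, that a Floer simple knot in a lens space is isotopic to the simple knot in its homology class, is itself an open conjecture, and your paragraph purporting to address it does not contain an argument: ``analyze the monodromy,'' ``sutured decomposition together with branched-surface or taut-foliation combinatorics,'' and ``thin-position analysis of a one-bridge presentation'' are names of techniques, not executed steps, and no lemma is stated that either route would prove. Since $\widehat{HFK}$ is not a complete invariant and a homology class in $L(p,q)$ contains many knots, nothing you have written closes the gap between the algebraic coincidence supplied by Theorem \ref{c: main} and the geometric conclusion the conjecture demands.

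To your credit, you identify this honestly in your final paragraph, so the proposal should be read as a correct reduction of Conjecture \ref{conj: berge} to the rigidity conjecture for Floer simple knots, consistent with how the paper itself frames the problem (Subsection \ref{ss: results} and the discussion of \cite{bgh:lens, r:Lspace}). Two smaller cautions: the satellite case is not handled in this paper and needs the external input you allude to (lens space surgeries on satellites force cables of torus knots, Bleiler--Litherland); and your phrase ``$2g-1$ determined by $(p,q)$'' is loose --- the genus is determined through the Alexander polynomial, which depends on the homology class $k$ as well, via \cite[Theorem 1.2]{os:lens} and Proposition \ref{p: homology} here. As a comparison of approaches there is nothing to weigh: the paper proves the realization and homology-class statements by the Donaldson-plus-correction-terms changemaker obstruction and the combinatorial classification of Sections \ref{s: decomposable}--\ref{s: cont fracs}, you take that result as a black box, and everything beyond it in your proposal is conjectural.
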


Complementing Conjecture \ref{conj: berge} is the cyclic surgery theorem of Culler-Gordon-Luecke-Shalen \cite{cgls:cyclic}, which implies that if a lens space knot $K$ is not a torus knot, then the surgery coefficient is an integer.  Therefore, an affirmative answer to the Berge conjecture would settle Moser's original question.  We henceforth restrict attention to {\em integer} slope surgeries as a result.  Reflecting $K$ if necessary, we may further assume that the slope is positive.  Thus, in what follows, we attach to every lens space knot $K$ a positive integer $p$ for which $p$-surgery along $K$ produces a lens space, and denote the surgered manifold by $K_p$.

Using monopole Floer homology, Kronheimer-Mrowka-Ozsv\'ath-Szab\'o related the knot genus and the surgery slope via the inequality \begin{equation}\label{e: kmos} 2g(K)-1 \leq p \end{equation} \cite[Corollary 8.5]{kmos}.  Their argument utilizes the fact that the Floer homology of a lens space is as simple as possible: $\rk \; \widehat{HF}(Y) = |H_1(Y;\Z)|$.  A space with this property is called an {\em L-space}, and a knot with a positive L-space surgery is an {\em L-space knot}.  Their proof adapts to the setting of Heegaard Floer homology as well \cite{os:absgr}, the framework in place for the remainder of this paper.  Ozsv\'ath-Szab\'o established a significant constraint on the knot Floer homology groups $\widehat{HFK}(K)$ and hence the Alexander polynomial $\Delta_K$ \cite[Theorem 1.2 and Corollary 1.3]{os:lens}.  Utilizing this result, Ni proved that $K$ is fibered \cite[Corollary 1.3]{ni:fibered}.

As indicated by Berge, it is often preferable to take the perspective of surgery along a knot in a lens space.  Corresponding to a lens space knot $K \subset S^3$ is a dual knot $K' \subset K_p$, the core of the surgery solid torus.  Reversing the surgery, it follows that $K'$ has a positive integer surgery producing $S^3$.  Following custom, we refer to the dual of a Berge knot as a Berge knot as well, and stress the ambient manifold to prevent confusion.  As demonstrated by Berge \cite[Theorem 2]{berge:lens}, the dual to a doubly primitive knot takes a particularly pleasant form: it is an example of a {\em simple knot}, of which there is a unique one in each homology class in $L(p,q)$.  Thus, each Berge knot in a lens space is specified by its homology class, and this is what we report in Subsection \ref{ss: list}.  This point of view is taken up by Baker-Grigsby-Hedden \cite{bgh:lens} and J. Rasmussen \cite{r:Lspace}, who have proposed programs to settle Conjecture \ref{conj: berge} by studying knots in lens spaces with simple knot Floer homology.

%%%
%%%
%%%

\subsection{Results.}\label{ss: results}

\noindent  A derivative of the Berge conjecture is the {\em realization problem}, which asks for those lens spaces that arise by integer surgery along a knot in $S^3$.  Closely related is the question of whether the Berge knots account for all the doubly primitive knots.  Furthermore, the Berge conjecture raises the issue of tightly bounding the knot genus $g(K)$ from above in terms of the surgery slope $p$.  The present work answers these three questions.  

\begin{thm}\label{t: main}

Suppose that positive integer surgery along a knot $K \subset L(p,q)$ produces $S^3$.  Then $K$ lies in the same homology class as a Berge knot $B \subset L(p,q)$.

\end{thm}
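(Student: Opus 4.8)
The plan is to pass to the dual picture and reduce the statement to a lattice-embedding problem solvable by combinatorics. Reversing the surgery, the hypothesis is equivalent to the existence of a knot $K' \subset S^3$ with $K'_p \cong L(p,q)$, where $K \subset L(p,q)$ is the core of the surgery solid torus; its class $[K] \in H_1(L(p,q);\Z) \cong \Z/p\Z$ is dual to the meridian of $K'$. Since each Berge knot is a simple knot, determined by its homology class, it suffices to show that $[K]$ coincides with the class of some Berge knot $B \subset L(p,q)$. Thus the real task is to pin down $[K]$ from the constraint that the surgery yields $S^3$.

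First I would extract the Floer-theoretic input. Because $K'_p$ is an L-space, $K'$ is an L-space knot, so by the cited work of Ozsv\'ath-Szab\'o its knot Floer homology --- hence its Alexander polynomial $\Delta_{K'}$ --- is a staircase with coefficients in $\{0,\pm 1\}$, and its torsion coefficients $t_i(K')$ are thereby determined. The correction terms $d(L(p,q),\s)$ can then be computed in two ways: from the $t_i$ via the large-surgery (mapping-cone) formula, and directly from $(p,q)$ via the recursion for lens-space $d$-invariants. Equating the two is the source of every constraint below.

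Next --- the key step --- I would repackage these constraints as a lattice embedding. From the continued-fraction expansion of $p/q$ one reads off a negative-definite linear plumbing $X$ bounding $-L(p,q)$, whose intersection lattice is the linear lattice $\Lambda(p,q)$ of some rank $n$. Gluing $X$ to the orientation-reversed trace of the $p$-surgery along $K'$ produces a closed, negative-definite four-manifold, whose form is diagonal ($\cong \Z^{n+1}$) by Donaldson's theorem. This exhibits an embedding $\Lambda(p,q) \into \Z^{n+1}$ with rank-one orthogonal complement, spanned by a single vector $\sigma$ of square $p$. The $d$-invariant comparison above --- which forces the filling to be sharp --- then constrains $\sigma$ to be a changemaker: after ordering its coordinates $0 \le \sigma_0 \le \cdots \le \sigma_n$, one has $\sigma_i \le 1 + \sum_{j<i}\sigma_j$ for each $i$, so that every residue arises as a subset sum. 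The class $[K]$ is recovered from the coordinates of $\sigma$.

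It then remains to classify the changemaker vectors $\sigma$ for which $(\sigma)^\perp \subset \Z^{n+1}$ is isomorphic to a linear lattice $\Lambda(p,q)$, and to match each resulting family against Berge's enumeration. I expect this classification to be the main obstacle: it is an intricate, though purely combinatorial, induction on $n$, analyzing how the standard basis vectors must be grouped so that the intersection graph of $(\sigma)^\perp$ is a path, while isolating the finitely many exceptional small cases. Once the admissible $\sigma$ are in hand, each determines a pair $(p,q)$ and a class $[K]$, and I would check case by case that this class occurs on the Berge list, completing the proof.
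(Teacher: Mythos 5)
Your proposal follows the paper's argument essentially verbatim: Donaldson's theorem combined with the sharpness of $X(p,q)$ and the $d$-invariant comparison yields the changemaker embedding $\Lambda(p,q) \cong (\sigma)^\perp \subset \Z^{n+1}$ (Theorem \ref{t: main technical}), the combinatorial classification of such changemaker lattices matches Berge's list (Theorem \ref{t: linear changemakers}), and the homology class is recovered from the embedding data (Proposition \ref{p: homology}). This is exactly how the paper deduces Theorem \ref{t: main}, so the approach is the same; the only caveat is that the bulk of the work lives in the combinatorial classification you correctly flag as the main obstacle.
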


\noindent The resolution of the realization problem follows at once.  As explained in Section \ref{s: completion}, the same result holds with $S^3$ replaced by any L-space homology sphere with $d$-invariant $0$.  As a corollary, we obtain the following result.

\begin{thm}\label{c: main}

Suppose that $K \subset S^3$, $p$ is a positive integer, and $K_p$ is a lens space.  Then there exists a Berge knot $B \subset S^3$ such that $B_p \cong K_p$ and $\widehat{HFK}(B) \cong \widehat{HFK}(K)$.  Furthermore, every doubly primitive knot in $S^3$ is a Berge knot.

\end{thm}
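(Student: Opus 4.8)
The plan is to derive Theorem \ref{c: main} from Theorem \ref{t: main} by transporting the statement through the surgery duality between a lens space knot and its dual. Suppose $K \subset S^3$ satisfies $K_p \cong L(p,q)$. As recalled in the introduction, the core $K' \subset K_p$ of the surgery solid torus is a knot in $L(p,q)$ that admits a positive integer surgery returning $S^3$, so Theorem \ref{t: main} applies to $K'$ and furnishes a (dual) Berge knot $B' \subset L(p,q)$ lying in the same homology class as $K'$. By definition $B'$ is the dual of a Berge knot $B \subset S^3$.

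Since $B'$ is a knot in $L(p,q) = K_p$, reversing the $S^3$-surgery on $B'$ produces the knot $B \subset S^3$ whose surgery returns precisely this lens space; hence $B_p \cong K_p$ with no further work, the slope being $p$ because $|H_1(K_p)| = p$ fixes the integral coefficient. This disposes of the homeomorphism claim and reduces the theorem to the Floer-homological assertion.

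The essential remaining point, and the one I expect to be the main obstacle, is the isomorphism $\widehat{HFK}(B) \cong \widehat{HFK}(K)$. Both $K$ and $B$ are L-space knots, so by Ozsv\'ath-Szab\'o the groups $\widehat{HFK}$ are determined by the Alexander polynomial, each nonzero summand being a single copy of $\Z$ placed according to the torsion coefficients. I would show that these coefficients are encoded in the homology class of the dual knot. The correction-term (surgery) formula expresses the $d$-invariants of $K_p$, as a function of its $\spc$ structures, in terms of those of a model lens space together with nonnegative integers $V_i$ assembled from $\widehat{HFK}(K)$; conversely, for an L-space knot the $V_i$ recover the Alexander polynomial, hence $\widehat{HFK}$. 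The labelling of $\spc$ structures on $K_p$ is pinned down by $[K']$, and the same formula governs $B_p$ via $[B']$. As $[K'] = [B']$ and $K_p \cong B_p$ as labelled lens spaces, the two families $\{V_i\}$ agree, yielding $\widehat{HFK}(B) \cong \widehat{HFK}(K)$. Matching the $\spc$ labellings compatibly across the homeomorphism is the delicate step; the rest is formal bookkeeping.

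For the final assertion that every doubly primitive knot is a Berge knot, I would invoke Berge's theorem that the dual of a doubly primitive knot is itself a simple knot. Running the argument above on a doubly primitive $K$, its dual $K'$ is simple and is homologous to the simple Berge dual $B'$. Since $L(p,q)$ carries a unique simple knot in each homology class, $K' = B'$ as knots, and dualizing gives $K = B$. Thus $K$ is a Berge knot, which completes the proof.
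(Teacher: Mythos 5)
Your proposal is correct and its skeleton matches the paper's proof exactly: pass to the dual knot $K' \subset K_p$, apply Theorem \ref{t: main} to get a Berge dual $B'$ with $[K'] = [B']$, deduce $\Delta_K = \Delta_B$, conclude $\widehat{HFK}(K) \cong \widehat{HFK}(B)$ from the Ozsv\'ath--Szab\'o structure theorem for lens space knots, and settle the doubly primitive claim via Berge's theorem that duals of doubly primitive knots are simple together with the uniqueness of a simple knot in each homology class. The one place you diverge is the middle step. The paper simply cites J.~Rasmussen's result that the knot Floer homology of a knot in a lens space admitting an $S^3$ surgery is determined by its homology class, obtaining $\widehat{HFK}(K') \cong \widehat{HFK}(B')$ and hence $\Delta_{K'} = \Delta_{B'}$, and then uses invariance of $\Delta$ under passing between a knot and its surgery dual (same complement) to get $\Delta_K = \Delta_B$. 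You instead sketch a proof of that cited fact from scratch: extracting the integers $V_i$ from the $d$-invariants of $K_p$ via the surgery formula, with the $\spc$ labelling pinned down by the homology class of the dual. That is a legitimate and more self-contained route, and the ``delicate step'' you flag --- matching the $\spc$ labellings of $K_p$ and $B_p$ across the homeomorphism --- is precisely the content of the result the paper outsources to the citation; carrying it out in full would essentially reprove Rasmussen's theorem. Either way the argument closes, so there is no gap, only a choice between citing and reproving the key lemma.
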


\noindent Based on well-known properties of the knot Floer homology groups, it follows that $K$ and $B$ have the same Alexander polynomial, genus, and four-ball genus.  Furthermore, the argument used to establish Theorem \ref{t: main} leads to a tight upper bound on the knot genus $g(K)$ in relation to the surgery slope.

\begin{thm}\label{t: berge bound}

Suppose that $K \subset S^3$, $p$ is a positive integer, and $K_p$ is a lens space.  Then
\begin{equation}\label{e: berge bound}
2g(K)-1 \leq p - 2 \sqrt{(4p+1)/5},
\end{equation}
unless $K$ is the right-hand trefoil and $p = 5$.  Moreover, this bound is attained by the type VIII Berge knots specified by the pairs $(p,k) = (5n^2+5n+1, 5n^2-1)$.

\end{thm}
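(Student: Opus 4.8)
The plan is to reduce the estimate to a combinatorial inequality about the surgery data of Berge knots and then optimize. By Theorem \ref{c: main} there is a Berge knot $B \subset S^3$ with $B_p \cong K_p$ and $\widehat{HFK}(B) \cong \widehat{HFK}(K)$; since the top Alexander grading of an L-space knot equals its Seifert genus, $g(K) = g(B)$. Hence it suffices to prove \eqref{e: berge bound} for every Berge knot, a family parametrized explicitly by the homology class $k$ of the simple-knot dual in $L(p,q)$. The right-hand trefoil with $p = 5$ is set aside at the outset as the stated exception, and I will recover it as the unique degenerate case below.

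For a Berge knot the dual is a simple knot, whose knot Floer homology is determined combinatorially by $(p,q,k)$. Running this through the argument behind Theorem \ref{t: main}, the surgery produces a vector $\sigma = (\sigma_1, \dots, \sigma_\ell) \in \Z^\ell$ with $0 \le \sigma_1 \le \cdots \le \sigma_\ell$, satisfying the growth condition $\sigma_{i+1} \le 1 + \sum_{j \le i} \sigma_j$ together with $\sum_i \sigma_i^2 = p$, and the genus is read off as $2g(B) = p - \sum_i \sigma_i$. Substituting into \eqref{e: berge bound}, the desired bound is equivalent to the lower estimate
\[
\sum_i \sigma_i + 1 \ \ge\ 2\sqrt{(4p+1)/5}, \qquad\text{i.e.}\qquad 5\Bigl(\sum_i \sigma_i + 1\Bigr)^2 \ \ge\ 16p + 4 .
\]
Thus maximizing the genus for fixed $p$ amounts to minimizing $\sum_i \sigma_i$ subject to $\sum_i \sigma_i^2 = p$.

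The crux, and the main obstacle, is that this inequality is \emph{false} for arbitrary vectors satisfying only the growth condition: for instance $\sigma = (1,2,4)$ gives $\sum_i \sigma_i = 7$ while $p = 21$, which violates it. The extra input must be that $\sigma$ genuinely arises from a lens space, i.e.\ that its orthogonal complement in $\Z^\ell$ is isomorphic to the linear lattice associated to $L(p,q)$; the vectors meeting only the growth condition form a strictly larger set. I would therefore prove the inequality for the realizable vectors, either by extracting from the linear-lattice structure a sharper spread condition that forces $\sum_i \sigma_i^2$ to be small relative to $(\sum_i \sigma_i)^2$, or, more concretely, by running the optimization directly over Berge's enumerated families and checking that each contributes a vector obeying the estimate. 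Tracking the quantity $f(\sigma) = 5(\sum_i\sigma_i)^2 + 10\sum_i\sigma_i + 1 - 16\sum_i\sigma_i^2$ under appending a new largest entry $t \le 1 + \sum_j \sigma_j$ shows that $f$ can decrease only when $t$ is near-maximal, so the minimizers are assembled by repeatedly adjoining near-maximal entries; this should pin down the extremal configurations.

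Finally, I would identify the extremizer as the type VIII family $(p,k) = (5n^2+5n+1, 5n^2-1)$. Here $4p+1 = 5(2n+1)^2$, so $2\sqrt{(4p+1)/5} = 4n+2$, and the associated vectors (for small $n$, $(1,1,3)$, $(1,1,2,5)$, $(1,1,1,3,7)$) satisfy $\sum_i \sigma_i = 4n+1$, yielding equality in \eqref{e: berge bound} with $2g-1 = 5n^2 + n - 1$. The degenerate vector $\sigma = (1,2)$, with $\sum_i\sigma_i^2 = 5$ and $f(\sigma) = -4 < 0$, is the unique configuration violating the inequality, and it corresponds exactly to the right-hand trefoil at $p = 5$, accounting for the single exception.
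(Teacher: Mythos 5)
Your reduction is the right one, and it matches the paper's: via $2g(K) = p - |\sigma|_1$ (Proposition \ref{p: genus}) the bound \eqref{e: berge bound} is equivalent to $5\left(|\sigma|_1 + 1\right)^2 \geq 16|\sigma| + 4$ for every changemaker $\sigma$ that actually arises, which is exactly the paper's inequality \eqref{e: jimmy}; your extremal family is also correct (the vectors $(1,\dots,1,n,2n+1)$ with $1$ repeated $n$ times have $|\sigma| = 5n^2+5n+1$ and $|\sigma|_1 = 4n+1$, giving equality and the pairs $(p,k) = (5n^2+5n+1, 5n^2-1)$), and you correctly isolate $(1,2)$ as the degenerate case. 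But there is a genuine gap at the central step: having observed (correctly, e.g.\ via $(1,2,4)$) that the changemaker condition alone does not imply the inequality, you defer the verification for \emph{realizable} $\sigma$ to two unexecuted options, and the one you partially develop cannot work. Your local-move analysis of $f(\sigma)$ under appending an entry $t \leq 1 + \sum_j \sigma_j$ shows $f$ decreases only when $11t > 10\bigl(\sum_j \sigma_j + 1\bigr)$, but your own counterexample $(1,2,4)$ is assembled entirely from \emph{maximal} appends, so "minimizers are built by near-maximal appends" identifies precisely the bad vectors rather than excluding them. Nothing short of importing the classification of realizable changemakers can close this, and asserting that $(1,2)$ is the \emph{unique} violator among realizable vectors is exactly the content still to be proved.

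The paper closes the gap by extracting from the structural Propositions of Sections \ref{s: decomposable}--\ref{s: tight} the fact that every realizable changemaker is a weight expansion (Definition \ref{d: wt expansion}) or a small modification of one: e.g.\ the type VIII changemakers of Proposition \ref{p: berge viii} are weight expansions augmented by $a_j + a_{j+1}$, and those of Proposition \ref{p: tight unbreakable}(1,3) are weight expansions with the first entry deleted. The recursive structure yields the exact identities $|w| = a_j \cdot a_{j+1}$ and $|w|_1 = a_j + a_{j+1} - 1$ of \eqref{e: wt exp}, which convert \eqref{e: jimmy} into a two-variable elementary inequality checked family by family; equality forces $(1,\dots,1,n,2n+1)$ in type VIII, the type I$_-$ analysis produces the single exception $(1,2)$, and Ghiggini's theorem (not just an assertion) is what identifies the corresponding genus-one L-space knot as the right-hand trefoil. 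So your proposal has the correct skeleton and the correct endgame, but the load-bearing middle — the case analysis over the classified families, or some substitute "spread condition" you only hypothesize — is missing, and your suggested replacement provably cannot substitute for it.
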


\noindent Theorem \ref{t: berge bound} was announced without proof in \cite[Theorem 1.2]{greene:cabling} (cf. \cite{saito:lens}).  As indicated in \cite{greene:cabling}, for $p \gg 0$, Theorem \ref{t: berge bound} significantly improves on the bound $2g(K)-1 \leq p-9$ conjectured by Goda-Teragaito \cite{gt:lenssurgery} for a hyperbolic knot $K$, and can be used to show that the conjectured bound holds for all but at most two values $p \in \{ 14, 19 \}$.  In addition, one step involved in both approaches to the Berge conjecture outlined in \cite{bgh:lens,r:Lspace} is to argue the non-existence of a non-trivial knot $K$ for which $K_{2g(K)-1}$ is a lens space.  This fact follows immediately from Theorem \ref{t: berge bound}.

%%%
%%%
%%%

\subsection{Berge knots in lens spaces.}\label{ss: list}

J. Rasmussen concisely tabulated the Berge knots $B \subset L(p,q)$ \cite[Section 6.2]{r:Lspace}.  
To describe those with a {\em positive} $S^3$ surgery, select a positive integer $k$ and produce a positive integer $p$ in terms of it according to the table below.  The value $k \pmod p$ represents the homology class of $B$ in $H_1(L(p,q)) \cong \Z / p \Z$, $q \equiv -k^2 \pmod p$, as described at the end of Section \ref{s: topology}.  We reproduce the tabulation here.

\smallskip

\noindent {\bf Berge Type I$_{\pm}$:} $p = ik \pm 1, \quad \gcd(i,k) = 1$;

\noindent {\bf Berge Type II$_{\pm}$:} $p = ik \pm 1, \quad \gcd(i,k) = 2$, $i,k \geq 4$;

\noindent {\bf Berge Type III:} $ \begin{cases} (a)_{\pm} \quad p \equiv \pm(2k-1)d \pmod{k^2}, \quad d \; | \; k+1, \; {k+1 \over d} \text{ odd}; \\ (b)_{\pm} \quad p \equiv \pm(2k+1)d \pmod{k^2}, \quad d \; | \; k-1, \; {k-1 \over d} \text{ odd};  \end{cases}$

\noindent {\bf Berge Type IV:} $ \begin{cases} (a)_{\pm} \quad p \equiv \pm(k-1)d \pmod{k^2}, \quad d \; | \; 2k+1; \\ (b)_{\pm} \quad p \equiv \pm(k+1)d \pmod{k^2}, \quad d \; | \; 2k-1;  \end{cases}$

\noindent {\bf Berge Type V:} $ \begin{cases} (a)_{\pm} \quad p \equiv \pm(k+1)d  \pmod{k^2}, \quad d \; | \; k+1, d \text{ odd}; \\ (b)_{\pm} \quad p \equiv \pm(k-1)d \pmod{k^2}, \quad d \; | \; k-1, d \text{ odd};  \end{cases}$

\noindent 

\noindent {\bf Berge Type VII:} $k^2+k+1 \equiv 0 \pmod{p}$;

\noindent {\bf Berge Type VIII:} $k^2-k-1 \equiv 0 \pmod{p}$;

\noindent {\bf Berge Type IX:} $p = {1 \over 11}(2k^2 + k + 1), k \equiv 2 \pmod{11}$;

\noindent {\bf Berge Type X:} $p = {1 \over 11}(2k^2 + k + 1), k \equiv 3 \pmod{11}$.

\smallskip

\noindent As indicated by J. Rasmussen, type VI occurs as a special case of type V, and types XI and XII result from allowing negative values for $k$ in IX and X, respectively.

%%%
%%%
%%%

\subsection{Overview and organization.}\label{ss: methods}  We now provide a detailed overview of the general strategy we undertake to establish the main results.  We hope that this account will satisfy the interests of most readers and clarify the intricate combinatorial arguments that occupy the main body of the text.

Our approach draws inspiration from a remarkable pair of papers by Lisca \cite{lisca:lens1,lisca:lens2}, in which he classified the sums of lens spaces that bound a smooth, rational homology ball.  Lisca began with the observation that the lens space $L(p,q)$ naturally bounds a smooth, negative definite plumbing 4-manifold $X(p,q)$ (Section \ref{s: topology}).  If $L(p,q)$ bounds a rational ball $W$, then the 4-manifold $Z := X(p,q) \cup -W$ is a smooth, closed, negative definite 4-manifold with $b_2(Z) = b_2(X) =: n$.  According to Donaldson's celebrated ``Theorem A", the intersection pairing on $H_2(Z;\Z)$ is isomorphic to {\em minus} the standard Euclidean integer lattice $-\Z^n$ \cite{d:thma}.  As a result, it follows that the intersection pairing on $X(p,q)$, which we henceforth denote by $-\Lambda(p,q)$, embeds as a full-rank sublattice of $-\Z^n$.  Lisca solved the combinatorial problem of determining the pairs $(p,q)$ for which there exists an embedding $\Lambda(p,q) \into \Z^n$, subject to a certain additional constraint on the pair $(p,q)$.  By consulting an earlier tabulation of Casson-Gordon \cite[p. 188]{cg:cobordism}, he observed that the embedding exists iff $\pm L(p,q)$ belongs to a family of lens spaces already known to bound a special type of rational ball.  The classification of lens spaces that bound rational balls follows at once.  Pushing this technique further, Lisca obtained the classification result for sums of lens spaces as well.

In our situation, we seek the pairs $(p,q)$ for which $L(p,q)$ arises as positive integer surgery along a knot $K \subset S^3$.  Thus, suppose that $K_p \cong L(p,q)$, and form a smooth 4-manifold $W_p(K)$ by attaching a $p$-framed 2-handle to $D^4$ along $K \subset \del D^4$.  This space has boundary $K_p$, so we obtain a smooth, closed, negative definite 4-manifold by setting $Z = X(p,q) \cup - W_p(K)$, where $b_2(Z) = n+1$.  By Donaldson's Theorem, it follows that $\Lambda(p,q)$ embeds as a codimension one sublattice of $\Z^{n+1}$.  However, this restriction is too weak: it is easy to produce pairs $(p,q)$ that fulfill this condition, while $L(p,q)$ does not arise as a knot surgery (for example, $L(33,2)$).

Thankfully, we have another tool to work with: the {\em correction terms} in Heegaard Floer homology (\cite[Section 2]{greene:cabling}).  Ozsv\'ath-Szab\'o defined these invariants and subsequently used them to phrase a necessary condition on the pair $(p,q)$ in order for $L(p,q)$ to arise as a positive integer surgery \cite[Corollary 7.5]{os:absgr}.  Using a computer, they showed that this condition is actually sufficient for $p \leq 1500$: every pair that fulfills it appears on Berge's list \cite[Proposition 1.13]{os:lens}.  Later, J. Rasmussen extended this result to all $p \leq 100,000$ \cite[end of Section 6]{r:Lspace}.  Following their work, it stood to reason that the Ozsv\'ath-Szab\'o condition is both necessary and sufficient for {\em all} $(p,q)$.  However, it remained unclear how to manipulate the correction terms effectively towards this end.

The key idea here is to use the correction terms in tandem with Donaldson's Theorem.  The result is an enhanced lattice embedding condition (cf. \cite{greene:3braids, greene:cabling, gj:slicepretzels}).  In order to state it,  we first require a combinatorial definition.

\begin{defin}\label{d: change}

A vector $\sigma = (\sigma_0, \dots, \sigma_n) \in \Z^{n+1}$ with $1 = \sigma_0 \leq \sigma_1 \leq \cdots \leq \sigma_n$ is a {\em changemaker} if for all $0 \leq k \leq \sigma_0 + \cdots + \sigma_n$, there exists a subset $A \subset \{0,\dots,n\}$ such that $\sum_{i \in A} \sigma_i = k$.  Equivalently, 
$\sigma_i \leq \sigma_0 + \cdots + \sigma_{i-1} + 1$ for all $1 \leq i \leq n$.

\end{defin}

\noindent If we imagine the $\sigma_i$ as values of coins, then Definition \ref{d: change} asserts a necessary and sufficient condition under which one can make exact change from the coins in any amount up to their total value.  The reader may find it amusing to establish this condition; its proof appears in both \cite{brown:changemaker} and \cite[Lemma 3.2]{greene:cabling}.  Note that Definition \ref{d: change} differs slightly from the one used in \cite{greene:cabling}, since here we require that $\sigma_0 = 1$.

Our lattice embedding condition now reads as follows.  Again, we phrase it from the perspective of surgery along a knot in a lens space.

\begin{thm}\label{t: main technical}

Suppose that positive integer surgery along a knot $K \subset L(p,q)$ produces $S^3$.  Then $\Lambda(p,q)$ embeds as the orthogonal complement to a changemaker $\sigma \in \Z^{n+1}$, $n = b_2(X)$.

\end{thm}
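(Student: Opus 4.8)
The plan is to build a closed negative-definite $4$-manifold, invoke Donaldson's theorem to realize $\Lambda(p,q)$ inside a diagonal lattice, and then use the Heegaard Floer correction terms to promote the resulting orthogonal vector to a changemaker. Passing to the dual knot, let $K' \subset S^3$ be the knot with $S^3_p(K') \cong L(p,q)$ obtained by reversing the surgery, and let $W_p(K')$ be its surgery trace, formed by attaching a $p$-framed $2$-handle to $B^4$ along $K'$. As $p > 0$ this is positive definite with intersection form $\langle p \rangle$, generated by the class $w$ of the capped-off Seifert surface. Gluing along $L(p,q) = \partial X(p,q) = \partial W_p(K')$ produces the smooth, closed, oriented, negative-definite $4$-manifold $Z := X(p,q) \cup_{L(p,q)} \overline{W_p(K')}$ with $b_2(Z) = n+1$. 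By Donaldson's Theorem A, $(H_2(Z), -Q_Z) \cong (\Z^{n+1}, \mathrm{id})$. Since $H_2(L(p,q)) = 0$ and $H_1(L(p,q)) \cong \Z/p$, Mayer--Vietoris embeds $\Lambda(p,q) \oplus \langle p \rangle$ as an index-$p$ sublattice of $\Z^{n+1}$, with $\Lambda(p,q)$ the image of $H_2(X(p,q))$ and $\langle p \rangle$ the image of $\overline{w}$. Writing $\sigma \in \Z^{n+1}$ for the image of $\overline w$, we have $|\sigma|^2 = p$ and $\Lambda(p,q) \subseteq \sigma^\perp$; after a signed permutation of the standard basis we may assume $0 \le \sigma_0 \le \cdots \le \sigma_n$. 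The goal is to prove that $\sigma$ is a changemaker, as this forces $\sigma_0 = 1$, hence $\gcd_i \sigma_i = 1$, hence $\sigma$ primitive and $\Lambda(p,q) = \sigma^\perp$ by the discriminant count $\disc \Lambda(p,q) = p = |\sigma|^2 = \disc \sigma^\perp$.

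As the introduction notes, Donaldson's theorem alone is too weak; the changemaker condition is where the correction terms enter, via two independent computations of the $d$-invariants of $L(p,q)$ that must agree. First, $X(p,q)$ is one of the sharp negative-definite plumbings of Ozsv\'ath--Szab\'o, so for each $\mathfrak{s} \in \Spc(L(p,q))$ one has $d(L(p,q), \mathfrak{s}) = \max (c^2 + n)/4$ over characteristic covectors $c$ of $\Lambda(p,q)$ in the class $\mathfrak{s}$; through the embedding $\Lambda(p,q) = \sigma^\perp$ this rewrites every $d$-invariant as an extremal problem over the characteristic vectors of $\Z^{n+1}$, constrained by their pairing with $\sigma$. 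Second, because $L(p,q) = S^3_p(K')$ and the bound \eqref{e: kmos} gives $p \ge 2g(K')-1$, the surgery formula of \cite{os:absgr} yields
\[
d(L(p,q), i) = d(S^3_p(U), i) - 2\,V_{\min\{i,\,p-i\}}, \qquad 0 \le i \le p-1,
\]
for nonnegative integers $V_j$ satisfying $V_j \ge V_{j+1} \ge V_j - 1$ and $V_j = 0$ for $j \ge g(K')$. Since the quantities $d(S^3_p(U), i)$ form an explicit reference sequence --- the one realized by the all-ones changemaker $(1,\dots,1) \in \Z^{p}$, whose orthogonal complement is the chain lattice bounding $S^3_p(U)$ --- matching the two computations of $d(L(p,q),i)$ reduces to a relation between $\sigma$ and this reference.

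The heart of the matter is to show that the constraints $V_j \ge 0$ and $V_j - V_{j+1} \in \{0,1\}$ translate, under these two expressions for the $d$-invariants, precisely into the changemaker inequalities $\sigma_i \le 1 + \sigma_0 + \cdots + \sigma_{i-1}$. I would carry this out by evaluating the sharp-plumbing formula on the distinguished characteristic vectors of $\Z^{n+1}$ adapted to $\sigma$ --- those with $\pm 1$ entries, organized by their pairing with $\sigma$ --- so that $d(L(p,q), i)$ becomes an explicit piecewise-quadratic function of the spin$^c$ label whose successive differences can be matched term-by-term against the $-2V_{\min\{i,\,p-i\}}$ staircase. The non-negativity $V_j \ge 0$ should pin down $\sigma_0 = 1$, while the unit-step property $V_j - V_{j+1} \le 1$ should deliver the remaining inequalities. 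I expect the principal obstacle to be exactly this translation: arranging the extremal characteristic vectors so that the plumbing $d$-invariants are legible coordinate-by-coordinate in terms of the partial sums $\sigma_0 + \cdots + \sigma_{i-1}$, and verifying that the monotonicity and unit-step structure of the $V_j$ is not merely implied by but \emph{equivalent} to the changemaker condition. Handling all $p$ spin$^c$ structures simultaneously --- rather than checking small $p$ by computer as in \cite{os:lens, r:Lspace} --- is the crux, and it is here that the arithmetic of subset sums in Definition \ref{d: change} becomes essential.
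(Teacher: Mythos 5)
Your overall strategy---Donaldson's theorem applied to the closed negative-definite manifold $Z = X(p,q) \cup -W$, combined with two computations of the correction terms of $L(p,q)$ (via sharpness of $X(p,q)$ on one side and the surgery formula with the $V_j$ on the other)---is exactly the mechanism underlying this theorem, which the paper handles by citing \cite[Theorem 3.3]{greene:cabling} and then adding two short supplementary arguments. The translation you defer (from $V_j \geq 0$ and $V_j - V_{j+1} \in \{0,1\}$ to the subset-sum inequalities) is precisely the content of that cited theorem, so leaving it as a plan is forgivable; your setup of $Z$, the index-$p$ embedding of $\Lambda(p,q) \oplus \langle p \rangle$, and the discriminant count for the final equality $\Lambda(p,q) = (\sigma)^\perp$ are all correct.

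The genuine gap is in how you obtain $\sigma_0 = 1$, on which your entire chain ``changemaker $\Rightarrow \sigma_0 = 1 \Rightarrow$ primitive $\Rightarrow$ equality'' depends. The correction-term argument cannot pin this down: a coordinate $e_0$ with $\sigma_0 = 0$ is orthogonal to $\sigma$, and in the maximization $d = \max\,(c^2 + (n+1))/4$ over characteristic vectors the optimizer takes $c_0 = \pm 1$ in that slot, contributing $(-1+1)/4 = 0$; hence the $d$-invariants of the boundary are identical whether or not $\sigma$ carries leading zeroes. Correspondingly, the subset-sum condition of Definition \ref{d: change}, stripped of the normalization $\sigma_0 = 1$, is satisfied by vectors such as $(0,1,1,2,\dots)$---it only forces the smallest \emph{nonzero} entry to equal $1$. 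This is exactly why the paper remarks that ``a priori $\sigma$ could begin with a string of zeroes,'' and it excludes that possibility not with Floer theory but with lattice theory: the weak changemaker condition already makes $\sigma$ primitive, so the discriminant count gives $\Lambda(p,q) = (\sigma)^\perp$ regardless, and if $\sigma_0 = 0$ this lattice contains $e_0$, a vector of norm $1$, forcing an orthogonal $\Z$-summand of $\Lambda(p,q)$---impossible since $\Lambda(p,q)$ is indecomposable (all $a_i \geq 2$, cf.\ Corollary \ref{c: linear irred}(2)). You need to replace ``the non-negativity $V_j \geq 0$ should pin down $\sigma_0 = 1$'' with this argument; as written, that step would fail.
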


\noindent Our strategy is now apparent: determine the list of pairs $(p,q)$ which pass this refined embedding obstruction, and check that it coincides with Berge's list.  Indeed, this is the case.

\begin{thm}\label{t: linear changemakers}

At least one of the pairs $(p,q)$, $(p,q')$, where $q q' \equiv 1 \pmod 1$, appears on Berge's list iff $\Lambda(p,q)$ embeds as the orthogonal complement to a changemaker in $\Z^{n+1}$.  

\end{thm}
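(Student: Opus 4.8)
The plan is to recast both implications as statements about the \emph{linear lattice} $\Lambda(p,q)$, which carries a distinguished basis $x_1,\dots,x_n$ whose Gram matrix is tridiagonal with diagonal entries $a_i \geq 2$ and off-diagonal entries $x_i \cdot x_{i+1} = -1$, where $[a_1,\dots,a_n]^- = p/q$ is the Hirzebruch--Jung continued fraction. Two preliminary observations organize the statement. Reversing the string $(a_1,\dots,a_n)$ induces an isomorphism $\Lambda(p,q) \cong \Lambda(p,q')$ whenever $qq' \equiv 1 \pmod p$, so the right-hand embedding condition is manifestly symmetric under $q \leftrightarrow q'$; this is exactly what licenses the phrase ``at least one of $(p,q),(p,q')$'' on the left. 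Moreover, via the identification $q \equiv -k^2 \pmod p$ from Section \ref{s: topology}, the passage $q \mapsto q'$ corresponds to $k \mapsto \pm k^{-1} \pmod p$, so I must track which representative of a Berge family a given lattice sees. With these in hand I treat the two directions separately.

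\textbf{The forward (easy) direction.} Assuming $(p,q)$ or $(p,q')$ lies on Berge's list, I would produce an explicit changemaker $\sigma$ and verify $\sigma^\perp \cong \Lambda(p,q)$ family by family. For each type I$_\pm$ through X, the pair $(p,k)$ determines the homology class of the simple dual knot, hence its explicitly known knot Floer homology, hence the torsion data that the proof of Theorem \ref{t: main technical} converts into a candidate vector $\sigma \in \Z^{n+1}$. One checks that the stated congruences on $(p,k)$ force $\sigma$ to satisfy the recursion $\sigma_i \leq 1 + \sigma_0 + \cdots + \sigma_{i-1}$, and then exhibits a basis of $\sigma^\perp$ realizing the tridiagonal form of $\Lambda(p,q)$. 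This is a finite verification per family, essentially continued-fraction bookkeeping, and constitutes the routine half.

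\textbf{The reverse (hard) direction.} Here I begin from a changemaker $\sigma=(\sigma_0,\dots,\sigma_n)$, $\sigma_0=1$, with $\sigma^\perp \cong \Lambda(p,q)$, and must deduce that $(p,q)$ or $(p,q')$ is a Berge pair. The engine is the \emph{standard basis}: for each $i \geq 1$ the changemaker condition furnishes $A_i \subseteq \{0,\dots,i-1\}$ with $\sum_{j\in A_i}\sigma_j = \sigma_i$, and then $v_i := -e_i + \sum_{j\in A_i} e_j \in \sigma^\perp$, with $v_i \cdot v_i = 1 + |A_i|$ and pairings $v_i \cdot v_j$ governed by the overlaps of the sets $A_i$ together with the $\pm e$ terms; the $v_1,\dots,v_n$ form a basis of $\sigma^\perp$. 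The task is to reconcile this combinatorial data with the \emph{linearity} of $\Lambda(p,q)$, which is indecomposable and admits a tridiagonal Gram matrix in its linear basis. Imposing that $\sigma^\perp$ carry such a basis (equivalently, that it avoid the small sublattice configurations obstructing linearity) translates into rigid constraints on the multiset $\{\sigma_i\}$ and the sets $A_i$: roughly, that the $\sigma_i$ grow through a controlled alternation of \emph{tight} steps, where $\sigma_i = 1+\sigma_0+\cdots+\sigma_{i-1}$, with repeated and small values. I would organize the argument by these increment patterns, showing that each admissible pattern pins $\sigma$ down to a one- or two-parameter family indexed by $(p,k)$, and then match the resulting continued fraction of $p/q$ against the list in Subsection \ref{ss: list}.

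\textbf{The main obstacle.} The difficulty is concentrated entirely in this reverse case analysis. The linearity constraints do not collapse to a short list at once, and many near-miss changemakers satisfy most but not all of the tridiagonal requirements, so one must be exhaustive to guarantee that no lens space outside Berge's list survives (consistent with the exclusion of pairs such as $L(33,2)$ flagged in the overview). Controlling the interaction between the changemaker recursion and the tridiagonal demand -- in particular the low-rank degenerate cases, the stable blocks in which several $\sigma_i$ coincide, and the boundaries between the Type VII/VIII and Type IX/X families -- is where the bulk of the combinatorial labor lies. Once the classification is complete, the extremal Type VIII case should simultaneously yield the sharp genus inequality and the trefoil exception recorded in Theorem \ref{t: berge bound}.
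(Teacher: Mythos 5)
Your overall architecture matches the paper's: the forward direction comes from the embedding obstruction, and the hard direction is a classification of changemaker lattices isomorphic to linear lattices via the standard basis $v_i = -e_i + \sum_{j\in A_i} e_j$, followed by continued-fraction bookkeeping against Berge's list. But as written the proposal has a genuine gap in the reverse direction: it rests on the unsupported assertion that linearity of $(\sigma)^\perp$ is ``equivalently'' the avoidance of certain small sublattice configurations, which then ``translates into rigid constraints'' on $\sigma$. No such exclusion criterion is stated, and this is exactly where the paper supplies its key idea: standard basis vectors are \emph{irreducible} (Lemma \ref{l: change irred}), and the irreducible vectors of a linear lattice are precisely the signed intervals $\pm[T]$ (Corollary \ref{c: linear irred}), so each $v_i$ must equal $\epsilon_i[T_i]$ for some interval $T_i$. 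The classification is then driven by the combinatorics of how intervals can abut and overlap --- the intersection graph is claw-free, contains no heavy triple, and has only short cycles (Lemmas \ref{l: no claw}, \ref{l: no triangle}, \ref{l: cycle}) --- together with the breakable/unbreakable dichotomy isolating the tight vector. Without the irreducibility-to-interval bridge there is no mechanism for converting ``$(\sigma)^\perp$ is linear'' into constraints on the sets $A_i$. Your description of the outcome is also off: the paper shows at most \emph{one} tight vector ever occurs (Lemma \ref{l: tight}(1)), and the families are organized by expansions, gappy vectors, and a possibly breakable tight vector, not by a ``controlled alternation of tight steps.''

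Two smaller points. For the forward direction you propose constructing $\sigma$ family by family from the Floer homology of the dual simple knot; this is far more work than needed and is not carried out. The paper's forward direction is one line: a pair on Berge's list is by definition realized by a dual Berge knot in $L(p,q)$ admitting a positive integer $S^3$ surgery, so Theorem \ref{t: main technical} applies directly. Finally, even granting the classification of standard bases, one still owes the verification that each resulting lattice is $\Lambda(p',q')$ for a Berge pair (the vertex-basis conversions and continued-fraction identities of Section \ref{s: cont fracs}) and the appeal to Gerstein's theorem (Proposition \ref{p: gerstein}) to conclude $p'=p$ and $q' \in \{q, q^{-1}\}$; that last step is what justifies the ``at least one of $(p,q)$, $(p,q')$'' phrasing, and it is absent from the proposal.
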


\noindent Furthermore, when $\Lambda(p,q)$ embeds, we recover a value $k \pmod p$ that represents the homology class of a Berge knot $K \subset L(p,q)$ (Proposition \ref{p: homology}).  Theorem \ref{t: main} follows easily from this result.

To give a sense of the proof of Theorem \ref{t: linear changemakers}, we first reflect on the lattice embedding problem that Lisca solved.  He made use of the fact that $\Lambda(p,q)$ admits a special basis; in our language, it is a {\em linear lattice} with a distinguished {\em vertex basis} (Subsection \ref{ss: linear lattice}).  He showed that any embedding of a linear lattice as a full-rank sublattice of $\Z^n$ (subject to the extra constraint he posited) can be built from one of a few small embeddings by repeatedly applying a basic operation called {\em expansion}.    Following this result, the identification of the relevant pairs $(p,q)$ follows from a manipulation of continued fractions.  The precise details of Lisca's argument are involved, but ultimately elementary and combinatorial in nature.

One is tempted to carry out a similar approach to Theorem \ref{t: linear changemakers}.  Thus, one might first attempt to address the problem of embedding $\Lambda(p,q)$ as a codimension one sublattice of $\Z^n$, and then analyze which of these are complementary to a changemaker.  However, getting started in this direction is difficult, since Lisca's techniques do not directly apply.

More profitable, it turns out, is to turn this approach on its head.  Thus, we begin with a study of the lattices of the form $(\sigma)^\perp \subset \Z^n$ for some changemaker $\sigma$; by definition, these are the  {\em changemaker lattices}.  A changemaker lattice is best presented in terms of its {\em standard basis}  (Subsection \ref{ss: changemaker}).  The question then becomes: when is a changemaker lattice isomorphic to a linear lattice?  That is, how do we recognize whether there exists a change of basis from its standard basis to a vertex basis?

The key notion in this regard is that of an {\em irreducible} element in a lattice $L$.  By definition, an element $x \in L$ is {\em reducible} if $x = y+z$, where $y,z \in L$ are non-zero and $\L y,z \R \geq 0$; it is irreducible otherwise. Here $\L\, ,\, \R$ denotes the pairing on $L$.  As we show, the standard basis elements of a changemaker lattice are irreducible (Lemma \ref{l: change irred}), as are the vertex basis elements of a linear lattice.  Furthermore, the irreducible elements in a linear lattice take a very specific form (Corollary \ref{c: linear irred}).  This leads to a variety of useful Lemmas, collected in Subsection \ref{ss: int graph}.  For example, if a changemaker lattice is isomorphic to a linear lattice, then its standard basis does not contain three elements, each of norm $\geq 3$, such that any two pair together non-trivially (Lemma \ref{l: no triangle}).

Thus, we proceed as follows.  First, choose a standard basis $S \subset \Z^n$ for a changemaker lattice $L$, and suppose that $L$ is isomorphic to a linear lattice.  Then apply the combinatorial criteria of Subsection \ref{ss: int graph} to deduce the specific form that $S$ must take.  Standard basis elements come in three distinct flavors -- {\em gappy}, {\em tight}, and {\em just right} (Definition \ref{d: tight, gappy, just right}) -- and our case analysis decomposes according to whether $S$ contains no gappy or tight vectors (Section \ref{s: just right}), a gappy vector but not a tight one (Section \ref{s: gappy, no tight}), or a tight vector (Section \ref{s: tight}).  In addition, Section \ref{s: decomposable} addresses the case in which $L$ is isomorphic to a (direct) sum of linear lattices.  This case turns out the easiest to address, and the subsequent Sections \ref{s: just right} - \ref{s: tight} rely on it, while increasing in order of complexity.

The net result of Sections \ref{s: decomposable} - \ref{s: tight} is a collection of several structural Propositions that enumerate the possible standard bases for a changemaker lattice isomorphic to a linear lattice or a sum thereof. Section \ref{s: cont fracs} takes up the problem of converting these standard bases into vertex bases, extracting the relevant pairs $(p,q)$ for each family of linear lattices, as well as the value $k \pmod p$ of Proposition \ref{p: homology}.  Here, as in Lisca's work, we make some involved calculations with continued fractions.  Table \ref{table: A} gives an overview of the correspondence between the structural Propositions and the Berge types.  Lastly, Section \ref{s: completion} collects the results of the earlier Sections to prove the Theorems stated above.

The remaining introductory Sections discuss various related topics.

%%%
%%%
%%%

\subsection{Related progress.}\label{ss: partial}

A number of authors have recently addressed both the realization problem and the classification of doubly primitive knots in $S^3$.  S. Rasmussen established Theorem \ref{t: main} under the constraint that $k^2 < p$ \cite[Theorem 1.0.3]{sr:thesis}.  This condition is satisfied precisely by  Berge types I-V.  Tange established Theorem \ref{t: main} under a different constraint relating the values $k$ and $p$ \cite[Theorem 6]{tange:realization}.  The two constraints are not complementary, however, so the full statement of Theorem \ref{t: main} does not follow on combination of these results.  In a different direction, Berge showed by direct topological methods that all doubly primitive knots are Berge knots; equivalently, a simple knot in a lens space has an $S^3$ surgery iff it is a (dual) Berge knot \cite{berge:ppknots}.

%%%
%%%
%%%

\subsection{Comparing Berge's and Lisca's lists.}\label{ss: berge lisca}

Lisca's list of lens spaces that bound rational balls bears a striking resemblance to the list of Berge knots of type I-V (Subsection \ref{ss: list}).  J. Rasmussen has explained this commonality by way of the knots $K$ in the solid torus $S^1 \times D^2$ that possess integer $S^1 \times D^2$ surgeries. The classification of these knots is due to Berge and Gabai \cite{berge:solidtorus,gabai:solidtorus}.  Given such a knot, we obtain a knot $K' \subset S^1 \times S^2$ via the standard embedding $S^1 \times D^2 \subset S^1 \times S^2$.  Performing the induced surgery along $K'$ produces a lens space $L(p,q)$, which we can effect by attaching a 2-handle along $K' \subset \del (S^1 \times D^3)$.  The resulting 4-manifold is a rational ball built from a single 0-, 1-, and 2-handle, and it has boundary $L(p,q)$.  As observed by J. Rasmussen, Lisca's theorem shows that every lens space that bounds a rational ball must bound one built in this way.  On the other hand, we obtain a knot $K'' \subset S^3$ from $K$ via the standard embedding $S^1 \times D^2 \subset S^3$.  Performing the induced surgery along $K''$ produces a lens space $L(r,s)$ and a dual knot representing some homology class $k \pmod r$ .  The Berge knots of type I-V arise in this way.  The pair $(p,q)$ comes from setting $p = k^2$ and $q = r$; in this way, we reconstruct Lisca's list (but not his result!) from Berge's.

Analogous to the Berge conjecture, Lisca's theorem raises the following conjecture.

\begin{conj}

If a knot in $S^1 \times S^2$ admits an integer lens space surgery, then it arises from a knot in $S^1 \times D^2$ with an integer $S^1 \times D^2$ surgery.

\end{conj}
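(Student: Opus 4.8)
The plan is to transport the three-step architecture behind Theorem~\ref{t: main technical} and Theorem~\ref{t: main} (topology $\to$ lattice $\to$ knot) into the $S^1\times S^2$ setting, and then to isolate the one place where the analogy genuinely fails --- which is exactly why this is a conjecture and not a theorem. Let $J\subset S^1\times S^2$ admit an integer surgery producing a lens space $L(p,q)$. Since the surgered manifold has finite first homology, $[J]$ is a nonzero class in $H_1(S^1\times S^2;\Q)\cong\Q$. Viewing $S^1\times S^2=\partial(S^1\times D^3)$ and attaching a $2$-handle along $J$ with the surgery framing produces a smooth $4$-manifold $W$ with $\partial W=L(p,q)$; the handle-chain long exact sequence $0\to H_2(W)\to \Z\xrightarrow{[J]}\Z\to H_1(W)\to 0$ shows $b_2(W)=0$, so $W$ is a rational homology ball. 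This is the precise analog of the trace $W_p(K)$, except that the $2$-handle now kills $H_1(S^1\times D^3)\cong\Z$ rather than contributing to $H_2$.

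Next I would cap off with the plumbing. Form the closed, negative-definite $4$-manifold $Z=X(p,q)\cup -W$, with $b_2(Z)=b_2(X(p,q))=:n$. Donaldson's Theorem~A \cite{d:thma} identifies its intersection form with $-\Z^{n}$, so $\Lambda(p,q)$ embeds as a full-rank (codimension-zero) sublattice of $\Z^{n}$. This is exactly the embedding problem Lisca solved \cite{lisca:lens1,lisca:lens2}, with no changemaker refinement needed precisely because $W$ carries no second homology. Invoking Lisca's classification, I conclude that $(p,q)$ lies on his list and --- as J.\ Rasmussen observed (Subsection~\ref{ss: berge lisca}) --- that $L(p,q)$ bounds a rational ball built from a single $0$-, $1$-, and $2$-handle, i.e.\ one arising from a Berge--Gabai knot in $S^1\times D^2$ \cite{berge:solidtorus,gabai:solidtorus}. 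Matching correction terms and homology classes, as in Proposition~\ref{p: homology}, I would then extract a specific such knot $J_0\subset S^1\times D^2$ whose induced $S^1\times S^2$ surgery reproduces $L(p,q)$ with the same $d$-invariants and the same value of $[J]\in H_1(S^1\times S^2)$ as $J$.

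The hard part --- and the reason the statement remains a conjecture --- is the final upgrade from invariants to geometry. The preceding steps produce a standard model $J_0$ that is indistinguishable from $J$ at the level of the surgered manifold, its correction terms, and the induced homology class; but Donaldson's theorem and the $d$-invariants see nothing of the isotopy class of $J$ itself. Concluding that $J$ actually \emph{arises from} a knot in $S^1\times D^2$ --- that it can be isotoped into a standardly embedded solid torus compatibly with the surgery --- is the exact analog of the gap between the realization problem (settled by Theorem~\ref{t: main}) and the full Berge conjecture (open). I expect this step to demand genuinely new geometric input, most plausibly a rigidity statement to the effect that a knot in $S^1\times S^2$ with sufficiently simple knot Floer homology is determined by its homology class, in the spirit of the simple-knot programs of Baker--Grigsby--Hedden \cite{bgh:lens} and J.\ Rasmussen \cite{r:Lspace}. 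Absent such a result, the strategy stalls exactly here, and so this is where I would concentrate the remaining effort.
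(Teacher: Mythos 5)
This statement is one of the paper's \emph{conjectures}: the paper offers no proof of it, so there is no argument of the author's to measure yours against, and you are right not to claim a complete proof. Your sketch is essentially a transposition of the paper's own discussion in Subsection \ref{ss: berge lisca} together with the architecture of Theorems \ref{t: main technical} and \ref{t: main}, and you correctly locate the genuine obstruction: everything this machinery can see (the diffeomorphism type of the surgered lens space, correction terms, the homology class of the dual knot) is insensitive to the isotopy class of $J$ itself, so concluding that $J$ literally lies in a standardly embedded $S^1\times D^2$ is the same kind of jump as passing from the realization problem to Conjecture \ref{conj: berge}. That diagnosis is accurate and is presumably why the author states this as a conjecture rather than a corollary of Lisca's work.

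Two smaller points about the part of the strategy you do carry out. First, the Donaldson step is redundant in this setting: once you observe that the surgery trace $W$ built on $S^1\times D^3$ is a rational homology ball with boundary $L(p,q)$, Lisca's theorem applies \emph{directly} to conclude that $L(p,q)$ is on his list and bounds a standard rational ball coming from a Berge--Gabai knot; forming $Z=X(p,q)\cup -W$ and invoking Theorem A reproves a weaker statement (existence of a full-rank embedding $\Lambda(p,q)\hookrightarrow\Z^n$) that Lisca already needed as input. The useful lesson from the $S^3$ case is that Donaldson alone was too weak there and had to be supplemented by the correction-term/changemaker refinement; here, because $b_2(W)=0$, there is no changemaker, and whatever extra leverage is needed to pin down the homology class of the dual knot must come from elsewhere (e.g.\ $d$-invariants of $L(p,q)$ relative to the rational ball, or the linking form). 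Second, and relatedly, your claim that one can ``match correction terms and homology classes, as in Proposition \ref{p: homology}'' to produce a model knot $J_0$ in the \emph{same homology class} is itself unproven: Proposition \ref{p: homology} extracts the class $[K]$ from the pairing with the changemaker vector $\sigma=[\Sigma]\in H_2(-W)$, and that vector does not exist when $H_2(W)=0$. So even the intermediate, realization-level statement (lens space \emph{and} homology class realized by a Berge--Gabai knot) is not established by your outline; it would require a new argument, not just the final ``rigidity'' step you flag.
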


%%%
%%%
%%%

\subsection{4-manifolds with small $b_2$.}  

Which lens spaces bound a smooth 4-manifold built from a single 0- and 2-handle?  Theorem \ref{t: main} can be read as answer to this question.  Which lens spaces bound a smooth, simply-connected 4-manifold $W$ with $b_2(W) = 1$?  Are there examples beyond those coming from Theorem \ref{t: main}?  The answers to these questions are unknown.  By contrast, the situation in the topological category is much simpler: a lens space $L(p,q)$ bounds a topological, simply-connected 4-manifold with $b_2 = b^+_2=1$ iff $-q$ is a square $(\mod p)$.

Similarly, we ask: which lens spaces bound a smooth rational homology ball?  Which bound one built from a single 0-, 1-, and 2-handle?  As addressed in Subsection \ref{ss: berge lisca}, the answers to these two questions are the same.  Furthermore, Lisca showed that a two-bridge link is smoothly slice if and only if its branched double-cover (a lens space) bounds a smooth rational homology ball.

Which lens spaces bound a topological rational homology ball?  The answer to this question is unknown.  For that matter, it is unknown which two-bridge links $L$ are topologically slice.  Note that if $L$ is topologically slice, then the lens space that arises as its branched double-cover bounds a topological rational homology ball.  However, the converse is unknown: is it the case that a lens space bounds a topological rational homology ball iff the corresponding two-bridge link is topologically slice?  Are the answers to these questions the same as in the smooth category?

%%%
%%%
%%%

\subsection{The Poincar\'e sphere.}\label{ss: poincare}

Tange constructed several families of simple knots in lens spaces with integer surgeries producing the Poincar\'e sphere $P^3$ \cite[Section 5]{tange:poincare}.  J. Rasmussen verified that Tange's knots account for all such simple knots in $L(p,q)$ with $|p| \leq 100,000$ \cite[end of Section 6]{r:Lspace}.  Furthermore, he observed that in the homology class of each type VII Berge knot, there exists a $(1,1)$-knot $T_L$ as constructed by Hedden \cite[Figure 3]{hedden:berge}, and it admits an integer $P^3$-surgery for values $p \leq 39$ \cite[end of Section 5]{r:Lspace}.  Combining conjectures of Hedden \cite[Conjecture 1.7]{hedden:berge} and J. Rasmussen \cite[Conjecture 1]{r:Lspace}, it would follow that Tange's knots and the knots $T_L$ homologous to type VII Berge knots are precisely the knots in lens spaces with an integer $P^3$-surgery.  Conjecture \ref{conj: poincare} below is the analogue to the realization problem in this setting.

\begin{conj}\label{conj: poincare}

Suppose that integer surgery along a knot $K \subset L(p,q)$ produces $P^3$.\footnote{Or, more generally, any L-space homology sphere with $d$-invariant $-2$.}  Then either $2g(K) - 1 < p$, and $K$ lies in the same homology class as a Tange knot, or else $2g(K)-1 = p$, and $K$ lies in the same homology class as a Berge knot of type VII.

\end{conj}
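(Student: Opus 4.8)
The plan is to transplant the lattice-embedding strategy behind Theorem \ref{t: main} from the zero-framed sphere $S^3$ to the Poincar\'e sphere $P^3$, whose two relevant features are that it bounds the negative definite $E_8$-plumbing $E$ and has $d$-invariant $-2$. First I would pass to the dual knot $K' \subset P^3$, whose induced surgery recovers $L(p,q)$, and glue together the negative definite plumbing $X(p,q)$ (of Section \ref{s: topology}), the two-handle cobordism realizing the surgery, and the filling $E$ along their common lens-space and Poincar\'e-sphere boundaries. The result is a closed $4$-manifold $Z$ with $b_2(Z) = n + 9$, where $n = b_2(X)$; if $Z$ is negative definite, Donaldson's Theorem A identifies its intersection lattice with $-\Z^{n+9}$. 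Consequently $\Lambda(p,q)$ embeds in $\Z^{n+9}$ with orthogonal complement a rank-nine lattice $N$ containing a copy of $E_8$ coming from $E$, together with one further generator carrying the surgery framing.

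Next I would upgrade this raw embedding to a refinement of Theorem \ref{t: main technical}. The Ozsv\'ath--Szab\'o correction terms of $L(p,q)$ are constrained by the surgery formula for the L-space knot $K' \subset P^3$; because $d(P^3) = -2$ rather than $0$, every comparison entering the derivation of Definition \ref{d: change} is shifted by the constant $-2$, and I expect the net effect to force $N$ to decompose as the orthogonal sum of the $E_8$ block and a single changemaker-type vector $\sigma$ of norm $p$ obeying the shifted inequalities. I would record this as an ``$E_8$-changemaker'' condition, namely $\Lambda(p,q) \cong (E_8 \oplus \L \sigma \R)^\perp$. The genus inequality in this setting reads $2g(K) - 1 \leq p$, and the dichotomy in the statement should correspond exactly to whether the extremal entry of $\sigma$ saturates this bound: strict inequality yields the Tange configurations, while equality pins down a single extremal family, which I expect to be governed by the relation $k^2 + k + 1 \equiv 0 \pmod p$ defining Berge type VII (compare Subsection \ref{ss: poincare}).

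With the embedding condition in hand, the argument would mirror Sections \ref{s: decomposable}--\ref{s: tight}. Using that the standard basis vectors are irreducible (Lemma \ref{l: change irred}), that a linear lattice's irreducibles take the restricted form of Corollary \ref{c: linear irred}, and the graph-theoretic constraints such as Lemma \ref{l: no triangle}, I would run the gappy/tight/just-right trichotomy to enumerate the $E_8$-changemaker lattices isomorphic to a linear lattice or a sum thereof. Converting the resulting standard bases into vertex bases and manipulating continued fractions as in Section \ref{s: cont fracs} would extract the pairs $(p,q)$ and, via the recipe of Proposition \ref{p: homology}, the homology class $k \pmod p$; comparing the output against Tange's tabulation and the type VII list would complete the identification, assembled as in Section \ref{s: completion}.

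The hard part will be the combinatorics of the rank-nine complement. In Theorem \ref{t: linear changemakers} the complement is a single changemaker vector, whereas here it carries an entire $E_8$ root system alongside $\sigma$, and the eight roots interact with $\sigma$ and with the vertex basis of $\Lambda(p,q)$ in ways the present trichotomy does not directly anticipate; extending the case analysis to control these interactions is the principal obstacle, and I expect it to demand genuinely new standard-basis configurations beyond the gappy, tight, and just-right ones. A closely related structural input that must be secured first is that the $d = -2$ correction terms really do force $E_8$ to split off as an orthogonal summand of $N$ rather than entangle with $\Lambda(p,q)$; this splitting is precisely what makes the reduction to an $E_8$-changemaker clean, and proving it rigorously is likely the most delicate step. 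Finally, a general L-space homology sphere with $d$-invariant $-2$ need not bound a definite $4$-manifold, so for the case in the footnote one cannot invoke Donaldson directly and must instead argue that the shifted correction-term condition alone reproduces the same $E_8$-changemaker constraint.
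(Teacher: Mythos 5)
There is no proof in the paper to compare against: the statement you are addressing is Conjecture \ref{conj: poincare}, which the paper explicitly leaves open (it cites only partial progress by Tange and, at the end of Subsection \ref{ss: poincare}, \emph{suggests} an approach via an unpublished variant of Donaldson's theorem due to Fr{\o}yshov). Your text is likewise a research program rather than a proof --- at each decisive juncture (the shifted changemaker condition, the orthogonal splitting of $E_8$, the new standard-basis combinatorics) you write ``I expect,'' so even on its own terms the proposal does not establish the statement.

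Beyond that, your opening step is mathematically broken. You glue the negative definite $E_8$-plumbing $E$ to $X(p,q) \cup W$ to form a closed $Z$ with $b_2(Z) = n+9$ and invoke Donaldson's Theorem A. But the orientations do not cooperate: $W$ is a cobordism from $L(p,q)$ to $P^3$, so closing up requires a filling of $-P^3$, which bounds the \emph{positive} definite $E_8$-manifold; with that filling $Z$ is not negative definite and Donaldson does not apply. The obstruction is also visible purely algebraically: if $Z$ were negative definite, Donaldson would give an embedding of the $E_8$ lattice into $\Z^{n+9}$, which is impossible --- a unimodular sublattice splits off as an orthogonal summand, and by Eichler's uniqueness of indecomposable decompositions (cited in Subsection \ref{ss: generalities}) this would force the even indecomposable lattice $E_8$ to be isomorphic to some $\Z^k$. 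This is precisely why the paper's suggested route avoids closing up at all and instead applies Fr{\o}yshov's theorem directly to the manifold $X(p,q) \cup W$ with boundary $P^3$, obtaining a constraint of the shape ``the form embeds in $\Z^N$ or in $E_8 \oplus \Z^{N-8}$'' without any appeal to a closed manifold. Your second concern --- whether $E_8$ splits off cleanly from the complement of $\Lambda(p,q)$ --- is a real issue, but it must be addressed within Fr{\o}yshov's framework, not Donaldson's; and your final remark about the footnote case is apt, since a general L-space homology sphere with $d$-invariant $-2$ need not bound anything definite, so even Fr{\o}yshov's statement would have to be replaced by a purely correction-term argument there.
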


\noindent Tange has obtained partial progress on Conjecture \ref{conj: poincare} \cite{tange:realization}.  The methodology developed here to establish Theorem \ref{t: main} suggests a similar approach to Conjecture \ref{conj: poincare}, making use of an unpublished variant on Donaldson's theorem due to Fr{\o}yshov \cite[Proposition 2 and the remark thereafter]{froyshov:poincare}.  Lastly, we remark that the determination of {\em non}-integral $P^3$-surgeries along knots in lens spaces seems tractable, although it falls outside the scope of the cyclic surgery theorem.

%%%%%%%
%%%%%%%
%%%%%%%
%%%%%%%
%%%%%%%

\section*{Acknowledgments} Thanks to John Baldwin for sharing the meal of paneer bhurji that kicked off this project, and to him, Cameron Gordon, Matt Hedden, John Luecke, and Jake Rasmussen for helpful conversations.  Paolo Lisca's papers \cite{lisca:lens1,lisca:lens2} and a lecture by Dusa McDuff on her joint work with Felix Schlenk \cite{ms:ellipsoids} were especially influential along the way.  The bulk of this paper was written at the Mathematical Sciences Research Institute in Spring 2010.  Thanks to everyone connected with that institution for providing an ideal working environment.

%%%%%%%
%%%%%%%
%%%%%%%
%%%%%%%
%%%%%%%

\section{Topological Preliminaries}\label{s: topology}

Given relatively prime integers $p > q > 0$, the lens space $L(p,q)$ is the oriented manifold obtained from $- p/q$ Dehn surgery along the unknot.  It bounds a plumbing manifold $X(p,q)$, which has the following familiar description.  Expand $p/q$ in a Hirzebruch-Jung continued fraction
\[ p/q = [a_1,a_2,\dots,a_n]^- = a_1 - \cfrac{1}{a_2 - \cfrac{1}{\ddots \\ - \cfrac{1}{a_n}}} \; ,\] 
with each $a_i$ an integer $\geq 2$.  Form the disk bundle $X_i$ of Euler number $-a_i$ over $S^2$, plumb together $X_i$ and $X_{i+1}$ for $i = 1, \dots, n-1$, and let $X(p,q)$ denote the result.  The manifold $X(p,q)$ is {\em sharp} \cite[Section 2]{greene:cabling}.  It also admits a Kirby diagram given by the framed chain link $\mathbb{L} = L_1 \cup \cdots \cup L_n \subset S^3$, in which each $L_i$ is a planar unknot framed by coefficient $-a_i$, oriented so that consecutive components link once positively. To describe the intersection pairing on $X(p,q)$, we state a definition.

\begin{defin}\label{d: linear lattice}

The {\em linear lattice} $\Lambda(p,q)$ is the lattice freely generated by elements $x_1,\dots,x_n$ with inner product given by \[ \langle x_i, x_j \rangle =  \begin{cases} a_i, & \text{ if } i=j; \\ -1, & \text{ if } |i-j|=1; \\ 0, & \text{ if } |i-j| > 1. \end{cases}\]

\end{defin}
\noindent  A more detailed account about lattices (in particular, the justification for calling $\Lambda(p,q)$ a lattice) appears in Section \ref{s: lattices}.  It follows at once that the inner product space $H_2(X(p,q),Q_X)$ equals {\em minus} $\Lambda(p,q)$; here and throughout, we take homology groups with integer coefficients.  We note that $p/q' = [a_n,\dots,a_1]^-$, where $0 < q' < p$ and $q q' \equiv 1 \pmod p$ (Lemma \ref{l: cont frac basics}(4)).  Thus, we obtain $\Lambda(p,q) \cong \Lambda(p,q')$ on the algebraic side, and $L(p,q) \cong L(p,q')$ on the topological side (cf. Proposition \ref{p: gerstein}).

Now suppose that positive integer surgery along a knot $K \subset L(p,q)$ produces $S^3$.  Let $W$ denote the associated 2-handle cobordism from $L(p,q)$ to $S^3$, capped off with a $4$-handle.  Orienting $K$ produces a canonical generator $[\Sigma] \in H_2(-W)$ defined by the condition that $\L [C],[\Sigma] \R = +1$, where $C$ denotes the core of the 2-handle attachment.  Form the closed, oriented, smooth, negative definite 4-manifold $Z = X(p,q) \cup - W$.  By \cite[Theorem 3.3]{greene:cabling}, it follows that $\Lambda(p,q)$ embeds in the orthogonal complement $(\sigma)^\perp \subset \Z^{n+1}$, where the changemaker $\sigma$ corresponds to the class $[\Sigma]$.  {\em A priori} $\sigma$ could begin with a string of zeroes as in \cite{greene:cabling}, but Theorem \ref{t: main technical} rules this out, and moreover shows that $\Lambda(p,q) \cong (\sigma)^\perp$.  We establish Theorem \ref{t: main technical} once we develop a bit more about lattices (cf. Subsection \ref{ss: changemaker}), and we make use of it in the remainder of this section.

We now focus on the issue of recovering the homology class $[K] \in H_1(L(p,q))$ from this embedding.  Regard $\mathbb{L}$ as a surgery diagram for $L(p,q)$, and let $\mu_i, \lambda_i \subset \del (nd(L_i))$ denote a meridian, Seifert-framed longitude pair for $L_i$, oriented so that $\mu_i \cdot \lambda_i = +1$.  Let $T_i$ denote the $i^{th}$ surgery solid torus.  The boundary of $T_n$ is a Heegaard torus for $L(p,q)$; denote by $a$ the core of $T_n$ and by $b$ the core of the complementary solid torus $T_n'$.  We compute the self-linking number of $b$ as $-q'/p \pmod 1$ (cf. \cite[Section 2]{r:Lspace}, bearing in mind the opposite orientation convention in place there).  Thus, if $[K] = \pm k [B]$, then the self-linking number of $K$ is $- k^2 q'/p \pmod 1$.  The condition that $K$ has a positive integer homology sphere surgery amounts to the condition that $-k^2 q' \equiv 1 \pmod p$ (ibid.), from which we derive $q \equiv -k^2 \pmod p$.

Define
\begin{equation}\label{e: x}
x := \sum_{i=1}^n p_{i-1} x_i \in \Lambda(p,q),
\end{equation}
where the values $p_i$ are inductively defined by $p_{-1} = 0$, $p_0 = 1$, and $p_i = a_i p_{i-1} - p_{i-2}$ (cf. Definition \ref{d: cont frac} and Lemma \ref{l: cont frac basics}(1)).  We identify the elements $x_i$ and $x$ with their images under the embedding $\Lambda(p,q) \oplus (\sigma) \into \Z^{n+1}$.

\begin{prop}\label{p: homology}

Suppose that positive integer surgery along the oriented knot $K \subset L(p,q)$ produces $S^3$, let $\Lambda(p,q) \oplus (\sigma) \into \Z^{n+1}$ denote the corresponding embedding, and set $k = \L e_0, x \R$.  Then
\[ [K] = k \, [b] \in H_1(L(p,q)).\]
\end{prop}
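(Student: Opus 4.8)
The plan is to unwind the geometric meaning of the class $x \in \Lambda(p,q)$ and relate it to the dual knot $K'$ whose homology class we wish to compute. First I would identify $x$ topologically. Recall that $\Lambda(p,q) = -H_2(X(p,q), Q_X)$ with vertex basis $x_1, \dots, x_n$ given by the cores (capped spheres) of the plumbing disk bundles, equivalently the components $L_i$ of the chain link $\mathbb{L}$. The coefficients $p_{i-1}$ appearing in \eqref{e: x} are exactly the continued-fraction numerators, and the standard continued-fraction identities (Lemma \ref{l: cont frac basics}(1)) show that $\langle x, x_i \rangle = 0$ for $i < n$ and $\langle x, x_n \rangle = 1$; in other words, $x$ is the dual basis vector to $x_n$ with respect to the pairing. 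The plan is to interpret this: $x$ is the homology class whose linking data with the surgery cores picks out the meridian $\mu_n$ of the last component, so that $x$ is Poincar\'e--Lefschetz dual to the generator of $H_1(L(p,q))$ carried by the core $b$ of the complementary solid torus $T_n'$.

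Next I would bring in the class $[\Sigma] \in H_2(-W)$ and the embedding. The changemaker $\sigma$ corresponds to $[\Sigma]$ via $\langle [C], [\Sigma]\rangle = +1$, and $e_0$ is the generator of $\Z^{n+1}$ along $\sigma$ (normalized by $\sigma_0 = 1$, which is the reason the excerpt insists on this convention). The quantity $k = \langle e_0, x\rangle$ is then the pairing, computed in the ambient lattice $\Z^{n+1} = H_2(Z)$, between the surgery-dual direction and the distinguished class $x$. The key step is to show that under the gluing $Z = X(p,q) \cup -W$, this intersection number in $H_2(Z)$ computes precisely the image of $[b]$ under the inclusion-induced map, weighted by the homology class of $K'$. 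Concretely, I would use the Mayer--Vietoris / linking-form computation already set up in the excerpt: the self-linking of $b$ is $-q'/p \pmod 1$, the self-linking of $K$ is $-k^2 q'/p \pmod 1$, and these are compatible exactly when $[K] = \pm k [b]$ in $H_1(L(p,q)) \cong \Z/p\Z$. The content of the Proposition is to pin down the sign and show the coefficient is the specific integer $\langle e_0, x\rangle$ rather than merely $\pm k$.

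The cleanest route is to track generators of $H_1(L(p,q))$ through the long exact sequence of the pair $(Z, X(p,q))$ and of $(Z, -W)$ simultaneously. In $X(p,q)$, the boundary $L(p,q)$ has $H_1$ generated by a meridian, and $b = $ core of $T_n'$ represents a specific generator whose relation to the $x_i$ is governed by the linking matrix, i.e.\ by the $p_i$. In $-W$, the class $[\Sigma]$ caps off a Seifert surface for $K'$, so pairing against $[\Sigma]$ (equivalently against $e_0$) reads off the multiplicity with which a $1$-cycle on $L(p,q)$ bounds relative to $K'$, which is exactly $[K'] = [K]$ measured in units of the ambient generator. Setting the two descriptions equal, the coefficient is forced to be $\langle e_0, x \rangle$, giving $[K] = k\,[b]$.

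The main obstacle I expect is the bookkeeping of orientations and sign conventions: matching Greene's orientation of $K$ (which fixes $[\Sigma]$ via $\langle[C],[\Sigma]\rangle = +1$), the orientation convention $\mu_i \cdot \lambda_i = +1$, and the chain-link orientation (consecutive components linking $+1$) so that the integer $k$ comes out with the correct sign and as an honest representative of $[K]$ rather than $\pm[K]$. The reference to the opposite convention in \cite[Section 2]{r:Lspace} signals that this sign-chasing is delicate; I would handle it by computing everything in the single ambient lattice $H_2(Z) = \Z^{n+1}$, where the self-intersection form fixes all signs at once, and only at the end translate back to $H_1(L(p,q))$ via Poincar\'e duality, thereby avoiding having to reconcile the two boundary conventions piecemeal.
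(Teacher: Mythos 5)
Your architecture is right at the two ends: you correctly see that the proof should push a single class of $H_2(Z)$ through both halves of $Z = X(p,q)\cup -W$, comparing $\del_*\gamma$ computed via $H_2(X,\del X)$ with $\del_*\gamma$ computed via $H_2(-W,-\del W)$, and then take $\gamma = e_0$ so that $\L e_0,\sigma\R = \sigma_0 = 1$ forces $[K] = \L e_0, x\R\,[b]$; this is exactly part (III) of the paper's proof. The genuine gap is in the middle. Everything hinges on the identity $\del_*\alpha = \L \alpha, x\R\,[b]$ for $\alpha\in H_2(X,\del X)$ — that is, on knowing that pairing against $x$ measures the boundary class in units of precisely the generator $[b]$, with that sign — and your justification of this does not hold up. You assert $\L x, x_i\R = 0$ for $i<n$ and $\L x, x_n\R = 1$, making $x$ ``the dual basis vector to $x_n$''; in fact the recursion $p_i = a_i p_{i-1}-p_{i-2}$ gives $\L x, x_n\R = a_n p_{n-1}-p_{n-2} = p_n = p$, not $1$. (That divisibility by $p$ is not a nuisance but the whole point: it is what makes $\L\alpha, x\R$ well defined modulo $p$ on $H_2(X,\del X)/H_2(X)$.) Likewise $\mu_n$ is not the core $b$ of $T_n'$: one computes $[\mu_n] = \pm q'[b]$ in $H_1(T_n')$, which is not $\pm[b]$ in general. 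The paper closes this gap geometrically, by constructing the compressing disk $D\subset T_n'$ as a cut-and-paste of $p_{i-1}$ copies of the planar surfaces $P_i$ capped off with surgery disks (checking that $\del D$ is connected because $\gcd(p_{n-1},p_{n-2})=1$ and that $\chi(D)=1$), so that $[\kappa]=(\kappa\cdot D)[b]$, and then realizing $x$ by a closed surface $\mathcal{S}$ built from the same $P$ so that $\kappa\cdot D = \L [F], x\R$. Some such explicit construction is unavoidable: no purely lattice-theoretic description of $x$ can tell you which generator of $H_1(L(p,q))\cong\Z/p\Z$ it detects, and Poincar\'e duality in the closed manifold $Z$ does not see $H_1(L(p,q))$ at all.

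Your proposed fallback, the linking-form computation, cannot rescue the argument. The condition $-m^2q'\equiv 1\pmod p$ determines $[K]=m[b]$ only up to the square roots of $-(q')^{-1}$ in $\Z/p\Z$, a set that is strictly larger than $\{\pm k\}$ whenever $p$ has several prime factors, and in any case it never distinguishes $k$ from $-k$ — whereas the Proposition is an oriented statement with a definite sign, which is exactly what the later sections need when they read off the residue $k\pmod p$ from the embedding.
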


\begin{proof}

(I) We first express the homology class of a knot $\kappa \subset L(p,q)$ from the three-dimensional point of view. To that end, we construct a compressing disk $D \subset T_n'$ which is related to the class $x$.  Let $P_i$ denote the planar surface in $S^3 - L$ with $[\del P_i] = [\lambda_i] -  [\mu_{i-1}] - [\mu_{i+1}]$ (taking $\mu_0, \mu_{n+1} = \varnothing$).  Choose $p_{i-1}$ disjoint copies of $P_i$, and form the oriented cut-and-paste $P$ of all these surfaces.  We calculate
\begin{eqnarray*}
[\del P] &=& \sum_{i=1}^n p_{i-1} [\del P_i] = \sum_{i=1}^n p_{i-1} ([\lambda_i] - [\mu_{i-1}] - [\mu_{i+1}]) \\
&=& \sum_{i=1}^{n-1} (p_{i-1}[\lambda_i] - (p_{i-2} + p_i)[\mu_i]) \; + \; (p_{n-1}[\lambda_n] - p_{n-2}[\mu_n]) \\
&=& \sum_{i=1}^{n-1} p_{i-1}[\lambda_i - a_i \mu_i] \; + \; (p_{n-1}[\lambda_n] - p_{n-2}[\mu_n]).
\end{eqnarray*}
Let $D_i$ denote a compressing disk for $T_i$.  Since $[\del D_i] = [\lambda_i - a_i \mu_i]$, it follows that we can form the union of $P$ with $p_{i-1}$ copies of $-D_i$ for $i = 1,\dots, n-1$ to produce a properly embedded, oriented surface $D \subset T_n'$.  The boundary $\del D$ represents $p_{n-1}[\lambda_n] - p_{n-2}[\mu_n] \in H_1(\del (nd (L_n)))$, and since $\gcd(p_{n-1},p_{n-2}) = 1$, it follows that $\del D$ has a single component.  Furthermore, a simple calculation shows that $\chi(D) = 1$.  Therefore, $D$ provides the desired compressing disk.

Since $b \cdot D = 1$, we calculate
\begin{equation}\label{e: kappa class}
[\kappa] = (\kappa \cdot D) [b] \in H_1(L(p,q))
\end{equation}
for an oriented knot $\kappa \subset L(p,q)$ supported in $T_n'$ and transverse to $D$.

\noindent (II) Now we pull in the four-dimensional point of view.  Given $\alpha \in H_2(X, \del X)$, represent the class $\del_* \alpha$ by a knot $\kappa \subset L(p,q)$, isotop it into the complement of the surgery tori $T_1 \cup \cdots \cup T_n$, and regard it as knot in $\del D^4$.  Choose a Seifert surface for it and push its interior slightly into $D^4$, producing a surface $F$.

Consider the Kirby diagram of $X$.  We can represent the class $x$ by the sphere $\mathcal{S}$ obtained by pushing the interior of $P$ slightly into $D^4$, producing a surface $P'$, and capping off $\del P'$ with $p_{i-1}$ copies of the core of handle attachment along $L_i$, for $i= 1,\dots,n$.  It is clear that
\begin{equation}\label{e: int nos}
\kappa \cdot D = \kappa \cdot P = F \cdot P' = F \cdot \mathcal{S} = \L [F] , x \R.
\end{equation}

Since $\del_* \alpha = \del_* [F] = [\kappa]$, it follows that $\alpha -[F]$ represents an absolute class in $H_2(X)$.  Since the pairing $H_2(X) \otimes H_2(X) \to \Z$ takes values in $|H_1(\del X)| \cdot \Z$, it follows that $\L \alpha , x \R \equiv \L [F] , x \R \pmod{p}$.  Comparing with \eqref{e: kappa class} and \eqref{e: int nos}, we obtain
\begin{equation}\label{e: alpha class}
\del_*\alpha = \L \alpha , x \R [b].
\end{equation}

\noindent (III)  At last we use the 2-handle cobordism $W$ and the closed manifold $Z$.  Given $\beta \in H_2(-W, - \del W)$, write $\beta = n [C]$, where $C$ denotes the core of the handle attachment along $K$.  Since $\del_*[C] = [K]$, it follows that
\begin{equation}\label{e: beta class}
\del_*\beta = \L \beta , [\Sigma] \R [K].
\end{equation}

Finally, consider the commutative diagram
\[ \xymatrix @R=.5pc{ % @R=.5pc makes the row spacing .5 times the default
 &  H_2(Z,-W) \ar[r]^\sim_{exc.} & H_2(X,\del X) \ar[dr]^{\del_*} &  \cr
H_2(Z) \ar[ur] \ar [dr] & & & H_1(L(p,q))\cr
&  H_2(Z,X) \ar[r]^(.42)\sim_(.42){exc.} & H_2(-W, -\del W) \ar[ur]^{\del_*} & 
} \]
Proceeding along the top, the image of a class $\gamma \in H_2(Z)$ in $H_1(L(p,q))$ is given by $\L \gamma , x \R [b]$ according to \eqref{e: alpha class}.  Similarly, proceeding along the bottom, its image in $H_1(L(p,q))$ is given by $\L \gamma , \sigma \R [K]$ according to \eqref{e: beta class}, switching to the use of $\sigma$ for $[\Sigma]$.  Thus, taking $\gamma = e_0$, we have 
\[ k [b] = \L e_0 , x \R [b] = \L e_0 , \sigma \R [K] = [K],\]
using the fact that $\sigma_0 = 1$.  This completes the proof of the Proposition.

\end{proof}

Thus, for an {\em unoriented} knot $K \subset L(p,q)$, we obtain a pair of values $\pm k \pmod p$ that specify a pair of homology classes in $H_1(L(p,q))$, one for each orientation on $K$.  Note that had we used the reversed basis $\{x_n,\dots,x_1\}$, we would have expressed $[K]$ as a multiple $k'[a] \in H_1(L(p,q))$.  Since $[a] = q [b]$, we obtain $k' \equiv k q' \equiv k^{-1} \pmod p$, which is consistent with $q' \equiv - (k')^2 \pmod p$ and $L(p,q') \cong L(p,q)$.  Thus, given a value $k \pmod p$, we represent equivalent (unoriented) knots by choosing any of the values $\{ \pm k, \pm k^{-1} \} \pmod p$.  For the latter Berge types listed in Subsection \ref{ss: list}, we use a judicious choice of $k$.  For example, Berge types IX and X involve a concise quadratic expression for $p$ in terms of $k$, but there does not exist such a nice expression for it in terms of the least positive residue of $-k$ or $k^{-1} \pmod p$.

%%%%%%%
%%%%%%%
%%%%%%%
%%%%%%%
%%%%%%%

\section{Lattices}\label{s: lattices}

%%%
%%%
%%%

\subsection{Generalities.}\label{ss: generalities}

A {\em lattice} $L$ consists of a finitely-generated free abelian group equipped with a positive-definite, symmetric bilinear pairing $\L \; , \; \R : L \times L \to \mathbb{R}$.  It is {\em integral} if the image of its pairing lies in $\Z$.  In this case, its {\em dual lattice} is the lattice \[ L^* := \{ x \in L \otimes \mathbb{R} \; | \; \L x, y \R \in \Z \; \text{for all} \; y \in L \},\] and its {\em discriminant} $\disc(L)$ is the index $[L^*:L]$.  All lattices will be assumed integral henceforth. 

Given a vector $v \in L$, its {\em norm} is the value $| v | := \L v,v \R$.  It is {\em reducible} if $v = x+y$ for some non-zero $x,y \in L$ with $\L x,y \R \geq 0$, and {\em irreducible} otherwise.
It is {\em breakable} if $v = x + y$ for some $x,y \in L$ with $|x|,|y| \geq 3$ and $\L x,y \R = -1$, and {\em unbreakable} otherwise.  A lattice $L$ is {\em decomposable} if it is an orthogonal direct sum $L = L_1 \oplus L_2$ with $L_1, L_2 \ne (0)$, and {\em indecomposable} otherwise.

Observe that any lattice $L$ has a basis $S = \{v_1,\dots,v_n\}$ of irreducible vectors, gotten by first selecting a non-zero vector $v_1$ of minimal norm, and then inductively selecting $v_i$ as a vector of minimal norm not contained in the span of $v_1,\dots,v_{i-1}$.  Given such a basis $S$, we define its {\em pairing graph} 
\[ \widehat{G}(S) = (S,E), \quad E = \{ (v_i,v_j) \; | \; i \ne j \text{ and } \langle v_i, v_j \rangle \ne 0 \}.\footnote{More on graph notation in Subsection \ref{ss: graph lattices}.} \]
Let $G_k$ denote a connected component of $\widehat{G}(S)$ and $L_k \subset L$ the sublattice spanned by $V(G_k)$.  If $L_k = L' \oplus L''$, then each vector in $V(G_k)$ must belong to one of $L'$ or $L''$ by indecomposability.  Since $G_k$ is connected, it follows that they must all belong to the same summand, whence $L_k$ is indecomposable.  A basic result, due to Eichler, asserts that this decomposition $L \cong \bigoplus_k L_k$ into indecomposable summands is unique up to reordering of its factors \cite[Theorem II.6.4]{milnor+husemoller}.

%%%
%%%
%%%

\subsection{Graph lattices.}\label{ss: graph lattices}

Let $G = (V,E)$ denote a finite, loopless, undirected graph.  Write $v \sim w$ to denote $(v,w) \in E$.  A {\em subgraph} of $G$ takes the form $H = (V',E')$, where $V' \subset V$ and $E' \subset  \{ (v,w) \in E \; | \: v, w \in V' \}$; it is {\em induced} if ``$=$" holds in place of ``$\subset$", in which case we write $H = G|V'$.  For a pair of disjoint subsets $T,T' \subset V$, write $E(T,T')$ for the set of edges between $T$ and $T'$, $e(T,T')$ for its cardinality, and set $d(T) = e(T,V-T)$.  In particular, the {\em degree} of a vertex $v \in V$ is the value $d(v)$.

Form the abelian group $\overline{\Gamma}(G)$ freely generated by classes $[v], \; v \in V$, and define a symmetric, bilinear pairing by \[ \langle [v], [w] \rangle =  \begin{cases} d(v), & \text{ if } v=w; \\ -e(v,w), & \text{ if } v \ne w. \end{cases}\]   Let \[[T] := \sum_{v \in T} \; [v]\] and note that 
\[\langle [T], [T'] \rangle = e(T \cap T', V - (T \cup T')) - e(T - T', T' - T).\]
In particular, $\langle [T], [T] \rangle = d(T)$, and $\langle [T], [T'] \rangle = -e(T,T')$ for disjoint $T, T'$.

Given $x \in \overline{\Gamma}(G)$, write $x = \sum_{v \in V} x_v [v]$, and observe that $|x| = \sum_{e \in E} (x_v - x_w)^2$, where $v$ and $w$ denote the endpoints of the edge $e$.  It follows that $|x| \geq 0$, so the pairing on $\overline{\Gamma}(G)$ is positive semi-definite.  Let $V_1,\dots,V_k$ denote the vertex sets of the connected components of $G$.  It easy to see that $|x| = 0$ iff $x$ belongs to the span of $[V_1],\dots,[V_k]$, and moreover that these elements generate $Z(G) := \{ x \in \overline{\Gamma}(G) \; | \; \L x,y \R = 0 \text{ for all } y \in \overline{\Gamma}(G) \}$.  It follows that the quotient $\Gamma(G) := \overline{\Gamma}(G) / Z(G)$ is a lattice.

\begin{defin}\label{d: graph lattice}

The {\em graph lattice} associated to $G$ is the lattice $\Gamma(G)$.

\end{defin}

Now assume that $G$ is connected.  For a choice of {\em root} $r \in V$, every element in $\overline{\Gamma}(G)$ is equivalent $(\mod Z(G))$ to a unique element in the subspace of $\overline{\Gamma}(G)$ spanned by the set $\{ [v] \; | \; v \in V - r \}$.  In what follows, we keep a choice of root fixed, and identify $\Gamma(G)$ with this subspace.  We reserve the notation $[T]$ for $T \subset V - r$.

\begin{defin}\label{d: vertex basis}

The set $\{ [v] \; | \; v \in V - r \}$ constitutes a {\em vertex basis} for $\Gamma(G)$.

\end{defin}

\begin{prop}\label{p: graph irred}

The irreducible elements of $\Gamma(G)$ take the form $\pm [T]$, where $T$ and  $V - T$ induce connected subgraphs of $G$.

\end{prop}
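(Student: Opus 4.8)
The plan is to prove the statement by establishing two directions: first that $[T]$ is irreducible whenever $T$ and $V-T$ induce connected subgraphs, and second that every irreducible element must take this form. Let me sketch both.

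For the forward direction, I would characterize irreducibility via the norm formula $|x| = \sum_{e \in E}(x_v - x_w)^2$. The key computational observation is that for $x = \sum_v x_v[v]$, the ``energy'' $|x|$ counts the squared jumps across edges. First I would argue that any irreducible element has coefficients $x_v \in \{0,1\}$ (after normalizing sign and using the root convention), so that it is of the form $[T]$ for some $T \subset V - r$: if some coefficient jumped by more than one across the coefficient values, or if the support structure allowed it, one could split off a piece and decrease energy in a way witnessing reducibility.

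\begin{proof}[Proof sketch]
First I would show that if $T$ and $V - T$ both induce connected subgraphs, then $[T]$ is irreducible. Suppose toward a contradiction that $[T] = y + z$ with $y,z$ nonzero and $\langle y, z\rangle \geq 0$. Writing $y = \sum_v y_v[v]$ and $z = \sum_v z_v[v]$ in the root-normalized coordinates, so that $y_v + z_v = 1$ if $v \in T$ and $y_v + z_v = 0$ otherwise, I would use the edge-energy expansion of the pairing together with the connectivity of the two induced subgraphs to force $y$ or $z$ to vanish, contradicting nontriviality. The mechanism is that $\langle y,z\rangle = \sum_{e=(v,w)}(y_v - y_w)(z_v - z_w)$, and connectivity of $G|T$ and $G|(V-T)$ constrains where the coefficients of $y$ and $z$ can differ; any genuine splitting introduces an edge across which $y$ and $z$ have opposite-signed jumps, forcing $\langle y,z\rangle < 0$.

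Conversely, I would show every irreducible element has the claimed form. Starting from an arbitrary $x = \sum_v x_v[v]$, I would first reduce to the case where the coefficients $x_v$ take at most the values $\{0,1\}$ by a level-set decomposition: if the coefficients take more than two distinct values, one can write $x$ as a sum of two nonzero pieces obtained by thresholding at some intermediate level, and the edge-energy formula shows these pieces pair nonnegatively, witnessing reducibility. This reduces $x$ (up to sign) to some $[T]$. Then, if either $G|T$ or $G|(V-T)$ is disconnected, I would split $T$ (or its complement) along a connected component to exhibit $[T]$ as a reducible sum, again verifying the nonnegative pairing via the energy formula. Thus irreducibility forces both induced subgraphs to be connected.

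The main obstacle I expect is the level-set reduction in the converse: carefully choosing the threshold and verifying that the two resulting pieces pair nonnegatively requires bookkeeping of the edge contributions, since edges internal to a single level set contribute zero to the cross term while edges crossing the threshold must be shown to contribute with the correct sign. The connectivity arguments in both directions hinge on the same $\langle y,z\rangle = \sum_{e}(y_v-y_w)(z_v-z_w)$ identity, so once that sign analysis is set up cleanly, both directions follow in parallel.
\end{proof}
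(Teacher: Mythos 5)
Your proposal is correct and takes essentially the same route as the paper: the paper peels off the top level set $T = \{v \mid c_v = \max\}$ and verifies $\langle [T], x-[T]\rangle \geq 0$ (your thresholding step in its sharpest form), and its key identity expressing $\langle y, y-[T]\rangle$ as a sum of manifestly nonnegative edge terms is precisely your $\langle y,z\rangle = \sum_{e}(y_u-y_v)(z_u-z_v) \leq 0$ read with the sign reversed. The one detail to make explicit in a full write-up is the case where $G|(V-T)$ is disconnected: the reducing decomposition there is $[T] = [T\cup V(C)] + (-[V(C)])$ for a component $C$ of $G|(V-T)$ chosen not to contain the root, which is not literally a split of the complement.
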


\begin{proof}

Suppose that $0 \ne x = \sum_{v \in V - r} c_v [v] \in \Gamma(G)$ is irreducible.  Replacing $x$ by $-x$ if necessary, we may assume that $c := \max_v c_v \geq 1$.  Let $T = \{ v \; | \; c_v = c \}$; then
\begin{eqnarray*}
\langle [T], x - [T] \rangle &=& \langle [T], (c-1)[T] \rangle + \langle [T], \sum_{v \in V - T} c_v [v] \rangle \\ &=&  (c-1) \cdot d(T) -\sum_{v \in V - T} c_v \cdot e(v,T) \\ &=& \sum_{v \in V - T} (c-1-c_v) \cdot e(v,T) \geq 0.\end{eqnarray*}
  Since $x$ is irreducible, it follows that $x = [T]$. 

Next, we argue that $[T]$ is irreducible if and only if the induced subgraphs $G|T$ and $G|(V-T)$ are connected.  Write $y = \sum_{v \in V} y_v [v] \in \overline{\Gamma}(G)$.  Then
\begin{equation}\label{e: y}
\langle y, y-[T] \rangle = \sum_C \sum_{(u,v) \in E(C)} (y_u - y_v)^2 + \sum_{(u,v) \in E(T,V-T)} (y_u - y_v)(y_u - y_v -1),
\end{equation}
where $C$ ranges over the connected components of $G|T$ and $G|(V-T)$.  Each summand appearing in \eqref{e: y} is non-negative.  It follows that \eqref{e: y} vanishes identically if and only if (a) $y_u$ is constant on each component $C$ and (b) if a component $C_1 \subset G|T$ has an edge $(u,v)$ to a component $C_2 \subset G|(V-T)$, then $y_u = y_v$ or $y_v + 1$.  Now pass to the quotient $\Gamma(G)$.  This has the effect of setting $y_r = 0$ in \eqref{e: y}.  If $G|(V-T)$ were disconnected, then we may choose a component $C$ such that $r \notin V(C)$ and set $y_u = -1$ for all $u \in V(C)$ and $0$ otherwise.  Then $y$ and $[T] -y $ are non-zero, orthogonal, and sum to $[T]$, so $[T]$ is reducible.  Similarly, if $G|T$ were disconnected, then we could choose an arbitrary component $C$ and set $y_u = 1$ if $u \in V(C)$ and $0$ otherwise, and conclude once more that $[T]$ is reducible.  Otherwise, both $G|T$ and $G|(V-T)$ are connected, and $y$ vanishes on $G|(V-T)$ and equals $0$ or $1$ on $G|T$.  Thus, $y = 0$ or $[T]$, and it follows that $[T]$ is irreducible.

\end{proof}

\begin{prop}\label{p: graph break}

Suppose that $G$ does not contain a cut-edge, and suppose that $[T] = y + z$ with $\L y, z \R = -1$.  Then either 

\begin{enumerate}

\item $G|T$ contains a cut-edge $e$, $V(G|T \, - \, e) = T_1 \cup T_2$, and $\{y,z\} = \{[T_1], [T_2]\}$; or

\item $G(V-T)$ contains a cut-edge $e$, $V(G|(V-T) - e) = T_1 \cup T_2$, $r \in T_2$, and $\{y,z\}= \{ [T_1 \cup T], - [T_1] \}$.

\end{enumerate}

\end{prop}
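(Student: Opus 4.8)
The plan is to run the computation underlying Proposition \ref{p: graph irred} again, but now targeting the value $1$ instead of $0$. Writing $y = \sum_{v \in V} y_v[v]$ for the representative with $y_r = 0$ and putting $z = [T]-y$, the hypothesis $\L y,z\R = -1$ is equivalent to $\L y, y - [T]\R = 1$. I would feed this into the identity \eqref{e: y}, whose right-hand side is a sum of nonnegative integers: each internal summand $(y_u-y_v)^2$ (over an edge inside a component of $G|T$ or of $G|(V-T)$) is a square, and each boundary summand $(y_u-y_v)(y_u-y_v-1)$ (over an edge of $E(T,V-T)$) is a product of consecutive integers, hence $0$ or $\geq 2$. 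Demanding that the total equal $1$ then pins the combinatorics down completely: every boundary term must vanish, so across each edge of $E(T,V-T)$ the $T$-endpoint value is equal to or exactly one greater than the $V-T$-endpoint value; and exactly one internal term equals $1$ with all others vanishing, so $y$ is constant on every component of $G|T$ and of $G|(V-T)$ apart from a single edge $e$ across which it jumps by $\pm 1$.

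Next I would extract the geometry, using throughout that $G|T$ and $G|(V-T)$ are connected, as holds for the irreducible $[T]$ to which the proposition is applied (Proposition \ref{p: graph irred}). Then there are exactly two internal components and $e$ lies in one of them; if $e$ were not a cut-edge of that component, flatness off $e$ would force $y$ to be constant across $e$ as well, a contradiction, so $e$ is a cut-edge splitting its connected host into two pieces carrying values differing by $1$. The crux of the argument, and the step where I expect the real work to lie, is to pin these two values down, and this is exactly where the no-cut-edge hypothesis on $G$ is indispensable. If either piece had no edge to the opposite side of the partition $(T,V-T)$, then $e$ would be that piece's only edge to the rest of $G$ and hence a cut-edge of $G$, contrary to hypothesis; therefore \emph{both} pieces meet the opposite side. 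Combined with the asymmetric boundary constraint (a $T$-side value lies in $\{\beta,\beta+1\}$ for an adjacent $V-T$-side value $\beta$) and the normalization $y_r = 0$, this forces $y$ to take only two consecutive values, one of them being $0$.

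Finally I would read off the two cases according to where $e$ sits. If $e \subseteq T$, then $G|(V-T)$ is flat and equal to $y_r = 0$, and the pinning forces the two pieces of $G|T - e$ to carry values $0$ and $1$; writing $T_1$ for the value-$1$ piece and $T_2$ for the value-$0$ piece yields $y = [T_1]$ and $z = [T_2]$, which is case (1). If instead $e \subseteq V-T$, then $G|T$ is flat with some value $c$, and since the piece $T_2 \ni r$ has value $0$ and meets $T$, the boundary inequality gives $c \in \{0,1\}$; the other piece $T_1$ then carries value $-1$ when $c=0$ and value $+1$ when $c=1$, and a direct check shows $\{y,z\} = \{[T_1 \cup T], -[T_1]\}$ in either subcase, which is case (2) with $r \in T_2$. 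In every case the relation $\L y,z\R = -1$ records that $e$ is the unique edge between the two pieces, confirming the cut-edge assertion. The two genuine obstacles are the value-pinning of the second paragraph, where bridgelessness cannot be dispensed with, and keeping the asymmetric boundary inequality straight, since it is precisely that asymmetry which separates the shape $\{[T_1],[T_2]\}$ of case (1) from $\{[T_1 \cup T], -[T_1]\}$ of case (2).
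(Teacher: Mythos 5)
Your proposal is correct and follows essentially the same route as the paper: reconsider the identity \eqref{e: y} with the inner product equal to $1$, observe that every boundary term $(y_u-y_v)(y_u-y_v-1)$ must vanish while exactly one internal term contributes $1$, deduce that this edge is a cut-edge of $G|T$ or $G|(V-T)$, and use the bridgelessness of $G$ to force both resulting pieces to meet the opposite side of the partition, which pins the values of $y$ and yields the two cases. Your handling of the asymmetric boundary constraint and the value-pinning matches the paper's argument step for step.
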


\begin{proof}

Reconsider \eqref{e: y}.  In the case at hand, the inner product is $1$.  Each term $(y_u - y_v)(y_u - y_v - 1)$ is either 0 or $\geq 2$, so it must be the case that each such term vanishes and there exists a unique edge $e \in E(T) \cup E(V-T)$, $e=(u,v)$, for which $(y_u - y_v)^2 =1$ and all other terms vanish.  In particular, it follows that $e$ is a cut-edge in either (a) $G|T$ or (b) $G|(V-T)$.

In case (a), write $T_1$ and $T_2$ for the vertex sets of the components of $G|T \, - \, e$.  Then $y$ is constant on $T_1$, $T_2$, and $V-T$; furthermore, it vanishes on $V-T$ and its values on $T_1$ and $T_2$ differ by one.  Since $e$ is not a cut-edge in $G$, it follows that $E(V-T,T_1), E(V-T,T_2) \ne \varnothing$, so the values on $T_1$ and $T_2$ differ from the value on $V-T$ by at most one.  It follows that these values are $0$ and $1$ in some order.  This results in (1).

In case (b), write $T_1$ and $T_2$ for the vertex sets of the components of $G|(V-T) - e$ with $r \in T_2$.  Now $y$ is constant on $T_1, T_2$, and $T$; furthermore, it vanishes on $T_2$ and its values on $T_1$ and $T_2$ differ by one.  Hence the value on $T_2$ is $1$ or $-1$.  Since $e$ is not a cut-edge in $G$, it follows that $E(T,T_1), E(T,T_2) \ne \varnothing$, so the value on $T$ is $0$ or $1$ more than the values on $T_1$ and $T_2$.  Thus, either the value on $T_1 \cup T$ is 1, or the value on $T_1$ is $-1$ and the value on $T$ is 0.  This results in (2).

\end{proof}

%%%
%%%
%%%

\subsection{Linear lattices.}\label{ss: linear lattice}

Observe that a sum of linear lattices $L = \bigoplus_k L_k$ occurs as a special case of a graph lattice.  Indeed, construct a graph $G$ whose vertex set consists of one vertex for each generator $x_i$ of $L_k$ (Definition \ref{d: linear lattice}), as well as one additional vertex $r$.  For a pair of generators $x_i, x_j$, declare  $(x_i,x_j) \in E$ if and only if $\L x_i, x_j \R = -1$, and define as many parallel edges between $r$ and $x_i$ as necessary so that $d(x_i) = a_i$.  It is clear that $\Gamma(G) \cong L$, and this justifies the term linear {\em lattice}.  Furthermore, the $x_i$ comprise a vertex basis for $L$.

Given a linear lattice $L$ and a subset of consecutive integers $\{i,\dots,j\} \subset \{1,\dots,n\}$, we obtain an {\em interval} $\{x_i,\dots,x_j\}$.  Two distinct intervals $T = \{x_i,\dots,x_j\}$ and $T' = \{x_k,\dots,x_l\}$ {\em share a common endpoint} if $i = k$ or $j = l$ and are {\em distant} if $k > j+1$ or $i > l+1$.  If $T$ and $T'$ share a common endpoint and $T \subset T'$, then write $T \prec T'$.  If $i = k+1$ or $l = j+1$, then $T$ and $T'$ are {\em consecutive} and write $T \dagger T'$.  They {\em abut} if they are either consecutive or share a common endpoint.  Write $T \pitchfork T'$ if $T \cap T' \ne \varnothing$ and $T$ and $T'$ do not share a common endpoint.  Observe that if $T \pitchfork T'$, then the symmetric difference $(T - T') \cup (T' - T)$ is the union of a pair of distant intervals.

\begin{cor}\label{c: linear irred}

Let $L = \bigoplus_k L_k$ denote a sum of linear lattices.

\begin{enumerate}

\item The irreducible vectors in $L$ take the form $\pm[T]$, where $T$ is an interval in some $L_k$;

\item each $L_k$ is indecomposable;

\item if $T \pitchfork T'$, then $[T-T'] \pm [T'-T]$ is reducible;

\item $[T]$ is unbreakable iff $T$ contains at most one vertex of degree $\geq 3$.

\end{enumerate}

\end{cor}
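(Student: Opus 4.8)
The plan is to derive all four parts of Corollary~\ref{c: linear irred} from the two graph-lattice Propositions (\ref{p: graph irred} and \ref{p: graph break}), applied to the particular graph $G$ that realizes $L = \bigoplus_k L_k$ as a graph lattice --- namely the graph constructed in Subsection~\ref{ss: linear lattice}, whose non-root vertices form disjoint paths (one path per summand $L_k$), with the root $r$ attached to each $x_i$ by $a_i - (\text{path-degree of } x_i)$ parallel edges. The key structural fact, which I would verify first, is the dictionary between intervals and connected vertex subsets: a subset $T$ of the path-vertices induces a connected subgraph of $G$ precisely when $T$ is an interval inside a single $L_k$, and its complement $V - T$ induces a connected subgraph precisely when $T$ is additionally an \emph{initial or terminal} interval touching the path endpoints appropriately (since $r$ is always in $V - T$ and connects to everything). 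This is the geometric heart of the argument and the step I expect to require the most care, because one must track exactly when removing $T$ from $G$ leaves the complement connected through the root.

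For part~(1), I would apply Proposition~\ref{p: graph irred}: the irreducible elements are $\pm[T]$ with both $G|T$ and $G|(V-T)$ connected. Using the dictionary, $G|T$ connected forces $T$ to be an interval in some $L_k$; the connectivity of $V - T$ is then automatic because $r \in V - T$ links to all remaining path-vertices (including both sides of any gap the interval leaves, and the vertices of the other summands). So the irreducibles are exactly $\pm[T]$ for intervals $T$. Part~(2) is immediate: if some $L_k$ decomposed, its vertex path would have to split into two vertex-disjoint, edge-disjoint pieces spanning orthogonal sublattices, but the path is connected in $\widehat{G}$, contradicting the uniqueness-of-components reasoning already recorded in Subsection~\ref{ss: generalities} (or directly, since a connected graph lattice is indecomposable).

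For part~(3), suppose $T \pitchfork T'$. As observed just before the Corollary, the symmetric difference splits as a union of two \emph{distant} intervals $A = T - T'$ and $B = T' - T$. Because $A$ and $B$ are distant (separated by at least one vertex, or lying in different summands), they span orthogonal sublattices, so $\langle [A], [B] \rangle = 0$. Hence $[A] \pm [B] = [A] + (\pm[B])$ writes the element as a sum of two nonzero vectors with non-negative (indeed zero) pairing, which is exactly the definition of reducible. This part is essentially a one-line orthogonality check once the distance observation is in hand.

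For part~(4), the breakability criterion, I would invoke Proposition~\ref{p: graph break}. Note first that the graph $G$ has no cut-edge relevant to the interval $T$ in the required sense: the root-edges are never cut-edges because of the path structure, so any cut-edge producing a breaking $[T] = y + z$ with $\langle y,z\rangle = -1$ must lie along the path $G|T$ itself (case~(1) of the Proposition) --- case~(2) would require a cut-edge in $G|(V-T)$, but $r$ makes that complement $2$-edge-connected. A path-edge interior to $T$ is a cut-edge of $G|T$ exactly when it separates $T$, and Proposition~\ref{p: graph break}(1) then gives $\{y,z\} = \{[T_1],[T_2]\}$ with $T = T_1 \sqcup T_2$. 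The breaking requires $|T_1|, |T_2| \geq 3$ in norm, i.e.\ $d(T_1), d(T_2) \geq 3$; translating degrees back through $d(T_i) = a_{\bullet} - (\text{internal path edges})$, one checks this forces each of $T_1, T_2$ to contain a vertex of degree $\geq 3$ in $G$. Conversely, if $T$ contains at most one vertex of degree $\geq 3$, no such splitting of $T$ into two high-norm pieces exists, so $[T]$ is unbreakable. The bookkeeping of how a split affects the two norms is the only mildly delicate computation here, and it is exactly the degree accounting I expect to be the secondary obstacle after the connectivity dictionary.
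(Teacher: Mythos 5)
Your overall strategy --- deducing all four parts from Propositions \ref{p: graph irred} and \ref{p: graph break} applied to the graph $G$ realizing $L$ --- is exactly the paper's, and parts (2) and (3) are correct and coincide with the paper's argument. Part (1) reaches the right conclusion, but your stated ``dictionary'' is self-contradictory: the opening paragraph claims $V-T$ induces a connected subgraph only when $T$ is an initial or terminal interval, while the part (1) argument (correctly) says connectivity of $V-T$ is automatic. The correct reason is not that ``$r$ links to all remaining path-vertices'' --- it has no edge to an interior vertex $x_i$ with $a_i=2$ --- but that each component of $(V-T)-r$ contains a path endpoint $x_1$ or $x_{n_k}$, and these always carry at least one edge to $r$ because $a_i \geq 2$ exceeds their path-degree.

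The genuine gap is in part (4), in your exclusion of case (2) of Proposition \ref{p: graph break}. The claim that ``$r$ makes $G|(V-T)$ $2$-edge-connected'' is false: take $p/q = [2,3]^-$ and $T = \{x_2\}$; then $V-T = \{r, x_1\}$ is joined by the single edge between $r$ and $x_1$ (since $a_1 - 1 = 1$), which is a cut-edge. More generally, whenever the complement of $T$ inside its path begins or ends with a run of norm-$2$ vertices, $G|(V-T)$ has a cut-edge and case (2) genuinely occurs. What saves the statement --- and what the paper actually argues --- is that in case (2) the piece $T_1$ consists entirely of vertices whose $a_i$ equals their path-degree, so $d(T_1) = 2$ and the summand $\pm[T_1]$ has norm $2$, violating the requirement $|y|,|z| \geq 3$ in the definition of breakable; it is this norm computation, not a connectivity claim, that rules out case (2). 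Two smaller points: you should also verify that $G$ itself has no cut-edge among its path-edges (needed to invoke Proposition \ref{p: graph break} at all), and your ``Conversely'' sentence merely restates the contrapositive of the direction you already argued --- the implication that two vertices of degree $\geq 3$ force breakability still needs the explicit splitting $[T] = [T_1] + [T_2]$ with $\langle [T_1],[T_2] \rangle = -1$ and $d(T_1), d(T_2) \geq 3$, as in the paper.
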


\begin{proof}

(1) follows from Proposition \ref{p: graph irred}, noting that for $T \subset V - \{r\}$, if $T$ and $V-T$ induce connected subgraphs of the graph $G$ corresponding to $L$, then $T$ is an interval in some $L_k$.  (2) follows since the elements of the vertex basis for $L_k$ are irreducible and their pairing graph is connected.  For item (3), write $(T-T') \cup (T'-T) = T_1 \cup T_2$ as a union of distant intervals.  Then $[T-T']\pm[T'-T] = \epsilon_1 [T_1] + \epsilon_2 [T_2]$ for suitable signs $\epsilon_1,\epsilon_2 \in \{ \pm 1 \}$, and $\L \epsilon_1 [T_1], \epsilon_2 [T_2] \R = 0$.  For (4), we establish the contrapositive in two steps.

\noindent ($\implies$) If an interval $T$ contains a pair of vertices $x_i,x_j$ of degree $\geq 3$, then it breaks into consecutive intervals $T = T_i \cup T_j$ with $x_i \in T_i$ and $x_j \in T_j$. It follows that $[T]$ is breakable, since $[T] = [T_i]+[T_j]$ with $\L [T_i],[T_j] \R = -1$ and $d(T_i), d(T_j) \geq 3$.

\noindent ($\impliedby$) If $[T] = y+z$ is breakable, then Proposition \ref{p: graph break} applies.  Observe that case (2) does not hold since breakability entails $|z| \geq 3$, while a cut-edge in $G(V-T)$ separates it into $T_1 \cup T_2$ with $d(T_1) = 2$ and $r \in T_2$.  Thus, case (1) holds, and it follows that $d(T_1),d(T_2) \geq 3$, so both $T_1$ and $T_2$ contain a vertex of degree $\geq 3$, which shows that $T$ contains at least two such.

\end{proof}

Next we turn to the question of when two linear lattices are isomorphic.  Let $I$ denote the set of irreducible elements in $L$, and given $y \in I$, let \[I(y) = \{ z \in I \: | \; \L y,z \R = -1, y + z \in I \}.\]  To unpack the meaning of this definition, suppose that $y \in I$, $|y| \geq 3$, and write $y = \epsilon_y [T_y]$.  If $z \in I(y)$ with $z = \epsilon_z [T_z]$, then either $\epsilon_y = \epsilon_z$ and $T_y \dagger T_z$, or else $|z|=2$, $\epsilon_y = - \epsilon_z$, and $T_z \prec T_y$.  Now suppose that $z \in I(y)$ with $|z| = 3$.  Choose elements $x_i \in T_y$ and $x_j \in T_z$ of norm $\geq 3$ so that the open interval $(x_i,x_j)$ contains no vertex of degree $\geq 3$.  If $w \in I(y) \cap (-I(z))$ with $w = \epsilon_w [T_w]$, then $T_w \subset (x_i,x_j)$, and either $T_w \prec T_y$ and $\epsilon_w = - \epsilon_y$, or else $T_w \prec T_z$ and $\epsilon_w = \epsilon_z$.  It follows that $|I(y) \cap (-I(z))| = |(x_i,x_j)| = |i-j|-1$.

The following is the main result of \cite{gerstein}.

\begin{prop}[Gerstein]\label{p: gerstein}

If $\Lambda(p,q) \cong \Lambda(p',q')$, then $p = p'$, and $q = q'$ or $q q' \equiv 1 (\textup{mod } p)$.

\end{prop}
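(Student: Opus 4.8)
The plan is to prove the statement by showing that the isomorphism type of a linear lattice $\Lambda(p,q)$ determines the pair $(p,q)$ up to the indicated equivalence. Since $\disc(\Lambda(p,q)) = p$ and discriminant is an isomorphism invariant, the equality $p = p'$ is immediate, so the real content is recovering $q$ (up to $q \mapsto q^{-1} \bmod p$) from the abstract lattice. My strategy is to show that the \emph{vertex basis} $\{x_1,\dots,x_n\}$ is essentially intrinsic to the lattice, so that the continued fraction $[a_1,\dots,a_n]^-$ can be read off from any isomorphism, and then invoke the relation $p/q = [a_1,\dots,a_n]^-$ together with the reversal identity $p/q' = [a_n,\dots,a_1]^-$ (Lemma \ref{l: cont frac basics}(4)) to conclude.

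First I would use Corollary \ref{c: linear irred} to pin down the combinatorial structure. By part (1), the irreducible elements of $\Lambda(p,q)$ are exactly the vectors $\pm[T]$ for intervals $T$, and by part (2) the lattice is indecomposable, so any isomorphism $\Lambda(p,q) \cong \Lambda(p',q')$ carries irreducibles to irreducibles and hence intervals to intervals (up to sign). The vertex basis elements are precisely the irreducibles $\pm[T]$ with $|T| = 1$, but these are not distinguished by norm alone; instead I would reconstruct the \emph{linear order} on the vertices from the incidence pattern of intervals. The key tool is the analysis of the sets $I(y)$ and $I(y) \cap (-I(z))$ developed just before Proposition \ref{p: gerstein}: the computation $|I(y) \cap (-I(z))| = |i-j|-1$ shows how to measure the gap between two norm-$\geq 3$ vertices $x_i, x_j$ purely from the lattice structure, which lets me recover the positions of all the high-norm vertices and the lengths of the ``linear'' stretches between them. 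Assembling this data reconstructs the sequence $(a_1,\dots,a_n)$ up to reversal.

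The main obstacle will be handling the degenerate cases where the intrinsic reconstruction of the order is ambiguous, and carefully tracking the reversal symmetry. A linear chain has a dihedral symmetry: reading the vertices left-to-right versus right-to-left gives the two continued fractions $[a_1,\dots,a_n]^-$ and $[a_n,\dots,a_1]^-$, which by Lemma \ref{l: cont frac basics}(4) correspond to $q$ and $q'$ with $qq' \equiv 1 \pmod p$. So an abstract isomorphism can only determine the vertex basis up to this reversal, which is exactly the source of the $q \leftrightarrow q^{-1}$ ambiguity in the conclusion. The delicate point is that when there are fewer than two vertices of norm $\geq 3$ (for instance $\Lambda(p,1)$, a single vertex, or chains of all $2$'s and one larger entry), the measurement $|i-j|-1$ has little or nothing to latch onto; these low-complexity lattices must be treated directly, checking by hand that the continued fraction—and hence $q$ up to inversion—is still forced.

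Once the ordered sequence $(a_1,\dots,a_n)$ is recovered up to reversal, the proof closes quickly: evaluate $p/q = [a_1,\dots,a_n]^-$ to get $q$ in the first case, and $p/q' = [a_n,\dots,a_1]^-$ with $qq' \equiv 1 \pmod p$ in the reversed case, giving exactly the stated dichotomy. Since Proposition \ref{p: gerstein} is cited as the main result of \cite{gerstein}, I expect the intended proof may simply defer to that reference; but the self-contained argument above, built on Corollary \ref{c: linear irred} and the $I(y)$-analysis already assembled in the text, is the natural route to reprove it within the paper's framework.
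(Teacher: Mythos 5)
Your proposal is correct and follows essentially the same route as the paper's proof: the paper likewise uses Corollary \ref{c: linear irred} together with the $I(y)$-analysis (in particular $|I(y)\cap(-I(z))| = |i-j|-1$) to show the lattice determines the norm sequence $(|x_1|,\dots,|x_n|)$ up to reversal, handles the degenerate cases $k=0,1$ of few high-norm vertices separately, and concludes via Lemma \ref{l: cont frac basics}(4). The only cosmetic difference is that you extract $p=p'$ from the discriminant up front, whereas the paper recovers both $p$ and $q$ at once from the reconstructed continued fraction.
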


\begin{proof}

Let $L$ denote a linear lattice with standard basis $S = \{ x_1,\dots,x_n \}$.  The Proposition follows once we show that $L$ uniquely determines the sequence of norms ${\bf x} = (|x_1|,\dots,|x_n|)$ up to reversal, noting that if $p/q = [a_1,\dots,a_n]^-$ and $p/q' = [a_n,\dots,a_1]^-$, then $q q' \equiv 1 \pmod p$ (Lemma \ref{l: cont frac basics}(4)).

Suppose that $I$ contains an element of norm $\geq 3$.  In this case, select $y_1 \in I$ with minimal norm $\geq 3$ subject to the condition that there does not exist a pair of orthogonal elements in $I(y_1)$.  It follows that $y_1 = \epsilon [T_1]$, where $T_1$ contains exactly one element $x_{j_1} \in S$ of norm $\geq 3$, and $j_1$ is the smallest or largest index of an element in $S$ with norm $\geq 3$.  Inductively select $y_i \in I(y_{i-1})$ with minimal norm $\geq 3$ subject to the condition that $\L y_i, y_j \R = 0$ for all $j < i$, until it is no longer possible to do so, terminating in some element $y_k$.  It follows that $y_i = \epsilon [T_i]$ for all $i$, where $\epsilon \in \{ \pm 1 \}$ is independent of $i$; each $T_i$ contains a unique $x_{j_i} \in S$ of norm $\geq 3$; $T_i \dagger T_{i+1}$ for $i < k$; and each $x_j \in S$ of norm $\geq 3$ occurs as some $x_{j_i}$.  Therefore, up to reversal, the (possibly empty) sequence $(|y_1|,\dots,|y_k|) = (|x_{j_1}|,\dots,|x_{j_k}|)$ coincides with ${\bf x}$ with every occurrence of 2 omitted.

To recover ${\bf x}$ completely, assume for notational convenience that $j_1 < \cdots < j_k$.  Set $n_i: = | I(y_i) \cap (-I(y_{i+1})) |$ for $i=1,\dots,k-1$, so that $n_i = j_{i+1}-j_i -1$.  If $k \geq 2$, then set $n_0 = | I(y_1) - (-I(y_2))$, $n_k = |I(y_k) - (-I(y_{k-1}))|$, and observe that $n_0 = j_1-1$ and $n_k = n-j_k$.  If $k = 1$, then decompose $I(y) = I_0 \cup I_1$, where $\L z_i, z'_i \R \ne 0$ for all $z_i,z'_i \in I_i$, $i = 0,1$.  In this case, set $n_i = |I_i|$, and observe that $\{n_0,n_1\} = \{ j_1 - 1 , n-j_1 \}$.  Lastly, if $k = 0$, then set $n_0 = n$.  Letting $2^{[t]}$ denote the sequence of 2's of length $t$, it follows that ${\bf x} = (2^{[n_0]},|y_1|,2^{[n_1]},\dots,2^{[n_{k-1}]},|y_k|,2^{[n_k]})$.  

Since the elements $y_1,\dots,y_k$ and the values $n_0,\dots,n_k$ depend solely on $L$ for their definition, it follows that ${\bf x}$ is determined uniquely up to reversal, and the Proposition follows.

\end{proof}

The following Definition and Lemma anticipate our discussion of the intersection graph in Subsection \ref{ss: int graph} (esp. Lemma \ref{l: cycle}).

\begin{defin}\label{d: abut graph}

Given a collection of intervals $\T =\{ T_1,\dots,T_k \}$ whose classes are linearly independent, define a graph \[ G(\T) = (\T, \E), \quad \E = \{ (T_i,T_j) \; | \; T_i \text{ abuts } T_j \}.\]

\end{defin}

\begin{lem}\label{l: triangle}

Given a cycle $C \subset G(\T)$, the intervals in $V(C)$ abut pairwise at a common end.  That is, there exists an index $j$ such that each $T_i \in V(C)$ has left endpoint $x_{j+1}$ or right endpoint $x_j$.  In particular, $V(C)$ induces a complete subgraph of $G(\T)$.

\end{lem}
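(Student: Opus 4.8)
The plan is to recast the abutting relation in terms of the \emph{walls} of the intervals and thereby realize $G(\T)$ as the line graph of a forest. To each interval $T = \{x_i,\dots,x_j\}$ I associate its two walls, the left wall $\ell(T) = i-1$ and the right wall $r(T) = j$, regarded as elements of $\{0,1,\dots,n\}$ (so wall $m$ sits between $x_m$ and $x_{m+1}$). Unwinding Definition~\ref{d: abut graph}, two intervals abut precisely when they share a wall, i.e.\ $\{\ell(T_a),r(T_a)\}\cap\{\ell(T_b),r(T_b)\}\ne\varnothing$: a shared left or right endpoint gives $\ell(T_a)=\ell(T_b)$ or $r(T_a)=r(T_b)$, while consecutiveness (one interval beginning immediately after the other ends) gives $r(T_a)=\ell(T_b)$ or $\ell(T_a)=r(T_b)$. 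Hence, letting $H$ be the graph on vertex set $\{0,\dots,n\}$ carrying one edge $\{\ell(T_a),r(T_a)\}$ for each $T_a\in\T$, I obtain $G(\T)=L(H)$, the line graph of $H$. Since an interval is recovered from its pair of walls, distinct intervals give distinct edges, so $H$ is simple.

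The first substantive step is to show that $H$ is a forest. I would set $S(m)=x_1+\cdots+x_m$ for $0\le m\le n$ (with $S(0)=0$), so that $[T]=S(r(T))-S(\ell(T))$ for every interval $T$. A cycle $w_0,w_1,\dots,w_c=w_0$ in $H$, with edges $T^{(1)},\dots,T^{(c)}$, then produces the telescoping relation $\sum_{a=1}^{c}\epsilon_a\,[T^{(a)}]=\sum_{a=1}^{c}\bigl(S(w_a)-S(w_{a-1})\bigr)=0$ for suitable signs $\epsilon_a\in\{\pm1\}$, a nontrivial dependence among distinct classes of $\T$. Because the classes of $\T$ are assumed linearly independent, $H$ can contain no cycle, i.e.\ $H$ is a forest.

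It then remains to prove the purely graph-theoretic statement that a cycle in the line graph of a forest is supported on edges through a single vertex: if $F$ is a forest and $e_1,\dots,e_m$ ($m\ge3$) are distinct edges with $e_s\cap e_{s+1}\ne\varnothing$ cyclically, then $\bigcap_s e_s\ne\varnothing$. I would argue by induction on $m$. For $m=3$, three pairwise-meeting edges without a common vertex would form a triangle, contradicting acyclicity. For $m\ge4$, let $v_s$ be the (unique, as $F$ is simple and acyclic) common vertex of $e_s$ and $e_{s+1}$. If all $v_s$ coincide, I am done; if $v_s\ne v_{s+1}$ for every $s$, then $e_s=\{v_{s-1},v_s\}$, so $v_0,v_1,\dots,v_m=v_0$ is a closed trail of distinct edges in $F$ and hence contains a cycle, which is impossible. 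Otherwise some $v_s=v_{s+1}$, so $e_s$ and $e_{s+2}$ both contain $v_s$; deleting $e_{s+1}$ leaves a cyclic sequence of $m-1\ge3$ distinct edges with consecutive ones meeting, and induction furnishes a common vertex $w$ for them. Since $e_s,e_{s+2}$ are distinct edges both through $w$ and through $v_s$, forcing $w=v_s$ would follow from $e_s=\{w,v_s\}=e_{s+2}$ being impossible, so $w=v_s\in e_{s+1}$, and all $m$ edges share $w$.

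Applying this to the forest $H$ and the cycle $C\subset G(\T)=L(H)$ yields a common wall $j$ shared by every interval in $V(C)$; each such interval accordingly has left endpoint $x_{j+1}$ (when $j$ is its left wall) or right endpoint $x_j$ (when $j$ is its right wall), and any two of them abut since they share the wall $j$, so $V(C)$ induces a complete subgraph. The main obstacle I anticipate is the final induction, specifically the bookkeeping in the inductive step: verifying that the deleted-edge reduction again produces a genuine simple cycle of length $\ge3$ in the line graph, and that the reinstated edge is forced through the common vertex. It is precisely the forest property established in the second step that eliminates the competing ``closed trail'' scenario and makes this argument go through.
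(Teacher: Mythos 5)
Your proof is correct, and it takes a genuinely different route from the paper's. The paper argues directly on the cycle $C$: it inducts on the number of edges of the subgraph induced on $V(C)$, splitting the cycle along a chord in the inductive step, and in the base case of an induced cycle it defines signs $\epsilon_i$ by hand according to how consecutive intervals abut, checks that each $x_j$ pairs to zero against $\sum_i \epsilon_i [T_i]$, and so contradicts the linear independence of the classes. You instead package the combinatorics once and for all: encoding each interval $T = \{x_{i},\dots,x_j\}$ by its wall pair $\{i-1, j\}$ identifies $G(\T)$ with the line graph of a simple graph $H$ on $\{0,\dots,n\}$ (your verification that abutting is exactly wall-sharing, and that walls determine the interval, is right); the telescoping identity $[T] = S(r(T)) - S(\ell(T))$ converts any cycle in $H$ into a signed linear dependence among distinct classes of $\T$, so independence forces $H$ to be a forest; and the purely graph-theoretic fact that a cycle in the line graph of a forest is supported on the edges through a single vertex -- which your induction proves correctly, with the forest property eliminating the closed-trail alternative and distinctness of $e_s, e_{s+2}$ forcing the common vertex in the reduction -- produces the common wall $j$, hence the common end and the complete subgraph. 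Both arguments ultimately rest on the same mechanism, a telescoping signed sum of interval classes contradicting linear independence, but yours isolates it in a single clean step (acyclicity of $H$) and replaces the paper's chord-splitting induction and sign bookkeeping with a standard statement about line graphs of forests, which also makes the complete-subgraph conclusion transparent; the paper's version buys self-containment, staying entirely inside the interval combinatorics it reuses elsewhere rather than introducing an auxiliary graph.
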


\begin{proof}

Relabeling as necessary, write $V(C) = \{T_1,\dots,T_k\}$, where $(T_i,T_{i+1}) \in E(C)$ for $i = 1,\dots, k$, subscripts $(\mod k)$.  We proceed by induction on the number of edges $n \geq k$ in the subgraph induced on $V(C)$.

When $n = k$, $C$ is an induced cycle.  In this case, if some of three of the intervals abut at a common end, then they span a cycle, $k=3$, and we are done.  If not, then define a sign $\epsilon_i = \pm 1$ by the rule that $\epsilon_i = 1$ if and only if $T_i  \dagger T_{i-1}$ and $T_i$ lies to the right of $T_{i-1}$, or if $T_i$ and $T_{i-1}$ share a common left endpoint.  Fix a vertex $x_j$, suppose that $x_j \in T_i$ for some $i$, and choose the next index $l \pmod{k}$ for which $x_j \in T_l$.  Observe, crucially, that $\epsilon_i = - \epsilon_l$.  It follows that $\L x_j, \sum_{i=1}^k \epsilon_i [T_i] \R = 0$.  As $x_j$ was arbitrary, we obtain the linear dependence $\sum_{i=1}^k \epsilon_i [T_i] = 0$, a contradiction.  It follows that if $n = k$, then $k=3$ and the three intervals abut at a common end.  

Now suppose that $n > k$.  Thus, there exists an edge $(T_i,T_j) \in E(C)$ for some pair of non-consecutive indices $i, j \pmod{k}$. Split $C$ into two cycles $C_1$ and $C_2$ along $(T_i,T_j)$.  By induction, every interval in $V(C_1)$ and $V(C_2)$ abuts at the same end as $T_i$ and $T_j$, so the same follows at once for $V(C)$.

\end{proof}

%%%
%%%
%%%

\subsection{Changemaker lattices.}\label{ss: changemaker}

Fix an orthonormal basis $\{e_0,\dots,e_n\}$ for $\Z^{n+1}$.

\begin{defin}\label{d: changemaker lattice}

A {\em changemaker lattice} is any lattice isomorphic to $(\sigma)^\perp \subset \Z^{n+1}$ for some changemaker $\sigma$ (Definition \ref{d: change}).

\end{defin}
%\footnote{In honor of Herr Otto Hemholtz Changemaker (1822-1887), hot air balloon engineer, dentist, and amateur mathematician.}

\begin{lem}\label{l: change disc}

Suppose that $L = (\sigma)^\perp \subset \Z^{n+1}$ is a changemaker lattice.  Then $\disc(L) = |\sigma|$.

\end{lem}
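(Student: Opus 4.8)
The plan is to produce an explicit $\Z$-basis for $L = (\sigma)^\perp$ that exploits the normalization $\sigma_0 = 1$, write down its Gram matrix, and read off $\disc(L)$ as the absolute value of the determinant of that matrix.

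First I would set $v_i := \sigma_i e_0 - e_i$ for $i = 1, \dots, n$. A one-line computation gives $\L v_i, \sigma \R = \sigma_i \sigma_0 - \sigma_i = 0$, so each $v_i$ lies in $L$, and the vectors are manifestly linearly independent since their $e_i$-components are distinct. The crux is to check that they span $L$ over $\Z$, and this is exactly where $\sigma_0 = 1$ enters: given $x = \sum_{j=0}^n x_j e_j \in L$, the condition $\L x, \sigma \R = 0$ reads $x_0 = -\sum_{j \geq 1} x_j \sigma_j$, and one verifies directly that $x = \sum_{j=1}^n (-x_j) v_j$, an integral combination. Hence $\{v_1, \dots, v_n\}$ is a genuine $\Z$-basis of $L$, not merely a basis of a finite-index sublattice.

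Next I would compute the Gram matrix. Since $i, j \geq 1$ the cross terms with $e_0$ drop out and $\L v_i, v_j \R = \sigma_i \sigma_j + \delta_{ij}$, so in matrix form the Gram matrix is $A = I_n + u u^\top$, where $u = (\sigma_1, \dots, \sigma_n)^\top$. With a basis in hand we have $\disc(L) = |\det A|$, and the determinant of this rank-one update is evaluated by the matrix determinant lemma:
\[ \det(I_n + u u^\top) = 1 + u^\top u = 1 + \sum_{i=1}^n \sigma_i^2. \]
Using $\sigma_0 = 1$ this equals $\sum_{i=0}^n \sigma_i^2 = |\sigma|$, which is the claim.

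The only subtlety — and the step I would treat most carefully — is the spanning assertion in the second paragraph: without $\sigma_0 = 1$ the $v_i$ would span only a finite-index sublattice of $(\sigma)^\perp$, and the determinant would overcount $\disc(L)$ by the square of that index. (Equivalently, the statement is the standard fact that $\disc(v^\perp) = |v|$ for a primitive vector $v$ in a unimodular lattice, applied to $v = \sigma$, which is primitive precisely because $\sigma_0 = 1$; the explicit basis merely renders this self-contained.) Everything else is a routine determinant evaluation.
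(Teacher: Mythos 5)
Your proof is correct, but it takes a genuinely different route from the paper's. The paper argues via discriminant multiplicativity: it considers the surjection $\varphi: \Z^{n+1} \to \Z/|\sigma|\Z$, $\varphi(x) = \L x, \sigma \R$, which is onto because $\sigma_0 = 1$; hence $K := \ker(\varphi)$ has index $|\sigma|$ in $\Z^{n+1}$ and so $\disc(K) = |\sigma|^2$, and since $K = L \oplus (\sigma)$ one gets $\disc(L) = \disc(K)/|\sigma| = |\sigma|$. You instead exhibit the explicit $\Z$-basis $v_i = \sigma_i e_0 - e_i$, verify spanning (correctly flagging this as the step where $\sigma_0 = 1$ is essential), and evaluate $\det(I_n + uu^\top) = 1 + \sum_{i=1}^n \sigma_i^2 = |\sigma|$ by the matrix determinant lemma --- all of which checks out, including the verification $x = \sum_{j=1}^n(-x_j)v_j$ and the Gram entries $\L v_i, v_j \R = \sigma_i\sigma_j + \delta_{ij}$. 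The paper's argument (following Milnor--Husem\"oller) is shorter and more conceptual: it needs only the index-squared formula for finite-index sublattices and multiplicativity of $\disc$ over orthogonal sums, and it generalizes immediately to any primitive vector in a unimodular lattice, as you note in your closing remark. Your computation is more elementary and self-contained, makes the role of $\sigma_0 = 1$ fully transparent, and produces a concrete basis of $L$ as a byproduct --- though note this basis is not the ``standard basis'' the paper constructs afterward for its combinatorial analysis, so it serves only the discriminant count. Both proofs invoke $\sigma_0 = 1$ at the structurally analogous point: you for spanning, the paper for surjectivity of $\varphi$.
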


\begin{proof}

(cf. \cite[proof of Lemma II.1.6]{milnor+husemoller})  Consider the map
\[ \varphi: \Z^{n+1} \to \Z / | \sigma | \Z, \quad \varphi(x) = \L x, \sigma \R \pmod{| \sigma |}.\]
As $\sigma_0 = 1$, the map $\varphi$ is onto, so $K : = \ker(\varphi)$ has discriminant $[\Z^{n+1}:K]^2 = | \sigma |^2$.  On the other hand, $K = L \oplus (\sigma)$, so $\disc(L) = \disc(K) / | \sigma | = | \sigma |$.

\end{proof}

\begin{proof}[Proof of Theorem \ref{t: main technical}]

We invoke \cite[Theorem 3.3]{greene:cabling} with $X = X(p,q)$.  It follows that $\Lambda(p,q)$ embeds as a full-rank sublattice of $(\sigma)^\perp \subset \Z^{n+1}$, where $|\sigma| = p$ and $\sigma$ is  a changemaker according to the convention of \cite{greene:cabling}.  It stands to verify that $\sigma_0 = 1$, and furthermore that $\Lambda(p,q)$ actually equals $(\sigma)^\perp$ on the nose.  First, if $\sigma_0 = 0$, then $\Lambda(p,q)$ would have a direct summand isomorphic to $(e_0) \cong \Z$, in contradiction to its indecomposability.  Hence $\sigma_0 = 1$. Second, $\disc(\Lambda(p,q)) = p = |\sigma| = \disc((\sigma)^\perp)$, using Lemma \ref{l: change disc} at the last step.  Since $\rk \; \Lambda(p,q) = \rk \; (\sigma)^\perp$, the two lattices coincide.

\end{proof}

We construct a basis for a changemaker lattice $L$ as follows.  Fix an index $1 \leq j \leq n$, and suppose that $\sigma_j = 1 + \sum_{i=0}^{j-1} \sigma_i$.  In this case, set $v_j = -e_j + 2e_0 + \sum_{i=1}^{j-1} e_i \in L$.  Otherwise, $\sigma_j \leq \sum_{i=0}^{j-1} \sigma_i$.  It follows that there exists a subset $A \subset \{ 0,\dots,j-1\}$ such that $\sigma_j = \sum_{i \in A} \sigma_i$.  Amongst all such subsets, choose the one maximal with respect to the total order $<$ on subsets of $\{ 0, 1, \dots, n \}$ defined by declaring $A' < A$ if the largest element in $(A \cup A') \setminus (A \cap A')$ lies in $A$; equivalently, $\sum_{i \in A'} 2^i < \sum_{i \in A} 2^i$.  Then set $v_j = -e_j+\sum_{i \in A} e_i \in L$.  If $v = -e_j + \sum_{i \in A'} e_i$ for some $A' < A$, then write $v \ll v_j$.    

The vectors $v_1,\dots,v_n$ are clearly linearly independent.  The fact that they span $L$ is straightforward to verify, too: given $w \in L$, add suitable multiples of $v_n,\dots,v_1$ to $w$ in turn to produce a sequence of vectors with {\em support} decreasing to $\varnothing$.  Recall that the support of a vector $v \in \Z^{n+1}$ is the set $\supp(v) = \{ i \; | \; \L v, e_i \R \ne 0 \}$.  For future reference, we also define
\[ \supp^+(v) = \{ i \; | \; \L v, e_i \R > 0 \} \quad \text{and} \quad \supp^-(v) = \{ i \; | \; \L v, e_i \R < 0 \}. \]

\begin{defin}\label{d: tight, gappy, just right}

The set $S = \{v_1,\dots,v_n\}$ constitutes the {\em standard basis} for $L$.  A vector $v_j \in S$ is 

\begin{itemize}

\item {\em tight}, if $v_j = -e_j + 2e_0 + \sum_{i=1}^{j-1} e_i$;

\smallskip

\item {\em gappy}, if $v_j = -e_j+\sum_{i \in A} e_i$ and $A$ does not consist of consecutive integers; and

\smallskip

\item {\em just right}, if $v_j = -e_j+\sum_{i \in A} e_i$ and $A$ consists of consecutive integers.

\end{itemize}
A {\em gappy index} for a gappy vector $v_j$ is an index $k \in A$ such that  $k+1 \notin A \cup \{j\}$.
\end{defin}

\noindent  Thus, every element of $S$ belongs to exactly one of these three types.

We record a few basic observations before proceeding to some more substantial facts about changemaker lattices.  Write $v_{jk} = \L v_j, e_k \R$.

\begin{lem}\label{l: basic}

The following hold:

\begin{enumerate}

\item $v_{jj} = -1$ for all $j$, and $v_{j,j-1}=1$ unless $j=1$ and $v_{1,0} = 2$;

\item for any pair $v_i, v_j$, we have $\L v_i, v_j \R \geq -1$;

\item if $k$ is a gappy index for some $v_j$, then $|v_{k+1}| \geq 3$;

\item given $z = \sum_{i=0}^n z_i e_i \in L$ with $|z| \geq 3$, $\supp^-(z)=\{j\}$, and $z_j = -1$, it follows that $j = \max(\supp(z))$.

\end{enumerate}

\end{lem}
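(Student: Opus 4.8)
The plan is to verify each of the four items directly from the explicit formulas defining the standard basis vectors $v_j$, treating them as vectors in $\Z^{n+1}$ and computing inner products coordinate-by-coordinate. Each item is a short computation, so the real work is bookkeeping with the definitions of tight, gappy, and just right vectors (Definition \ref{d: tight, gappy, just right}) and the construction preceding it.

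For item (1), I would read off $v_{jj} = -1$ immediately, since every $v_j$ has the form $-e_j + (\text{positive terms indexed below } j)$. The claim $v_{j,j-1} = 1$ (unless $j=1$) reduces to showing that $j-1 \in A$ whenever $v_j$ is gappy or just right, and that $v_1 = -e_1 + 2e_0$ in the exceptional case. The cleanest route is to observe that maximality of $A$ with respect to the order $<$ forces $j-1 \in A$: if $j-1 \notin A$, the changemaker inequality $\sigma_{j-1} \leq \sigma_0 + \cdots + \sigma_{j-2} + 1$ would let one swap in $e_{j-1}$ for lower-indexed basis vectors and produce a larger admissible subset, contradicting maximality. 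For item (2), I would note that since $\supp(v_i) \cap \supp(v_j)$ consists of indices where both vectors are positive (each has a single negative coordinate, at its own index $i$ or $j$), the only possible negative contribution to $\L v_i, v_j \R$ comes from the term $v_{jj} \cdot v_{ij}$ when $i < j$, which is $-1 \cdot v_{ij}$. Since $v_{ij} \in \{0, 1\}$ for $i < j$ (the positive coordinates of $v_j$ are all equal to $1$ except possibly the $e_0$-coordinate of a tight vector, which is $+2$), and any overlap in the positive supports contributes non-negatively, the inner product is bounded below by $-1$.

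Item (3) is the key structural observation and I expect it to require the most care. If $k$ is a gappy index for $v_j$, then by definition $k \in A$ but $k+1 \notin A \cup \{j\}$. I would argue that the maximality of $A$ forces $v_{k+1}$ to be tight or at least of norm $\geq 3$: the presence of the ``gap'' at $k+1$ means that when constructing $v_{k+1}$, the partial sums behaved in a way that prevented $k+1$ from being absorbed into a consecutive block, which one translates into the inequality $\sigma_{k+1} = 1 + \sum_{i \leq k}\sigma_i$ (the tight condition) or an analogous statement giving $|v_{k+1}| \geq 3$. The norm of a standard basis vector $v_{k+1} = -e_{k+1} + \sum_{i \in A'} e_i$ is $1 + |A'|$ (or $2 + |A'|$ counting the doubled $e_0$ in the tight case), so $|v_{k+1}| \geq 3$ amounts to $|A'| \geq 2$, i.e.\ $\sigma_{k+1}$ is a sum of at least two earlier $\sigma_i$'s. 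The main obstacle is making the combinatorial link between a gap in the expansion of $\sigma_j$ and the size of the expansion of $\sigma_{k+1}$ rigorous; I would do this by exploiting that $\sigma_k$ appears in $\sigma_j$ but $\sigma_{k+1}$ does not, together with monotonicity $\sigma_k \leq \sigma_{k+1}$, to force $\sigma_{k+1} > \sigma_k$, hence $\sigma_{k+1}$ cannot be just a single coin and must decompose with $|A'| \geq 2$.

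For item (4), I would reason that if $z = \sum z_i e_i \in L$ has $\supp^-(z) = \{j\}$ with $z_j = -1$, then orthogonality to $\sigma$ gives $\sigma_j = \sum_{i \neq j} z_i \sigma_i$ with all $z_i \geq 0$ for $i \neq j$. Suppose for contradiction $j < \max(\supp(z))$, so some $z_m > 0$ with $m > j$. Then $\sigma_m$ appears as a summand on the right with $\sigma_m \geq \sigma_j$ (monotonicity), forcing $\sigma_j = \sum_{i \neq j} z_i \sigma_i \geq \sigma_m \geq \sigma_j$; equality throughout then forces $z$ to be supported on a single index $m$ with $\sigma_m = \sigma_j$ and $z_m = 1$, giving $z = e_m - e_j$ of norm $2 < 3$, contradicting $|z| \geq 3$. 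This handles the boundary case and completes the proof. Each of these four arguments is elementary; I expect the proof to consist mainly of carefully invoking maximality of the subsets $A$ and the changemaker/monotonicity inequalities, with item (3) demanding the most delicate argument.
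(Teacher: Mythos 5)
Your proposal is correct and follows essentially the same route as the paper's (much terser) proof: items (1) and (3) come down to the maximality of $A$ (if $j-1$ or $k+1$ could be swapped in, $A$ would not be maximal, which is the paper's ``$v_j \ll v_j - v_{k+1}$''), and item (4) is exactly the paper's computation $0 = \L z,\sigma\R \geq \sigma_k - \sigma_j \geq 0$ with strictness forced by $|z|\geq 3$. The only blemish is a notational transposition in (2): the potential $-1$ comes from the term at index $i$, where $v_{ii}=-1$ meets $v_{ji}\in\{0,1\}$, not from $v_{jj}\cdot v_{ij}$ (which vanishes since $v_{ij}=0$ for $i<j$); the bound and the rest of the argument are unaffected.
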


\begin{proof}
(1) is clear, using the maximality of $A$ for the second part. (2) is also clear.  (3) follows from maximality, as otherwise $v_j \ll v_j - v_{k+1}$. For (4), suppose not, and select $k > j$ for which $z_k > 0$.  We obtain the contradiction $0 = \L z, \sigma \R > \sigma_k - \sigma_j \geq 0$, where the inequality is strict because $|z| \geq 3$.

\end{proof}

\begin{lem}\label{l: change irred}

The standard basis elements of a changemaker lattice are irreducible.

\end{lem}

\begin{proof}

Choose a standard basis element $v_j \in S$ and suppose that $v_j = x +y$ for $x,y \in L$ with $\langle x,y \rangle \geq 0$.  In order to prove that $v_j$ is irreducible, it stands to show that one of $x$ and $y$ equals $0$.  Write $x = \sum_{i=0}^n x_i e_i$ and $y = \sum_{i=0}^n y_i e_i$.

{\em Case 1.} $v_j$ is not tight.  In this case, $|v_{ji}| \leq 1$ for all $i$.  We claim that $x_i y_i = 0$ for all $i$.  For suppose not. Since $\langle x, y \rangle \geq 0$ there exists an index $i$ so that $x_i y_i > 0$.  Then $|v_{ji}| = |x_i + y_i| \geq 2$, a contradiction.  Since all but one coordinate of $v_j$ is non-negative, it follows that one of $x$ and $y$ has all its coordinates non-negative.  But the only such element in $L$ is $0$.  It follows that $v_j$ is irreducible.  Notice that this same argument applies to any vector of the form $-e_j + \sum_{i \in A} e_i$.

{\em Case 2.} $v_j$ is tight.  We repeat the previous argument up to the point of locating an index $i$ such that $x_i y_i > 0$.  Now, however, we conclude that $x_0 = y_0 =1$, and $x_i y_i \leq 0$ for all other indices $i$.  In particular, there is at most one index $k$ for which $x_k y_k = -1$, and $x_i y_i =0$ for $i \ne 0,k$.  If there were no such $k$, then we conclude as above that one of $x$ and $y$ has all its coordinates non-negative, and $v_j$ is irreducible as before.  Otherwise, we may assume that $x_k = 1$.  Then $\supp^-(y) = \{k\}$ and $\supp^-(x) = \{j\}$. Since $x_i + y_i = v_{ji} \ne 0$ for all $i \leq j$, it follows that $k > j$.  But then $|x| \geq 3$ and $k = \max(\supp(x)) > j$, in contradiction to Lemma \ref{l: basic}(4).  Again it follows that $v_j$ is irreducible.

\end{proof}

We collect a few more useful cases of irreducibility (cf. Lemma \ref{l: tight}).

\begin{lem}\label{l: more irred}

Suppose that $v_t \in L$ is tight.

\begin{enumerate}

\item If $v_j$ is tight, $j \ne t$, then $v_j - v_t$ is irreducible.

\item If $v_j = -e_j+e_{j-1}+e_t$, $j > t$, then $v_j + v_t$ is irreducible.

\item If $v_{t+1} = -e_{t+1}+e_t+\cdots+e_0$, then $v_{t+1} - v_t$ is irreducible. 

\end{enumerate}

\end{lem}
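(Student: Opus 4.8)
The plan is to handle all three parts by a single argument patterned on the proof of Lemma~\ref{l: change irred}. First I would record the three vectors explicitly. With $v_t = -e_t + 2e_0 + \sum_{i=1}^{t-1} e_i$, a direct computation gives
\[ v_j - v_t = -e_j + 2e_t + \sum_{i=t+1}^{j-1} e_i, \qquad v_j + v_t = -e_j + e_{j-1} + 2e_0 + \sum_{i=1}^{t-1} e_i, \qquad v_{t+1} - v_t = 2e_t - e_0 - e_{t+1}, \]
for cases (1) (where I take $j > t$, harmless since irreducibility is preserved under negation), (2), and (3) respectively. The uniform feature is that each resulting vector $w$ lies in $L$, has exactly one coordinate equal to $2$, at an index I call $m$ (so $m = t,\,0,\,t$ respectively), and has every other coordinate in $\{-1,0,1\}$.

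Next, suppose $w = x+y$ with $x,y \in L$ nonzero and $\L x, y \R \geq 0$, aiming for a contradiction. The first step is the coordinatewise sign analysis of Lemma~\ref{l: change irred}: for each $i \neq m$ we have $x_i y_i \leq 0$, since $x_i y_i > 0$ would force $|w_i| = |x_i + y_i| \geq 2$. Hence $\L x, y \R \geq 0$ forces $x_m y_m \geq 0$, and as $x_m + y_m = 2$ this leaves two possibilities: either $\{x_m, y_m\} = \{2,0\}$ (the partition case), or $x_m = y_m = 1$. In the latter case $\sum_{i \neq m} x_i y_i \geq -1$ is a sum of nonpositive integers, so at most one index $k \neq m$ has $x_k y_k = -1$ and all other off-$m$ products vanish; since $x_k y_k = -1$ gives $\{x_k, y_k\} = \{1,-1\}$, such a $k$ has $w_k = 0$. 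Either way $x$ and $y$ are supported on $\supp(w) \cup \{k\}$.

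The partition case is immediate: every $x_i y_i$ vanishes, so $x$ and $y$ split the coordinates of $w$. In cases (1) and (2) the only negative coordinate of $w$ is at index $j$, so the summand avoiding $j$ has all coordinates $\geq 0$; in case (3) the only positive coordinate is at $m$, so the summand avoiding $m$ has all coordinates $\leq 0$. Since every $\sigma_i > 0$, a nonzero vector of $L$ can have neither all coordinates $\geq 0$ nor all $\leq 0$; thus that summand vanishes, forcing the other to equal $w$, against nonzeroness. For the main case I would pit Lemma~\ref{l: basic}(4) against the tightness of $v_t$: the relation $\sigma_t = 1 + \sum_{i<t}\sigma_i$ yields $\sigma_t \geq 2$ and $\sigma_t > \sigma_k$ for all $k < t$. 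Using the $x \leftrightarrow y$ symmetry to fix which summand receives the $-1$ at each negative coordinate of $w$, one summand acquires a single negative coordinate, equal to $-1$; when its norm is $\geq 3$, Lemma~\ref{l: basic}(4) forces that coordinate to be the maximal element of its support, which locates $k$, and the surviving placements are killed by evaluating $\L x, \sigma \R = 0$ and invoking $\sigma_t > \sigma_k$ or the monotonicity of $\sigma$. With a single negative coordinate of $w$, cases (1) and (2) go through cleanly.

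The main obstacle is case (3), where $w = 2e_t - e_0 - e_{t+1}$ has \emph{two} negative coordinates, so Lemma~\ref{l: basic}(4) does not apply to $w$ directly and there are more configurations: three $-1$'s to distribute, so up to symmetry $x$ carries two or three of them. To finish I would use the extra relation forced by $w \in L$, namely $\sigma_{t+1} = 2\sigma_t - 1$ (equivalently the ``just right'' identity $\sigma_{t+1} = \sum_{i=0}^{t}\sigma_i$ for $v_{t+1}$); with $\sigma_t \geq 2$ this gives $\sigma_{t+1} > \sigma_t$. Any configuration leaving one summand with only nonnegative coordinates is excluded by the positivity argument above; those with $k < t$ fall to tightness (which forces $\sigma_k \geq \sigma_t$, contradicting $\sigma_t > \sigma_k$); and the low-norm escapes such as $e_t$ or $e_t - e_k$, to which Lemma~\ref{l: basic}(4) is unavailable, are excluded directly from $\L \cdot, \sigma \R = 0$ using $\sigma_t \geq 2$ and, for $k > t+1$, the inequality $\sigma_k \geq \sigma_{t+1} > \sigma_t$. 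Collecting these contradictions establishes that $w$ is irreducible in all three cases.
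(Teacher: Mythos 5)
Your proposal is correct and follows essentially the same route as the paper: the coordinatewise sign analysis from Lemma \ref{l: change irred} to isolate at most one index $k$ with $x_ky_k=-1$, then Lemma \ref{l: basic}(4) together with $\L \cdot,\sigma\R=0$ and the tightness relation $\sigma_t = 1+\sum_{i<t}\sigma_i$ to kill each placement of $k$. The only difference is organizational (you unify the three cases up front and enumerate the distribution of the $-1$'s in case (3) slightly differently), not substantive.
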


\begin{proof}

In each case, we assume that the vector in question is expressed as a sum of non-zero vectors $x$ and $y$ with $\L x,y \R \geq 0$.  Recall that both $x$ and $y$ have entries of both signs.

(1) Assume without loss of generality that $j > t$.  Thus, $v_j - v_t = -e_j + 2e_t + \sum_{i=t+1}^{j-1} e_i$, where the summation could be empty.  As in the proof of Lemma \ref{l: change irred},  it quickly follows that $x_t = y_t = 1$, $x_k y_k = -1$ for some $k$, and otherwise $x_i y_i = 0$.  Without loss of generality, $x_k = 1$.  Thus, $\supp^-(x) = \{j\}$ and $\supp^-(y) = \{ k \}$.  By Lemma \ref{l: basic}(4), it follows that $j = \max(\supp(x)) > k$.  As $x_k+y_k = 0$, it follows that $k < t$.  Now $0 = \L y, \sigma \R \geq \sigma_t - \sigma_k > 0$, a contradiction. Therefore, $v_j - v_t$ is irreducible.

(2) It follows that $x_0 = y_0 = 1$, $x_k = -y_k = \pm 1$ for some value $k \geq t$, and otherwise $x_i y_i = 0$.  Without loss of generality, say $x_k = 1$.  Thus, $\supp^-(x) = \{j\}$ and $\supp^-(y) = \{ k \}$.  By Lemma \ref{l: basic}(4), $j = \max (\supp(x))$.  In particular, it follows that $k < j$.  Another application of Lemma \ref{l: basic}(4) implies that $k = \max(\supp(y))$.  It follows that $y = -e_k + \sum_{i \in A} e_i$ for some $A \subset \{0,\dots,t-1\}$. But then $0 = \L y, \sigma \R = -\sigma_k + \sum_{i \in A} \sigma_i < -\sigma_k + 1 + \sum_{i=0}^{t-1} \sigma_i = -\sigma_k + \sigma_t \leq 0$, a contradiction.  Therefore, $v_j + v_t$ is irreducible.

(3) We have $v_{t+1} - v_t = -e_{t+1} + 2e_t - e_0$.    It follows that $x_t = y_t = 1$.  If $x_i y_i = 0$ for every other index $i$, then $\{ x,y \} = \{ -e_{t+1} + e_t, e_t - e_0 \}$, but $\L e_t - e_0, \sigma \R = \sigma_t - \sigma_0 > 0$, a contradiction.  It follows that there is a unique index $k$ for which $x_k y_k = -1$, and otherwise $x_i y_i = 0$.  Without loss of generality, say $x_k = 1$.  Hence $y = e_t - \sum_{i \in A} e_i$ for some $A \subset \{0,k,t+1\}$. But $A$ cannot contain an index $> t$, for then $0 = \L y, \sigma \R \leq \sigma_t - \sigma_{t+1} < 0$, nor can it just contain indices $< t$, for then $0 = \L y, \sigma \R \geq \sigma_t - \sum_{i=0}^{t-1} \sigma_i = 1$.  Therefore, $v_{t+1} - v_t$ is irreducible.

\end{proof}

\begin{lem}\label{l: change unbreakable}

If $v_j \in S$ is not tight, then it is unbreakable.

\end{lem}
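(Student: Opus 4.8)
The plan is to argue by contradiction: suppose a non-tight $v_j$ is breakable, say $v_j = x+y$ with $|x|,|y|\ge 3$ and $\L x,y\R = -1$, and extract enough structure on $x$ and $y$ to violate the maximality built into the standard basis. Write $v_j = -e_j + \sum_{i\in A} e_i$ with $A\subseteq\{0,\dots,j-1\}$ and $\sigma_j = \sum_{i\in A}\sigma_i$, this $A$ being the maximal such subset in the order of Subsection \ref{ss: changemaker}. The key feature of the non-tight case, which I will exploit throughout, is that every coordinate of $v_j$ lies in $\{0,\pm 1\}$.

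First I would analyze $\L x,y\R = \sum_i x_i y_i$ coordinate by coordinate. At an index $i\notin\supp(v_j)$ we have $y_i = -x_i$, so the term is $-x_i^2 \le 0$; at an index $i\in\supp(v_j)$ we have $x_i+y_i = v_{ji} = \pm 1$, so the term is $x_i(v_{ji}-x_i)\le 0$, with equality exactly when $\{x_i,y_i\} = \{0,v_{ji}\}$. Hence every term is non-positive, and since the total is $-1$, exactly one term equals $-1$ and the rest vanish. A one-line check shows $-1$ cannot occur at an on-support index, since $x_i(v_{ji}-x_i) = -1$ has no integer solution; so the $-1$ occurs at a single off-support index $i^*$ with $\{x_{i^*},y_{i^*}\} = \{1,-1\}$. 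Consequently both $x$ and $y$ have all entries in $\{0,\pm 1\}$, they share only the index $i^*$ (with opposite signs), and they partition $\supp(v_j)$ between them.

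Next I would pin down signs and supports using $x,y\in(\sigma)^\perp$. Renaming if necessary, assume $j\in\supp(x)$ and set $P = \supp(x)\cap A$, so that $x = \delta e_{i^*} + \sum_{i\in P} e_i - e_j$ with $\delta = x_{i^*}\in\{\pm 1\}$ and $y = -\delta e_{i^*} + \sum_{i\in A\setminus P} e_i$. From $\L x,\sigma\R = 0$ I obtain $\delta\sigma_{i^*} = \sum_{i\in A\setminus P}\sigma_i \ge 0$, which forces $\delta = 1$ and $\sigma_{i^*} = \sum_{i\in A\setminus P}\sigma_i$. The norm bounds now do the real work: since $x$ and $y$ have $\pm 1$ entries, $|x| = |P|+2\ge 3$ gives $P\ne\varnothing$, whence $\sigma_{i^*} < \sigma_j$ and so $i^* < j$ by monotonicity of $\sigma$; and $|y| = |A\setminus P|+1\ge 3$ gives $|A\setminus P|\ge 2$. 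Moreover $\supp^-(y) = \{i^*\}$ with $y_{i^*} = -1$ and $|y|\ge 3$, so Lemma \ref{l: basic}(4) yields $i^* = \max\supp(y)$, that is, $i^* > \max(A\setminus P)$.

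Finally I would spring the maximality trap. Set $A' = P\cup\{i^*\}$. Then $A'\subseteq\{0,\dots,j-1\}$ and $\sum_{i\in A'}\sigma_i = \sum_{i\in P}\sigma_i + \sigma_{i^*} = \sum_{i\in A}\sigma_i = \sigma_j$, so $A'$ is another admissible set, with $-e_j + \sum_{i\in A'} e_i\in L$. Since $A\triangle A' = (A\setminus P)\cup\{i^*\}$ has largest element $i^*\in A'$, we have $A < A'$, contradicting the maximality of $A$ in the construction of $v_j$; hence $v_j$ is unbreakable. I expect the main obstacle to be the bookkeeping in the third step: one must establish both that $i^*$ exceeds every index of $A\setminus P$ and that $i^* < j$, since only then does replacing $A\setminus P$ by the single index $i^*$ yield a strictly larger admissible set. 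Lemma \ref{l: basic}(4) is precisely the tool that supplies the first inequality, and the strict inequality $\sigma_{i^*} < \sigma_j$ (coming from $P\ne\varnothing$) supplies the second.
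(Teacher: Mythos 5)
Your proof is correct and follows essentially the same route as the paper's: the coordinate-wise analysis of $\langle x,y\rangle=-1$ isolating a single off-support index $i^*$, an application of Lemma \ref{l: basic}(4) to the summand with $\supp^-=\{i^*\}$, and the contradiction $v_j \ll -e_j+\sum_{i\in A'}e_i$ with the maximality of $A$. The only cosmetic difference is that you deduce $i^*<j$ from $\langle x,\sigma\rangle=0$ and the monotonicity of $\sigma$, where the paper instead applies Lemma \ref{l: basic}(4) a second time to the other summand.
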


\begin{proof}

Suppose that $v_j$ were breakable and choose $x$ and $y$ accordingly.  From the conditions that  $\langle x, y \rangle = -1$ and $v_{j0 } \ne 2$ it follows that $x_k y_k = -1$ for a single index $k$, and otherwise $x_i y_i = 0$.  Without loss of generality, say $x_k = -1$.   Then $\supp^-(x) = \{ k \}$ and $\supp^-(y) = \{ j \}$.  By Lemma \ref{l: basic}(4), it follows that $k = \max (\supp(x))$ and $j = \max( \supp(y))$.  In particular, it follows that $k < j$, and that $y_i = v_{ji}$ for all $i > j$.  On the other hand, $j \in \supp^+(y) - \supp^+(v_i)$.  It follows  that $v_j \ll y$, a contradiction.  Hence $v_i$ is unbreakable, as claimed.

\end{proof}

%%%%%%%
%%%%%%%
%%%%%%%
%%%%%%%
%%%%%%%

\section{Comparing linear lattices and changemaker lattices}\label{s: prep work}

In this section we collect some preparatory results concerning when a changemaker lattice is isomorphic to a sum of one or more linear lattices.  Thus, for the entirety of this section, let $L$ denote a changemaker lattice with standard basis $S = \{ v_1, \dots, v_n \}$, and suppose that $L$ is isomorphic to a linear lattice or a sum thereof.  By Corollary \ref{c: linear irred} and Lemma \ref{l: change irred}, it follows that $v_i = \epsilon_i [T_i]$ for some sign $\epsilon_i = \pm 1$ and interval $T_i$. Let $\T = \{T_1,\dots,T_n\}$. If $v_i$ is not tight, then Corollary \ref{c: linear irred} and Lemma \ref{l: change unbreakable} imply that $T_i$ contains at most one vertex of degree $\geq 3$.  If $[T_i]$ is unbreakable and $d(T_i) \geq 3$, then let $z_i$ denote its unique vertex of degree $\geq 3$.

%%%
%%%
%%%

\subsection{Standard basis elements and intervals.}  

Tight vectors, especially breakable ones, play an involved role in the analysis (Section \ref{s: tight}).  We begin with some basic observations about them.

\begin{lem}\label{l: tight 2}

Suppose that $v_t$ is tight, $j \ne t$, and $|v_j| \geq 3$.  Then $\L v_t, v_j \R$ equals

\begin{enumerate}

\item $|v_j| - 1$, iff $T_j \prec T_t$;

\item $|v_j| -2$, iff $z_j \in T_t$ and $T_j \pitchfork T_t$, or $|v_j| = 3, T_j \dagger T_t$, and $\epsilon_j \ne \epsilon_t$;

\item $\epsilon \in \{\pm 1\}$, iff $T_j \dagger T_t$ and $\epsilon_j \epsilon_t \ne\epsilon$, or $|v_j| = 3$, $z_j \in T_t$, $T_j \pitchfork T_t$, and $\epsilon_j \epsilon_t = \epsilon$; or

\item $0$, iff $z_j \notin T_t$ and either $T_j$ and $T_t$ are distant or $T_j \pitchfork T_t$.

\end{enumerate}
If $|v_j| = 2$, then $|\L v_t, v_j \R| \leq 1$, with equality iff $T_t$ and $T_j$ abut.

\end{lem}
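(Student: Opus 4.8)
The plan is to reduce the whole statement to the combinatorics of intervals and then pin down signs using the changemaker structure. Since $L$ is isomorphic to a sum of linear lattices, by Corollary \ref{c: linear irred} and Lemma \ref{l: change irred} I would write $v_t = \epsilon_t[T_t]$ and $v_j = \epsilon_j[T_j]$ with $T_t, T_j$ intervals, so that $\L v_t, v_j\R = \epsilon_t\epsilon_j\L[T_t],[T_j]\R$. Whenever the vertex $z_j$ is invoked, $[T_j]$ is unbreakable, hence $v_j$ is not tight (Lemma \ref{l: change unbreakable}) and $T_j$ has a single vertex $z_j$ of degree $\geq 3$; a one-line computation then gives $|[T_j]| = |z_j| = |v_j|$, with every other vertex of $T_j$ of degree $2$. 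The magnitude of $\L[T_t],[T_j]\R$ will depend only on the combinatorial relation between the two intervals, while the sign $\epsilon_t\epsilon_j$ will be forced by the inequality $\L v_t, v_j\R \geq -1$ (Lemma \ref{l: basic}(2)) precisely when that magnitude is at least $2$.

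First I would record the interval pairing formula. For an interval $T$ and a vertex $x_a$, one computes $\L[T], x_a\R = |x_a|-2$ if $x_a$ is interior to $T$, $|x_a|-1$ if $x_a$ is an endpoint of $T$, $-1$ if $x_a$ lies just outside and adjacent to $T$, and $0$ otherwise. Summing $\L[T_t], x_a\R$ over $a \in T_j$ then yields four outcomes: $\L[T_t],[T_j]\R = 0$ when $T_t, T_j$ are distant; $-1$ when $T_t \dagger T_j$; $|[T_j]|-1 = |v_j|-1$ when $T_j \prec T_t$; and $|[T_t \cap T_j]|-2$ when $T_j \pitchfork T_t$ (the staircase and strictly-nested overlaps both reduce to this). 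In the last case I would observe that $T_t \cap T_j$ contains $z_j$ exactly when $z_j \in T_t$, whence $|[T_t\cap T_j]| = |v_j|$; otherwise $T_t\cap T_j$ consists of degree-$2$ vertices and $|[T_t\cap T_j]| = 2$. Thus $\L[T_t],[T_j]\R$ equals $|v_j|-2$ or $0$ in these two sub-cases.

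Next I would run the sign analysis and assemble the classification. When the magnitude is $\geq 2$ — that is $T_j \prec T_t$ (value $|v_j|-1$), or $T_j \pitchfork T_t$ with $z_j \in T_t$ and $|v_j|\geq 4$ (value $|v_j|-2$) — Lemma \ref{l: basic}(2) forces $\epsilon_t\epsilon_j = +1$, giving cases (1) and (2). When the magnitude is exactly $1$ — $T_j \dagger T_t$, or $T_j \pitchfork T_t$ with $z_j \in T_t$ and $|v_j|=3$ — both signs are admissible, and reading off $\L v_t, v_j\R = \pm 1$ from $\epsilon_t\epsilon_j$ produces the remaining clauses of cases (2) and (3). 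When the magnitude is $0$ — $T_t,T_j$ distant, or $T_j \pitchfork T_t$ with $z_j \notin T_t$ — we land in case (4). To certify that this is a genuine biconditional I must rule out the one remaining configuration $T_t \prec T_j$, which would contribute the unlisted value $|v_t|-1$. Here the explicit coordinates are decisive: since $v_j$ is not tight, $v_j = -e_j + \sum_{i\in A}e_i$ with $A \subseteq \{0,\dots,j-1\}$ and entries in $\{-1,0,1\}$, so pairing against $v_t = -e_t + 2e_0 + \sum_{i=1}^{t-1}e_i$ gives $\L v_t, v_j\R \leq 2 + (t-1) = t+1 < t+3 = |v_t|-1$, so $T_t \prec T_j$ cannot occur.

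Finally, the $|v_j|=2$ statement follows from the same interval formula. Now $T_j$ carries no vertex of degree $\geq 3$, so all its vertices have degree $2$; the configuration $T_t \prec T_j$ is impossible since it would force $|v_t| = |[T_t]| = 2$, against $|v_t| = t+4 \geq 5$. The surviving relations give $\L[T_t],[T_j]\R$ equal to $\pm 1$ when $T_t$ and $T_j$ abut ($T_j \prec T_t$ or $T_t \dagger T_j$) and $0$ when they are distant or satisfy $T_j \pitchfork T_t$; hence $|\L v_t, v_j\R| \leq 1$ with equality exactly when the intervals abut. I expect the main obstacle to be the bookkeeping in the $\pitchfork$ case — distinguishing the two sub-cases by the location of $z_j$ and tracking $\epsilon_t\epsilon_j$, including the genuine value-overlap at $|v_j|=3$ where cases (2) and (3) meet — together with the verification that $T_t \prec T_j$ never arises, which is precisely where the explicit changemaker coordinates, rather than pure lattice combinatorics, are needed.
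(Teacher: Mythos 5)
Your proposal is correct and takes essentially the same route as the paper, whose proof is only a sketch: condition on how $T_j$ meets $T_t$, use unbreakability of $T_j$ (valid since $j \ne t$ forces $v_j$ non-tight by Lemma \ref{l: tight}(1), hence unbreakable by Lemma \ref{l: change unbreakable} --- note your parenthetical states this implication backwards, though harmlessly), and combine the interval pairing values with the bound $\L v_t, v_j \R \geq -1$ of Lemma \ref{l: basic}(2) to force the signs $\epsilon_t\epsilon_j$ whenever the magnitude is at least $2$. Your explicit coordinate estimate $\L v_t, v_j \R \leq t+1 < t+3 = |v_t|-1$ excluding the configuration $T_t \prec T_j$, and the degree argument excluding it when $|v_j|=2$, supply details the paper's sketch leaves implicit.
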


\begin{proof}[Proof sketch.]

Observe that $-1 \leq \L v_i, v_j \R \leq |v_j|-1$ for any pair of distinct $i,j$.  Assuming that $|v_j| \geq 3$, the result follows by using the fact that $T_j$ is unbreakable, and conditioning on how $T_j$ meets $T_t$ and whether or not $d(T_j) > 3$.

\end{proof}

\begin{lem}\label{l: tight}

Suppose that $v_t \in S$ is tight.

\begin{enumerate}

\item No other standard basis vector is tight.

\item If $v_j = -e_j+e_{j-1}+e_t$, $j > t+1$, then $T_t \dagger T_j$.

\item If $v_{t+1} = -e_{t+1}+e_t+\cdots+e_0$, then $t=1$ and $T_1 \dagger T_2$. 

\end{enumerate}

\end{lem}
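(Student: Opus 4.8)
The plan is to handle all three parts by the same two-step mechanism: first compute the pairing $\L v_t, v_j \R$ of the tight vector against the relevant basis vector $v_j$ directly from the coordinate expressions, match the resulting number against the classification in Lemma \ref{l: tight 2}, and then use the targeted irreducibility statements of Lemma \ref{l: more irred} together with the reducibility criterion Corollary \ref{c: linear irred}(3) to discard the overlapping ($\pitchfork$) configurations, leaving only the consecutive ($\dagger$) one asserted.

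For (1), suppose toward a contradiction that $v_s$ and $v_l$ are both tight with $s < l$. From $v_s = -e_s + 2e_0 + \sum_{i=1}^{s-1} e_i$ and $v_l = -e_l + 2e_0 + \sum_{i=1}^{l-1} e_i$ one reads off $\L v_s, v_l \R = s+2$ and $|v_l| = l+4$. Since $|v_l| \geq 3$, Lemma \ref{l: tight 2} (applied with $v_s$ as the tight vector) forces $\L v_s, v_l \R \in \{\, |v_l|-1,\ |v_l|-2,\ 0,\ \pm 1 \,\} = \{ l+3,\ l+2,\ 0,\ 1,\ -1 \}$. But $3 \leq s+2 \leq l+1$, so $s+2$ avoids every admissible value, a contradiction; hence no second tight vector exists.

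For (2), the hypotheses $v_j = -e_j + e_{j-1} + e_t$ and $j > t+1$ give $|v_j| = 3$, and since $j-1 > t$ lies outside $\supp(v_t)$ one computes $\L v_t, v_j \R = -1$. By Lemma \ref{l: tight 2} this puts us in case (3) with $\epsilon = -1$, so either $T_j \dagger T_t$, or else $z_j \in T_t$, $T_j \pitchfork T_t$ and $\epsilon_j \epsilon_t = -1$. In the latter case $\epsilon_j = -\epsilon_t$, whence $v_j + v_t = \epsilon_t([T_t] - [T_j]) = \epsilon_t([T_t - T_j] - [T_j - T_t])$; as $T_j \pitchfork T_t$, Corollary \ref{c: linear irred}(3) shows this is reducible, contradicting the irreducibility of $v_j + v_t$ from Lemma \ref{l: more irred}(2). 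Therefore $T_t \dagger T_j$.

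For (3), with $v_{t+1} = -e_{t+1} + e_t + \cdots + e_0$ one finds $|v_{t+1}| = t+2$ and $\L v_t, v_{t+1} \R = t = |v_{t+1}| - 2$, landing in case (2) of Lemma \ref{l: tight 2}: either (a) $z_{t+1} \in T_t$ with $T_{t+1} \pitchfork T_t$, or (b) $|v_{t+1}| = 3$ (i.e.\ $t=1$), $T_{t+1} \dagger T_t$ and $\epsilon_{t+1} \neq \epsilon_t$. Configuration (b) yields exactly $t = 1$ and $T_1 \dagger T_2$, so it suffices to rule out (a). Here Lemma \ref{l: tight 2} supplies no sign data, so I must pin down $\epsilon_{t+1}\epsilon_t$ myself: writing $M = T_t \cap T_{t+1}$, $L = T_t - T_{t+1}$, $R = T_{t+1} - T_t$ (so $L, M, R$ are consecutive with $L, R$ distant), one gets $\L [T_t], [T_{t+1}] \R = \L [L]+[M],\ [M]+[R] \R = d(M) - 2$; because $z_{t+1} \in M$ has degree $\geq 3$ while every vertex has degree $\geq 2$, one has $d(M) \geq 3$, so this geometric pairing is strictly positive. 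Combined with $\L v_t, v_{t+1} \R = t > 0$ this forces $\epsilon_{t+1} = \epsilon_t$, and then $v_{t+1} - v_t = \epsilon_t([T_{t+1} - T_t] - [T_t - T_{t+1}])$ is reducible by Corollary \ref{c: linear irred}(3), contradicting Lemma \ref{l: more irred}(3). Thus (a) cannot occur and only (b) remains. I expect this last sign determination to be the main obstacle: unlike the $\pitchfork$ subcase of (2), where Lemma \ref{l: tight 2} hands us $\epsilon_j\epsilon_t = -1$ outright, case (2) leaves $\epsilon_{t+1}\epsilon_t$ open, so establishing positivity of $\L [T_t],[T_{t+1}] \R$ via the high-degree vertex $z_{t+1}$ in the overlap is precisely what unlocks the Corollary \ref{c: linear irred}(3) reduction; everything else is routine coordinate bookkeeping.
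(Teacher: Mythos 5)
Parts (2) and (3) of your argument are correct and essentially the paper's own: the same pairing computations, the same reading of Lemma \ref{l: tight 2}, and the same use of Corollary \ref{c: linear irred}(3) against Lemma \ref{l: more irred}(2),(3). In (3) you even supply a detail the paper leaves implicit, namely that $z_{t+1} \in T_t \cap T_{t+1}$ forces $\L [T_t],[T_{t+1}] \R = d(T_t \cap T_{t+1}) - 2 > 0$ and hence $\epsilon_{t+1} = \epsilon_t$; that is the right way to see it.

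Part (1), however, has a genuine gap. You apply Lemma \ref{l: tight 2} with $v_s$ tight and $v_l$ in the role of $v_j$, and conclude that $\L v_s, v_l \R$ must lie in $\{|v_l|-1,\ |v_l|-2,\ 0,\ \pm 1\}$. But that classification is established under the assumption that $T_j$ is unbreakable: its proof sketch explicitly conditions on how the \emph{unbreakable} interval $T_j$ meets $T_t$, and case (2) refers to $z_j$, which is only defined for unbreakable intervals. By Lemma \ref{l: change unbreakable} unbreakability is automatic when $v_j$ is not tight, but here $v_l$ is tight, so $T_l$ may be breakable and the list does not apply. Indeed the value you computed, $\L v_s, v_l \R = s+2 = |v_s|-2$, is realizable by an admissible configuration: if $T_s \pitchfork T_l$ with $\epsilon_s = \epsilon_l$ and $d(T_s \cap T_l) = d(T_s)$ (for instance $T_s$ strictly contained in $T_l$, or every vertex of $T_s - T_l$ of degree $2$), then $\L v_s, v_l \R = d(T_s \cap T_l) - 2 = s+2$. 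So the claim that ``$s+2$ avoids every admissible value'' is false, and your contradiction evaporates. This overlapping configuration is exactly what the paper must rule out by a further argument: from positivity of the pairing it deduces $\epsilon_l = \epsilon_s$ and $T_l \pitchfork T_s$, so that $v_l - v_s = \epsilon_l([T_l - T_s]-[T_s - T_l])$ is reducible by Corollary \ref{c: linear irred}(3), contradicting the irreducibility of $v_l - v_s$ supplied by Lemma \ref{l: more irred}(1). Your proof of (1) never invokes Lemma \ref{l: more irred}(1), and without that input the case does not close.
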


\begin{proof}

We apply each case of Lemma \ref{l: more irred} in turn.

(1) Suppose that there were another index $j$ for which $v_j$ is tight.  Without loss of generality, we may assume that $j> t$.  Then $\langle v_t, v_j \rangle = | v_t | - 2 \geq 3.$  It follows that $\epsilon_j = \epsilon_t$ and $T_j \pitchfork T_t$.  Thus, $[T_j - T_t] - [T_t - T_j]$ is reducible, but it also equals $\epsilon_j(v_j - v_t)$, which is irreducible according to Lemma \ref{l: more irred}(1).  This yields the desired contradiction.

(2) We have $\L v_t, v_j \R = -1$ and $|v_j | = 3$, so either the desired conclusion holds, or else $z_j \in T_t$, $T_t \pitchfork T_j$, and $\epsilon_t \epsilon_j = -1$.  If the latter possibility held, then $[T_j - T_t] - [T_t - T_j]$ is reducible, but it also equals $\epsilon_j v_j - \epsilon_t v_t = \epsilon_j (v_j + v_t)$, which is irreducible according to Lemma \ref{l: more irred}(2).  It follows that $T_t \dagger T_j$.

(3) We have $\L v_t, v_{t+1} \R = |v_{t+1}|-2$, so either the desired conclusion holds, or else $z_{t+1} \in T_t$, $T_t \pitchfork T_{t+1}$, and $\epsilon_t = \epsilon_{t+1}$. If the latter possibility held, then again $[T_{t+1} - T_t]-[T_t-T_{t+1}]$ is reducible, but it also equals $\epsilon_t (v_{t+1}-v_t)$, which is irreducible according to Lemma \ref{l: more irred}(3).  It follows that $t = 1$ and $T_1 \dagger T_2$.

\end{proof}

Lemmas \ref{l: change unbreakable} and \ref{l: tight}(1) immediately imply the following result.

\begin{cor}\label{c: one breakable}

A standard basis $S$ contains at most one unbreakable vector, and it is tight. \qed

\end{cor}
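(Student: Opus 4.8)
Taking the Corollary at face value, the first thing I would do is test the literal claim that $S$ has at most one \emph{unbreakable} vector. This cannot hold: Lemma~\ref{l: change unbreakable} asserts that \emph{every} non-tight standard basis vector is unbreakable, so unbreakable vectors are the rule rather than the exception. Concretely, the changemaker $\sigma = (1,1,1)$ yields $L = (\sigma)^\perp \cong \Lambda(3,2)$ (the $A_2$ lattice), with standard basis $\{-e_1+e_0,\,-e_2+e_1\}$; both vectors have norm $2$, hence are unbreakable, since a breakable $v = x+y$ satisfies $|v| = |x|+|y|+2\langle x,y\rangle = |x|+|y|-2 \geq 4$, and neither vector is tight. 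Thus the quantity the two cited Lemmas actually bound is the number of \emph{breakable} vectors, and I would read the occurrence of ``unbreakable'' in the statement as a slip for ``breakable.''

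With that reading the proof is the one-line deduction signalled by the phrase ``immediately imply.'' First I would take the contrapositive of Lemma~\ref{l: change unbreakable}: if $v_j \in S$ is breakable, then $v_j$ is tight. Next I would invoke Lemma~\ref{l: tight}(1), which says $S$ contains at most one tight vector. Combining the two, at most one element of $S$ can be breakable, and any such element is necessarily tight, which is the assertion.

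There is essentially no obstacle here beyond correctly matching the definitions. The only care needed is to run Lemma~\ref{l: change unbreakable} in the contrapositive direction---breakable forces tight---and to keep straight that ``breakable'' refers to a splitting $v = x+y$ with $|x|,|y|\geq 3$ and $\langle x,y\rangle = -1$. No combinatorial input is required; the content lives entirely in the two prior Lemmas, whose proofs (via Lemmas~\ref{l: more irred} and~\ref{l: basic}) carry the genuine work.
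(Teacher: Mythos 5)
Your proposal is correct and matches the paper's intended argument exactly: the paper offers no proof beyond stating that Lemmas \ref{l: change unbreakable} and \ref{l: tight}(1) ``immediately imply'' the result, which is precisely your deduction (contrapositive of Lemma \ref{l: change unbreakable}: breakable forces tight; Lemma \ref{l: tight}(1): at most one tight vector). You are also right that ``unbreakable'' in the statement is a misprint for ``breakable''---as your $\sigma=(1,1,1)$ example shows, and as the paper's own label and its subsequent use (e.g.\ the definition of $\overline{S}$ as the unbreakable elements, with $\overline{S}=S-v_t$ when a breakable $v_t$ exists) confirm.
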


The following important Lemma provides essential information about when two standard basis elements can pair non-trivially together: unless one is breakable or has norm 2, then they correspond to consecutive intervals.

\begin{lem}\label{l: cool}

Given a pair of unbreakable vectors $v_i, v_j \in S$ with $| v_i |, | v_j | \geq 3$, we have $| \L v_i, v_j \R | \leq 1$, with equality if and only if $T_i \dagger T_j$ and $\epsilon_i \epsilon_j = - \langle v_i, v_j \rangle$.

\end{lem}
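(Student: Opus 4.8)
The plan is to push everything into the geometry of the two intervals and to invoke the changemaker structure only at the very end. Since $v_i = \epsilon_i[T_i]$ and $v_j = \epsilon_j[T_j]$ are unbreakable with $|v_i|,|v_j| \ge 3$, Corollary \ref{c: linear irred}(4) tells me each interval carries a unique vertex $z_i$ (resp. $z_j$) of degree $\ge 3$, all its other vertices having degree $2$, so that $|v_i| = \deg(z_i)$. I would then evaluate $\L [T_i],[T_j] \R$ by conditioning on the relative position of the two intervals: if they lie in different summands or are distant the pairing is $0$; if $T_i \dagger T_j$ it is $-1$; if they share a common endpoint (so $T_i \prec T_j$ after relabeling) it is $|v_i| - 1 \ge 2$; and if $T_i \pitchfork T_j$ it is $0$ unless the overlap contains a degree-$\ge 3$ vertex, in which case that vertex must be simultaneously $z_i$ and $z_j$ and the pairing is $\deg(z_i) - 2 = |v_i|-2 \ge 1$. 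The consecutive case delivers exactly the asserted equality $\L v_i,v_j \R = -\epsilon_i\epsilon_j$, and in the remaining good cases $|\L v_i,v_j\R| \le 1$ with strict inequality. Thus the lemma reduces to \emph{excluding} the two offending configurations $T_i \prec T_j$ and the ``bad'' transverse case with $z_i = z_j$ in the overlap.

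Both offending configurations force $z_i = z_j$, hence $|v_i| = |v_j|$, and both force $\epsilon_i = \epsilon_j$ since the pairing is positive. I would exploit this by examining $v_i - v_j = \epsilon_i([T_i] - [T_j]) = \epsilon_i([T_i \setminus T_j] - [T_j \setminus T_i])$. Geometrically $T_i \setminus T_j$ and $T_j \setminus T_i$ are distant intervals containing no degree-$\ge 3$ vertex, so each is empty or a norm-$2$ interval; thus $v_i - v_j$ is a vector of $L$ of norm $2$ (the $\prec$ case) or of norm $4$ splitting as a sum of two orthogonal norm-$2$ vectors (the bad $\pitchfork$ case). The decisive point is that a norm-$2$ element of $L = (\sigma)^\perp$ must have the form $e_p - e_q$ with $\sigma_p = \sigma_q$. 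Writing $v_i = -e_i + \sum_{a \in A_i} e_a$ and matching the disjoint-support pieces of $v_i - v_j$ against its coordinate expression $-e_i + e_j + (e_{a^\ast} - e_{b^\ast})$, where $a^\ast \in A_i \setminus A_j$ and $b^\ast \in A_j \setminus A_i$, then yields $\sigma$-equalities: either $\sigma_{a^\ast} = \sigma_i$ and $\sigma_{b^\ast} = \sigma_j$, or else $\sigma_{a^\ast} = \sigma_{b^\ast}$ and $\sigma_i = \sigma_j$.

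Each alternative collapses to a contradiction. In the first, $\sigma_{a^\ast} = \sigma_i$ forces the index set of $v_i$ down to the singleton $\{a^\ast\}$, so $|v_i| = 2$, against $|v_i| \ge 3$. In the second, $\sigma_i = \sigma_j$ with $i < j$ makes $\{i\}$ a representation of $\sigma_j$; since the standard basis (Definition \ref{d: tight, gappy, just right}) takes $A_j$ maximal in the total order $\ll$, this forces $i \in A_j$, which the configuration forbids, unless $\sigma$ is constant on an interval of indices straddling $i$, and that once more collapses $A_i$ to a singleton. This disposes of both offending configurations and proves the bound with its equality clause. The one wrinkle is when one of $v_i, v_j$ is tight (at most one can be, by Lemma \ref{l: tight}(1)): the coordinate bookkeeping shifts because of the entry $2e_0$, but those pairings are already tabulated in Lemma \ref{l: tight 2}, and the same norm-$2$/norm-$4$ splitting excludes the values $|v|-1$ and $|v|-2$, now contradicting the defining relation of a tight vector rather than the maximality of $A_j$. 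I expect this last exclusion step — translating the geometric bad-transverse configuration into the $\sigma$-equalities and squeezing out the contradiction — to be the main obstacle, the geometric case analysis being routine given Corollary \ref{c: linear irred}.
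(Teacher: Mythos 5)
Your geometric reduction is the paper's first step, and your exclusion of the offending configurations by comparing the coordinate form of $v_i - v_j$ against its forced splitting into orthogonal norm-$2$ pieces is essentially the paper's irreducible-versus-reducible argument in different clothing. But there is a genuine gap at the sign claim. You assert that both offending configurations ``force $\epsilon_i = \epsilon_j$ since the pairing is positive.'' What is positive is $\L [T_i],[T_j] \R$; that pins down $\epsilon_i\epsilon_j$ only via $\L v_i,v_j\R = \epsilon_i\epsilon_j\,\L[T_i],[T_j]\R$ together with the lower bound $\L v_i,v_j\R \geq -1$ of Lemma \ref{l: basic}(2), which works when $\L[T_i],[T_j]\R \geq 2$ but fails in the transverse configuration with $|v_i|=|v_j|=3$, where the geometric pairing equals $1$ and $\epsilon_i = -\epsilon_j$ is perfectly consistent with $\L v_i,v_j\R = -1$. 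That configuration must still be excluded --- it would give $|\L v_i,v_j\R| = 1$ with $T_i\pitchfork T_j$, violating the ``only if'' half of your equality clause --- and it escapes your argument entirely, since with opposite signs $v_i - v_j = \epsilon_i([T_i]+[T_j])$ no longer splits into two orthogonal norm-$2$ vectors. The repair is the paper's: with opposite signs one analyzes $v_i + v_j = \epsilon_i([T_i\setminus T_j] - [T_j\setminus T_i])$ instead, which is geometrically a sum of two nonzero orthogonal vectors but coordinate-wise equals $-e_j + e_p + e_q + e_s$ (here $\L v_i,v_j\R=-1$ forces $i\in A_j$ and $A_i\cap A_j=\varnothing$), a vector with a single negative entry and hence irreducible as in Case 1 of the proof of Lemma \ref{l: change irred}; in your bookkeeping, three $+1$'s and one $-1$ cannot be partitioned into two blocks of the form $\pm(e_a-e_b)$.

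Two smaller soft spots. In your second alternative ($\sigma_i=\sigma_j$), maximality of $A_j$ against the competitor $\{i\}$ does not force $i\in A_j$: it forces either $i\in A_j$ (already excluded by the splitting) or that $A_j$ contain an element exceeding $i$, and since $A_j\cap A_i\subset\{0,\dots,i-1\}$ the only candidate is $b^\ast$, whereupon $\sigma_{b^\ast}\geq\sigma_i=\sigma_j$ collapses $A_j$ to $\{b^\ast\}$; this is the paper's dichotomy on whether its index $k$ exceeds $i$, and your ``$\sigma$ constant on an interval straddling $i$'' clause is not the right fix. Finally, the tight case cannot be deferred to Lemma \ref{l: tight 2}: the paper gives it a separate support-counting argument ($|v_t|=|v_j|=t+4$ forces $\supp(v_j)$ to contain at least three indices greater than $t$, so $|\L v_t,v_j\R|\leq |v_j|-3$ while the geometry would demand at least $|v_j|-2$), and your $v_i-v_t$ has $e_0$-entry $v_{i0}-2$, which changes the norm-$2$/norm-$4$ splitting analysis rather than leaving it intact.
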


\begin{proof}

The Lemma follows easily once we establish that $\L [T_i], [T_j] \R \leq 0$.  Thus, we assume that $\L [T_i], [T_j] \R \geq 1$ and derive a contradiction.  Since these classes are unbreakable and they pair positively, it follows that $z_i = z_j$.  In particular, $d:= |v_i| = d(T_i) = d(T_j) =|v_j| $.   Now, either $T_i \pitchfork T_j$, in which case $\L [T_i], [T_j] \R = d-2$, or else $T_i$ and $T_j$ share a common endpoint, in which case $\L [T_i], [T_j] \R = d-1$.

Let us first treat the case in which $i = t$ and $v_t$ is tight.  By Corollary \ref{c: one breakable}, it follows that $v_j$ is not tight.  Thus, $d = t+4$, and $\supp(v_j)$ contains at least three values $> t$.  If $v_{jt}=1$, then $\L v_t, v_j \R \leq d-3$, while if $v_{jt} = 0$, then $\supp(v_j)$ contains at least four values $>t$, and again $\L v_j, v_t \R \leq d-3$.  As $\L v_t, v_j \R \geq -1$ and $d \geq 5$, we have $| \L v_t, v_j \R | \leq d-3$, whereas $|\L [T_t], [T_j] \R| \geq d-2$.  This yields the desired contradiction to the assumption that $\L [T_t], [T_j] \R \geq 1$ in this case.

Thus, we may assume that neither $v_i$ nor $v_j$ is tight, and without loss of generality that $j > i$.  Suppose that $\epsilon: = \epsilon_j = \epsilon_i$.  Thus $\langle v_i, v_j \rangle = \langle [T_i], [T_j] \rangle \geq 1$.  If $v_{ji} = 1$, then $\langle v_j, v_i \rangle \leq d -2$, with equality possible iff $v_{jk} = 1$ whenever $v_{ik} = 1$.  But then $| v_j | > | v_i |$, a contradiction.  Hence $v_{ji} = 0$.  If $\langle v_j, v_i \rangle = d-1$, then again $v_{jk} = 1$ whenever $v_{ik} = 1$.  But then $v_j - v_i = -e_j + e_i \gg v_j$, a contradiction.

Still assuming that $\epsilon_j = \epsilon_i$, we are left to consider the case that $v_{ji} = 0$ and $\langle v_j, v_i \rangle = d - 2$, and therefore $T_i \pitchfork T_j$.  In this case, 
$\supp(v_j) - \supp(v_i) = \{j,k\}$ and $\supp(v_i) - \supp(v_j) = \{i,l\}$ for some indices $k, l$.  If $\sigma_j = \sigma_k$, then $v_j = -e_j +e_k$, in contradiction to $|v_j| \geq 3$.  If $\sigma_j = \sigma_i$, then either $i < k$, in which case we derive the contradiction $v_j = -e_j + e_k$ again, or else $k < i$, in which case we derive the contradiction $-e_j + e_i \gg v_j$.  Therefore, $\sigma_j \ne \sigma_i, \sigma_k$.  It easily follows that $v_j - v_i = -e_j + e_k + e_i - e_l$ is irreducible.   On the other hand, $v_j - v_i = \epsilon ( [T_j] - [T_i] ) = \epsilon[T_j - T_i] - \epsilon[T_i - T_j]$ is reducible, a contradiction.

It follows that $\epsilon: = \epsilon_j = - \epsilon_i$.  Hence $\langle [T_i], [T_j] \rangle = - \langle v_i, v_j \rangle \leq 1$.  In case of equality, we have $d = 3$ and $T_i \pitchfork T_j$. So on the one hand, $v_j = -e_j + e_i + e_p$ and $v_i = -e_i + e_q + e_s$ for distinct indices $i,j,p,q,s$, whence $v_j + v_i = -e_j + e_p + e_q + e_s$ is irreducible (cf. the proof of Lemma \ref{l: change irred}, Case 1).  On the other hand, it equals $\epsilon([T_j]-[T_i]) = \epsilon[T_j - T_i] - \epsilon[T_i - T_j]$, which is reducible, a contradiction.

In total, $\L [T_i], [T_j] \R \leq 0$ in every case, and the Lemma follows.

\end{proof}

\begin{cor}\label{c: distinct z_i}

If $T_i$ and $T_j$ are distinct unbreakable intervals with $d(T_i),d(T_j) \geq 3$, then $z_i \ne z_j$.

\qed

\end{cor}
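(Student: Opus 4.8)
The plan is to deduce the Corollary directly from Lemma \ref{l: cool} by contradiction, so that all the real work is borrowed from that Lemma. Suppose $z_i = z_j =: z$. Since $T_i$ and $T_j$ are unbreakable intervals whose unique vertex of degree $\geq 3$ is $z$, every other vertex of $T_i$ or of $T_j$ has degree $2$. Writing each interval as a path with $z$ somewhere along it and counting internal edges, the first step is the one-line computation $|v_i| = |[T_i]| = d(z) = |[T_j]| = |v_j| =: d \geq 3$, using that adjoining a degree-$2$ vertex to an interval leaves its norm unchanged.

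Next I would analyze how $T_i$ and $T_j$ sit relative to one another. Both contain $z$, so $T_i \cap T_j \neq \varnothing$; since the intervals are distinct, either they share a common endpoint or $T_i \pitchfork T_j$. Computing the pairing in each configuration gives $\L [T_i], [T_j]\R = d-1$ when they share a common endpoint (so that $T_i \prec T_j$ or $T_j \prec T_i$ after relabeling), and $\L [T_i], [T_j]\R = d-2$ when $T_i \pitchfork T_j$. Using $d \geq 3$, in both cases $\L [T_i], [T_j]\R \geq d-2 \geq 1 > 0$.

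Finally I would invoke the key inequality established in the course of proving Lemma \ref{l: cool}, namely that $\L [T_i], [T_j]\R \leq 0$ for any two distinct unbreakable standard basis intervals of norm $\geq 3$. This contradicts the strict positivity just obtained, so $z_i = z_j$ is impossible.

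The argument is short precisely because the substance is already contained in Lemma \ref{l: cool}; the only place to be careful is the configuration analysis of the second step, where one must check that the list of possibilities for two overlapping intervals sharing their high vertex is exhaustive (sharing an endpoint versus crossing transversally) and that $d \geq 3$ is genuinely used to make the pairing strictly positive rather than merely nonnegative. If one prefers to cite only the \emph{statement} of Lemma \ref{l: cool}, the same conclusion follows: the bound $|\L v_i, v_j\R| \leq 1$ rules out the endpoint-sharing case (where $d-1 \geq 2$) and the $\pitchfork$ case with $d \geq 4$, while for $d = 3$ with $T_i \pitchfork T_j$ the equality clause of Lemma \ref{l: cool} would force $T_i \dagger T_j$, contradicting that $T_i$ and $T_j$ overlap.
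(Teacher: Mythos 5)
Your proposal is correct and follows essentially the route the paper intends: the Corollary is stated with no proof because it is immediate from Lemma \ref{l: cool}, exactly as you argue — if $z_i = z_j$ then $d(T_i) = d(z_i) = d(T_j) =: d \geq 3$ and $\L [T_i],[T_j]\R \in \{d-1, d-2\} > 0$, contradicting either the inequality $\L [T_i],[T_j]\R \leq 0$ from the Lemma's proof or (as in your second variant) the statement of the Lemma together with its equality clause. Both of your variants are sound, and your configuration analysis (common endpoint versus $\pitchfork$) is exhaustive by the definition of $\pitchfork$.
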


%%%
%%%
%%%

\subsection{The intersection graph.}\label{ss: int graph}

This subsection defines the key notion of the intersection graph, and establishes the most important properties about it that are necessary to carry out the combinatorial analysis of Sections \ref{s: decomposable} - \ref{s: tight}.

Recall that from the standard basis $S$ we obtain a collection of intervals $\T$.  Let $\overline{S} \subset S$ denote the subset of unbreakable elements of $S$; thus, $\overline{S} =  S - v_t$ if $S$ contains a breakable element $v_t$,  and $\overline{S} = S$ otherwise.

\begin{defin}[Compare Definition \ref{d: abut graph}]\label{d: int graph}

The {\em intersection graph} is the graph
\[ G(S) = (S,E), \quad E = \{ (v_i,v_j) \; | \; T_i \text{ abuts } T_j \}.\]
Write $G(S')$ to denote the subgraph induced by a subset $S' \subset S$.  If $(v_i, v_j) \in E$ with $i < j$, then $v_i$ is a {\em smaller neighbor} of $v_j$.

\end{defin}

Observe that $G(S)$ is a subgraph of the pairing graph $\widehat{G}(S)$ (Subsection \ref{ss: generalities}), and Lemma \ref{l: cool} implies that they coincide unless $S$ contains a breakable element $v_t$.  Furthermore, if $v_t \in S$ is breakable, then Lemma \ref{l: tight 2} implies that $(v_t,v_j) \in E$ iff $\L v_t, v_j \R \in \{|v_j|-1,1,-1\}$, except in the special case that $|v_j| = 3$, $z_j \in T_t$, $T_j \pitchfork T_t$, and $\epsilon_j \epsilon_t = \L v_t, v_j \R$.  Therefore, $G(S)$ is determined by the pairings of vectors in $S$ except in this special case, which fortunately arises just once in our analysis (Proposition \ref{p: breakable 3}).

We now collect several fundamental properties about the intersection graph $G(S)$.

\begin{defin}\label{d: claw}

The {\em claw} $(i;j,k,l)$ is the graph $Y = (V,E)$ with \[ V = \{i,j,k,l\} \quad \text{and} \quad E = \{ (i,j), (i,k), (i,l) \}.\]  A graph $G$ is {\em claw-free} if it does not contain an induced subgraph isomorphic to $Y$.  

\end{defin}

\noindent Equivalently, if three vertices in $G$ neighbor a fourth, then some two of them neighbor.

\begin{lem}\label{l: no claw}

$G(S)$ is claw-free.

\end{lem}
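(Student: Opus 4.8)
The plan is to reduce the assertion to a purely combinatorial statement about intervals on a line, stripping away the changemaker structure entirely. By Lemma \ref{l: change irred} together with Corollary \ref{c: linear irred}(1), every standard basis vector has the form $v_i = \epsilon_i [T_i]$ with $T_i$ an interval in some summand of $L$, and by Definition \ref{d: int graph} the edges of $G(S)$ are precisely the pairs $(v_i, v_j)$ for which $T_i$ abuts $T_j$. Since the relation ``abuts'' refers to the linear order inside a single linear lattice summand, any two abutting intervals lie in a common summand. Hence it suffices to prove: if an interval $T_i$ abuts each of three intervals $T_j, T_k, T_l$ lying in the same linear lattice, then at least two of $T_j, T_k, T_l$ abut one another. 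In the language of Definition \ref{d: claw}, this says exactly that the would-be center $v_i$ of an induced claw cannot have three pairwise non-abutting neighbors.

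First I would observe that the abutting of $T_i$ with any interval $T$ localizes to one of the two endpoints of $T_i$. Writing $T_i = \{x_a, \dots, x_b\}$, the relation ``$T_i$ abuts $T$'' unfolds into exactly one of four possibilities: $T$ shares the left endpoint $x_a$, $T$ shares the right endpoint $x_b$, $T$ is consecutive to $T_i$ on the left (so $T$ has right endpoint $x_{a-1}$), or $T$ is consecutive on the right (so $T$ has left endpoint $x_{b+1}$). I would group the first and third of these as occurring ``at the left end'' of $T_i$ and the second and fourth as ``at the right end''. Each neighbor of $v_i$ is thereby assigned to one of the two ends of $T_i$, so by the pigeonhole principle two of the three neighbors $T_j, T_k, T_l$ — say $T_j$ and $T_k$ — are assigned to the same end.

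It then remains to check that two intervals abutting $T_i$ at the same end necessarily abut each other; I expect this short case check to be the only real content. Taking the right end, there are three cases: if $T_j$ and $T_k$ both share the right endpoint $x_b$ of $T_i$, they share a common right endpoint; if both are consecutive to $T_i$ on the right, they share the common left endpoint $x_{b+1}$; and if one shares $x_b$ while the other begins at $x_{b+1}$, the two are consecutive. In every case $T_j$ abuts $T_k$, so $(v_j, v_k) \in E$ and no induced claw centered at $v_i$ survives; the left end is identical by symmetry. The main (indeed only) obstacle is the bookkeeping: one must confirm that the four-way localization above is exhaustive and that it behaves correctly in the degenerate case where $T_i$ is a single vertex and its two endpoints coincide, so that the pigeonhole step remains valid. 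No input from the changemaker or lattice structure beyond the interval description of $S$ is needed.
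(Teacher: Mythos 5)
Your proposal is correct and is essentially the paper's own argument: the paper's proof reads, in its entirety, ``If $T_i$ abuts three intervals $T_j, T_k, T_l$, then it abuts some two at the same end, and then those two abut.'' You have simply made explicit the localization of each abutment to an end of $T_i$, the pigeonhole step, and the three-way case check that two intervals abutting $T_i$ at a common end must abut each other, all of which is sound (including in the single-vertex case, since your two groups are defined by the combinatorial conditions rather than by the vertices themselves).
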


\begin{proof}

If $T_i$ abuts three intervals $T_j, T_k, T_l$, then it abuts some two at the same end, and then those two abut.

\end{proof}

\begin{defin}\label{d: heavy}

A {\em heavy triple} $(v_i,v_j,v_k)$ consists of distinct vectors of norm $\geq 3$ contained in the same component of $G(\overline{S})$, none of which separates the other two in $G(\overline{S})$.  In particular, if $(v_i,v_j,v_k)$ spans a triangle, then it spans a {\em heavy triangle}.

\end{defin}

\begin{lem}\label{l: no triangle}

$G(\overline{S})$ does not contain a heavy triple.

\end{lem}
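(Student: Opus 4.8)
The plan is to show that, after relabeling, the vector $v_j$ is a cut vertex of $G(\overline{S})$ separating $v_i$ from $v_k$, in direct contradiction to the very definition of a heavy triple; this bypasses any appeal to cycles. So suppose $(v_i,v_j,v_k)$ is a heavy triple. Each of $T_i,T_j,T_k$ is unbreakable with $d(T_\bullet)=|v_\bullet|\ge 3$, so each carries a distinguished vertex $z_i,z_j,z_k$ of degree $\ge 3$, and these are pairwise distinct by Corollary \ref{c: distinct z_i}. I would relabel the triple so that $z_j$ is the median of $z_i,z_j,z_k$ in the linear order on the vertex basis $x_1,\dots,x_n$; since none of the three separates the other two, in particular $v_j$ does not separate $v_i$ from $v_k$.

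The heart of the argument is the claim that $z_j$ lies in $T_j$ and in no other interval $T_c$ with $v_c\in\overline{S}$. If $|v_c|\ge 3$, then $T_c$ is unbreakable of degree $\ge 3$, hence contains a unique vertex of degree $\ge 3$ (Corollary \ref{c: linear irred}(4)); were $z_j\in T_c$, that vertex would be $z_j=z_c$, forcing $z_c=z_j$ and contradicting Corollary \ref{c: distinct z_i}. If instead $|v_c|\le 2$, a direct count of the edges leaving $T_c$ shows that an interval $T$ with $d(T)\le 2$ cannot contain a vertex of degree $\ge 3$ at all: a vertex of degree $\ge 3$ sends at least one root-edge out of $T$, and together with the edges leaving the ends of $T$ this already totals at least $3$. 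In particular $T_i$ and $T_k$, being distinct from $T_j$, avoid $z_j$; as intervals containing $z_i<z_j$ and $z_k>z_j$ respectively, they therefore lie entirely to the left and entirely to the right of $z_j$.

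With this in hand the separation is immediate. In $G(\overline{S})-v_j$ every interval avoids $z_j$, hence lies wholly among $\{x_1,\dots,x_{z_j-1}\}$ or wholly among $\{x_{z_j+1},\dots,x_n\}$; call these the left and right intervals. No left interval abuts a right interval: a shared endpoint is impossible since the two index ranges are disjoint, and consecutiveness is impossible since it would require adjacent indices $s,s+1$ to straddle the deleted index $z_j$. Thus $G(\overline{S})-v_j$ carries no edge between left and right intervals, so $v_i$ (a left interval) and $v_k$ (a right interval) lie in different components. Hence $v_j$ separates $v_i$ from $v_k$, the desired contradiction, and the Lemma follows.

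I expect the only delicate point to be the covering claim of the second paragraph, namely ruling out that a light ($d(T)\le 2$) interval swallows the degree-$\ge 3$ vertex $z_j$, together with the verification that abutment cannot bridge the single-index gap left by deleting $z_j$; everything else is routine interval bookkeeping. It is worth noting that this route avoids Lemma \ref{l: triangle} and claw-freeness (Lemma \ref{l: no claw}) entirely: one could instead try to place $T_i,T_j,T_k$ on a common cycle and invoke Lemma \ref{l: triangle} plus Lemma \ref{l: cool}, but extracting such a cycle from the weaker hypothesis that no vertex separates the other two is precisely the difficulty that the direct separation argument sidesteps.
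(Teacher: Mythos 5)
Your proof is correct and is essentially the paper's own argument: the paper likewise takes the median of the three degree-$\geq 3$ vertices, observes that the interval containing it is unique among the unbreakable intervals (via Corollary \ref{c: distinct z_i}), and concludes that deleting the corresponding basis vector separates the other two in $G(\overline{S})$, contradicting the definition of a heavy triple. Your write-up merely makes explicit two points the paper leaves implicit (that a $d(T)\leq 2$ interval cannot contain a degree-$\geq 3$ vertex, and that abutment cannot bridge the deleted index), so the claimed contrast with a hypothetical cycle-based route does not reflect an actual difference from the paper.
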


\begin{proof}

Since $v_i,v_j,v_k$ belong to the same component of $G(\overline{S})$, the intervals $T_i,T_j,T_k$ are subsets of some path $P \subset G - r$.  Assume without loss of generality that $z_i$ lies between $z_j$ and $z_k$ on $P$.  Every unbreakable interval in $P$ that avoids $z_i$ lies to one side of it, and $T_j$ and $T_k$ lie to opposite sides by assumption.  As each element in $\overline{S}$ is unbreakable and $T_i$ is the unique interval containing $z_i$, it follows that $v_j$ and $v_k$ lie in separate components of $G(\overline{S}) -v_i$.

\end{proof}

\begin{lem}\label{l: cycle}

Every cycle in $G(\overline{S})$ has length $3$ and contains a unique vector $v_i$ of norm $2$. Furthermore, if $(v_i,v_j,v_k)$ is a cycle with $i < j < k$ and $v_k$ is not gappy, then $|v_l| = 2$ for all $l \leq i$ if $\overline{S} = S$, or for all $t < l \leq i$ otherwise.

\end{lem}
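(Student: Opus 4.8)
The plan is to reduce a cycle to a clique and then count norms. Since every vector of a cycle $C\subset G(\overline{S})$ is unbreakable, each equals $\epsilon_i[T_i]$ for an interval $T_i$ (Corollary \ref{c: linear irred}, Lemma \ref{l: change irred}), and the classes $[T_i]$ are linearly independent, being part of the standard basis. Thus Lemma \ref{l: triangle} applies to the intervals indexed by $V(C)$: the set $V(C)$ spans a complete subgraph of $G(\overline{S})$ whose intervals abut pairwise at a common end. It therefore suffices to show that any clique $K\subseteq G(\overline{S})$ on at least three vertices has at most two vectors of norm $\geq 3$ and at most one vector of norm $2$; these two bounds force $|K|=3$ with exactly one norm-$2$ vertex, and in particular no cycle of length $\geq 4$ can exist.

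For the first bound, three vectors of norm $\geq 3$ in a common triangle would constitute a heavy triple, since no vertex of a triangle separates the other two, contradicting Lemma \ref{l: no triangle}. For the second bound, suppose a clique contained two norm-$2$ vectors. By Lemma \ref{l: basic}(1) these have the shape $-e_p+e_{p-1}$, and being unbreakable (Lemma \ref{l: change unbreakable}) they abut iff they pair nontrivially, which happens iff their indices are consecutive; write them $v_p,v_{p+1}$. Any third clique vector $v_c$ abuts both, i.e. $\L v_c,v_p\R=v_{c,p-1}-v_{c,p}$ and $\L v_c,v_{p+1}\R=v_{c,p}-v_{c,p+1}$ are both nonzero. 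Since $\supp(v_c)\subseteq\{0,\dots,c\}$, the cases $c\leq p-1$ make one of these pairings vanish; hence $c>p+1$, so $v_{c,p-1},v_{c,p},v_{c,p+1}\in\{0,1\}$ and the two nonvanishing conditions force $(v_{c,p-1},v_{c,p},v_{c,p+1})$ to equal $(0,1,0)$ or $(1,0,1)$. In the first case $p$, and in the second case $p-1$, is a gappy index of $v_c$, so Lemma \ref{l: basic}(3) yields $|v_{p+1}|\geq 3$ or $|v_p|\geq 3$, contradicting $|v_p|=|v_{p+1}|=2$. This establishes both bounds, and hence the first assertion.

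For the second assertion, write the triangle as $(v_i,v_j,v_k)$ with $i<j<k$ and let $v_m=-e_m+e_{m-1}$ be its unique norm-$2$ vector. I would first show $m=i$. Using that $\supp(v_a)\subseteq\{0,\dots,a\}$, nontriviality of $\L v_i,v_k\R$ and $\L v_j,v_k\R$ forces a smaller neighbor of $v_k=-e_k+e_{k-1}$ to have index $k-1$, so $m\neq k$ (two vectors cannot both have index $k-1$). If $m=j$, the same computation gives $i=j-1$ and, because $v_k$ is not gappy so that $A_k$ is an interval containing $j$ but not $j-1$, one finds $A_k=\{j,\dots,k-1\}$ and hence $\L v_i,v_k\R=0$, contradicting completeness of the triangle. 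Thus $m=i$; moreover $\L v_j,v_i\R,\L v_k,v_i\R\neq 0$ together with Lemma \ref{l: basic}(3) pin down $i\in A_j\cap A_k$, $i-1\notin A_j\cup A_k$, and $v_k=-e_k+e_{k-1}+\cdots+e_i$, again using that $v_k$ is not gappy.

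It remains to propagate $|v_l|=2$ downward for $t<l\leq i$ (with the convention $t=0$ when $\overline{S}=S$, so that the range is $1\leq l\leq i$). The clean case is $l=i-1$: if $|v_{i-1}|\geq 3$, then $v_{i-1}$ abuts $v_i$ (as $v_{i-1,i-1}=-1$) but not $v_k$ (its support lies in $\{0,\dots,i-1\}$, disjoint from $\supp(v_k)=\{i,\dots,k\}$), and a short check shows $(v_{i-1},v_j,v_k)$ is then a heavy triple, since none of the three separates the other two, contradicting Lemma \ref{l: no triangle}. I expect the general downward step to be the main obstacle: for a hypothetical $v_l$ of norm $\geq 3$ with $t<l<i-1$ one no longer obtains a direct edge to the triangle, so the contradiction must instead be extracted by combining the support constraints on $v_j,v_k$ (once more invoking that $v_k$ is not gappy) with the maximality built into the standard basis and the changemaker inequality $\sigma_l\leq \sigma_0+\cdots+\sigma_{l-1}+1$, with the case split organized so as to exclude the tight vector $v_t$ from the range correctly.
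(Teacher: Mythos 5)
Your reduction to a clique via Lemma \ref{l: triangle}, the heavy-triangle bound on the number of norm-$\geq 3$ vectors, and the exclusion of two norm-$2$ vectors all track the paper's proof of the first assertion; where the paper derives the contradiction $v_c \ll v_c - v_p$ or $v_c \ll v_c - v_{p+1}$ directly from maximality, you route through gappy indices and Lemma \ref{l: basic}(3), which is the same maximality fact in disguise. Your identification of the norm-$2$ vector as $v_i$ and the computation $v_k = -e_k + e_{k-1} + \cdots + e_i$ are also sound and agree with the paper's ``it follows at once that $\min(\supp(v_k)) = i$ and $|v_i|=2$.''

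The genuine gap is exactly the step you flag yourself: propagating $|v_l|=2$ down to $l = t+1$ (resp.\ $l=1$). You treat the range $t < l < i-1$ as needing new tools (support constraints, the changemaker inequality), but no direct edge to the triangle is required, because a heavy \emph{triple} --- unlike a heavy triangle --- only asks that the three vectors of norm $\geq 3$ lie in one component of $G(\overline{S})$ with none separating the other two. The paper's argument is: choose $l < i$ \emph{largest} with $|v_l| \geq 3$ and $l \neq t$. If $l > t$, then every $v_{l'}$ with $l < l' < i$ has norm $2$, so $(v_l, v_{l+1}, \dots, v_i)$ induces a path in $G(\overline{S})$, and $(v_l, v_j, v_k)$ is a heavy triple for precisely the reasons you gave in your $l = i-1$ case: $v_j \sim v_k$ directly, and deleting either of $v_j$, $v_k$ leaves the path through $v_i$ to the other. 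This contradicts Lemma \ref{l: no triangle}, so no such $l$ exists and $|v_{l'}|=2$ for all $l'$ in the stated range. In other words, your ``clean case'' is already the entire argument --- you only needed to take $l$ maximal so that the intervening vectors all have norm $2$ and supply the connecting path, rather than reaching for the changemaker inequality.
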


\begin{proof}

Choose a cycle $C \subset G(\overline{S})$.  By Lemma \ref{l: triangle}, it follows that $V(C)$ induces a complete subgraph.  Thus, $V(C)$ cannot contain three vectors of norm $\geq 3$, for then they would span a heavy triangle, in contradiction to Lemma \ref{l: no triangle}.  Note also that the vectors of norm $2$ in $S$ induce a union of paths in $G(S)$.  Therefore, $V(C)$ cannot contain more than two such vectors.  If it did contain two, then they must take the form $v_{i+1} = -e_{i+1}+e_i$ and $v_i = -e_i + e_{i-1}$.  Choose any other $v_j \in V(C)$.  Then either $v_{j,i} = 0$ and $v_{j,i\pm1} = 0$, or else $v_{j,i} = 0$ and $v_{j,i\pm1} = 1$.  In the first case, $v_j \ll v_j - v_{i+1}$, and in the second, $v_j \ll v_j - v_i$, both of which entail contradictions.  It follows that $V(C)$ contains at most two vectors of norm $\geq 3$ and at most one vector of norm $2$, hence exactly that many of each.  This establishes the first part of the Lemma.

For the second part, it follows at once that $\min(\supp(v_k)) = i$ and $|v_i|=2$.  If $|v_l| \geq 3$ for some largest value $l < i$ and $l \ne t$, then $(v_l,v_{l+1},\dots,v_i)$ induces a path in $G(\overline{S})$, and then $(v_l,v_j,v_k)$ forms a heavy triple. The second part now follows as well.

\end{proof}

\begin{cor}\label{c: cycle}

If $C \subset S$ spans a cycle in $G(S)$, then it induces a complete subgraph and $|V(C)| \leq 4$, with equality iff $C$ contains a breakable vector $v_t$. \qed

\end{cor}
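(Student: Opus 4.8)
The plan is to deduce everything from Lemma \ref{l: triangle}, Lemma \ref{l: cycle}, and Corollary \ref{c: one breakable}, after passing to the unbreakable subgraph $G(\overline{S})$. Since $C$ spans a cycle in $G(S)$, I would fix a cycle whose vertex set is exactly $C$; Lemma \ref{l: triangle} then shows that the intervals indexed by $V(C)$ abut pairwise at a common end, so $C$ induces a complete subgraph, $G(C) \cong K_m$ with $m := |V(C)|$. This settles the first assertion and reduces the remaining claims to bounding $m$.

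Next I would bound $m$ by restricting to the unbreakable vectors. Set $\overline{C} = C \cap \overline{S}$; since $G(C)$ is complete, so is the induced subgraph $G(\overline{C}) \cong K_{|\overline{C}|}$, which sits inside $G(\overline{S})$. A complete graph on four or more vertices contains a cycle of length $4$, whereas Lemma \ref{l: cycle} guarantees that every cycle in $G(\overline{S})$ has length $3$; hence $|\overline{C}| \leq 3$. By Corollary \ref{c: one breakable} there is at most one breakable vector $v_t$, so $C \setminus \overline{C}$ is either empty or $\{v_t\}$, giving $m \leq |\overline{C}| + 1 \leq 4$, the stated bound.

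For the equality clause, one implication comes for free from the same reduction: if $C$ contains no breakable vector then $C = \overline{C}$ spans a cycle in $G(\overline{S})$, which by Lemma \ref{l: cycle} has length exactly $3$, so $m = 3 < 4$. Consequently $m = 4$ forces $v_t \in V(C)$ (and $|\overline{C}| = 3$). The converse, that a cycle through the breakable vector actually attains $m = 4$, is the point I expect to be the main obstacle, since it amounts to ruling out a triangle $\{v_t, v_a, v_b\}$ in $G(S)$. To handle it I would exploit the common-endpoint structure of Lemma \ref{l: triangle}, whereby all three intervals $T_t, T_a, T_b$ share the split between some $x_j$ and $x_{j+1}$, and then play the pairing dichotomy of Lemma \ref{l: tight 2} for the tight vector $v_t$ against Lemma \ref{l: cool}, which dictates that the two unbreakable intervals $T_a, T_b$ can abut only by being consecutive. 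Pushing the resulting nested-versus-consecutive case analysis to a contradiction is the delicate step; the rest is a direct assembly of the cited results.
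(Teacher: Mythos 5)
Your argument for the first two assertions and for the forward direction of the equality clause is exactly the intended one: the paper supplies no written proof, treating the corollary as immediate from Lemma \ref{l: triangle} (the vertex set of a cycle induces a complete subgraph), Lemma \ref{l: cycle} (no cycle in $G(\overline{S})$ has four vertices, which kills $|\overline{C}|\geq 4$ via the $4$-cycle sitting inside $K_4$), and Corollary \ref{c: one breakable} (at most one element of $S$ is breakable). Your reduction to $\overline{C}=C\cap\overline{S}$ reproduces that chain of deductions correctly, including the observation that a cycle with no breakable vector lies in $G(\overline{S})$ and hence has exactly three vertices.

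The step you single out as the main obstacle --- showing that a cycle through the breakable vector must have four vertices --- is not provable, because it is false; the ``iff'' in the statement has to be read as ``only if.'' The paper itself produces triangles through the breakable vector: in the proof of Proposition \ref{p: breakable 3}(b), when $|v_{t+1}|=2$ one has $v_{t+1}\sim v_t$ (last clause of Lemma \ref{l: tight 2}), $v_t\sim v_g$ (since $T_g\prec T_t$), and $v_{t+1}\sim v_g$ (forced, since otherwise $(v_t;v_1,v_{t+1},v_g)$ induces a claw), so $(v_t,v_{t+1},v_g)$ spans a triangle in $G(S)$ containing the breakable $v_t$. Accordingly, only the direction you did prove --- $|V(C)|=4$ forces a breakable vector in $C$ --- is ever invoked later (e.g.\ to conclude that every cycle has length $3$ when $S$ contains no breakable element, or that a long cycle missing an edge is impossible). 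So your write-up already establishes everything the corollary asserts that is both true and used; do not spend effort on the converse.
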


\begin{defin}\label{d: triangle sign}

If $(v_i,v_j,v_k)$ spans a triangle in $G(S)$, then it is {\em positive} or {\em negative} according to the sign of $\L v_i,v_j \R \cdot \L v_j,v_k \R \cdot \L v_k,v_i \R$.

\end{defin}

\begin{lem}\label{l: signs}

If $(v_i,v_j,v_k)$ spans a triangle in $G(S)$ and some pair of $T_i, T_j, T_k$ are consecutive, then the triangle is positive.

\end{lem}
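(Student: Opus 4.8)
The plan is to reduce the signed product defining the triangle to a count of how many of the three interval-pairs are consecutive, after checking that the orientations $\epsilon_i$ appearing in $v_i = \epsilon_i[T_i]$ contribute nothing to the overall sign.

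First I would apply Lemma \ref{l: triangle}. Since $(v_i,v_j,v_k)$ spans a triangle in $G(S)$, the intervals $T_i,T_j,T_k$ abut pairwise and hence span a $3$-cycle in $G(\T)$; the lemma then furnishes an index $m$ so that each of the three intervals either has right endpoint $x_m$ (call this type $L$) or left endpoint $x_{m+1}$ (type $R$). Two intervals of the same type share that common endpoint, so one contains the other and $\langle [T_a],[T_b]\rangle = |v_a|-1 \geq 1 > 0$, where $T_a$ denotes the smaller (here $|v_a| = d(T_a) \geq 2$). Two intervals of opposite type are consecutive, meeting in the single edge $(x_m,x_{m+1})$, so $\langle[T_a],[T_b]\rangle = -1 < 0$. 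Because abutting intervals either share a common endpoint (forcing nesting) or are consecutive, these two cases exhaust the possibilities for the pairs occurring in our triangle.

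Next I would cancel the orientations. Writing each $v_a = \epsilon_a[T_a]$, the product $\langle v_i,v_j\rangle\langle v_j,v_k\rangle\langle v_k,v_i\rangle$ acquires the factor $\epsilon_i^2\epsilon_j^2\epsilon_k^2 = 1$, so its sign equals that of $\langle[T_i],[T_j]\rangle\langle[T_j],[T_k]\rangle\langle[T_k],[T_i]\rangle$, namely $(-1)^c$, where $c$ is the number of consecutive (equivalently, opposite-type) pairs among the three intervals. I would then count $c$ using the hypothesis: some pair is consecutive, so the three intervals are not all of one type; for three intervals distributed between types $L$ and $R$ the split must then be $2$--$1$, which yields exactly $c=2$ opposite-type pairs. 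Hence the sign is $(-1)^2 = +$, so the triangle is positive.

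The argument is short, and I do not anticipate a genuine obstacle. The two points requiring care are the invocation of Lemma \ref{l: triangle} to place all three intervals at a single junction $x_m / x_{m+1}$, and the observation that among intervals sharing a common end the number of consecutive pairs is necessarily even; this parity is precisely what forces the product of signs to be positive once the orientation signs have cancelled.
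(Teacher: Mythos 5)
Your proof is correct and follows essentially the same route as the paper's: cancel the orientation signs via $(\epsilon_i\epsilon_j\epsilon_k)^2=1$ and observe that exactly two of the three pairs are consecutive (each contributing $-1$) while the third shares a common endpoint (contributing a positive pairing). The only difference is that you make explicit the appeal to Lemma \ref{l: triangle} and the $2$--$1$ type-split counting, which the paper leaves implicit.
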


\begin{proof}

Observe that $\L v_i,v_j \R \cdot \L v_j,v_k \R \cdot \L v_k,v_i \R = (\epsilon_i \epsilon_j \epsilon_k)^2 \cdot \L [T_i],[T_j] \R \cdot \L [T_j],[T_k] \R \cdot \L [T_k],[T_i] \R$.  Two pairs of $T_i,T_j,T_k$ are consecutive and the other pair shares a common endpoint, so the right-hand side of this equation is positive.

\end{proof}

Most of the case analysis to follow in Sections \ref{s: decomposable} - \ref{s: tight} involves arguing that elements of $S$ must take a specific form, for otherwise we would obtain a contradiction to one of the preceding Lemmas.  In such cases, we typically just state something to the effect of ``$(v_i,v_j,v_k)$ forms a negative triple" without the obvious conclusion ``a contradiction", to spare the use of this phrase several dozen times.

We conclude with one last basic observation.

\begin{lem}\label{l: index s}

Suppose that $v_s \in S$ has norm $\geq 3$ with $s$ chosen smallest, and that it is not tight.  Then $v_s$ is just right, and $|v_s| \in \{s, s+1\}$.

\end{lem}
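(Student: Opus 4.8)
The plan is to first pin down the shape of $v_1,\dots,v_{s-1}$, then read $v_s$ directly off the changemaker and maximality conditions, and finally invoke claw-freeness (Lemma~\ref{l: no claw}) to confine $\sigma_s$ to the two allowed values.

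First I would use the minimality of $s$ to show that each of $v_1,\dots,v_{s-1}$ has norm exactly $2$. By Lemma~\ref{l: basic}(1), every $v_j$ has $v_{jj}=-1$ together with a nonzero entry in position $j-1$, so $|v_j|\geq 2$; and $|v_j|\leq 2$ for $j<s$ by the choice of $s$. A standard basis vector of norm $2$ must be $v_j=-e_j+e_{j-1}$, and this forces $\sigma_j=\sigma_{j-1}$. (The case $s=1$ cannot occur under the hypotheses: $v_1$ is either tight, which is excluded, or equal to $-e_1+e_0$ of norm $2$.) Chaining these equalities back to $\sigma_0=1$ yields $\sigma_0=\sigma_1=\cdots=\sigma_{s-1}=1$.

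Next I would determine $v_s$ precisely. Since $v_s$ is not tight, $\sigma_s\leq\sum_{i=0}^{s-1}\sigma_i=s$, and $v_s=-e_s+\sum_{i\in A}e_i$ for the maximal subset $A\subseteq\{0,\dots,s-1\}$ with $\sum_{i\in A}\sigma_i=\sigma_s$. As every $\sigma_i=1$ for $i<s$, this condition reads $|A|=\sigma_s$, and maximality selects the largest indices, $A=\{s-\sigma_s,\dots,s-1\}$. This is a block of consecutive integers, so $v_s$ is just right (Definition~\ref{d: tight, gappy, just right}), and $|v_s|=1+\sigma_s$. The hypothesis $|v_s|\geq 3$ gives $\sigma_s\geq 2$.

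It remains to prove $\sigma_s\geq s-1$, which is the crux. Suppose instead that $\sigma_s\leq s-2$ and set $m=s-\sigma_s$, so $2\leq m\leq s-2$. Both $v_s$ (not tight) and the norm-$2$ vectors $v_{m-1},v_m,v_{m+1}$ are unbreakable (Lemma~\ref{l: change unbreakable}), hence their mutual pairings determine the induced subgraph of $G(S)$ on $\{v_{m-1},v_m,v_{m+1},v_s\}$. Using $v_{m-1}=-e_{m-1}+e_{m-2}$, $v_m=-e_m+e_{m-1}$, $v_{m+1}=-e_{m+1}+e_m$, and $v_s=-e_s+\sum_{i=m}^{s-1}e_i$, a short computation shows that $v_m$ pairs to $-1$ with each of $v_{m-1}$, $v_{m+1}$, $v_s$, while the three leaves pair trivially: $v_{m-1}$ has support disjoint from both $v_{m+1}$ and $v_s$, and the pairing $\L v_{m+1},v_s\R$ receives $-1$ from the $e_{m+1}$ slot and $+1$ from the $e_m$ slot, hence vanishes. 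Thus $(v_m;v_{m-1},v_{m+1},v_s)$ is an induced claw, contradicting Lemma~\ref{l: no claw}. Therefore $\sigma_s\in\{s-1,s\}$ and $|v_s|=1+\sigma_s\in\{s,s+1\}$. The main obstacle is precisely this last paragraph: one must get the index bookkeeping right so that the bounds $2\leq m\leq s-2$ guarantee that all three leaves actually exist as distinct basis vectors and that they are genuinely pairwise non-adjacent, so that the configuration really is a claw rather than a triangle.
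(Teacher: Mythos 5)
Your proof is correct and takes essentially the same route as the paper's: minimality of $s$ forces $v_s$ to be just right, and the induced claw $(v_m;v_{m-1},v_{m+1},v_s)$ is exactly the paper's claw $(v_k;v_{k-1},v_{k+1},v_s)$ with $k=\min(\supp(v_s))$, contradicting Lemma~\ref{l: no claw} when $|v_s|<s$. The only cosmetic difference is that you establish the just-right property by computing $\sigma_0=\cdots=\sigma_{s-1}=1$ directly, whereas the paper deduces it from Lemma~\ref{l: basic}(3).
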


\begin{proof}

Recall that if $v_g$ is gappy, then $|v_{k+1}| \geq 3$ for a gappy index $k$.  By minimality of $s$, it follows that $v_s$ is just right.  If $|v_s| < s$, then $v_s = -e_s+e_{s-1}+\cdots + e_k$ for some $2 \leq k \leq s-2$, and $(v_k;v_{k-1},v_{k+1},v_s)$ induces a claw, a contradiction.

\end{proof}

%%%%%%%
%%%%%%%
%%%%%%%
%%%%%%%
%%%%%%%

\section{A decomposable lattice}\label{s: decomposable}

The goal of this section is to classify the changemaker lattices isomorphic to a sum of more than one linear lattice (Proposition \ref{p: decomposable structure}).  We begin with a basic result.

\begin{lem}\label{l: two summands}

A changemaker lattice has at most two indecomposable summands.  If it has two indecomposable summands, then there exists an index $s > 1$ for which $v_s = -e_s + \sum_{i=0}^{s-1} e_i$, $|v_i| = 2$ for all $1 \leq i < s$, and $v_s$ and $v_1$ belong to separate summands.

\end{lem}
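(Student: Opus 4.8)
The plan is to argue entirely in terms of the standard basis $S=\{v_1,\dots,v_n\}$ of $L$, reducing both assertions to a single orthogonality lemma. The first observation is that each $v_i$ is irreducible (Lemma \ref{l: change irred}), so it lies in a unique indecomposable summand of $L$, and two standard basis vectors in distinct summands are orthogonal. Thus the entire statement is controlled by which inner products $\L v_i,v_j\R$ vanish.

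The crux is the claim that \emph{if a standard basis vector $v_j$ with $j\geq 2$ satisfies $\L v_j,v_i\R=0$ for every $i<j$, then $\sigma_0=\cdots=\sigma_{j-1}=1$ and $v_j=-e_j+\sum_{i=0}^{j-1}e_i$}. To prove this I would write $v_j=-e_j+w$, where $w\geq 0$ is the positive part of $v_j$, supported on $\{0,\dots,j-1\}$. Each $v_i$ with $i<j$ has support in $\{0,\dots,j-1\}$ and does not meet index $j$, so orthogonality yields $\L w,v_i\R=0$ for all $i<j$. The vectors $v_1,\dots,v_{j-1}$ are linearly independent and lie in the coordinate subspace $\mathbb{R}^{\{0,\dots,j-1\}}\cong\mathbb{R}^j$, so their orthogonal complement there is one dimensional; it is spanned by $\sigma':=(\sigma_0,\dots,\sigma_{j-1})$, since $\L v_i,\sigma'\R=\L v_i,\sigma\R=0$. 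Hence $w=\lambda\sigma'$ for some real $\lambda$, and $\lambda\neq 0$ because $w\neq 0$ (otherwise $v_j=-e_j\notin(\sigma)^\perp$). Comparing $0$th coordinates gives $\lambda=w_0$. If $v_j$ were tight then $w_0=2$, forcing $\lambda=2$ and $w_1=2\sigma_1\geq 2$, contradicting $w_1=1$; so $v_j$ is not tight, $w_0\in\{0,1\}$, and therefore $\lambda=1$ and $w=\sigma'$. Since $w$ has $0/1$ entries while every $\sigma_i\geq\sigma_0=1$, this forces $\sigma_i=1$ for all $i<j$ and $w=e_0+\cdots+e_{j-1}$, proving the claim. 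I expect this orthogonal-complement computation to be the one real obstacle; its attractive feature is that it settles the tight case along the way, so no separate appeal to the uniqueness of tight vectors is needed.

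Granting the claim, the structural assertion is immediate. Suppose $L$ has at least two summands, let $S_1$ be the summand containing $v_1$, and let $s$ be the least index with $v_s\notin S_1$; then $s>1$ and $v_s$ is orthogonal to each of $v_1,\dots,v_{s-1}$. The claim gives $v_s=-e_s+\sum_{i=0}^{s-1}e_i$ and $\sigma_i=1$ for $i<s$, which is to say $|v_i|=2$ for $1\leq i<s$; moreover $\L v_s,\sigma\R=0$ forces $\sigma_s=s$. As $v_1$ and $v_s$ lie in different summands, this is exactly the asserted structure.

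To rule out a third summand, I would suppose one exists and let $j$ be the least index for which $v_j$ avoids both the summand $S_1$ of $v_1$ and the summand $S_2$ of $v_s$. Then $j>s$, and by minimality $v_j$ is orthogonal to every $v_i$ with $i<j$. Applying the claim once more yields $\sigma_i=1$ for all $i<j$, and in particular $\sigma_s=1$; but $\sigma_s=s\geq 2$, a contradiction. Hence $L$ has at most two indecomposable summands, which completes the proof.
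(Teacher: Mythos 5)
Your proof is correct. The overall skeleton matches the paper's: pick the least index $s$ with $v_s$ outside the summand of $v_1$, pin down the form of $v_s$ and the norms of the earlier vectors, then rule out a third summand. But your central computation is genuinely different. The paper works combinatorially with the pairing graph $\widehat{G}(S)$: it shows $\min(\supp(v_s))=0$ because otherwise $v_s \sim v_{\min(\supp(v_s))}$, rules out gappiness because a gappy index $l$ would give $v_s \sim v_{l+1}$, and deduces $|v_i|=2$ for $i<s$ because any $v_i$ of norm $\geq 3$ would pair nontrivially with $v_s$; uniqueness of the second component then follows from uniqueness of $s$. You instead prove a cleaner linear-algebra claim: if $v_j$ ($j\geq 2$) is orthogonal to $v_1,\dots,v_{j-1}$, then its positive part $w$ lies in the one-dimensional orthogonal complement of $\mathrm{span}(v_1,\dots,v_{j-1})$ inside $\mathbb{R}^{\{0,\dots,j-1\}}$, which is spanned by the truncated changemaker $\sigma'$, forcing $w=\sigma'$ and $\sigma_0=\cdots=\sigma_{j-1}=1$ (the tight case being excluded by comparing the $0$th and $1$st coordinates). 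This buys you slightly more than the paper extracts at this stage --- it outputs the values $\sigma_i=1$ directly rather than just $|v_i|=2$, and it makes the exclusion of a third summand a one-line numerical contradiction ($\sigma_s=s\geq 2$ versus $\sigma_s=1$) instead of an appeal to the uniqueness of $s$. The paper's graph-theoretic route, on the other hand, is the one that generalizes to the rest of its analysis (it reuses the same adjacency criteria throughout Sections 4--7). All the supporting steps you use --- irreducibility of standard basis vectors, the triangular support structure, $\langle v_s,\sigma\rangle=0$ forcing $\sigma_s=s$ --- check out.
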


\begin{proof}

For the first statement, it suffices to show that $\widehat{G}(S)$ has at most two connected components (cf. Subsection \ref{ss: generalities}).  Thus, suppose that $\widehat{G}(S)$ has more than one component, fix a component $C$ that does not contain $v_1$, and choose $s > 1$ smallest such that $v_s \in V(C)$.  Thus, $v_s \not\sim v_i$ for all $1 \leq i < s$.  Let $k = \min(\supp(v_s))$.  Then $k = 0$, since otherwise $v_s \sim v_k$.  Furthermore, $v_s$ is not gappy, for if $l$ is a gappy index, then $v_s \sim v_{l+1}$.  Therefore, $v_s = -e_s + \sum_{i=0}^{s-1} e_i$.  If $|v_i| \geq 3$ for some $i < s$, then $v_s \sim v_i$.  It follows that $|v_i| = 2$ for all $i < s$.  As $s$ is uniquely determined, it follows that $C$ is as well, so $\widehat{G}(S)$ contains exactly two components.  The statement of the Lemma now follows.

\end{proof}

For the remainder of the section, suppose that $L$ is a changemaker lattice isomorphic to a sum of two linear lattices. 

\begin{lem}\label{l: just right}

All elements of $S$ are just right.

\end{lem}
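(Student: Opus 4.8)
The plan is to show that $S$ contains neither a tight nor a gappy vector; by Definition \ref{d: tight, gappy, just right} this forces every element to be just right. Throughout I invoke the structural data of Lemma \ref{l: two summands}: there is an index $s > 1$ with $v_s = -e_s + \sum_{i=0}^{s-1} e_i$, with $|v_i| = 2$ (so $v_i = -e_i + e_{i-1}$ by Lemma \ref{l: basic}(1)) for $1 \le i < s$, and with $v_1$ and $v_s$ lying in the two distinct connected components of the pairing graph $\widehat{G}(S)$ that witness the two indecomposable summands. In particular $v_1, \dots, v_s$ are already just right, so any offending vector has index $> s$.

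First I eliminate tight vectors. If $v_m$ is tight then $v_m = -e_m + 2e_0 + \sum_{i=1}^{m-1} e_i$ has norm $m + 4 \ge 5$; since $v_1, \dots, v_{s-1}$ have norm $2$ and $v_s$ is just right, necessarily $m > s$. As $v_m$ has full support on $\{0, \dots, m-1\}$ with a double weight on $e_0$, a direct computation gives $\langle v_m, v_1 \rangle = 2 - 1 = 1$ and $\langle v_m, v_s \rangle = s \ge 2$. Thus $v_m$ is adjacent in $\widehat{G}(S)$ to both $v_1$ and $v_s$, joining their two components and contradicting Lemma \ref{l: two summands}. Hence there is no tight vector, so by Lemma \ref{l: change unbreakable} every element of $S$ is unbreakable; consequently $\overline{S} = S$ and $\widehat{G}(S) = G(S)$.

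For gappy vectors I argue by minimal counterexample: let $v_m$ be gappy of least index, so $m > s$ and $v_1, \dots, v_{m-1}$ are just right. Let $k$ be the smallest gappy index, so that $A \cap \{0,\dots,k\} = \{b,\dots,k\}$ is an interval with $b = \min\supp(v_m)$. Lemma \ref{l: basic}(3) gives $|v_{k+1}| \ge 3$, and since the only norm-$2$ entries below index $s$ are $v_1, \dots, v_{s-1}$ we get $k \ge s - 1 \ge 1$. By minimality $v_{k+1}$ is just right of norm $\ge 3$, so $v_{k+1} = -e_{k+1} + e_k + \dots + e_c$ with $c \le k-1$. Both $v_m$ and $v_{k+1}$ are now unbreakable of norm $\ge 3$, so Lemma \ref{l: cool} forces $|\langle v_m, v_{k+1}\rangle| \le 1$; computing $\langle v_m, v_{k+1}\rangle = k - \max(b,c) + 1$ this pins $\langle v_m, v_{k+1}\rangle = 1$ and hence $b = \min\supp(v_m) = k$, whence also $\langle v_m, v_k \rangle = -1$. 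In the base case $k = s - 1$ the payoff is immediate: then $v_m$ neighbors $v_s$ (via $e_{s-1}$, pairing $+1$) and $v_{s-1}$ (via $e_{s-1}$, pairing $-1$), once more joining the two components and contradicting Lemma \ref{l: two summands}.

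The remaining and genuinely delicate range is $k \ge s$, where $v_m$ sits entirely inside the second summand and no longer meets the norm-$2$ chain. Here I would track $c = \min\supp(v_{k+1})$. If $1 \le c \le s-1$, then $v_{k+1}$ pairs to $-1$ with the norm-$2$ vector $v_c$ of the first summand (it carries no $e_{c-1}$), bridging the two components; if $c = 0$, then $v_{k+1}$ is a full run from $e_0$ and $\langle v_s, v_{k+1}\rangle = s-1$, which violates Lemma \ref{l: cool} as soon as $s \ge 3$, leaving only a bounded $s = 2$ configuration to dispatch by hand. The hard part is the last possibility $c \ge s$, where $v_{k+1}$ too avoids the first summand; I expect this to be the main obstacle, to be resolved by iterating the support bookkeeping along the neighbors of $v_m$ until some vector is forced down into $\{0, \dots, s-1\}$ (producing an edge to the first summand) or else a heavy triangle of norm-$\ge 3$ vectors appears inside the second summand, contradicting Lemma \ref{l: no triangle}. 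Arranging this descent so that it provably terminates, rather than examining an unbounded family of support patterns by hand, is where the real care is required.
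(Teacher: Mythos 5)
Your elimination of tight vectors is correct and is exactly the paper's argument: a tight $v_m$ pairs non-trivially with both $v_1$ and $v_s$, merging the two components guaranteed by Lemma \ref{l: two summands}. Your setup for the gappy case is also on the paper's track: with $k$ the minimal gappy index you correctly force $|v_{k+1}|\geq 3$, $\min(\supp(v_m))=k$ via Lemma \ref{l: cool}, and hence $\L v_m,v_k\R=-1$ and $\L v_m,v_{k+1}\R=1$. But from that point your argument has a genuine gap: you branch on $c=\min(\supp(v_{k+1}))$ and explicitly leave the main range $k\geq s$, $c\geq s$ unresolved, proposing an unspecified ``descent'' through support patterns whose termination you admit you cannot yet arrange. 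That case is not a corner case; it is the generic situation, and nothing in your bookkeeping so far forces any vector down into $\{0,\dots,s-1\}$ or produces a heavy triangle there.

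The missing idea is that no case analysis on $c$ is needed at all. Having shown $v_m\sim v_k$ and $v_m\sim v_{k+1}$, observe that $v_k\not\sim v_{k+1}$: if $|v_k|=2$ then $v_k=-e_k+e_{k-1}$ pairs to $-1+v_{k+1,k-1}=0$ with the just right vector $v_{k+1}$, while if $|v_k|\geq 3$ then $(v_k,v_{k+1},v_m)$ would span a heavy triangle, contradicting Lemma \ref{l: no triangle}. Now if $v_k$ and $v_{k+1}$ lay in the same component of $G(S_{m-1})$, a shortest path between them together with $v_m$ would span a cycle whose vertex set is not complete (it misses the edge $(v_k,v_{k+1})$), contradicting Lemma \ref{l: triangle}. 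Hence $v_k$ and $v_{k+1}$ lie in the two distinct components of $G(S_{m-1})$; since there are only two by Lemma \ref{l: two summands}, $v_m$ joins them, $G(S)$ is connected, and $L$ is indecomposable --- a contradiction that disposes of every value of $k$ and $c$ uniformly. You should replace your descent plan with this connectivity argument.
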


\noindent Thus, $G(S) = \widehat{G}(S)$.

\begin{proof}

Suppose that $v_t \in S$ were tight.  Then $\langle v_t, v_1 \rangle = 1$ and $\langle v_t, v_s \rangle \geq 1$ would imply that $v_1$ and $v_s$ belong to the same component of $\widehat{G}(S)$, a contradiction.

Next, suppose that $v_g \in S$ were gappy with $g$ chosen minimal.  Note that $| v_g | \geq 3$.  Let $k$ denote the minimal gappy index for $v_g$.  Since $v_{k+1}$ is not gappy, it follows that $v_{k+1,k-1} \ne 0$.  Since $\langle v_g, v_{k+1} \rangle \leq 1$ by Lemma \ref{l: cool}, it follows that $v_{g,k-1} = 0$, and now minimality of $k$ implies that $k = \min(\supp(v_g))$.  This implies that $v_g \sim v_k$.  We cannot have $v_{k+1} \sim v_k$, since this would force $| v_k | \geq 3$, and then $(v_k,v_{k+1},v_j)$ forms a heavy triangle, in contradiction to Lemma \ref{l: no triangle}.  Hence $v_k \not\sim v_{k+1}$.  As $G(S)$ does not contain a cycle of length $> 3$, it follows that $v_k$ and $v_{k+1}$ belong to separate components of $G(S_{g-1})$.  Since $G(S_{g-1})$ has at most two components, it follows that $G(S_g)$ is connected, hence $G(S)$ is as well, a contradiction.

\end{proof}

\begin{lem}\label{l: v_m of degree 2}

Suppose that $v_m$ has multiple smaller neighbors.  Then  $m > s+1$, and

\begin{itemize}

\item $v_m = -e_m + e_{m-1} + \cdots + e_{s-1}$,

\item $v_{s+1} = -e_{s+1}+e_s+e_{s-1}$, 

\item $v_s = -e_s + e_{s-1}+ \cdots + e_0$, and 

\item $|v_k| = 2$ for all other $k < m$.

\end{itemize}

\end{lem}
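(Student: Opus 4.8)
The plan is to read the smaller neighbors of $v_m$ off its coordinates, locate $v_m$ relative to the two-summand splitting, and then invoke the cycle lemma. Throughout I use that $\overline{S} = S$ (there is no tight vector, by Lemma \ref{l: just right}), so $G(S) = \widehat{G}(S)$, every $v_i$ is unbreakable, and $v_i = -e_i + e_{i-1} + \cdots + e_{k_i}$. First I would reduce to $|v_m| \geq 3$: a norm-$2$ vector $v_m = -e_m + e_{m-1}$ pairs nontrivially only with $v_{m-1}$, hence has a single smaller neighbor. Writing $v_m = -e_m + e_{m-1} + \cdots + e_k$, the identity $\L v_i, v_m \R = \sum_{l=k}^{m-1} v_{il}$ (valid for $i < m$, since $v_{im} = 0$) classifies the smaller neighbors: $v_{k+1}$ is never one; $v_k$ is one exactly when $k \geq 1$, with $\L v_k, v_m \R = -1$; and for $i \geq k+2$, $v_i$ is a neighbor iff $|v_i| \geq 3$, in which case Lemma \ref{l: cool} forces $\L v_i, v_m \R = 1$, with either $|v_i| = 3$ and $\supp^+(v_i) \subset \{k, \dots, m-1\}$, or $i = k+2$. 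In particular, \emph{every} norm-$\geq 3$ vector $v_c$ with $c \geq k+2$ is automatically a smaller neighbor of $v_m$.

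Next I would pin down $k = s-1$; this is where the two-summand hypothesis is essential. Every smaller neighbor lies in the component of $v_m$, and Lemma \ref{l: two summands} places the full vector $v_s = -e_s + e_{s-1} + \cdots + e_0$ in a component $C_2$ disjoint from $C_1 \ni v_1, \dots, v_{s-1}$. For $1 \leq k \leq s-1$ a direct computation gives $\L v_s, v_m \R = (s-k) - 1$, so Lemma \ref{l: cool} (both norms $\geq 3$) forces $s - k \leq 2$; and $k \leq s-3$ is impossible outright, while $k = s-2$ is impossible because then $v_k \in C_1$ and $v_s \in C_2$ would both neighbor $v_m$, bridging the two components. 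This leaves $k = s-1$, whence $v_k = v_{s-1} \in C_1$ and so $v_m \in C_1$. The degenerate case $k = 0$ (so $v_m$ is itself full) forces $s = 2$ by the same pairing bound and is then excluded by the component argument, and the range $k \geq s$ is excluded because $v_m$ then interacts only with the vectors to the right of $v_s$ and reproduces a forbidden configuration there (a heavy triangle via Lemma \ref{l: no triangle}, or a length-$\geq 4$ cycle via Corollary \ref{c: cycle}, the latter on noting that the triangle $(v_k, v_{k+2}, v_m)$ would force $|v_s| = 2$).

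With $k = s-1$ fixed, the remaining smaller neighbor is the first norm-$\geq 3$ vector at index $\geq s+1$; I would show it is $v_{s+1}$ of norm $3$, i.e.\ $v_{s+1} = -e_{s+1} + e_s + e_{s-1}$, by ruling out $|v_{s+1}| = 2$: in that case $v_{s+1} \in C_2$ (it neighbors $v_s$), and tracing the norm-$2$ chain up to the genuine neighbor $v_b$ ($b \geq s+2$) would drag $v_b$, and hence $v_m$—which neighbors $v_{s-1} \in C_1$—into $C_2$, a contradiction. This also yields $m > s+1$. Since $v_{s-1}$ and $v_{s+1}$ both neighbor $v_m$ and neighbor each other, $(v_{s-1}, v_{s+1}, v_m)$ spans a triangle; as $v_m$ is just right (not gappy), Lemma \ref{l: cycle} identifies $v_{s-1}$ as its unique norm-$2$ vertex and gives $|v_l| = 2$ for all $l \leq s-1$, pinning down the stated forms of $v_s$ and $v_{s+1}$.

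Finally I would show $|v_l| = 2$ for $s+1 < l < m$. By the classification, any counterexample is a norm-$3$ vector $v_j = -e_j + e_{j-1} + e_{j-2}$ neighboring $v_m$; taking $j$ minimal, the vectors $v_{s+2}, \dots, v_{j-1}$ are norm $2$, so $v_{s+1}, v_{s+2}, \dots, v_{j-2}, v_j, v_m$ close into a cycle of length $j - s \geq 4$, contradicting Corollary \ref{c: cycle}. The short cases $j = s+2$ and $j = s+3$ are dispatched directly by the component argument ($v_{s+2} \sim v_s \in C_2$ while $v_m \in C_1$) and by the heavy triangle $(v_{s+1}, v_{s+3}, v_m)$ (Lemma \ref{l: no triangle}). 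Assembling these facts gives exactly $v_s = -e_s + e_{s-1} + \cdots + e_0$, $v_{s+1} = -e_{s+1} + e_s + e_{s-1}$, $v_m = -e_m + e_{m-1} + \cdots + e_{s-1}$, with all other $v_l$ $(l < m)$ of norm $2$. The main obstacle is the middle step: organizing the case analysis on $k$ so that the two-component constraint interacts correctly with Lemmas \ref{l: cool}, \ref{l: no triangle}, and \ref{l: cycle}, and in particular handling the status of $v_{k+1}$ (norm $2$ versus the big vector $v_s$) uniformly across the subcases; the coordinate computations themselves are routine.
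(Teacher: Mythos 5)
Your overall route is sound and close in spirit to the paper's, though organized differently: the paper takes two arbitrary smaller neighbors $v_i\sim v_m$, $v_j\sim v_m$, forces $v_i\sim v_j$ by the no-long-cycle/two-component dichotomy, and then reads everything off Lemma \ref{l: cycle}; you instead classify all smaller neighbors in terms of $k=\min(\supp(v_m))$ and run a case analysis on $k$ versus $s$. Most of your computations check out, but two steps are not closed as written. First, the exclusion of $k=0$ (with $s=2$ forced) is not a ``component argument'': there is no $v_k$ in that case, $v_m\sim v_2=v_s\in C_2$ places $v_m$ and hence all of its smaller neighbors in $C_2$, and nothing forces a neighbor into $C_1$ (which may well be just $\{v_1\}$). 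You must instead argue as in your $k\geq s$ case: the two norm-$\geq 3$ neighbors $v_2$ and $v_j$ are either adjacent (heavy triangle, all three norms being $\geq 3$), or non-adjacent in the same component of $G(S_{m-1})$ (cycle of length $>3$), or in separate components (which connects $G(S_m)$ and hence $G(S)$, since $G(S_{m-1})$ has at most two components by the argument of Lemma \ref{l: two summands}).

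Second, and more seriously, you never actually derive the form $v_{s+1}=-e_{s+1}+e_s+e_{s-1}$, which is part of the lemma's conclusion. Ruling out $|v_{s+1}|=2$ only shows that the second neighbor is $v_{s+1}$ with $|v_{s+1}|\geq 3$; since $s+1=k+2$, your own classification in the first step leaves open $v_{s+1}=-e_{s+1}+e_s+\cdots+e_l$ for any $l\leq s-1$. The subsequent triangle $(v_{s-1},v_{s+1},v_m)$ presupposes $v_{s-1}\sim v_{s+1}$, which holds exactly when $l=s-1$, so that step is circular as written. The gap is easy to fill with tools you already use: if $l\leq s-2$, then $\L v_{s+1},v_s\R=s-l-1\geq 1$, so $v_{s+1}\sim v_s\in C_2$, while $v_{s+1}\sim v_m\in C_1$ (as $v_m\sim v_{s-1}$), a contradiction. (The paper gets this for free by first forcing the two smaller neighbors to be adjacent and then using adjacency to the norm-$2$ vector $v_{s-1}$ to pin down $\min(\supp(v_{s+1}))=s-1$.) With these two repairs your argument goes through.
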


\begin{proof}

Suppose that $v_m \sim v_i, v_j$ with $i < j < m$.  As in the proof of Lemma \ref{l: just right}, the vectors $v_i$ and $v_j$ cannot belong to separate components of $G(S_{m-1})$, for then $G(S)$ would be connected.  Furthermore, $v_i \sim v_j$, since otherwise $G(S_m)$ would contain a cycle of length $> 3$.  By Lemma \ref{l: cycle}, it follows that $|v_l| = 2$ for all $l \leq i$.  Hence $s \geq i+1$.  From $v_j \sim v_i$, it follows that $j \geq s+1$ and $v_j = -e_j + e_{j-1} + \cdots + e_i$.  As $\L v_j, v_m \R \leq 1$, it follows that $j = i+2$.  Thus, $s = i+1$.  As $i$ and $j$ are uniquely determined, it follows that $v_m \not\sim v_k$ for all $s+2 < k < m$, so $|v_k| = 2$ for all such $k$.

\end{proof}

The following definition is essential to describe the way in which we build families of standard bases.  The terminology borrows from \cite[Definition 3.4]{lisca:lens1}, although its meaning differs somewhat.

\begin{defin}\label{d: expansion}

We call $S_m$ an {\em expansion} of $S_{m-1}$ if $v_m = -e_m+ e_{m-1} + \cdots + e_k$ for some $k$, $|v_i| = 2$ for all $k+ 1 < i < m$, and $| v_{k+1} | \geq 3$ in case $m > k+1$.

\end{defin}

\begin{lem}\label{l: v_m of degree 1}

Suppose that $v_m$ has a single smaller neighbor.  Then either 

\begin{enumerate}

\item $s = 2$, $|v_k|=2$ for $s < k < m$, and $v_m = -e_m + e_{m-1} + \cdots + e_0$;

\item $|v_k| = 2$ for $s < k < m$ and $v_m = -e_m + e_{m-1} + \cdots + e_s$; or

\item $S_m$ is an expansion of $S_{m-1}$.

\end{enumerate}

\end{lem}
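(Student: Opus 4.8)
The plan is to read off the smaller neighbors of $v_m$ directly and then split on $k := \min(\supp(v_m))$. Since every element of $S$ is just right (Lemma \ref{l: just right}), we have $v_m = -e_m + e_{m-1} + \cdots + e_k$, and each $v_i = -e_i + e_{i-1} + \cdots + e_{k_i}$ with $k_i = \min(\supp(v_i))$; moreover $G(S) = \widehat{G}(S)$, so adjacency to $v_m$ is the same as pairing nontrivially with it. For $i < m$ one has $\L v_i, v_m \R = \sum_{l=k}^{m-1} v_{il}$, and I would record the resulting dichotomy: the supports are disjoint (so $v_i \not\sim v_m$) when $i < k$; $\L v_k, v_m \R = -1$ (so $v_k \sim v_m$) when $k \geq 1$; $\L v_{k+1}, v_m \R = 0$, so $v_{k+1}$ is never a smaller neighbor; and for $k+2 \leq i \leq m-1$ the pairing vanishes if $|v_i| = 2$ and is positive if $|v_i| \geq 3$. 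Hence the smaller neighbors of $v_m$ are $v_k$ (present exactly when $k \geq 1$) together with the norm-$\geq 3$ vectors among $v_{k+2}, \dots, v_{m-1}$.

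First I would treat $k = 0$. Then $v_m = -e_m + \cdots + e_0$ and the smaller neighbors are exactly the $v_i$ of norm $\geq 3$ with $i < m$. By Lemma \ref{l: two summands}, $v_s$ is one such, and a short check gives $2 \leq s \leq m-1$; the single-neighbor hypothesis then forces $v_s$ to be the only one, so $|v_i| = 2$ for every $i < m$ with $i \neq s$. Since $k = 0 \leq k_s$, the pairing is $\L v_s, v_m \R = |v_s| - 2 = s - 1$, and as $v_s, v_m$ are unbreakable of norm $\geq 3$, Lemma \ref{l: cool} forces $s - 1 \leq 1$, i.e. $s = 2$. This is conclusion (1).

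Next I would treat $k \geq 1$, where $v_k$ is the unique smaller neighbor, so $|v_i| = 2$ for all $k+2 \leq i \leq m-1$. If $|v_m| = 2$ or $|v_{k+1}| \geq 3$, then $v_m$ satisfies Definition \ref{d: expansion} on the nose, giving conclusion (3). Otherwise $|v_m| \geq 3$ (so $k \leq m-2$) and $|v_{k+1}| = 2$; then $v_{k+1}, \dots, v_{m-1}$ all have norm $2$, so $\sigma_k = \sigma_{k+1} = \cdots = \sigma_{m-1}$, and it remains to prove $k = s$ (which gives conclusion (2)). From $|v_i| = 2$ for $i < s$ (Lemma \ref{l: two summands}) I get $\sigma_0 = \cdots = \sigma_{s-1} = 1$ and hence $\sigma_s = s$, together with $m > s$. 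If $k < s$, then $s$ lies in the constant run $\{k,\dots,m-1\}$, forcing $\sigma_s = \sigma_k = 1$ against $\sigma_s = s \geq 2$. If $k > s$, then $v_k$ has a smaller neighbor $v_j$ --- namely $v_{k_k}$ when $k_k \geq 1$, and $v_s$ when $k_k = 0$ (since then $e_0 \in \supp(v_k)$ and $v_s$ has norm $\geq 3$ with $s < k$). In either case $\supp(v_j) \subseteq \{0,\dots,k-1\}$, so $v_j$ is disjoint from, hence nonadjacent to, both $v_{k+1} = -e_{k+1} + e_k$ and $v_m$; as $v_{k+1} \sim v_k$, $v_m \sim v_k$, and $v_{k+1} \not\sim v_m$, the quadruple $(v_k; v_j, v_{k+1}, v_m)$ induces a claw, contradicting Lemma \ref{l: no claw}. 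Thus $k = s$ and $v_m = -e_m + \cdots + e_s$, which is conclusion (2).

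The main obstacle is the subcase $k > s$: the contradiction requires exhibiting a third prong for the claw, i.e. a genuine smaller neighbor of $v_k$ supported entirely below index $k$, and this is exactly where the split between $k_k \geq 1$ and $k_k = 0$ (the latter appealing to $v_s$) is needed. The rest is bookkeeping with the coordinate formula for $\L v_i, v_m \R$ and matching the outcomes to Definition \ref{d: expansion} and the statement's three cases.
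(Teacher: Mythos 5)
Your proof is correct and follows essentially the same route as the paper's: split on $\min(\supp(v_m))$, use the pairing bound of Lemma \ref{l: cool} to force $s=2$ when that minimum is zero, and use the claw obstruction at $v_k$ to force $k=s$ in the non-expansion subcase. The only cosmetic difference is that you exhibit the third prong of the claw explicitly by cases on $\min(\supp(v_k))$, whereas the paper argues contrapositively that $v_k$ can have no smaller neighbor and hence must equal $v_s$.
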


\begin{proof}

Suppose first that $v_{m0} = 1$.  It follows by assumption on $v_m$ that $|v_k|=2$ for all $1 \leq k < n$ except for a single $v_i$, for which $|v_i | = 3$.  On the other hand, $| v_s | = s+1$.  It follows that $s = 2$, and (1) holds.

Thus, we may assume that $v_m = -e_m + e_{m-1} + \cdots + e_i$ for some $i > 0$.  Now $v_m \sim v_i$, so $| v_k | = 2$ for all $i+1 < k < m$.  Suppose that $i+1 < m$ and $| v_{i+1} | = 2$.  If $v_i \sim v_l$ with $l < i$, then $(v_i;  v_l, v_{i+1}, v_m)$ induces a claw, in contradiction to Lemma \ref{l: no claw}.  It follows that $i = s$ and (2) holds.

The remaining cases that $m = i+1$, or that $m > i+1$ and $|v_{i+1}| > 2$, both result in (3).

\end{proof}

\begin{defin}

Let $A_{s,m}$ denote the family of standard bases enumerated in Lemma \ref{l: v_m of degree 2} and $B_m$, $C_{s,m}$ the families enumerated in Lemma \ref{l: v_m of degree 1}(1) and (2), respectively.

\end{defin}

Combining Lemmas  \ref{l: v_m of degree 2} and  \ref{l: v_m of degree 1} and induction, we obtain the following structural result.

\begin{prop}\label{p: decomposable structure}

Suppose that a changemaker lattice is isomorphic to a sum of more than one linear lattice.  Then its standard basis is built by a sequence of (possibly zero) expansions to $A_{s,m}$, $B_m$, $C_{s,m}$, or $\varnothing$, for some $m > s \geq 2$. \qed

\end{prop}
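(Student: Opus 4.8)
My plan is to run a single downward scan through the standard basis, feeding each vector into Lemmas \ref{l: v_m of degree 2} and \ref{l: v_m of degree 1}, which between them classify every vector according to its number of smaller neighbors. Throughout I work inside the fixed identification of $L$ with a sum of two linear lattices, so that by Lemma \ref{l: just right} every $v_i$ is just right and $G(S) = \widehat{G}(S)$, and by Lemma \ref{l: two summands} there is a distinguished index $s > 1$ with $v_s = -e_s + \sum_{i=0}^{s-1} e_i$, $|v_i| = 2$ for $1 \le i < s$, and $v_s, v_1$ lying in the two different summands. The reason to scan rather than to peel off the top vector and recurse is that I never want to re-prove that a truncation $S_m$ spans a sum of linear lattices: the hypotheses of Lemmas \ref{l: v_m of degree 2} and \ref{l: v_m of degree 1} only concern the smaller neighbors of a single vector, and their conclusions describe $S_m$ outright, so they apply verbatim to any index $m$ of the fixed basis.

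First I record that every $v_m$ with $m > s$ has at least one smaller neighbor. Indeed, if $v_m$ had none, then $\min(\supp(v_m)) = 0$, since otherwise $v_m$ neighbors the basis vector indexed by $\min(\supp(v_m))$, and $v_m$ is not gappy, since otherwise $v_m \sim v_{k+1}$ for a gappy index $k$; hence $v_m = -e_m + e_{m-1} + \cdots + e_0$ with $|v_i| = 2$ for all $i < m$, which forces $|v_s| = 2$ and contradicts $|v_s| = s+1 \ge 3$. (This is the uniqueness already isolated in the proof of Lemma \ref{l: two summands}.) Now scan $m = n, n-1, \dots, s+1$ in turn. If $v_m$ has two or more smaller neighbors, Lemma \ref{l: v_m of degree 2} forces $S_m = A_{s,m}$, and I stop. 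If $v_m$ has exactly one smaller neighbor, Lemma \ref{l: v_m of degree 1} puts it into exactly one of three cases: case (1) gives $S_m = B_m$ and case (2) gives $S_m = C_{s,m}$, in either of which I stop; while case (3) asserts that $S_m$ is an \emph{expansion} of $S_{m-1}$ in the sense of Definition \ref{d: expansion}, in which case I record the expansion and pass to $m-1$.

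Because each of $A_{s,m}$, $B_m$, $C_{s,m}$ specifies all of $v_1, \dots, v_m$, stopping at the largest base-completing index $m_0$ (the first one met while scanning downward) exhibits $S$ as $S_{m_0}$ --- one of the three families --- followed by the expansions $v_{m_0+1}, \dots, v_n$ recorded during the scan, which is precisely the asserted conclusion; the indices below $m_0$ need no separate treatment, as they are pinned down by the family. The only way the scan fails to stop is to reach $v_s$, meaning every one of $v_{s+1}, \dots, v_n$ is an expansion; then the second summand is generated by $v_s$ together with a chain of expansions, and this degenerate seed is what the $\varnothing$ option absorbs. I would dispatch this boundary situation (in particular the minimal case in which the second summand has rank one) by a direct check.

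I expect the main obstacle to be exactly this bookkeeping at the bottom of the scan: confirming that the three families together with a single expansion move leave no intermediate possibility for a single-smaller-neighbor vector uncovered, and fitting the smallest configurations --- where $v_s$ is essentially the whole second summand --- cleanly into the stated list. The engine of the proof, namely the trichotomy by number of smaller neighbors, is supplied entirely by Lemmas \ref{l: v_m of degree 2} and \ref{l: v_m of degree 1}, so the residual content is organizational: verifying that those lemmas are legitimately applicable at every index of the fixed basis, and that ``halt at the first base-completing vector from the top'' does produce a well-defined base together with a tail of expansions.
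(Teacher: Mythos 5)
Your proposal is correct and is essentially the paper's own argument: the paper proves this Proposition in one line by ``combining Lemmas \ref{l: v_m of degree 2} and \ref{l: v_m of degree 1} and induction,'' which is precisely your downward scan organized top-down instead of inductively. Your added observation that no $v_m$ with $m > s$ can lack a smaller neighbor, and your handling of the all-expansions case bottoming out at $S_s$ (absorbed by the $\varnothing$ option), are exactly the details the paper leaves implicit.
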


In fact, we have established somewhat more: if a changemaker lattice is isomorphic to a linear lattice or a sum thereof, and $G(S_{n'})$ is disconnected for some $n' \leq n$, then $S_{n'}$ takes the form appearing in Proposition \ref{p: decomposable structure}.  We will utilize Proposition \ref{p: decomposable structure} in this stronger form on several occasions in Sections \ref{s: just right} - \ref{s: tight}.

We obtain vertex bases for the families in Proposition \ref{p: decomposable structure} as follows:

\begin{enumerate}

\item[$A_{s,m}$:]  $\{v_{m-1},\dots,v_{s+1},v_{s-1},\dots,v_1,-(v_m+v_1+\cdots+v_{s-1})\} \cup \{v_s\}$;

\item[$B_m$:] $\{ v_{m-1} \dots,v_2,-v_m\} \cup \{ v_1 \}$;

\item[$C_{s,m}$:] $\{ v_1, \dots, v_{s-1} \} \cup \{ v_{m-1}, \dots, v_s, v_m \}$.

\end{enumerate}

\noindent For expansion on $\varnothing$, the standard basis is, up to reordering, a vertex basis (cf. Proposition \ref{p: expansion}).

%%%%%%%
%%%%%%%
%%%%%%%
%%%%%%%
%%%%%%%

\section{All vectors just right}\label{s: just right}

Before proceeding further, we briefly comment on the purpose of this and the next two sections, and establish some notation.  Just as Proposition \ref{p: decomposable structure} describes the structure of the standard basis for a changemaker lattice isomorphic to a sum of more than one linear lattice, our goal in Sections \ref{s: just right} - \ref{s: tight} is to produce a comprehensive collection of {\em structural Propositions} that do the same thing for the case of a single linear lattice.  In each Proposition we enumerate a specific family of standard bases, and in Section \ref{s: cont fracs} we verify that each basis does, in fact, span a linear lattice by converting it into a vertex basis.  

{\em A posteriori}, each standard basis $S = \{v_1,\dots,v_n\}$ contains at most one tight vector and two gappy vectors.  We always denote the tight vector by $v_t$.  We denote the gappy vector with the smaller index by $v_g$, which always takes the form $e_k + e_j + e_{j+1} + \cdots + e_{g-1}-e_g$ with $k < j+1$.  When there are two gappy vectors (\ref{p: breakable 1}(1), \ref{p: breakable 2}(2), \ref{p: breakable 3}(1,2)), we specifically notate the one with the larger index.  We write $s = \min \{ i \; | \; |v_i| > 2 \}$ when there is no tight vector, and $s =\min  \{ i > t \; | \; |v_i| > 2 \}$ when there is one.  Otherwise, every standard basis element $v_i$ is just right, so is completely determined by $i$ and its norm, which we report iff $|v_i| \geq 3$.  In \ref{p: gappy structure}(2) and \ref{p: breakable 1}-\ref{p: breakable 3} we report some families of standard bases {\em up to truncation}.  Thus in \ref{p: gappy structure}(2), we may truncate by taking $n = g$ and disregarding $v_i$ for $i \geq g+1$.

\noindent {\em Example.} The first structural Proposition \ref{p: just right 1}(1) reports the family of standard bases parametrized by $s \geq 2$, where 
\begin{itemize}
\item $v_i = e_{i-1}-e_i$ for $i=1,\dots,s-1$;
\item  $v_s = e_0 + \cdots + e_{s-1} - e_s$;
\item $v_{s+1} = e_{s-1} + e_s - e_{s+1}$;
\item $v_{s+2} = e_s + e_{s+1} - e_{s+2}$; and
\item $v_n = v_{s+3} = e_{s-1} + e_s + e_{s+1} + e_{s+2} - e_{s+3}$.
\end{itemize}

\medskip

For the remainder of this section, assume that $L$ is a changemaker lattice isomorphic to a linear lattice, and that every element of $S$ is just right (hence also unbreakable).

%%%
%%%
%%%

\subsection{$G(S)$ contains a triangle.}  A {\em sun} is a graph consisting of a triangle $\Delta$ on vertices $\{a_1,a_2,a_3\}$ together with three vertex-disjoint paths $P_1,P_2,P_3$ such that $a_i$ is an endpoint of $P_i$, $i = 1,2,3$.  The other endpoints of the $P_i$ are the {\em extremal vertices} of the graph.

\begin{lem}\label{l: sun}

If $G(S)$ contains a triangle $\Delta$, then $G(S)$ is a sun and $V(\Delta) = \{v_i,v_{i+2},v_m\}$ for some $i+2 < m$.  Furthermore, $|v_l| = 2$ for all $v_l$ along the path containing $v_i$.

\end{lem}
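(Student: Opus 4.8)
The plan is to first locate the triangle precisely, then reduce the global assertion to the uniqueness of the triangle, and finally read off the norm-$2$ path; the uniqueness step is where the real work lies.

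First I would identify $\Delta$. By Corollary~\ref{c: cycle} it induces a complete subgraph on three vectors, and by Lemma~\ref{l: cycle} exactly one of them, $v_i$ with $i$ smallest, has norm $2$, the other two $v_j,v_k$ ($i<j<k$) having norm $\geq 3$; moreover $\min(\supp(v_j))=\min(\supp(v_k))=i$ and $|v_l|=2$ for all $l\leq i$. Writing $v_j=-e_j+\sum_{l=i}^{j-1}e_l$ and $v_k=-e_k+\sum_{l=i}^{k-1}e_l$, I compute $\langle v_j,v_k\rangle=(j-i)-1$. Since both are unbreakable of norm $\geq 3$, Lemma~\ref{l: cool} forces $|\langle v_j,v_k\rangle|\leq 1$, while $|v_j|\geq 3$ gives $j-i\geq 2$; hence $j=i+2$, $v_{i+2}=-e_{i+2}+e_{i+1}+e_i$, and with $m=k$ we get $V(\Delta)=\{v_i,v_{i+2},v_m\}$, $i+2<m$.

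Next I would reduce the sun claim to a single cycle. As $L$ is an indecomposable linear lattice and every $v_l$ is unbreakable, $\widehat{G}(S)=G(S)$ is connected, it is claw-free by Lemma~\ref{l: no claw}, and by Corollary~\ref{c: cycle} every cycle in it is a triangle. Granting that $\Delta$ is the \emph{only} cycle, $G(S)$ is unicyclic: deleting the edges of $\Delta$ leaves a claw-free forest, and a claw-free tree is a path. Claw-freeness controls the attachment, since at each vertex $a$ of $\Delta$ the two remaining triangle vertices are adjacent: $a$ can carry at most one forest-neighbor, because two non-adjacent ones would complete a claw with a triangle vertex and two adjacent ones a second triangle. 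Thus exactly one path hangs off each vertex of $\Delta$, the three paths are vertex-disjoint, and $G(S)$ is a sun.

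The hard part will be the uniqueness of the triangle. Suppose a second triangle $\Delta'$; by the first step it is $\{v_{i'},v_{i'+2},v_{m'}\}$ with norm-$2$ vertex $v_{i'}$, and Lemma~\ref{l: cycle} gives $|v_l|=2$ for $l\leq i'$; take $i<i'$. If $i'\geq i+2$ then $|v_{i+2}|=2$, contradicting $|v_{i+2}|=3$, so $i'=i+1$. If $i\geq 2$, then $v_{i-1}$ and $v_{i+1}$ are norm $2$, and a direct check of the pairings shows $\{v_i;v_{i-1},v_{i+1},v_{i+2}\}$ is an induced claw, against Lemma~\ref{l: no claw}. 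If $i=1$, then $v_3=v_{i+2}$ and $v_m$ (both of minimal support $1$) and $v_4=v_{i'+2}$ (minimal support $2$) are three norm-$\geq 3$ vectors with $v_3\sim v_m$, joined by the paths $v_3-v_1-v_2-v_4$ and $v_m-v_1-v_2-v_4$ avoiding $v_m$ and $v_3$ respectively; hence none separates the other two, a heavy triple forbidden by Lemma~\ref{l: no triangle}. I expect this claw/heavy-triple dichotomy to be the crux, as it is where the changemaker relations press hardest against the separation hypothesis.

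Finally I would check the norm-$2$ path. The path through $v_i$ runs along the chain $v_i,v_{i-1},\dots,v_1$, all of norm $2$ by $|v_l|=2$ for $l\leq i$. I would forbid any branch off this chain into a norm-$\geq 3$ vector, and forbid $v_{i+1}$ as a competing norm-$2$ neighbor of $v_i$, in each instance by exhibiting an explicit claw; the boundary case $i=1$, where one must rule out a norm-$\geq 3$ neighbor of $v_2$, is settled by the same heavy triple as above. Hence every vector on the path containing $v_i$ has norm $2$, finishing the proof.
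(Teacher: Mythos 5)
Your overall strategy matches the paper's: pin down the triangle as $\{v_i,v_{i+2},v_m\}$ using Lemma \ref{l: cycle} and the pairing bound of Lemma \ref{l: cool}, prove the triangle is unique, deduce the sun structure from claw-freeness and connectivity, and then control the path through $v_i$. Your identification of the triangle and your derivation of the sun from uniqueness are sound; the paper leaves the latter step implicit, and your local degree count at the triangle vertices is the right way to make it precise (though note that ``deleting the edges of $\Delta$ leaves a claw-free forest'' is not literally valid, since claw-freeness concerns induced subgraphs of $G(S)$ rather than of an edge-deleted graph -- your subsequent argument avoids relying on it).

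There are two gaps. First, in the uniqueness step you ``take $i<i'$,'' which silently excludes the case $i=i'$: two distinct triangles could share the base edge $\{v_i,v_{i+2}\}$ and differ only in the apex, $m\neq m'$. This must be ruled out separately; the paper does so by observing that $v_m=-e_m+\cdots+e_i$ and $v_{m'}=-e_{m'}+\cdots+e_i$ would give $\L v_m,v_{m'}\R\geq 2$, contradicting Lemma \ref{l: cool}. (Your treatment of $i'=i+1$ is correct: the induced claw $(v_i;v_{i-1},v_{i+1},v_{i+2})$ for $i\geq 2$ and the heavy triple $(v_3,v_4,v_m)$ for $i=1$ both check out; the paper instead uses the single heavy triple $(v_{i+2},v_{i+3},v_m)$, which works uniformly for all $i$.) Second, your argument that every vector on the path containing $v_i$ has norm $2$ is only sketched, and the proposed tool -- ``exhibiting an explicit claw'' -- is not clearly adequate for a norm-$\geq 3$ vector occurring beyond $v_1$ on that path, where no obvious claw presents itself. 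The clean argument, which you invoke only in the boundary case $i=1$, works in every case at once: any $v_l$ of norm $\geq 3$ on the path through $v_i$ forms a heavy triple with $v_{i+2}$ and $v_m$, since $v_{i+2}\sim v_m$ directly and $v_l$ reaches each of them through $v_i$ while avoiding the other. With these two repairs your proof is complete and essentially coincides with the paper's.
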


\begin{proof}

Choose a triangle $\Delta \subset G(S)$ with $V(\Delta) = \{ v_i,v_j,v_m \}$, $i < j < m$.  By Lemma \ref{l: cycle}, $|v_l| = 2$ for all $l \leq i$ and $|v_j|,|v_m| \geq 3$.  Since $v_j$ and $v_m $ are just right, we have $v_j = -e_j+\cdots+e_i$ and $v_m = -e_m+ \cdots + e_i$.  Since $\L v_m, v_j \R \leq 1$, it follows that $j = i+2$.

Suppose by way of contradiction that $\Delta' \subset G(S)$ were another triangle with $V(\Delta') = \{ v_{i'},v_{j'},v_{m'} \}$, $i' < j' < m'$.  Then $|v_l| = 2$ for all $l \leq i'$ and $j' = i'+2$, so $i' \in \{i-1,i,i+1\}$.  If $i' = i$, then $\L v_{m'},v_m \R \geq 2$, which cannot occur.  If $i' = i + 1$, then $(v_{i+3},v_{i+1},v_i)$ is a path in $G(S)$, which implies that $(v_{i+2},v_{i+3},v_m)$ forms a heavy triple, a contradiction.  By symmetry, $i = i' + 1$ cannot occur either.

Consequently, $G(S)$ contains a unique triangle $\Delta$.  Furthermore, $G(S)$ is claw-free by Lemma \ref{l: no claw}.  It follows that $G(S)$ is a sun.  If some vector $v_l$ on the path containing $v_i$ had norm $\geq 3$, then $(v_l,v_k,v_m)$ forms a heavy triple, so this does not occur.

\end{proof}

\begin{prop}\label{p: just right 1}

Suppose that every element in $S$ is just right, and that $G(S)$ contains a triangle.  Then either

\begin{enumerate}

\item $n = s+3$, $|v_s| = s+1$, $|v_{s+1}|=|v_{s+2}|=3$, and $|v_{s+3}|=5$;
% XII negative 

\item $n = s+3$, $|v_s| = s+1$, $|v_{s+1}|=3$, and $|v_{s+2}| = |v_{s+3}|=4$; or
% XI negative

\item $|v_s| = s = 3$, $|v_m| = m$ for some $m > 3$, $|v_i| = 2$ for all $i < m$, $i \ne 3$, and $S$ is built from $S_m$ by a sequence of expansions.
% II-

\end{enumerate}

\end{prop}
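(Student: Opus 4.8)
The plan is to exploit Lemma \ref{l: sun}, which already pins down a great deal of the structure: since $G(S)$ contains a triangle, it is a sun with a unique triangle on vertices $\{v_i, v_{i+2}, v_m\}$, and the path containing $v_i$ consists entirely of norm-$2$ vectors. I would first use Lemma \ref{l: cycle} together with Lemma \ref{l: index s} to locate the index $s$: the norm-$2$ path through $v_i$ forces $|v_l| = 2$ for all $l \le i$, so in fact $i = s-1$ and $v_s = -e_s + e_{s-1} + \cdots + e_0$ is the first vector of norm $\ge 3$, with $|v_s| = s+1$ in cases (1) and (2). The triangle relation $j = i+2$ from Lemma \ref{l: sun} then identifies $v_{s+1}$ and the apex $v_m$ as the two larger vertices of the triangle.

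The heart of the argument is the case analysis on the remaining two paths $P_2, P_3$ of the sun emanating from $v_{s+1}$ and $v_m$. The claw-freeness (Lemma \ref{l: no claw}) and the prohibition on heavy triples (Lemma \ref{l: no triangle}) are the main constraints: any vector of norm $\ge 3$ lying on $P_2$ or $P_3$ away from the triangle would, together with $v_{s+1}$ or $v_m$ and the third triangle vertex, form a heavy triple. So I would argue that each of $P_2, P_3$ can carry at most a bounded number of norm-$\ge 3$ vectors, and then determine their precise norms by examining the pairings $\L v_{s+1}, v_{s+2} \R$, $\L v_{s+2}, v_{s+3} \R$, etc., using Lemma \ref{l: cool} (each such pairing is $\pm 1$ for consecutive unbreakable intervals) together with the changemaker constraint $\sigma_j \le \sigma_0 + \cdots + \sigma_{j-1} + 1$ and the maximality defining the standard basis (Lemma \ref{l: basic}). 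In the ``short'' cases (1) and (2), one checks that $P_2$ and $P_3$ are degenerate (the sun closes up quickly at $n = s+3$), and the three admissible norm patterns $(3,3,5)$ and $(3,4,4)$ for $(|v_{s+1}|, |v_{s+2}|, |v_{s+3}|)$ fall out from balancing the discriminant or, more directly, from the requirement that the final vector $v_{s+3}$ pair correctly with both $v_{s+1}$ and $v_m$ while remaining just right.

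Case (3) is where $m$ can be large: here $|v_s| = s = 3$, the apex $v_m$ has norm $m$, and the sun has a long norm-$2$ path absorbed into the expansions. I would handle this by invoking Proposition \ref{p: decomposable structure} in the stronger form noted after its statement — namely, if $G(S_{n'})$ is disconnected for some $n' \le n$ the prefix takes a recognized shape — to argue that $S$ is built from $S_m$ by a sequence of expansions in the sense of Definition \ref{d: expansion}, each expansion appending a vector $-e_j + e_{j-1} + \cdots + e_k$ with intervening norm-$2$ vectors. The expansions preserve the linear-lattice structure and do not create new heavy triples or claws, so the induction closes. \textbf{The main obstacle} I anticipate is the bookkeeping that separates cases (1) and (2) from (3): one must show that once $|v_s| = s+1 > s$ (equivalently $v_{s,0} = 1$ with $s \ge 3$) the basis is forced to terminate at $n = s+3$, whereas $|v_s| = s = 3$ is precisely the threshold that permits the unbounded expansion family. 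Teasing apart this dichotomy cleanly — ruling out norm-$\ge 3$ vectors beyond $v_{s+3}$ in the first situation while allowing them in the second — will require careful use of Lemma \ref{l: cool} and the gap/maximality conditions, and is the step most prone to hidden subcases.
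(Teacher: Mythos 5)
Your skeleton matches the paper's: invoke Lemma \ref{l: sun}, split on whether $|v_s|$ equals $s+1$ or $s$ (equivalently, on whether the triangle's norm-two vertex is $v_{s-1}$ or $v_{s-2}$), and treat the two bounded cases separately from the expansion family. But the two mechanisms you propose for closing the bounded cases do not work, and the step you yourself flag as ``the main obstacle'' is in fact the entire content of the proof. First, heavy triples do not bound the number of norm-$\geq 3$ vectors on the paths $P_2,P_3$: if $w$ lies on the path emanating from $v_m$, then $v_m$ \emph{separates} $w$ from $v_{s+1}$ in $G(\overline{S})$, so $(w,v_{s+1},v_m)$ fails Definition \ref{d: heavy}, and the same separation defeats every triple formed from two path-vertices and a triangle vertex. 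Second, ``balancing the discriminant'' is not an available move; no discriminant count enters this argument.

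What actually drives cases (1) and (2) is an observation you omit: when $|v_s|=s+1$ one computes $\L v_s, v_l\R = 0$ for every $l \in \{1,\dots,s-1,s+1\}$ and also $\L v_s,v_m\R = 0$, so $v_s$ is isolated from the triangle and its norm-two path, and connectivity of the sun forces a new vector $v_j$ with $\L v_j,v_s\R \ne 0$. Ruling out $\min(\supp(v_j)) < s$ (which produces a genuine heavy triangle on $(v_{s+1},v_j,v_m)$) gives $v_j = -e_j+\cdots+e_s$, whence $\L v_j,v_m\R = \min\{j,m\}-(s+1)$; Lemma \ref{l: cool} caps this at $1$, forcing $\min\{j,m\}=s+2$. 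The two choices $j=s+2$ or $m=s+2$ are precisely cases (1) and (2), and a final pairing computation (any further vector must attach at an extremal vertex of the resulting sun, forcing $\min(\supp(v_l)) \leq s+1$ and hence $\L v_l,v_m\R \geq 1$) shows $n=s+3$. Without this chain the termination is unproved. Finally, for case (3) you cannot invoke the strengthened form of Proposition \ref{p: decomposable structure}: in that configuration $G(S_{n'})$ is connected for every $n'$, so its hypothesis never holds. The correct route is direct: show each $v_j$ with $j>m$ has a unique smaller neighbor $v_k$ with $|v_{k+1}|\geq 3$ (else a claw at $v_k$), i.e.\ that $S_j$ \emph{must be} an expansion of $S_{j-1}$ --- a necessity argument, not the sufficiency check (``expansions preserve the structure'') that you describe.
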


\begin{proof}

We apply Lemma \ref{l: sun}, keeping the notation therein.

\noindent (I) {\em Suppose that $|v_{i+1}| > 2$.}

\noindent In this case, $s = i+1$ and $v_s$ has no smaller neighbor, so $|v_s| = s+1$.  Since $G(S)$ is connected, $v_s$ has some neighbor $v_j = -e_j+\cdots+e_l$.  Note that $v_s \not\sim v_{s+1},v_m$, so  $j \ne s+1,m$.  If $l < s$, then in fact $l < s-1$ and $(v_{s+1},v_j,v_m)$ forms a heavy triangle, so it follows that $l = s$.  Since $1 \geq \L v_j,v_m \R = \min\{m,j\} - (s+1) \geq 1$, it follows that $\min\{m,j\} = s+2$.

\noindent (I.1) {\em Suppose that $j = s+2$.}

\noindent The subgraph $H$ of $G(S)$ induced on $\{v_1,\dots,v_{s+2},v_m\}$ is a sun with extremal vertices $v_1,v_{s},v_{s+1}$.  We claim that $G(S) = H$.  For if not, then there exists some vector $v_l = -e_l + \cdots + e_k$ with a single edge to $H$, meeting it in an extremal vertex.  This forces $k \leq s+1$, but then $\L v_l, v_m \R \geq 1$, a contradiction.  It follows that $n=m = s+3$, and case (1) results.

\noindent (I.2) {\em Suppose that $m = s+2$.}

\noindent The subgraph $H$ induced on $\{v_1,\dots,v_{s+2},v_j\}$ is a sun with extremal vertices $v_1,v_s,v_{s+1}$ like before.  The argument just given (with $v_j$ in place of $v_m$) applies to show that $G(S) = H$.  It follows that $n = j = s+3$, and case (2) results.

\noindent (II) {\em Suppose that $|v_{i+1}| = 2$.}

\noindent In this case, $s = i+2 = 3$ and $|v_3| = 3$.  If $|v_j| \geq 3$ for some $3 < j < m$ chosen smallest, then the subgraph induced on $V' = \{v_1,\dots,v_{j-1},v_m\}$ is a sun in which $v_j$ has multiple neighbors, which cannot occur in the sun $G(S)$.  Thus, $|v_j| = 2$ for all $3 < j < m$.  Next, choose any $v_j = -e_j+\cdots+e_k$ with $j > m$.  Then $G(S_{j-1})$ is a sun, so $v_j$ has exactly one smaller neighbor.  It easily follows that $k \geq 2$, $v_j \sim v_k$, and $v_k$ has some smaller neighbor $v_l$.  If $|v_j| \geq 3$, then $|v_{k+1}| \geq 3$, since otherwise $(v_k;v_l,v_{k+1},v_j)$ induces a claw.  It follows that $G(S_j)$ is an expansion of $G(S_{j-1})$.  By induction, $G(S)$ is a sequence of expansions applied to $G(S_m)$, and case (3) results.

\end{proof}

%%%
%%%
%%%

\subsection{$G(S)$ does not contain a triangle.}

In this case, $G(S)$ is a path.

\subsubsection{Some vertex has multiple smaller neighbors.}   Suppose that $v_m \in S$ has multiple smaller neighbors.  Since $G(S)$ is a path, it follows that $G(S_{m-1})$ consists of a union of two paths and that $v_m$ is adjacent precisely to one endpoint of each.  Therefore, $m$ is the minimal index for which $G(S_m)$ is connected, which establishes that $m$ is unique.

\begin{lem}\label{l: little}

Suppose that every element in $S$ is just right, $G(S)$ does not contain a triangle, $v_m \in S$ has multiple smaller neighbors, and $v_{m-1}$ is not an endpoint of $G(S)$.  Then $m = n$.

\end{lem}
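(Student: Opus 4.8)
The plan is to argue by contradiction: assume $m < n$ and produce a vertex of degree three in the path $G(S)$. First I would fix the ambient picture. Since $L$ is a single linear lattice it is indecomposable (Corollary \ref{c: linear irred}(2)), so $\widehat{G}(S)$ is connected; as every $v_i$ is just right, $G(S) = \widehat{G}(S)$ by Lemma \ref{l: cool}. Being connected, triangle-free and claw-free (Lemma \ref{l: no claw}), $G(S)$ is a single path. In this path $v_m$ has its two smaller neighbours $v_a \in C_1$, $v_b \in C_2$ as its only neighbours, where $C_1, C_2$ are the two components of $G(S_{m-1})$ that $v_m$ merges. In particular $v_m$ has degree two, and $|v_m| \geq 3$: a norm-two just right vector $e_{m-1} - e_m$ meets only $v_{m-1}$ among smaller vectors, so it could not have two smaller neighbours. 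Write $v_m = -e_m + e_{m-1} + \cdots + e_{k_m}$, so $k_m \leq m-2$.

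Next I would pin down $v_{m+1}$. Because $v_m$ already has degree two, no larger vector meets it, so $v_{m+1} \not\sim v_m$. Writing $v_{m+1} = -e_{m+1} + e_m + \cdots + e_k$ and computing $\L v_{m+1}, v_m \R = (m-1) - \max(k, k_m)$, the vanishing of this pairing together with $k_m \leq m-2$ forces $k = m-1$, i.e.\ $v_{m+1} = -e_{m+1} + e_m + e_{m-1}$. Consequently $\L v_{m+1}, v_{m-1} \R = -1$, so $v_{m+1} \sim v_{m-1}$.

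Now I would invoke the hypothesis that $v_{m-1}$ is not an endpoint, i.e.\ $\deg v_{m-1} = 2$; one of its two neighbours is $v_{m+1}$. Suppose $v_m \sim v_{m-1}$. Then the two neighbours of $v_{m-1}$ are exactly $v_m$ and $v_{m+1}$, so $v_{m-1}$ has no smaller neighbour and is therefore the distinguished vector of Lemma \ref{l: two summands}, namely $v_{m-1} = -e_{m-1} + \sum_{i=0}^{m-2} e_i$ with all lower vectors of norm two; a short computation then shows $v_m \sim v_{m-1}$ forces $k_m = m-3$ and makes $v_m$'s second smaller neighbour an \emph{interior} vertex of the norm-two chain $C_2$, a degree-three vertex. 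Hence $v_m \not\sim v_{m-1}$. Computing $\L v_m, v_{m-1} \R = (m-2) - \max(k_m, k_{m-1})$, its vanishing gives $\max(k_m, k_{m-1}) = m-2$; the choice $k_m = m-2$ would make $v_{m-2}$ the unique smaller neighbour of $v_m = -e_m + e_{m-1} + e_{m-2}$, contradicting that $v_m$ merges two components, so $k_{m-1} = m-2$, that is $v_{m-1} = e_{m-2} - e_{m-1}$ has norm two (with unique smaller neighbour $v_{m-2}$), and $k_m \leq m-3$.

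The crux is to contradict this last configuration. Since $S_{m-1}$ is disconnected, the strong form of Proposition \ref{p: decomposable structure} determines its shape: the component containing $v_{m-1}$ is pinned down, and the coordinate ranges of $C_1, C_2$ are explicit. Because $v_m \not\sim v_{m-1}$ yet $v_m$ meets the opposite endpoint $v_a$ of $C_1$, its support $[k_m, m]$ reaches strictly below $v_{m-1}$ and sweeps across its whole component; reading off the norms along the chain and invoking the bound $|\L v_i, v_j \R| \leq 1$ of Lemma \ref{l: cool} to forbid large pairings, the bottom index $k_m$ is forced to coincide with the top coordinate of a norm-two vector lying in the interior of the other component, which then acquires $v_m$ as a third neighbour, against the maximum degree two of $G(S)$. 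I expect this final step — controlling $\supp(v_m)$ so that reaching $v_a$ is simultaneously incompatible with $v_m \not\sim v_{m-1}$ and with degree two — to be the main obstacle, and it is where the explicit component structure from Proposition \ref{p: decomposable structure} and the coordinate bookkeeping must be carried out in full.
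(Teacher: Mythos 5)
Your opening moves are sound and track the paper's computation closely: $G(S)=\widehat{G}(S)$ is a path, $v_m$ is interior with $k_m\le m-2$, and $\L v_{m+1},v_m\R=0$ forces $v_{m+1}=-e_{m+1}+e_m+e_{m-1}$, hence $v_{m+1}\sim v_{m-1}$. The gap is in what follows. In the case $v_m\sim v_{m-1}$ you claim that $v_m$'s second smaller neighbour $v_{m-3}$ is an interior vertex of the norm-two chain; that holds only for $m\ge 5$. For $m=4$ the second smaller neighbour is $v_1$, an endpoint of the chain $v_1-v_2$, and no contradiction arises. This is not a reparable oversight: the configuration $v_1=-e_1+e_0$, $v_2=-e_2+e_1$, $v_3=-e_3+e_2+e_1+e_0$, $v_4=-e_4+e_3+e_2+e_1$, $v_5=-e_5+e_4+e_3$ (i.e.\ $\sigma=(1,1,1,3,5,8)$) consists entirely of just right vectors and spans the linear lattice $\Lambda(101,71)$ with vertex basis $\{v_2,v_1,v_4,-v_3,-v_5\}$ of norms $(2,2,4,4,3)$ --- it is the $m=4$, $n=5$ member of Proposition \ref{p: just right 2}(5). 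Here $G(S)$ is the triangle-free path $v_2-v_1-v_4-v_3-v_5$, $v_4$ has the two smaller neighbours $v_1,v_3$, and $v_3=v_{m-1}$ has degree two in $G(S)$. So under your literal reading of ``$v_{m-1}$ is not an endpoint of $G(S)$'' the statement is false with $m=4<n=5$, and the ``crux'' you defer in the remaining case cannot be carried out either.

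The resolution is that the hypothesis must be read the way the paper uses it: in each application one verifies that $v_{m-1}$ already has two neighbours in $G(S_m)$, i.e.\ that it is an \emph{interior vertex of the path $G(S_m)$}. (That is strictly stronger than not being an endpoint of $G(S)$; the example above is exactly one where $v_{m-1}$ is an endpoint of $G(S_m)$ yet interior to $G(S)$ because $v_{m+1}$ attaches to it.) With that hypothesis your argument terminates immediately: $v_{m+1}\sim v_{m-1}$ gives $v_{m-1}$ a third neighbour, contradicting that $G(S)$ is a path, and no case division on whether $v_m\sim v_{m-1}$ is needed. This is the same calculation as the paper's, run in the contrapositive direction: the paper observes that the unique smaller neighbour of $v_{m+1}$ is an endpoint of $G(S_m)$, hence has index $j\le m-2$, hence $|v_{m+1}|\ge 4$ and $\L v_{m+1},v_m\R\ge 1$, contradicting that $v_m$ is interior.
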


\begin{proof}

Suppose by way of contradiction that $n > m$, and consider $v_{m+1}$.  Its unique smaller neighbor $v_j$ is an endpoint of the path $G(S_m)$, and $j < m-1$ by hypothesis.  Therefore, $|v_{m+1}| \geq 4$, but this implies that $\L v_{m+1}, v_m \R \geq 1$, a contradiction.

\end{proof}

\begin{prop}\label{p: just right 2}

Suppose that every element in $S$ is just right, $G(S)$ does not contain a triangle, and some vector $v_m \in S$ has multiple smaller neighbors.  Then either

\begin{enumerate}

\item $m=n=4$, $|v_2| = |v_3| = 3$, and $|v_4| = 5$;
% XII special case

\item $m=n=4$, $|v_2|=3$, and $|v_3|=|v_4|=4$;
% XI special case

\item $m=n = s+3$, $|v_s| = s+1$, $|v_{s+2}|=3$, and $|v_{s+3}|=5$;
% IX positive

\item $m=n = s+3$, $|v_s| = s+1$, and $|v_{s+2}| = |v_{s+3}| = 4$; or 
% X positive

\item $s=3$, $|v_3|=4$, $|v_m|= m > 3$, and $|v_{m+1}| =3$ in case $n > m$.
% IV(b)- \cap V(a)-

\end{enumerate}

\end{prop}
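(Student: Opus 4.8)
The plan is to read off the five families from the local picture at $v_m$ inside the path $G(S)$. Under the standing hypotheses $G(S)$ is a connected path: connectivity holds because $L$ is an indecomposable linear lattice and, every basis vector being just right hence unbreakable (Lemma \ref{l: change unbreakable}), one has $G(S) = \widehat{G}(S)$; triangle-freeness is assumed, claw-freeness (Lemma \ref{l: no claw}) bounds every degree by $2$, and Lemma \ref{l: cycle} rules out cycles. As recorded just before the statement, $v_m$ then has exactly two smaller neighbors $v_i, v_j$ ($i<j<m$); these are non-adjacent (else they complete a triangle with $v_m$) and so are the endpoints, one in each of the two path-components $P_1 \ni v_1$ and $P_2$ of $G(S_{m-1})$, that $v_m$ splices together, and $m$ is the least index making $G(S_m)$ connected.

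First I would fix normal forms: the norm-$2$ vectors below $v_s$ are $v_l = e_{l-1}-e_l$, $l<s$, and by Lemma \ref{l: index s} the first vector $v_s$ of norm $\geq 3$ is just right with $|v_s|\in\{s,s+1\}$. Writing $v_m = -e_m + e_{m-1}+\cdots+e_k$ with $k=\min(\supp v_m)$, the key computation classifies, via the just-right shape together with the bound $|\langle v_l, v_m\rangle|\le 1$ of Lemma \ref{l: cool}, which smaller $v_l$ can neighbor $v_m$: either $l=k$, giving the weight $-1$ edge (any norm $\geq 2$ is admissible); or $\min(\supp v_l) \geq k$ with $|v_l|=3$, giving a weight $+1$ edge; or $k = l-2$ with $|v_l|\geq 4$, again weight $+1$. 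A smaller vector of norm $\geq 4$ reaching down to $k$ would pair with weight $\geq 2$, which is impossible; this already forces $k=0$ to imply $s=2$, since otherwise $v_s$ (norm $\geq 4$, minimal support $0$) would over-pair with $v_m$. Matching the two neighbors against these three mechanisms, and invoking connectivity to require that $v_1$ actually attach — its neighbors are exactly the vectors with $\min(\supp)=1$ — pins $v_i, v_j$ down in terms of $s$, $m$, and whether the upper neighbor has norm $3$ or $4$.

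The main split is governed by Lemma \ref{l: little}, according to whether $v_{m-1}$ is an endpoint of $G(S)$. If it is not, then $m=n$ and $v_m$ terminates the path; here $P_2$ grows from $v_s$ (with $|v_s|=s+1$, which I would verify), and Lemma \ref{l: no triangle} together with claw-freeness forbid any interior norm-$\geq 3$ vector beyond the designated ones — indeed claw-freeness is precisely what excludes the spurious ``large $s$, short $m$'' configurations, by preventing an intermediate norm-$4$ vector from acquiring three pairwise non-adjacent neighbors. The two admissible weightings of the $v_m$-edges yield the top-pair norm patterns $(3,5)$ and $(4,4)$, and whether $v_m$ reaches all the way down ($k=0$, forcing $s=2$ and $m=s+2$) or stops at $k\ge 1$ (general $s$, $m=s+3$, with an intervening norm-$2$ vector $v_{s+1}$) then separates (1),(2) from (3),(4). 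If instead $v_{m-1}$ is an endpoint, the component off $v_1$ is short and $P_2$ is a long norm-$2$ tail hanging from a single norm-$4$ vector; this forces $s=3$, $|v_3|=4$, $v_m=-e_m+\cdots+e_1$ of norm $m$, and an optional terminal norm-$3$ vector $v_{m+1}$ when $n>m$, which is case (5).

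I expect the principal obstacle to lie in the second paragraph's bookkeeping: correctly enumerating the at-most-two admissible neighbor-mechanisms, excluding every alternative by playing Lemma \ref{l: cool}, Lemma \ref{l: no claw}, and Lemma \ref{l: no triangle} against the support data, and — crucially — using connectivity to prevent $v_1$ from being stranded. Once $v_m$, its two neighbors, the parameter $s$, and the endpoint status of $v_{m-1}$ are all determined, reading off the norms yields exactly the five families (1)–(5).
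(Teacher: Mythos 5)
Your setup and your enumeration of the three ways a smaller $v_l$ can pair non-trivially with $v_m = -e_m + e_{m-1} + \cdots + e_k$ (namely $l = k$ with pairing $-1$; $|v_l| = 3$ with $\min(\supp(v_l)) \geq k$, pairing $+1$; and $l = k+2$ with $\min(\supp(v_l)) < k$, hence $|v_l| \geq 4$, pairing $+1$) are correct, and this is exactly the engine of the paper's proof. The gap is in how you claim these mechanisms resolve into the five families. You assert that ``$k=0$ versus $k\geq 1$'' separates cases (1),(2) from (3),(4). This is false: in case (2) the vector $v_4$ is just right of norm $4$, hence equals $-e_4+e_3+e_2+e_1$ and has $k=1$; meanwhile at $k=0$ the third mechanism is unavailable (it would require $\min(\supp(v_{k+2}))<0$), so the cell ``$k=0$ with a norm-$4$ upper neighbor'' is empty and $k=0$ yields only case (1). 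Consequently your scheme provides no way to separate case (2) from case (4) (or from case (5)): all three arise from $v_m$ with $k\geq 1$ attaching to $v_k$ and to a norm-$\geq 4$ vector $v_{k+2}$. What actually distinguishes them --- and what is absent from your plan --- is the further conditioning on whether $v_{k+2}$ has a smaller neighbor and on $|v_{k+1}|\in\{2,3\}$ (the paper's cases II.2.i, II.2.ii$'$, II.2.ii$''$), which is where $s=2$ versus general $s$, and hence (2) versus (4) versus (5), gets decided.

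A secondary problem: you make ``is $v_{m-1}$ an endpoint of $G(S)$'' your primary split, citing Lemma \ref{l: little}. But in case (5) with $n>m$ one has $v_{m+1}\sim v_{m-1}$, so $v_{m-1}$ is \emph{not} an endpoint of $G(S)$; your first branch would then conclude $m=n$ and you would lose the $n>m$ part of case (5). (This also shows that the hypothesis of Lemma \ref{l: little} must be read as ``$v_{m-1}$ is not an endpoint of $G(S_m)$'', which is how the paper actually uses it: the lemma is invoked only after establishing $v_m\sim v_{m-1}$ within each branch, i.e., as the last step of each case rather than as the opening dichotomy.) So while your neighbor-mechanism computation is sound and the overall strategy is the paper's, the case division you build on top of it does not produce the stated classification as written.
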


\begin{proof}

Write $v_m = -e_m + \cdots + e_k$.

\noindent (I)  {\em Suppose that $k=0$.}

\noindent Then $\L v_m, v_s \R = s-1 \geq 1$ forces $s = 2$.  Further, $v_m$ is not adjacent to $v_1$, but as $v_m$ has a neighbor in the component of $G(S_{m-1})$ containing it, $v_1$ must have some neighbor $v_i = -e_i + \cdots + e_1$ with $i \geq 3$.  Then $\L v_m, v_i \R \geq i-2 \geq 1$, so $i=3$.  Hence $v_m$ neighbors $v_2$ and $v_3$ but no other $v_j$, $j < m$.  It follows that $|v_j| = 2$ for $3<j<m$.  If $m > 4$, then $(v_3;v_1,v_4,v_m)$ induces a claw.  Hence $m=4$.  By Lemma \ref{l: little}, it follows that $m = n$, and case (1) results.

\noindent (II)  {\em Suppose that $k > 0$.}  Hence $v_m \sim v_k, v_j$ for some $j \geq k+2$.  

\noindent (II.1) {\em Suppose that $j > k+2$.}

\noindent  In this case, $|v_j|=3$ and $|v_{j-1}|=2$, so $v_{j-2}$ neighbors $v_j$ and $v_{j-1}$ and no other vector.  It follows that $j-2 \in \{1,s\}$.  But $1=j-2 > k >0$ cannot occur, so $j-2=s$.  Since $0=\L v_m,v_s \R = s-k-1$, it follows that $s = k+1$.  Moreover, since $v_j$ is an endpoint of its path in $G(S_{m-1})$, it follows that $m=j+1$.  By Lemma \ref{l: little}, it follows that $m = n$, and case (3) results.

\noindent (II.2) {\em Suppose that $j = k+2$.}  Since $v_j$ and $v_k$ belong to different components of $G(S_{m-1})$, it follows that $|v_k|=2$ and $|v_{k+2}| \geq 4$.  

\noindent (II.2.i) {\em Suppose that $v_{k+2}$ has no smaller neighbor.}

\noindent In this case, $|v_{k+1}|=2$ and $k+2 = s$.  As $v_k \sim v_{k+1}$ and $v_k$ is an endpoint of its path in $G(S_{m-1})$, it follows that $v_k$ has no smaller neighbor, hence $k = 1$.  It follows that $s=3$, $|v_3| = 4$, $|v_m| = m$, and $|v_i| = 2$ for all other $i < m$.  If $n = m$, then we land in case (5).  If $n > m$, then consider $v_{m+1}$.  Its unique smaller neighbor $v_j$ is an endpoint of $G(S_m)$.  It follows that $j = m-1$ and $|v_{m+1}|=3$.  By a similar argument, it follows that $|v_i|=2$ for all $i > m+1$.  Therefore, case (5) results.

\noindent (II.2.ii) {\em Suppose that $v_{k+2}$ has a smaller neighbor.}  Thus, it has no larger neighbor in $G(S_m)$ besides $v_m$, so $m = k+3$.  

\noindent (II.2.ii$'$) {\em Suppose that $|v_{k+1}| \geq 3$.}

\noindent It follows that $v_{k+1} \sim v_{k+2}$, and $v_{k+2}$ cannot have any other smaller neighbor.  Hence $\min(\supp(v_{k+2}))=0$, $s = k+1$, and $1 \geq |\L v_{k+1},v_k \R| = k$, so $k =1$.    By Lemma \ref{l: little}, it follows that $m = n$, and case (2) results.

\noindent (II.2.ii$''$) {\em Suppose that $|v_{k+1}|=2$.}

\noindent In this case, $v_k \sim v_{k+1}$, so $v_k$ has no smaller neighbor.  Hence $k \in \{1,s\}$.  However, if $k = 1$, then $v_{k+2}$ would not have a smaller neighbor.  Hence $k = s$, and as $\L v_{k+2}, v_s \R = 0$ but $v_{k+2}$ has a smaller neighbor, it follows that $|v_{k+2}| = 4$.  By Lemma \ref{l: little}, it follows that $m = n$, and case (4) results.

\end{proof}

\subsubsection{No vector has multiple smaller neighbors.}

\begin{prop}\label{p: just right 3}

Suppose that every element in $S$ is just right, $G(S)$ does not contain a triangle, and no vector in $S$ has multiple smaller neighbors.  Then either

\begin{enumerate}

\item $|v_i| = 2$ for all $i$;
% k=1

\item $s = 3, |v_3|=3,|v_4|=5$; or
% III(a)- \cap V(a)-

\item $|v_s| = s$ and $S$ is built from $S_s$ by a sequence of expansions.
% I+

\end{enumerate}

\end{prop}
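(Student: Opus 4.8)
The plan is to mimic the style of Propositions \ref{p: just right 1} and \ref{p: just right 2}: use the two hypotheses (no triangle, no vector with multiple smaller neighbors) to force $G(S)$ to be a path that grows one endpoint at a time, and then read off the support of each successive standard basis vector. First I would dispose of the trivial possibility: if $|v_i| = 2$ for every $i$ we are in case (1), so assume $s = \min\{i : |v_i| > 2\}$ is defined. Since $L$ is a single linear lattice it is indecomposable, so $\widehat{G}(S)$ is connected (Subsection \ref{ss: generalities}); because every $v_i$ is just right, hence unbreakable, $G(S) = \widehat{G}(S)$ by the remark following Definition \ref{d: int graph}, so $G(S)$ is connected. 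It is claw-free by Lemma \ref{l: no claw} and triangle-free by hypothesis; as the neighborhood of each vertex of a triangle-free graph is independent, claw-freeness forces every vertex to have degree at most $2$, and Lemma \ref{l: cycle} rules out cycles. Hence $G(S)$ is a path.

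Next I would pin down the local picture at the bottom. By Lemma \ref{l: index s}, $v_s$ is just right with $|v_s| \in \{s, s+1\}$. I claim $|v_s| = s$. Indeed, if $|v_s| = s+1$ then $v_s = -e_s + e_{s-1} + \cdots + e_0$ pairs trivially with each norm-$2$ vector $v_i$, $i < s$, so $v_s$ is an isolated vertex of $G(S_s)$; since $G(S)$ is connected and no later vector has two smaller neighbors, no later vector can join two distinct components, so $G(S)$ would remain disconnected, a contradiction (cf. Lemma \ref{l: v_m of degree 2} and the stronger form of Proposition \ref{p: decomposable structure}). The same observation shows that every $v_m$ with $m \geq 2$ has \emph{exactly} one smaller neighbor, since a vector with none would isolate itself permanently; hence the path $G(S)$ is built by attaching each new vector at a current endpoint. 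With $|v_s| = s$ we have $v_s = -e_s + e_{s-1} + \cdots + e_1$ and $\langle v_s, v_1 \rangle = -1$, so $G(S_s)$ is the path $v_{s-1} - \cdots - v_1 - v_s$ and $s \geq 3$.

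Finally I would run the induction on $m > s$. Writing $v_m = -e_m + e_{m-1} + \cdots + e_k$ (just right), the bound $|\langle v_m, v_j \rangle| \leq 1$ for unbreakable $v_j$ of norm $\geq 3$ (Lemma \ref{l: cool}) controls how far the support of $v_m$ may extend past the nearest norm-$\geq 3$ vector below it, while claw-freeness (Lemma \ref{l: no claw}) forbids $v_m$ from attaching to an interior vertex. Together these force $|v_i| = 2$ for $k+1 < i < m$ and $|v_{k+1}| \geq 3$ whenever $k \geq 1$; that is, $S_m$ is an expansion of $S_{m-1}$ (Definition \ref{d: expansion}) except possibly when $v_m$ has full support $k = 0$. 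A short computation gives $\langle v_m, v_s \rangle = s-2$ when $v_m = -e_m + \cdots + e_0$, so by Lemma \ref{l: cool} full support can occur only for $s = 3$, and then only at $m = 4$ (a later full-support or norm-$\geq 3$ vector would either pair too strongly with an existing norm-$\geq 3$ vector or create a claw), producing $|v_4| = 5$ and case (2); afterward claw-freeness again forces each vector to be an expansion. Assembling the outcomes yields cases (2) and (3).

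The main obstacle I anticipate is this inductive step: translating the single-smaller-neighbor and claw-free conditions into the precise support form of Definition \ref{d: expansion}, and in particular isolating the lone $s = 3$ full-support exception and verifying that no second irregular vector can appear after it. This is the bookkeeping-heavy part, parallel to the case analyses in Propositions \ref{p: just right 1} and \ref{p: just right 2}, and is where the discrepancy between the index ordering and the path ordering must be tracked carefully.
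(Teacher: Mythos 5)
Your proposal is correct and follows essentially the same route as the paper's proof: connectivity of every $G(S_j)$ (equivalently, each $v_m$ having a unique smaller neighbor) forces $|v_s| = s$, and the dichotomy on whether some $v_m$ with $m > s$ has full support yields case (2) (via $\langle v_m, v_s\rangle = s-2$ and a claw argument) or case (3) (expansions). The only cosmetic differences are that the vanishing of $\langle v_m, v_i\rangle$ for $k+1 < i < m$ follows most directly from the unique-smaller-neighbor hypothesis together with $G(S) = \widehat{G}(S)$ rather than from Lemma \ref{l: cool}, and your clause ``$|v_{k+1}| \geq 3$ whenever $k \geq 1$'' needs the caveat ``unless $m = k+1$''.
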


\begin{proof}

It must be the case that $G(S_j)$ is connected for all $j$, since $G(S)$ is connected, and if $G(S_{m-1})$ were disconnected for some $m > 0$, then $v_m$ would have multiple smaller neighbors.   Thus, unless case (1) occurs, it follows that $|v_s| = s \geq 3$.

\noindent (I) {\em Suppose that there exists an index $m > s$ for which $\min(\supp(v_m))=0$.}

\noindent In this case, $\L v_m, v_s \R = s-2 \geq 1$.  It follows that $s = 3$ and $v_3$ is the unique smaller neighbor of $v_m$.  If $m > 4$, then $(v_3;v_2,v_4,v_m)$ induces a claw.  Hence $m = 4$.  If $n > 4$, then choose $j$ maximal for which $|v_5|=\cdots=|v_j|=2$.  Thus, $G(S_j)$ has endpoints $v_j$ and $v_2$.  If $n > j$, then $v_{j+1}$ must neighbor one of $v_j$ and $v_2$.  However, $v_{j+1} \sim v_2$ implies that $\L v_{j+1}, v_4 \R \geq 1$, while $v_{j+1} \sim v_j$ implies that $|v_{j+1}|=2$.  Both result in contradictions, so it follows that $n = j$, and case (2) results.

\noindent (II) {\em Suppose that $\min(\supp(v_m)) > 0$ for all $m > s$.}

\noindent Consider $v_m = -e_m + \cdots + e_k$ with $m > s$ and $k > 0$.  Thus, $v_k$ is the unique smaller neighbor of $v_m$, so it is an endpoint of $G(S_{m-1})$, and $|v_i| = 2$ for all $k+1 < i < m$.  If $m > k+1$ and $|v_{k+1}| = 2$, then $v_k \sim v_{k+1}$.  As $v_k$ is an endpoint of $G(S_{m-1})$, it follows that $v_k$ has no smaller neighbor.  But then $k = 1$ and $|v_i| = 2$ for all $i < m$, in contradiction to the assumption that $m > s$.  It follows that $|v_{k+1}| \geq 3$, or else $m = k+1$ and $|v_m| = 2$.  Hence $S_m$ is an expansion on $S_{m-1}$.  By induction on $m$, it follows that case (3) results.

\end{proof}

%%%%%%%
%%%%%%%
%%%%%%%
%%%%%%%
%%%%%%%

\section{A gappy vector, but no tight vector}\label{s: gappy, no tight}

In this section, assume that $L$ is a changemaker lattice isomorphic to a linear lattice, $S$ does not contain a tight vector, and it does contain a gappy vector $v_g$.  Again, every vector in $S$ is unbreakable, and $G(S) = \widehat{G}(S)$.  For use in Section \ref{s: tight}, the following Lemma allows the possibility that $S$ contains a tight, unbreakable vector.

\begin{lem}\label{l: one gappy}

Suppose that $v_g \in S$ is gappy, and that $S$ contains no breakable vector.  Then $v_g$ is the unique gappy vector, $v_g = -e_g + e_{g-1} + \cdots + e_j + e_k$ for some $k +1 < j < g$, and $v_k$ and $v_{k+1}$ belong to distinct components of $G(S_{g-1})$.

\end{lem}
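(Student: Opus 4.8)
The plan is to treat the gappy vector of smallest index first, so that the vectors below it are easier to control, and then to bootstrap to the full statement. Throughout I use that $S$ has no breakable vector, hence every $v_i$ is unbreakable (Lemma \ref{l: change unbreakable}), $G(S) = \widehat{G}(S)$, and $|\langle v_i, v_j\rangle| \le 1$ whenever $|v_i|, |v_j| \ge 3$ (Lemma \ref{l: cool}); since $L$ is a single linear lattice it is indecomposable, so $G(S)$ is connected. Let $v_g$ be gappy of minimal index, so that every $v_i$ with $i < g$ is just right or tight. Write $v_g = -e_g + \sum_{i \in A} e_i$ with $A$ non-consecutive; by Lemma \ref{l: basic}(1), $g - 1 \in A$ and $|v_g| = |A| + 1 \ge 3$. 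The local fact driving the argument is that for any gappy index $k$ of $v_g$ we have $|v_{k+1}| \ge 3$ (Lemma \ref{l: basic}(3)), and in $\langle v_g, v_{k+1}\rangle$ the $e_k$-coordinate contributes $+1$ while every lower coordinate contributes a nonnegative amount (both vectors are non-tight, so have nonnegative entries away from their top index); since Lemma \ref{l: cool} caps the pairing at $1$, it equals $1$, all lower products vanish, and $v_g \sim v_{k+1}$.

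Next I would pin down the shape $v_g = -e_g + e_{g-1} + \cdots + e_j + e_k$ with $k + 1 < j < g$. Taking $k$ minimal among the gappy indices, the vanishing of the lower products together with the minimality of $k$ (which makes $A$ consecutive below $k$) and the maximality order $\ll$ on the standard basis should force $k = \min(\supp(v_g))$: a coordinate of $v_g$ below $k$ would either contradict the pairing bound against $v_{k+1}$ or exhibit a vector $\ll v_g$. A second gappy index $k' > k$ would give a second norm-$\ge 3$ neighbour $v_{k'+1}$ of $v_g$, and a second low coordinate would give a second low neighbour; in each case one produces a claw at $v_g$ (Lemma \ref{l: no claw}), a heavy triple (Lemma \ref{l: no triangle}), or a cycle on more than three vertices (Corollary \ref{c: cycle}). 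Eliminating these pins $\supp(v_g)$ to the single low index $k$ followed by one block $\{j, \dots, g-1\}$, and shows $k$ is the unique gappy index of $v_g$.

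With the form established, the separation statement is immediate. Since $k = \min(\supp(v_g))$ and the block $\{j, \dots, g-1\}$ lies above $k$, we get $\langle v_g, v_k\rangle = -1$ and $\langle v_g, v_{k+1}\rangle = 1$, so $v_g$ neighbours both $v_k$ and $v_{k+1}$; and $v_k \not\sim v_{k+1}$, since if $|v_k| \ge 3$ the triangle $(v_k, v_{k+1}, v_g)$ would be heavy (Lemma \ref{l: no triangle}), while if $|v_k| = 2$ the non-gappiness of $v_{k+1}$ (here I use the minimality of $g$) gives $v_{k+1, k-1} = 1$ and hence $\langle v_k, v_{k+1}\rangle = 0$ by direct computation. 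A path from $v_k$ to $v_{k+1}$ inside $G(S_{g-1})$ would close through $v_g$ into a cycle on at least four vertices, contradicting Corollary \ref{c: cycle}; therefore $v_k$ and $v_{k+1}$ lie in distinct components of $G(S_{g-1})$, which is in particular disconnected.

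Finally, uniqueness (hence that $v_g$ is \emph{the} gappy vector) follows from the strengthened Proposition \ref{p: decomposable structure}: since $G(S_{g-1})$ is disconnected, $S_{g-1}$ is built by expansions from one of $A_{s,m}$, $B_m$, $C_{s,m}$, or $\varnothing$, all of whose vectors are just right — consistent with $v_g$ being the minimal gappy vector. A gappy $v_{g'}$ with $g' > g$ would, by the disconnection argument, make some $G(S_{g'-1})$ disconnected and hence an all–just-right family, contradicting that it contains the gappy $v_g$; so no further gappy vector exists. The main obstacle I anticipate is the second paragraph: the forbidden-subgraph lemmas control how intervals abut but say little about the internal coordinate pattern of a single changemaker vector, so the real work is combining Lemma \ref{l: cool} with the maximality order $\ll$ to eliminate every competing support pattern, and organizing the three conclusions — form, separation, uniqueness — so that the argument for each is available when needed (which is why I fix the minimal-index gappy vector at the outset).
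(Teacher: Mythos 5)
Your treatment of the minimal gappy vector follows the paper's own argument: force $k=\min(\supp(v_g))$ and the single block $\{j,\dots,g-1\}$ by playing the pairing bound of Lemma \ref{l: cool} against $v_{k+1}$, rule out a second gappy index with a claw at $v_g$, and get disconnection of $G(S_{g-1})$ from the prohibition on long cycles. One sub-case slips through, however. When $|v_k|=2$ you compute $\langle v_k, v_{k+1}\rangle = 0$ from $v_{k+1,k-1}=1$, but this fails if $v_{k+1}$ is tight, i.e.\ $k=1$ and $v_2 = -e_2+2e_0+e_1$, where $\langle v_1, v_2\rangle = -1+2 = 1$. The lemma is deliberately stated to allow a tight unbreakable vector in $S$ (it is reused in Section \ref{s: tight}), so this cannot be excluded by fiat; the paper disposes of it by noting that $(v_1,v_2,v_g)$ would then span a \emph{negative} triangle, contradicting Lemma \ref{l: signs}.

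The more serious gap is in your uniqueness step, which is also where you depart from the paper. You claim that a second gappy vector $v_{g'}$, $g'>g$, would ``by the disconnection argument'' force $G(S_{g'-1})$ to be disconnected, whence the strengthened Proposition \ref{p: decomposable structure} makes every vector of $S_{g'-1}$ just right, contradicting the gappiness of $v_g$. The closing observation is attractive, but the implication you need does not transfer from $v_g$ to $v_{g'}$: every coordinate computation in the disconnection argument used the minimality of $g$ to know that $v_{k+1}$ is \emph{not} gappy (hence $v_{k+1,k-1}=1$, hence $v_{g,k-1}=0$, hence $k=\min(\supp(v_g))$, $v_g\sim v_k$, and $v_k\not\sim v_{k+1}$). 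For $v_{g'}$ with minimal gappy index $k'$, the vector $v_{k'+1}$ may be $v_g$ itself (when $k'+1=g$), and none of those conclusions then follow from the same computation. The paper argues in the opposite direction: it uses the now-established \emph{connectivity} of $G(S_{h-1})$ to show the putative second gappy vector $v_h$ has at most two smaller neighbours which must be adjacent, deduces $|v_l|=2$ for $l=\min(\supp(v_h))$ and $v_{k'+1,l-1}=1$, and reaches the contradiction $v_{k'+1}\ll v_{k'+1}-v_l$ with the maximality of the subset $A$ defining $v_{k'+1}$. Some appeal of this kind to the changemaker structure itself, rather than to the intersection graph alone, is needed to close this step, and your sketch does not supply it; you flag the second paragraph as the anticipated obstacle, but in fact it is this last step that carries the real difficulty.
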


\begin{proof}

Choose $v_g$ with $g$ minimal, and choose a minimal gappy index $k$ for $v_g$.  Then $|v_{k+1}| \geq 3$,  and since $v_{k+1}$ is not gappy, it follows that $v_{k+1,k-1} = 1$.  Thus, $v_{g,k-1}=0$, since otherwise $\L v_g, v_{k+1} \R \geq 2$.  It follows that $v_g \sim v_{k+1},v_k$.  If $v_k \sim v_{k+1}$, then either $|v_k| \geq 3$, or else $k = 1$ and $v_2$ is tight.  In the first case, the triangle $(v_k,v_{k+1},v_g)$ is heavy, and in the second case, it is negative.  Hence $|v_k| = 2$ and $v_k \not\sim v_{k+1}$.  If $v_k$ and $v_{k+1}$ were in the same component of $G(S_{g-1})$, then a shortest path between them, together with $v_g$, would span a cycle of length $> 3$ in $G(S)$.  It follows that $G(S_{g-1})$ has two components, and $v_k$ and $v_{k+1}$ belong to separate components.

Suppose by way of contradiction that $l > k$ were another gappy index.  Then $|v_{l+1}| \geq 3$, so $v_{k+1} \not\sim v_{l+1}$, since otherwise $(v_{k+1},v_{l+1},v_g)$ forms a heavy triangle.  Furthermore, $v_{l+1,k} = 0$, since otherwise $\L v_g, v_{l+1} \R \geq 2$.  It follows that $v_{l+1} \not\sim v_k$, too.  But then $(v_g; v_k,v_{k+1},v_{l+1})$ induces a claw.  Hence no other index $l$ exists, and $v_g$ takes the stated form.

Lastly, suppose by way of contradiction that $v_h$ were another gappy vector, with $h > g$ chosen smallest.  Note that $G(S_{h-1})$ is connected.  It follows that $v_h$ has at most two smaller neighbors and that they are adjacent, since otherwise there would exist a cycle of length $> 3$ in $G(S)$.  Choose a minimal gappy index $k'$ for $v_h$ and let $l = \min(\supp(v_h))$.  Then $|v_{k'+1}| \geq 3$, and since $\L v_g, v_{k'+1} \R \leq 1$, it follows that $v_{k'+1,i} = 0$ for $i = l,\dots,k'-1$.  Thus, $v_{k'+1,i}=1$ for some $i < l$, whence $l > 0$.  Thus, $v_h \sim v_{k'+1}, v_l$, so $v_{k'+1} \sim v_l$.  However, $|v_l|=2$, so it follows that $v_{k'+1,l-1} = 1$; but then $v_{k'+1} \ll v_{k'+1} - v_l$, a contradiction.  It follows that $v_g$ is the unique gappy vector, as claimed.

\end{proof}

By Lemma \ref{l: one gappy}, it follows that $G(S_{g-1})$ is disconnected, so $S_{g-1}$ must take one of the forms described by Proposition  \ref{p: decomposable structure}.  Lemmas \ref{l: S_g nothing} and \ref{l: S_g not nothing} condition on these possible forms to determine the structure of $S_g$.

\begin{lem}\label{l: S_g nothing}

Suppose that $S_{g-1}$ is built from $\varnothing$ by a sequence of expansions.  Then $S_g$ takes one of the following forms:

\begin{enumerate}

\item  $k = s-1$, $j = s+1$, $|v_s| = s+1$, and $|v_{s+2}|=4$;
% IV(b)-

\item $j = k+2$, $|v_i| = 2$ for all $i >k+1$, and otherwise $S_{g-1}$ is arbitrary;
% on the way to VII;

\item $k = s$, $j = s+2$, $|v_s| = s+1$, $|v_{s+1}|=3$, and $|v_{s+2}|=3$;
% IV(a)-

\item $k = 1$, $s = 2$, $|v_2| = 3$, $|v_j|=j$, and $|v_{j+1}|=3$; or
% V(a)-

\item $k = 1$, $s = 2$, $|v_2|=3$, and $|v_j| = j$.
% V(b)-

\end{enumerate}

\end{lem}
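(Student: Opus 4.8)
The plan is to pin down exactly how the gappy vector $v_g$ attaches to the two components of $G(S_{g-1})$, and to read off the five configurations from the possible attachments. I would organise the argument around the bridging behaviour of $v_g$ together with a case split on the location of its gap relative to the distinguished index $s$.

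First I record the ambient structure. Since $G(S_{g-1})$ is disconnected (Lemma \ref{l: one gappy}) and $S_{g-1}$ is a $\varnothing$-expansion basis, Lemma \ref{l: two summands} supplies a distinguished index $s$ with $v_s = -e_s + e_{s-1} + \cdots + e_0$, with $|v_i| = 2$ for $1 \le i < s$, and with $v_s$ and $v_1$ lying in the two distinct components; the vectors $v_{s+1}, \dots, v_{g-1}$ then arise by expansions. By Lemma \ref{l: one gappy}, $v_g = -e_g + e_{g-1} + \cdots + e_j + e_k$ with $k+1 < j < g$, it is the unique gappy vector, and $v_k, v_{k+1}$ lie in distinct components of $G(S_{g-1})$. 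Because $v_{k+1}$ is just right of norm $\ge 3$ we have $v_{k+1} = -e_{k+1} + e_k + \cdots$, and a direct computation using Lemma \ref{l: basic}(1) gives $\L v_g, v_k \R = -1$ and $\L v_g, v_{k+1} \R = 1$; thus $v_g$ genuinely bridges the two components.

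Next I would extract the local constraints. The relation $v_k \not\sim v_{k+1}$ (they lie in different components) means no triangle is formed at the foot of $v_g$, so any additional smaller neighbour of $v_g$ would, together with $v_k$ and $v_{k+1}$, force either a claw (contradicting Lemma \ref{l: no claw}) or a cycle that is too long or heavy (contradicting Corollary \ref{c: cycle} and Lemma \ref{l: no triangle}). Keeping all pairings against unbreakable norm-$\ge 3$ vectors within $\{-1,0,1\}$ via Lemma \ref{l: cool}, and invoking the changemaker maximality relation $\ll$ together with Lemma \ref{l: basic}, I would determine the norms of $v_k, v_{k+1}, v_{k+2}$ and how $k$ and $j$ sit relative to $s$.

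The case analysis then proceeds according to where $v_k$ and $v_{k+1}$ fall among the two components. When $v_{k+1}$ is the distinguished vector $v_s$ (so $k = s-1$ and $v_k = v_{s-1}$ has norm $2$), an expansion argument forces $j = s+1$ and $|v_{s+2}| = 4$, giving (1). The exceptional possibility that $v_k$ itself is $v_s$ (so $k = s$ and $|v_k| \ge 3$, with $v_{k+1} = v_{s+1}$ of norm $3$) forces $j = s+2$ and $|v_{s+1}| = |v_{s+2}| = 3$, giving (3). When instead $v_{k+1}$ is a norm-$\ge 3$ expansion vector above which every vector has norm $2$, the bridge forces $j = k+2$, giving (2) with $S_{g-1}$ otherwise unconstrained. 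Finally, when the small component is forced to be the single norm-$2$ vertex $v_1$ --- so $k = 1$, $s = 2$, and $v_2 = -e_2 + e_1 + e_0$ has norm $3$ --- a maximality argument applied to $v_g$ and to the vector $v_j$ at the foot of the upper run of $v_g$ yields $|v_j| = j$, and, according to whether a just-right vector $v_{j+1}$ of norm $3$ is present, produces (4) or (5).

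I expect the main obstacle to lie in the last family of configurations and in verifying exhaustiveness. In the wide-gap cases the support of $v_g$ straddles part of the large component, so one must deploy the maximality relation $\ll$ carefully --- both to prove $|v_j| = j$ and to exclude spurious norm-$\ge 3$ vectors within the gapped range --- while tracking which vertices lie in which component. The repeated appeals to claw-freeness, the cycle and heavy-triple bounds, and Lemma \ref{l: cool} are essentially mechanical once a local configuration is fixed; the genuine work is confirming that no attachment pattern other than (1)--(5) survives all of these constraints, and in isolating the single exceptional case (3) in which $v_k$ has norm $\ge 3$.
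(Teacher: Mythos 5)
Your setup is sound --- the appeal to Lemmas \ref{l: two summands} and \ref{l: one gappy}, the computation $\L v_g, v_k \R = -1$ and $\L v_g, v_{k+1} \R = 1$, and the toolbox of claws, heavy triples, cycles, and Lemma \ref{l: cool} --- but the case division you propose is where the proof breaks down, and it is exactly where the content of the lemma lies. The paper's proof splits first on whether $v_g \sim v_j$, equivalently on whether $v_{jk}=0$ or $v_{jk}=1$. In the first branch the claw $(v_g;v_k,v_{k+1},v_j)$ together with a heavy-triangle argument forces $|v_j|=2$ and $j=k+2$, and a further split on whether some $v_l$ with $k<l<g$ neighbors $v_k$ separates outcomes (1) and (2); in the second branch one gets $k=\min(\supp(v_j))$, and the claw $(v_k;v_l,v_j,v_g)$ forces $k\in\{1,s\}$, leading to (3), (4), (5). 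Your division by the identity of $v_k$ and $v_{k+1}$ relative to $v_s$ cuts across this dichotomy and yields false implications. For instance, $v_{k+1}=v_s$ (i.e.\ $k=s-1$) does not force $|v_{s+2}|=4$: if no $v_l$ with $k<l<g$ neighbors $v_{s-1}$, then $|v_i|=2$ for all $i>s$ and you land in outcome (2), not (1). Likewise $v_k=v_s$ can land in outcome (2) rather than (3). Your first and third cases overlap (when $v_{k+1}=v_s$ and every later vector up to $v_{g-1}$ has norm $2$) and assign contradictory conclusions on the overlap.

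The four cases you list are also not exhaustive: nothing covers, for example, $k>s$ with $|v_{k+1}|\geq 3$ and a second norm-$\geq 3$ vector strictly between $v_{k+1}$ and $v_g$. Excluding such configurations is done in the paper precisely by the $v_{jk}$ dichotomy (the first branch forces $|v_l|=2$ for $k+1<l<g$ whenever no $v_l$ neighbors $v_k$; the second branch forces $k\in\{1,s\}$), and this exclusion is the "genuine work'' you defer at the end of your proposal. As written, the plan would not compile into a proof without replacing the case division by one that actually partitions the possibilities and derives, rather than assumes, the constraints appearing in conclusions (1)--(5).
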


\begin{proof}

\noindent (I)  {\em Suppose that $v_g \sim v_j$.}

\noindent Thus, $v_{jk} = 0$.  If $v_j \not\sim v_{k+1}$, then $(v_g;v_k,v_{k+1},v_j)$ induces a claw.  Hence $v_j \sim v_{k+1}$.  If $|v_j| \geq 3$, then $(v_{k+1},v_j,v_g)$ forms a heavy triangle.  Hence $|v_j|=2$ and $j = k+2$. 

\noindent (I.1)  {\em Suppose that $v_l \sim v_k$ with $k < l < g$.}

\noindent Thus, $v_l = -e_l+\cdots+e_k$.  Then $l > k+2$ and $\L v_g, v_l \R \leq 1$, which implies that $l = k+3$ and $|v_{k+3}|=4$.  If $|v_i| \geq 3$ for some largest value $i \leq k$, then $(v_i,v_{k+3},v_g)$ forms a heavy triple.  Hence $|v_i| = 2$ for all $i \leq k$, and $s = k+1$.  If $v_l \sim v_{s+1}$ for some $s+1 < l < g$, then $(v_g;v_{s-1},v_s,v_l)$ induces a claw.  It follows that $|v_l| = 2$ for all $s+1 < l < g$, and case (1) results.

\noindent (I.2)  {\em Suppose that $v_l \not\sim v_k$ for all $k+1 < l < g$.}

\noindent It follows that $|v_l|=2$ for all such $l$, and case (2) results.

\noindent (II)  {\em Suppose that $v_g \not\sim v_j$.}

\noindent Thus, $v_{jk} = 1$.  Furthermore, the assumption on $S_{g-1}$ implies that $k = \min(\supp(v_j))$ and that $|v_i| = 2$ for all $k+1<i<j$.  If $v_k$ has a smaller neighbor $v_l$, then $(v_k;v_l,v_j,v_g)$ induces a claw.  It follows that $k \in \{1,s\}$.

\noindent (II.1)  {\em Suppose that $k = s$.}

\noindent Since $|v_s| \geq 3$, it follows that $|v_{s+1}|=3$.  If $j > s+2$, then $|v_{s+2}|=2$ and $(v_{s+1};v_{s-1},v_{s+2},v_g)$ induces a claw.  Hence $j = s+2$.  If $v_{s+1} \sim v_l$ for some $s+2 < l < g$, then either $l = s+3$, in which case $(v_s,v_l,v_g)$ forms a heavy triangle, or else $l > s+3$, in which case $\L v_g,v_l \R \geq 2$.  It follows that $|v_l|=2$ for all $s+2 < l < g$, and case (3) results.

\noindent (II.2)  {\em Suppose that $k = 1$.} It follows that $s = 2$.

\noindent (II.2$'$)  {\em Suppose that $v_l \sim v_{j-1}$ for some $j < l < g$.}

\noindent If $v_g \sim v_l$, then $(v_{j-1},v_l,v_g)$ forms a heavy triple.  Hence $v_g \not\sim v_l$, so $l = j+1$ and $|v_{j+1}|=3$.  If $v_i \sim v_j$ for some $j < i < g$, then $(v_j,v_i,v_g)$ forms a heavy triple.  Hence $|v_i| = 2$ for all $j+1 < i < g$, and case (4) results.

\noindent (II.2$''$)  {\em Suppose that $v_l \not\sim v_{j-1}$ for all $j < l < g$.}

\noindent It follows that $|v_l|=2$ for all $j < l < g$, and case (5) results.

\end{proof}

\begin{lem}\label{l: S_g not nothing}

Suppose that $S_{g-1}$ is not built from $\varnothing$ by a sequence of expansions. Then $S_g$ takes one of the following forms:

\begin{enumerate}

\item $k=s$, $j=s+2$, $|v_s| = s+1$, $|v_{s+1}|=3$, and $|v_{s+2}| = 4$;

\item $k=1$, $j=3$, $s = 2$, $|v_2| = 3$, and $|v_3| =4$; or

\item $k = s-1$, $j = s+1$, $|v_s| = s+1$, and $|v_{s+2}|=3$.

\end{enumerate}

\end{lem}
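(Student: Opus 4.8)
The plan is to follow the template of Lemma \ref{l: S_g nothing}, but to exploit the extra vector of norm $\geq 3$ that every non-$\varnothing$ base family carries. By Lemma \ref{l: one gappy}, $v_g$ is the unique gappy vector, it has the form $v_g = -e_g + e_{g-1} + \cdots + e_j + e_k$ with $k+1 < j < g$, and $v_k$, $v_{k+1}$ lie in distinct components of $G(S_{g-1})$; moreover $v_g$ is adjacent to both $v_k$ and $v_{k+1}$, so adjoining $v_g$ merges the two components and $G(S_g)$ is connected. Since $G(S_{g-1})$ is disconnected, the strengthened form of Proposition \ref{p: decomposable structure} applies, and as the hypothesis excludes the $\varnothing$ base, $S_{g-1}$ is obtained from one of $A_{s,m}$, $B_m$, or $C_{s,m}$ by a sequence of expansions. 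In each of these Lemma \ref{l: two summands} supplies the separating vector $v_s = -e_s + \sum_{i=0}^{s-1} e_i$, and the base additionally carries a top vector $v_m$ of norm $\geq 3$ (together with $v_{s+1}$ in the case of $A_{s,m}$); it is precisely this vector, absent from the $\varnothing$ case, that sharpens the conclusions.

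First I would split on whether or not $v_g \sim v_j$, exactly as in the two main cases of Lemma \ref{l: S_g nothing}. The case $v_g \sim v_j$ forces $k = s-1$, leading to conclusion (3); the case $v_g \not\sim v_j$ yields $v_{jk} = 1$, so that $v_j$ is the just-right interval from $k$ to $j$ and $k \in \{1, s\}$, leading to conclusions (1) and (2). In each branch the combinatorial constraints of Subsection \ref{ss: int graph} prune the configurations: claw-freeness (Lemma \ref{l: no claw}) rules out a vertex gaining three pairwise non-adjacent neighbors once $v_g$ is attached; the ban on heavy triples (Lemma \ref{l: no triangle}) forbids $v_g$, $v_{k+1}$, and a further norm-$\geq 3$ vector from lying on a common path with none separating the other two; Lemma \ref{l: cycle} confines cycles to length three; and the bound $|\langle v_i, v_j \rangle| \leq 1$ for unbreakable vectors of norm $\geq 3$ (Lemma \ref{l: cool}) forces the relevant support overlaps to be minimal. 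These are what collapse the index ranges to the exact relations $j = s+2$, $j = s+1$, and so on, recorded in the statement.

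The crux, and where this lemma genuinely departs from Lemma \ref{l: S_g nothing}, is the interaction of $v_g$ with the top vector $v_m$ of the base family. In the $\varnothing$ case the top of $S_{g-1}$ is free and can arise from a long expansion, producing the unbounded norm $|v_j| = j$ seen in the conclusions of Lemma \ref{l: S_g nothing}; here $v_m$ already plays that role, and I expect that any additional nontrivial expansion inserted between $v_s$ (or $v_{s+1}$) and $v_g$ creates either a heavy triple among the norm-$\geq 3$ vectors $v_s$, $v_m$, $v_g$ or a cycle of length $> 3$ once $v_g$ is adjoined. Ruling these out is what forces the expansions to be trivial and pins the norm of the governing top vector to exactly $|v_{s+2}| = 4$, $|v_3| = 4$, or $|v_{s+2}| = 3$ in cases (1)--(3). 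The main difficulty will be the case bookkeeping this requires: for each of the three base families and each admissible position of the gap one must identify exactly which norm-$\geq 3$ vectors lie on the path joining $v_k$ to $v_{k+1}$, and verify that attaching $v_g$ violates one of Lemmas \ref{l: no claw}, \ref{l: no triangle}, \ref{l: cycle}, or \ref{l: cool} unless $S_g$ takes one of the three stated forms.
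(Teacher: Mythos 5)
Your plan follows essentially the same route as the paper: invoke Lemma \ref{l: one gappy} for the form of $v_g$ and the disconnection of $G(S_{g-1})$, use the strengthened form of Proposition \ref{p: decomposable structure} to reduce to the base families $A_{s,m}$, $B_m$, $C_{s,m}$, and then prune with the claw, heavy-triple, and cycle lemmas; the paper merely organizes the casework by base family rather than by whether $v_g \sim v_j$, and the per-family bookkeeping you defer to is exactly what its proof consists of. One small caution: your claim that in the $v_g \not\sim v_j$ branch $v_{jk}=1$ makes $v_j$ the just-right vector supported on $\{k,\dots,j\}$ with $k \in \{1,s\}$ is imported from Lemma \ref{l: S_g nothing}(II) and does not hold verbatim here (in the $A_{s,m}$ case one ends with $j=m$ and $\min(\supp(v_j)) = s-1 < k = s$), though this self-corrects once you carry out the family-by-family analysis you describe.
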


\begin{proof}

Since $S_{g-1}$ is not obtained from $\varnothing$ by a sequence of expansions, Proposition \ref{p: decomposable structure} implies that $S_{g-1}$ is built by applying a sequence of expansions to $A_{s,m}, B_m$, or $C_{s,m}$, for some $m > s \geq 2$.  We consider these three possibilities in turn.

\noindent (I) $S_m = A_{s,m}$.

\noindent In this case, $v_s$ is a singleton in $G(S_{g-1})$.  It follows that $s \in \{ k,k+1 \}$.  If $s = k+1$, then since $(v_{s-1},v_s,v_m)$ spans a triangle and $v_{s-1} \sim v_g$, it follows that $(v_s,v_m,v_g)$ forms a heavy triple.  Therefore, $s = k$.  If $m \ne j$, then $(v_{s+1},v_m,v_g)$ forms a heavy triangle.  Therefore, $m = j$.  If $m > s+2$, then $|v_{s+1}|=2$, and $(v_{s+1};v_{s-1},v_{s+2},v_g)$ induces a claw.  Therefore, $m = s+2$.  It follows that $S_{g-1}$ is built from $A_{s,s+2}$ by a sequence of expansions.  If $v_l \sim v_{k+1}$ for some $s+2<l\leq g-1$, then either $l = s+3$ and $(v_{s+1},v_{s-1},v_{s+3},v_g)$ induces a claw, or else $l > s+3$ and $(v_{s+1},v_l,v_g)$ forms a heavy triple.  Consequently, no such $l$ exists, and therefore $|v_i| = 2$ for all $s+2 < i \leq g-1$.  This results in case (1).

\noindent (II)  $S_m = B_m$.

\noindent In this case, $v_1$ is a singleton in $G(S_{g-1})$, so $k = 1$.  If $m \ne j$, then $(v_2,v_m,v_g)$ forms a heavy triangle.  Therefore, $m = j$.  If $m > 3$, then $(v_2;v_3,v_m,v_g)$ induces a claw.  Hence $m = 3$.  It follows that $S_{g-1}$ is built from $B_4$ by a sequence of expansions.  If $v_l \sim v_2$ for some $3 < l \leq g-1$, then either $(v_3,v_l,v_g)$ forms a heavy triangle, or $(v_3;v_4,v_l,v_g)$ induces a claw.  It follows that $|v_i|=2$ for all $4 < i \leq g-1$.  This results in case (2).

\noindent (III) $S_m = C_{s,m}$.

\noindent In this case, $(v_1,\dots,v_{s-1})$ spans a component of $G(S_{g-1})$.  It follows that $k = s-1$.  If $v_m \sim v_g$, then $(v_s,v_m,v_g)$ forms a heavy triangle.  Hence $v_m \not\sim v_g$.  If $j > s+1$, then $(v_s;v_{s+1},v_m,v_g)$ induces a claw.  Hence $j = s+1$.  Since $v_m \not\sim v_g$, it follows that $m = s+2$.  Therefore, $S_{g-1}$ is built from $C_{s,s+2}$ by a sequence of expansions.  If $v_l \sim v_{s+1}$ for some $s+2 < l \leq g-1$, then $l = s+3$ since $\L v_g, v_l \R \leq 1$, and then $(v_g;v_{s-1},v_s,v_{s+3})$ induces a claw.  It follows that $|v_l|=2$ for all $s+2 < l < g$.  This results in case (3).

\end{proof}

\begin{lem}\label{l: gappy triangle}

Suppose that there exists $v_m \in S$ with multiple smaller neighbors, and $m > g$.  Then $m = g+1$, $g = s+2$, $|v_s|=s+1$, $v_{s+2} = -e_{s+2} + e_{s+1} + e_{s-1}$, $|v_{s+3}| = 5$, and $|v_i|=2$ for $i =1,\dots,s-1,s+1$.

\end{lem}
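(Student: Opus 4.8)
The plan is to manufacture a triangle from the hypothesis and then pin it against the gappy structure already in hand. First I would check that $G(S_{m-1})$ is connected: if it were disconnected, the strengthened form of Proposition~\ref{p: decomposable structure} would exhibit $S_{m-1}$ as a sequence of expansions of one of $A_{s',m'}$, $B_{m'}$, $C_{s',m'}$, or $\varnothing$, all of whose vectors are just right, contradicting the presence of the gappy vector $v_g$ in $S_{m-1}$ (recall $g<m$). With connectivity secured, the two smaller neighbors furnished by the hypothesis cannot lie in different components, and were they non-adjacent, a shortest path between them together with $v_m$ would close a cycle of length $>3$, against Lemma~\ref{l: cycle}. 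Hence two of them are adjacent and span a triangle $(v_a,v_b,v_m)$ with $a<b<m$. As $v_m$ is not gappy ($v_g$ is the unique gappy vector and $g<m$), Lemma~\ref{l: cycle} yields $|v_a|=2$, $|v_b|,|v_m|\geq 3$, $|v_l|=2$ for all $l\leq a$, and $\min(\supp(v_m))=a$, so that $v_m=-e_m+e_{m-1}+\cdots+e_a$.

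Next I would identify the apex $v_b$ with the gappy vector. Were $v_b$ just right, adjacency $v_b\sim v_a$ would force $\min(\supp(v_b))=a$, and then Lemma~\ref{l: cool} applied to the unbreakable pair $v_b,v_m$ (each of norm $\geq 3$ and of minimal support $a$) would give $b=a+2$ and $|v_b|=3$; the resulting all--just-right triangle drives $s$ into the narrow range permitted by Lemma~\ref{l: index s} and renders $G(S_{g-1})$ connected, contradicting the two-component structure demanded of $v_g$ by Lemma~\ref{l: one gappy}. Thus $b=g$. Since $v_a$ is now a norm-$2$ smaller neighbor of $v_g$, it must be $v_k$ or $v_{k+1}$ in the notation of Lemma~\ref{l: one gappy}; as $|v_{k+1}|\geq 3$ while $|v_a|=2$, in fact $a=k$, and combining $|v_l|=2$ for $l\leq a=k$ with $|v_{k+1}|\geq 3$ gives $s=k+1$, i.e.\ $a=k=s-1$ and $v_g=-e_g+e_{g-1}+\cdots+e_j+e_{s-1}$.

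Finally I would extract the precise indices and norms. Computing $\langle v_g,v_m\rangle=g-j$ and invoking Lemma~\ref{l: cool} forces $g-j=1$, so the upper block of $v_g$ is a single entry and $v_g=-e_g+e_{g-1}+e_{s-1}$ has norm $3$. Ruling out $g\geq s+3$ then comes from examining the basis vectors sitting in the gap of $v_g$: a norm-$2$ value produces a claw at $v_g$ together with $v_{s-1}$ and $v_s$, while a norm-$3$ value produces a heavy triangle with $v_s$ and $v_g$, each forbidden by Lemmas~\ref{l: no claw} and~\ref{l: no triangle}; hence $g=s+2$ and $v_{s+2}=-e_{s+2}+e_{s+1}+e_{s-1}$. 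That $|v_s|=s+1$ follows because $\min(\supp(v_s))=1$ would make $v_s\sim v_1$, placing $v_{s-1}=v_k$ and $v_s=v_{k+1}$ in a common component of $G(S_{g-1})$ and again contradicting Lemma~\ref{l: one gappy}. A norm $\geq 3$ at $v_{s+1}$ would make $v_{s+1}\sim v_m$ and create the forbidden $4$-cycle on $v_{s+1},v_m,v_{s+2},v_{s-1}$, so $|v_{s+1}|=2$; and if $m>g+1$, then $v_{g+1}$ is adjacent to $v_g$ but to neither $v_{s-1}$ nor $v_{s+1}$, giving a claw $(v_g;v_{s-1},v_{s+1},v_{g+1})$. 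Hence $m=g+1=s+3$, and $v_m=-e_{s+3}+e_{s+2}+e_{s+1}+e_s+e_{s-1}$ has norm $5$, completing the list. The main obstacle is the second paragraph's identification $b=g$: this is the step that binds the purely local triangle coming from $v_m$ to the global gappy data, and it is what makes the subsequent index bookkeeping go through cleanly.
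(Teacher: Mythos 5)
Your overall strategy --- extract the triangle, identify its norm-$\geq 3$ apex with the gappy vector $v_g$, then pin down the indices with claws and Lemma \ref{l: cool} --- is the same as the paper's, and your first and third paragraphs are essentially sound (the paper reaches $g=s+2$ and $m=g+1$ with the claws $(v_g;v_s,v_{g-1},v_m)$ and $(v_g;v_{g-1},v_{g+1},v_m)$, close cousins of yours). The problem is the step you yourself flag as critical: the identification $b=g$. You suppose $v_b$ is just right, correctly deduce $v_b=-e_{a+2}+e_{a+1}+e_a$ with $|v_b|=3$, and then assert that this ``renders $G(S_{g-1})$ connected,'' contradicting Lemma \ref{l: one gappy}. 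That assertion fails in the sub-case $s=a+1$, $|v_s|=s+1$, $v_s=-e_s+\cdots+e_0$: there $\L v_s,v_l\R=0$ for all $l<s$ and $\L v_s,v_b\R=(-1)(1)+(1)(1)=0$, so $v_s$ is an isolated vertex of $G(S_{g-1})$, which therefore has the two components $\{v_s\}$ and everything else --- a legitimately disconnected configuration of exactly the shape described in Lemma \ref{l: two summands}, and one that is perfectly consistent with Lemma \ref{l: one gappy} (with $k=s-1$ and $v_{k+1}=v_s$). So no contradiction materializes where you claim one.

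The gap is fillable, but it needs a different mechanism. In that sub-case Lemma \ref{l: one gappy} forces $v_g=-e_g+e_{g-1}+\cdots+e_j+e_{s-1}$ with $j>s$, whence $\L v_g,v_m\R=-1+(g-j)+1=g-j\geq 1$; thus $v_g\sim v_m$, contradicting the fact that $v_m$ has precisely two smaller neighbors $v_a,v_b$ while $g\neq a,b$. This pairing computation is really the engine of the paper's own (terser) argument: it shows directly that any $v_l$ with $l<m$, $l\neq a+1,b$ and $|v_l|\geq 3$ --- gappy or not --- would be a third smaller neighbor of $v_m$, so that $|v_l|=2$ for all $l<m$ with $l\neq s,b$, and then $b=g$ falls out because $v_s$ is just right by Lemma \ref{l: index s}. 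A second, smaller slip: the norm-two smaller neighbors of $v_g$ are $v_k$ and $v_j$ (when $|v_j|=2$), not ``$v_k$ or $v_{k+1}$''; the alternative $a=j$ does need to be excluded, though it dies immediately since $|v_{k+1}|\geq 3$ with $k+1<j=a$ would contradict $|v_l|=2$ for $l\leq a$.
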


\begin{proof}

Since $G(S_{m-1})$ is connected and $G(S_m)$ does not contain a cycle of length $>3$, it follows that $v_m$ has precisely two smaller neighbors $v_a, v_b$ with $a < b$, and $(v_a,v_b,v_m)$ spans a triangle.   By Lemma \ref{l: cycle}, it follows that $v_m = -e_m+ \cdots + e_a$, $|v_l|=2$ for all $l \leq a$, and $|v_b|, |v_m| \geq 3$.  Furthermore, $|v_l|=2$ for all $l < m$, $l \ne s, b$.  As $|v_s|,|v_g| \geq 3,$ it follows that $s = a+1$ and $b = g$;  since $|v_{k+1}| \geq 3$, it follows that $k = s$; and since $\L v_m, v_g \R \leq 1$, it follows that $v_g = -e_g + e_{g-1} +e_{s-1}$ for some $g \geq s+2$.  If $g < m-1$, then $(v_g;v_{g-1},v_{g+1},v_m)$ induces a claw, and if $g > s+2$, then $(v_g;v_s,v_{g-1},v_m)$ induces a claw.  It follows that $m = g+1$, $g = s+2$, and $S_m$ takes the stated form.

\end{proof}

\begin{prop}\label{p: gappy structure}

Suppose that $S$ contains a gappy vector $v_g$ but no tight vector.  Then $S$ takes one of the following forms:

\begin{enumerate}

\item $n = g$ and $S$ is as in Lemma \ref{l: S_g nothing}(2);

% Berge VII

\item $n \geq g$, and up to truncation, $|v_{g+1}|=3$, $|v_i|=2$ for all $g+1 < i \leq n$, and $S_g$ is as in Lemmas \ref{l: S_g nothing} or \ref{l: S_g not nothing}, except for Lemma \ref{l: S_g nothing}(2);

% cf Lem S_g not nothing

\item $S_{g+1}$ is as in Lemma \ref{l: gappy triangle} and $|v_i| = 2$ for all $g+1 < i \leq n$.

% III(a)- \cap IV(b)-

\end{enumerate}

\end{prop}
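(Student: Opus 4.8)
The plan is to take the structure of $S_g$ as already determined and to pin down the tail $v_{g+1},\dots,v_n$. First I would record the setup: since $S$ contains no tight vector, Lemma~\ref{l: change unbreakable} shows every element of $S$ is unbreakable, so Lemma~\ref{l: one gappy} applies and gives that $v_g$ is the \emph{unique} gappy vector, that every other vector is just right, that $v_g=-e_g+e_{g-1}+\cdots+e_j+e_k$ with $k+1<j<g$, and that $G(S_{g-1})$ is disconnected with $v_k,v_{k+1}$ in separate components. By the strengthened form of Proposition~\ref{p: decomposable structure}, $S_{g-1}$ is one of the enumerated families, and Lemmas~\ref{l: S_g nothing} and~\ref{l: S_g not nothing} then determine $S_g$ completely (according to whether $S_{g-1}$ is built from $\varnothing$ by expansions or not). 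All that remains is to analyze $v_{g+1},\dots,v_n$, each of which is just right.

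The analysis splits on whether some $v_m$ with $m>g$ has multiple smaller neighbors. If it does, I take the least such $m$ and invoke Lemma~\ref{l: gappy triangle}, which forces $m=g+1$ together with the precise form of $S_{g+1}$ (in particular $g=s+2$ and $|v_{g+1}|=5$). To finish this branch I must show $|v_i|=2$ for all $i>g+1$: the graph $G(S_{g+1})$ already carries a triangle built on two vectors of norm $\geq 3$, so any later vector of norm $\geq 3$ would complete a heavy triple (Lemma~\ref{l: no triangle}), while any later vector acquiring a second smaller neighbor would create a claw (Lemma~\ref{l: no claw}) or a cycle violating Lemma~\ref{l: cycle}. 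This yields case~(3).

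In the complementary case, every $v_m$ with $m>g$ has a unique smaller neighbor, so $G(S)$ is obtained from the connected graph $G(S_g)$ by attaching pendant paths. The key point is to understand $v_{g+1}$. Being just right with top index $g+1$, it is either $-e_{g+1}+e_g$ (norm $2$), the unique norm-$3$ form $-e_{g+1}+e_g+e_{g-1}$, or a vector of norm $\geq 4$; the last is excluded since it would acquire more than one smaller neighbor. A short computation of the pairings of $v_{g+1}$ with $v_g$ and with $v_{g-1}$ locates the unique smaller neighbor and lets me test the two admissible shapes. The norm-$2$ choice attaches $v_{g+1}$ to $v_g$, producing a claw at $v_g$ (its gappy neighbors $v_k,v_{k+1}$ are non-adjacent), so it never occurs. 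The norm-$3$ choice attaches $v_{g+1}$ to $v_{g-1}$, and its admissibility hinges on whether $v_g$ separates the remaining norm-$\geq 3$ vectors — above all the gappy partner $v_{k+1}$ — from $v_{g-1}$ inside $G(S_g)$; if it does not, the heavy triple $(v_{k+1},v_g,v_{g+1})$ obstructs it. Tracing the components of $G(S_g)-v_g$ for each form shows that \ref{l: S_g nothing}(2) admits no valid $v_{g+1}$ at all, forcing $n=g$ (this is case~(1)), whereas the remaining forms admit at most a single norm-$3$ vector $v_{g+1}$; a further induction identical to the expansion arguments then forces $|v_i|=2$ for $i>g+1$. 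Together with the convention of reporting these families up to truncation, this gives case~(2).

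The hard part is the component bookkeeping in the third paragraph. For each of the eight possible forms of $S_g$ I must trace $G(S_g)-v_g$ carefully enough to decide exactly when $v_g$ separates its gappy partner $v_{k+1}$ (and the other norm-$\geq 3$ vectors) from the attachment vertex $v_{g-1}$, since this separation is precisely what controls survival of a norm-$3$ extension against the heavy-triple test; isolating \ref{l: S_g nothing}(2) as the distinguished form is the most delicate bit. Everything else reduces mechanically to the claw-freeness, cycle, and heavy-triple lemmas already in hand.
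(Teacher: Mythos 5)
Your overall strategy is the paper's: invoke Lemma \ref{l: one gappy} to reduce to the forms of $S_g$ catalogued in Lemmas \ref{l: S_g nothing} and \ref{l: S_g not nothing}, split on whether some $v_m$ with $m>g$ has multiple smaller neighbors (Lemma \ref{l: gappy triangle} disposes of that branch), and in the remaining branch pin down $v_{g+1}$ and kill Lemma \ref{l: S_g nothing}(2) with the heavy triple $(v_{k+1},v_g,v_{g+1})$. That last exclusion is exactly the paper's one-line argument, and your exclusion of the norm-$2$ extension via the claw at $v_g$ also matches. But the ``component bookkeeping over eight forms'' you single out as the hard part is not where the difficulty lies: for a necessary condition you only need the heavy-triple obstruction to fire for \ref{l: S_g nothing}(2), since the survival of the remaining forms is certified afterwards in Section \ref{s: cont fracs} by exhibiting vertex bases.

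The genuine gap is your exclusion of $|v_{g+1}|\geq 4$. You assert that such a vector ``would acquire more than one smaller neighbor,'' and that pairing $v_{g+1}$ against $v_g$ and $v_{g-1}$ locates its unique smaller neighbor. Neither claim holds in general. Writing $v_{g+1}=-e_{g+1}+e_g+\cdots+e_l$ with $l=\min(\supp(v_{g+1}))$, the smaller neighbor is $v_l$, and $l$ can sit well below $g-1$; moreover when $v_g=-e_g+e_{g-1}+e_k$ one can have $\L v_{g+1},v_g\R=0$ with $l\le g-2$. Concretely, for $\sigma=(1,1,2,2,3,7)$ the standard basis has $v_4=-e_4+e_3+e_1$ gappy and $v_5=-e_5+e_4+e_3+e_2$ of norm $4$ with $\L v_5,v_4\R=\L v_5,v_3\R=0$ and unique smaller neighbor $v_2$: your mechanism simply does not fire, and your pairing computation against $v_g,v_{g-1}$ would fail even to find the neighbor. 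The paper reaches $|v_{g+1}|=3$ by a different chain: the claw $(v_g;v_k,v_{k+1},v_m)$ forces $l\neq g$, hence $v_g$ has no larger neighbor; a second claw based at $v_l$ forces $|v_{l+1}|\geq 3$; and minimality then yields $m=g+1$ and the norm bound. Any complete writeup must go through an argument of this kind (and must confront the configuration above, where $v_l$ is the vector $-e_s+e_{s-1}+\cdots+e_0$ generating the second component and so has no smaller neighbor to serve as the third leg of a claw), rather than the neighbor-count you propose.
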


\begin{proof}

If $n = g$ then the result is immediate.  Thus, suppose that $n > g$, and select any $g < m \leq n$.  If $v_m$ has multiple smaller neighbors, then $m = g+1$ and $S_{g+1}$ takes the form stated in Lemma \ref{l: gappy triangle}.  Assuming this is not the case, $v_m$ has a unique smaller neighbor.  If $l := \min(\supp(v_m)) = 0$, then $v_m \sim v_s, v_g$, a contradiction.  Hence $l > 0$, and $v_l$ is the unique smaller neighbor of $v_m$.  Observe that $l \ne g$, since then $(v_g;v_k,v_{k+1},v_m)$ induces a claw.  It follows that $v_g$ has no larger neighbor.  If $|v_{l+1}| = 2$, then $s < g \leq l$, so $v_l$ has a smaller neighbor $v_i$, and then $(v_l;v_i,v_{l+1},v_g)$ induces a claw.  It follows that $|v_{l+1}| \geq 3$.

Consequently, if $m > g$ is chosen minimal with $|v_m| \geq 3$, then $m = g+1$ and either $S_{g+1}$ is as in Lemma \ref{l: gappy triangle}, or else $|v_{g+1}| = 3$.  Furthermore, there does not exist any $m' > m$ with $|v_{m'}| \geq 3$, since then $\min(\supp(v_{m'})) = g$ and $v_{m'} \sim v_g$, which does not occur.  Therefore, $|v_i| = 2$ for all $g+1 < i \leq n$.  Finally, $S_g$ cannot take the form stated in Lemma \ref{l: S_g nothing}(2), for then $(v_{k+1},v_g,v_{g+1})$ forms a heavy triple.  The statement of the Proposition now follows.

\end{proof}

%%%%%%%
%%%%%%%
%%%%%%%
%%%%%%%
%%%%%%%

\section{A tight vector}\label{s: tight}

Suppose that $S$ contains a tight vector $v_t$.  By Lemma \ref{l: tight}(1), the index $t$ is unique.  The arguments in this Section reach slightly beyond the criteria laid out in Subsection \ref{ss: int graph} that sufficed to carry out the analysis in Sections \ref{s: decomposable} - \ref{s: gappy, no tight}.  Nevertheless, the basic ideas are the same as before.

%%%
%%%
%%%

\subsection{All vectors unbreakable.}\label{ss: tight unbreakable}

Propositions \ref{p: berge viii} and \ref{p: tight unbreakable} describe the structure of a standard basis that contains a tight, unbreakable element.  However, we do not make any assumption on $v_t$  just yet, as these results will apply in Subsection \ref{ss: tight breakable}.

\begin{lem}

$S_{t-1}$ is built from $\varnothing$ by a sequence of expansions.

\end{lem}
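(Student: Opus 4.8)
The plan is to exploit the tight vector $v_t$ as a rigid external reference against which the vectors $v_1,\dots,v_{t-1}$ must arrange themselves. First I would dispose of the trivial case $t=1$, where $S_{t-1}=\varnothing$, and assume $t \geq 2$. For every $i<t$ the vector $v_i$ is not tight (Lemma \ref{l: tight}(1)), hence unbreakable (Lemma \ref{l: change unbreakable}); write $v_i=\epsilon_i[T_i]$. Since all of $v_1,\dots,v_{t-1}$ lie in $\overline{S}$, the induced graph $G(S_{t-1})$ is a subgraph of $G(\overline S)$, so the intersection-graph lemmas of Subsection \ref{ss: int graph} --- claw-freeness (Lemma \ref{l: no claw}), absence of heavy triples (Lemma \ref{l: no triangle}), and the cycle structure (Lemma \ref{l: cycle}) --- are available verbatim. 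The one new ingredient is the pairing with $v_t=-e_t+2e_0+\sum_{i=1}^{t-1}e_i$: a direct computation gives, for each $i<t$,
\[ \L v_t,v_i\R = |v_i|-2+v_{i0}, \]
so that $\L v_t,v_i\R$ equals $|v_i|-1$ when $0\in\supp(v_i)$ and $|v_i|-2$ otherwise. Feeding this into Lemma \ref{l: tight 2} shows that, for $|v_i|\geq 3$, one has $T_i\prec T_t$ whenever $0\in\supp(v_i)$, while $T_i\pitchfork T_t$ (through $z_i$) or $T_i\dagger T_t$ when $0\notin\supp(v_i)$. Thus $T_t$ is a ``central'' interval to which every $T_i$ is tightly constrained.

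The first substantive step is to show that $S_{t-1}$ contains no gappy vector. Suppose $v_g$ ($g<t$) were gappy with $g$ and the gappy index $k$ minimal. As in the proof of Lemma \ref{l: one gappy}, $|v_{k+1}|\geq 3$ (Lemma \ref{l: basic}(3)), $v_g\sim v_k$ and $v_g\sim v_{k+1}$, while $|v_k|=2$ and $v_k\not\sim v_{k+1}$, so $v_k$ and $v_{k+1}$ lie in distinct components of $G(S_{g-1})$. I would then invoke $v_t$: the identity above pins down which of $v_k,v_{k+1},v_g$ and the intervals straddling the gap are abutted by $T_t$, and in each configuration $v_t$ either completes a claw, creates a heavy triple with $v_{k+1}$ and $v_g$, or forces a reducible difference $[T-T']\pm[T'-T]$ exactly as in Lemmas \ref{l: cool} and \ref{l: one gappy} --- a contradiction. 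It is precisely here that the tight vector is indispensable: without it, Section \ref{s: gappy, no tight} shows gappy vectors are perfectly compatible with being a linear lattice. Hence every $v_i$, $i<t$, is just right.

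With $S_{t-1}$ entirely just right, the final step mirrors the inductive analysis of Section \ref{s: just right}, applied to the initial segment: for $m<t$ the set $\supp(v_m)$ consists of consecutive integers ending at $m$, and claw-freeness together with the no-heavy-triple lemma force $S_m$ to be an expansion of $S_{m-1}$ unless $S_m$ realizes one of the sporadic families of Propositions \ref{p: just right 1}, \ref{p: just right 2}, and \ref{p: just right 3}(2) --- those carrying a distinguished norm-$4$ or norm-$5$ vector or a ``sun.'' Each such family I would eliminate using $v_t$ once more: pairing $v_t$ with the distinguished large-norm vector and its neighbors via the identity above produces a heavy triple or a claw, contradicting Lemma \ref{l: no triangle} or \ref{l: no claw}. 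What survives is exactly the expansion-from-$\varnothing$ form, proving the Lemma.

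The hard part will be the two elimination arguments (the gappy case and the sporadic just-right families): both hinge on using the breakable vector $v_t$ as an auxiliary vertex, and the subtlety is that $v_t\sim v_j$ is governed by the special rule of Lemma \ref{l: tight 2} rather than by ``$\L v_t,v_j\R\neq 0$,'' so every adjacency of $v_t$ must be certified through that Lemma before the intersection-graph obstructions can be applied.
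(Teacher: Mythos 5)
Your opening computation $\L v_t, v_i \R = |v_i| - 2 + v_{i0}$ for $i < t$ is correct, it is exactly the input the paper feeds into Lemma \ref{l: tight 2}, and you are right that every adjacency of $v_t$ must be certified through that lemma rather than through mere non-vanishing of pairings. But the architecture of your argument has a genuine gap at its second and third steps. The paper neither excludes gappy vectors below $t$ by a local configuration analysis nor re-runs the just-right classification on the segment. Instead it locates the smallest $s < t$ with $|v_s| \geq 3$, gets $|v_s| \in \{s, s+1\}$ from Lemma \ref{l: index s}, kills $|v_s| = s$ with the claw $(v_1; v_2, v_s, v_t)$ or the negative triangle $(v_1, v_s, v_t)$, and then --- this is the pivot you are missing --- observes that $|v_s| = s+1$ forces $T_s \prec T_t$ while $T_1$ abuts $T_t$ at the opposite end, so $v_1$ and $v_s$ lie in \emph{distinct components} of $G(S_{t-1})$: a path joining them, together with $v_t$, would span a cycle of length $\geq 4$ missing an edge, contradicting Corollary \ref{c: cycle}. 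Disconnectedness then triggers the strengthened form of Proposition \ref{p: decomposable structure} (stated just after its proof, valid for initial segments $S_{n'}$), which says $S_{t-1}$ is built by expansions from $A_{s,m}$, $B_m$, $C_{s,m}$, or $\varnothing$; the proof concludes by eliminating $A_{s,m}$, $B_m$, $C_{s,m}$ with three short interval arguments against $T_t$.

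Your step (c) cannot substitute for this. Propositions \ref{p: just right 1}, \ref{p: just right 2}, and \ref{p: just right 3} classify a \emph{complete} standard basis under the hypotheses that every element of $S$ is just right and $L$ is a single, hence indecomposable, linear lattice; their proofs use connectivity of $G(S)$ and terminality arguments of the form ``$m = n$,'' none of which is available for an initial segment sitting below a tight (possibly breakable) $v_t$. Indeed, in the only nontrivial case $G(S_{t-1})$ \emph{is} disconnected, so the deviations from ``expansion'' that must be eliminated are not the sporadic connected families of Section \ref{s: just right} but the decomposable families $A_{s,m}$, $B_m$, $C_{s,m}$ --- absent from your elimination list, and precisely the cases occupying the bulk of the paper's proof. (The conclusion itself tolerates disconnectedness: expansion from $\varnothing$ can produce two components, cf. Proposition \ref{p: expansion}.) Your step (b) is likewise only asserted: the gappy exclusion is true, but ``in each configuration a contradiction'' conceals a genuine case analysis, and when $v_t$ is breakable one has $v_t \notin \overline{S}$, so heavy-triple obstructions through $v_t$ are unavailable and $T_t$ may contain many vertices of degree $\geq 3$, making the configuration space larger than your sketch allows. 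In the paper gappiness below $t$ is excluded for free, since every family appearing in Proposition \ref{p: decomposable structure} is just right.
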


\begin{proof}

If $|v_i|=2$ for all $i < t$, then the result is immediate, so suppose that $|v_s| \geq 3$ with $s < t$ chosen smallest.  Thus, $|v_s|=s$ or $s+1$.  Let us rule out the first possibility.  If $|v_s| = s$, then $s \geq 3$ and $\L v_t, v_s \R = s-2 \geq 1$.  Hence either $T_s \pitchfork T_t$, or else $s = 3$ and $T_s \dagger T_t$.  In the first case, $(v_1;v_2,v_s,v_t)$ induces claw, and in the second case, $(v_1,v_s,v_t)$ forms a negative triangle.  Therefore, $|v_s| = s+1$.

It follows that $\L v_t, v_s \R = |v_s|-1 \geq 2$, so that $T_s \prec T_t$.  As $\L v_1, v_s \R = 0$, it follows that $T_1$ and $T_s$ abut $T_t$ at opposite ends.  We claim that $v_1$ and $v_s$ belong to separate components of $G(S_{t-1})$.  For suppose the contrary, and choose a shortest path between them.  Together with $v_t$ they span a cycle of length $\geq 4$ in $G(S)$ that is missing the edge $(v_1,v_s)$, contradicting Corollary \ref{c: cycle}.

Therefore, $G(S_{t-1})$ is disconnected.  It follows by Proposition \ref{p: decomposable structure} that $S_{t-1}$ is built from $A_{s,m}, B_m, C_{s,m}$, or $\varnothing$ by a sequence of expansions.  Let us rule out the first three possibilities in turn.

\noindent (a) $A_{s,m}.$  Since $|v_m| \geq 4$ and $\L v_t, v_m \R = |v_m| -2$, it follows that $T_m \pitchfork T_t$.  Since $v_{s+1} \sim v_m$, it follows that $T_{s+1} \dagger T_m$, whence $T_{s+1} \; \cancel{\dagger} \; T_t$ since otherwise $T_m \dagger T_t$.  Hence $T_{s+1} \pitchfork T_t$ as well.  In particular, $z_{s+1},z_m \in T_t$.  On the other hand, $(v_1,\dots,v_{s-1},v_{s+1},v_m)$ induces a sun, with $|v_{s+1}|,|v_m| \geq 3$.  It follows that $T_1$ is contained in the open interval with endpoints $z_{s+1}$ and $z_m$, so that $T_1$ and $T_t$ do not abut, in contradiction to $v_1 \sim v_t$.

\noindent (b) $B_m.$  In this case, $T_2$ and $T_m$ both abut $T_t$, and at the opposite end as $T_1$.  As $|v_2|, |v_m| \geq 3$, it follows that both $T_2, T_m \prec T_t$.  Hence one of $T_2$, $T_m$ contains the other, in contradiction to their unbreakability.

\noindent (c) $C_{s,m}.$  Now $T_s \prec T_t$.  If $T_m \pitchfork T_t$, then $T_m$ and $T_t$ abut $T_s$ at opposite ends.  However, $T_{s+1}$ abuts $T_s$ as well, but $v_s \not\sim v_t, v_{s+1}$.  It follows that $m = s+2$ and $T_m \dagger T_t$.  But then $(v_s,v_m,v_t)$ forms a negative triangle.

It follows that $S_{t-1}$ is built from $\varnothing$ by a sequence of expansions, as desired.

\end{proof}

\begin{prop}\label{p: berge viii}

Suppose that $|v_i| \ne 2$ for some $i < t$.  Then $S = S_t$.
% VIII

\end{prop}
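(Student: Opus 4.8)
The plan is to prove $n=t$ by showing that $v_{t+1}$ cannot exist. First I would record the geometry coming from the preceding Lemma: $S_{t-1}$ is a sequence of expansions on $\varnothing$, the least index $s<t$ with $|v_s|\ge 3$ satisfies $s\ge 2$ and $v_s=-e_s+e_{s-1}+\cdots+e_0$ with $|v_s|=s+1$, and $T_s\prec T_t$ while $T_1\prec T_t$ at the opposite end of $T_t$, with $v_1$ and $v_s$ in distinct components of $G(S_{t-1})$. Since $z_s$ and $z_t$ are then two \emph{distinct} degree-$\ge 3$ vertices of $T_t$ (Corollary \ref{c: distinct z_i}), the tight vector $v_t$ is breakable. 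Consequently Lemma \ref{l: cool} is unavailable for pairs involving $v_t$, and I must read off abutment of $T_t$ from the exact value of $\langle v_t,\,\cdot\,\rangle$ through Lemma \ref{l: tight 2}; in particular $v_1$ and $v_s$ are exactly the intervals capping the two ends of $T_t$.

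Next, assume $n>t$ and examine $v_{t+1}$, writing $A=\supp^+(v_{t+1})$. The first step is to rule out $0\in A$. If $0\in A$ but $1\notin A$, then $0$ is a gappy index, so Lemma \ref{l: basic}(3) forces $|v_1|\ge 3$, contradicting $|v_1|=2$. If instead $0\in A$, the general inequality $\langle v_t,v_{t+1}\rangle\le |v_{t+1}|-1$ combined with $v_{t,0}=2$ forces $t\in A$; one then pushes $A$ toward the initial run $\{0,1,\dots,t\}$ (smaller $A$ being excluded by the gappy obstruction or by maximality, i.e.\ by $v_{t+1}\ll(\cdot)$), and for such $A$ the pairing $\langle v_{t+1},v_s\rangle=s-1$ collides with Lemma \ref{l: cool} once $s\ge 3$, while the residual case $s=2$ is handled by locating $T_{t+1}$ and invoking the cycle bound of Corollary \ref{c: cycle}. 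Hence $\min(\supp(v_{t+1}))\ge 1$.

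With $0\notin A$, I would use the smaller neighbour $v_l$, $l=\min(\supp(v_{t+1}))$, together with $\langle v_{t+1},v_t\rangle$ and $\langle v_{t+1},v_s\rangle$ to position $T_{t+1}$ against $T_t$. Because both ends of $T_t$ are already occupied by the abutting intervals $T_1$ and $T_s$ (and $T_t$ additionally abuts the interior expansion vectors of $S_{t-1}$), every admissible placement of $T_{t+1}$ produces a forbidden configuration: a third interval meeting $T_t$ at an end it already shares gives a claw (Lemma \ref{l: no claw}); a placement abutting two unbreakable norm-$\ge 3$ vectors at once gives a heavy triple (Lemma \ref{l: no triangle}); and a placement that reconnects the two former components of $G(S_{t-1})$ through $v_{t+1}$ and $v_t$ produces a cycle exceeding the length permitted by Corollary \ref{c: cycle}. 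In each branch we reach a contradiction, so $v_{t+1}$ does not exist and $S=S_t$.

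The hard part will be the bookkeeping forced by the breakability of $v_t$. Since $G(S)$ and $\widehat G(S)$ differ near $v_t$ and four-cycles through $v_t$ are allowed (Corollary \ref{c: cycle}), none of the abutment relations at $T_t$ may be inferred from nonvanishing pairings alone; each must be extracted via Lemma \ref{l: tight 2}, and the special edge case flagged after Definition \ref{d: int graph} must be checked not to occur here. Organizing the elimination of all supports $A$ with $0\in A$, and separating the subcases $|v_{t+1}|=2$ versus $|v_{t+1}|\ge 3$ and gappy versus just right, is where essentially all of the work concentrates.
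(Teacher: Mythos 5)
Your setup is sound: under the hypothesis $v_t$ is indeed breakable, $T_1$ and $T_s$ cap the two ends of $T_t$, and abutment with $T_t$ must be read off via Lemma \ref{l: tight 2} rather than Lemma \ref{l: cool}. The elimination of $0\in\supp(v_{t+1})$ is also essentially fine (the paper does it in one stroke via Lemma \ref{l: tight}(3), which forces $t=1$). But the heart of your argument --- the claim that once $\min(\supp(v_{t+1}))\geq 1$, ``every admissible placement of $T_{t+1}$'' yields a claw, a heavy triple, or an over-long cycle --- is asserted rather than proved, and it is false as stated. Since $v_{t+1}$ is not gappy and contains $e_t$ in its support, one computes $\L v_t,v_{t+1}\R=|v_{t+1}|-3$, and Lemma \ref{l: tight 2} then confines $|v_{t+1}|$ to $\{2,3,4\}$. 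The cases $|v_{t+1}|=2$ and $|v_{t+1}|=4$ do fall to a claw $(v_t;v_1,v_s,v_{t+1})$ or a negative triangle, exactly as you predict. The problem is $|v_{t+1}|=3$, i.e.\ $v_{t+1}=-e_{t+1}+e_t+e_{t-1}$: here $\L v_t,v_{t+1}\R=0$, so by Lemma \ref{l: tight 2}(4) the interval $T_{t+1}$ does \emph{not} abut $T_t$ at all, its only smaller neighbor is the norm-$2$ vector $v_{t-1}$, and no claw, heavy triple, or cycle arises locally. None of your three ``forbidden configurations'' applies to this placement.

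This is precisely where the paper's proof does its real work, with an idea absent from your proposal: it takes the induced path $P$ in $G(S)$ from $v_t$ to $v_{t+1}$, tracks which end of $T_t$ each successive interval attaches to, shows that $z_{t+1}$ must lie outside $T_t$ on the far side of the last degree-$\geq 3$ vertex, and concludes that $T_t$ is contained in the union $T$ of the remaining intervals of $P$. The contradiction is then a degree count, $d(T_t)=t+4>t+1\geq d(T)$, not a local forbidden subgraph. Without some substitute for this global argument your proof does not close; I would also note that your catch-all ``cycle exceeding the permitted length'' needs care, since Corollary \ref{c: cycle} permits $4$-cycles through the breakable vector $v_t$, so reconnection through $v_t$ is not automatically fatal.
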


\begin{proof}

We proceed by way of contradiction.  Thus, suppose that $S \ne S_t$, and consider $v_{t+1}$.  

Since $G(S_t)$ is a path, Lemma \ref{l: one gappy} implies that $v_{t+1}$ is not gappy.  Set $k := \min(\supp(v_{t+1}))$.  By Lemma \ref{l: tight}(3), it follows that $k > 0$.  Hence $\L v_t, v_{t+1} \R \leq |v_{t+1}| - 3$, so $|v_{t+1}| \in \{2,3,4\}$.  If $|v_{t+1}| = 2$, then $(v_t;v_1,v_s,v_{t+1})$ induces a claw.  Similarly, if $|v_{t+1}| = 4$, then $(v_t;v_1,v_s,v_{t+1})$ induces a claw unless $k \in \{1,s\}$; but if $k \in \{1,s\}$, then $(v_k,v_t,v_{t+1})$ forms a negative triangle.

It remains to consider the case that $|v_{t+1}|=3$.  In this case, $v_{t-1}$ is the unique smaller neighbor of $v_{t+1}$, and $v_t \not\sim v_{t+1}$, so $z_{t+1} \notin T_t$. Let $P \subset G(S)$ denote the induced path with consecutive vertices $(v_{i_1},\dots, v_{i_l})$, where $i_1 = t$ and $i_l =t+1$.  Thus, $i_2 \in \{1,s\}$ and $i_{l-1} = t-1$.  Observe that if $m < t$ is maximal with the property that $|v_m| \geq 3$, then $v_m \in V(P)$; in fact, $i_j = m$, where $j+m = t+1$.  Note that $z_{i_j} \notin T_{i_h}$ for all $h \ne 1,j$.  Let $x$ denote the endpoint of $T_t$ at which $T_{i_1} = T_t$ and $T_{i_2}$ abut.  Without loss of generality, suppose that $y$ is the left endpoint of $T_t$.  Thus, $T_{i_{j-1}}$ abuts the left endpoint of $T_m$.  It follows that $T_{i_{j+1}}$ abuts the right endpoint of $T_{i_j}$, since otherwise $(v_{i_{j-1}},v_{i_j},v_{i_{j+1}})$ induces a triangle, while $P$ is a path.  Hence $z_m$ separates $x$ from all $T_{i_h}$ with $h > j$.  In particular, $z_m$ separates $x$ from $z_{t+1} \in T_{t+1} = T_{i_l}$.  As $z_{t+1} \notin T_t$, it follows that $z_{t+1}$ lies to the right of $T_t$.  Hence $T_t \subset T := \bigcup_{h=2}^l T_{i_h}$.  However, $d(T_t) = t+4 > t+1 \geq d(T)$, a contradiction.

It follows that $v_{t+1}$ cannot exist, so $S = S_t$, as desired.

\end{proof}

Henceforth we assume that $|v_i|=2$ for all $i < t$.

\begin{prop}\label{p: tight unbreakable}

Suppose that $z_i \notin T_t$ for all $i \leq n' \leq n$ with $|v_i| \geq 3$.  Then $S_{n'}$ takes one of the following forms:

\begin{enumerate}

\item $t=1$, $|v_s| = s+1$ for some $s > 1$, $|v_i| = 2$ for all $1 < i < s$, and $S_{n'}$ is built from $S_s$ by a sequence of expansions;
% I- part 1

\item $t=1$, $|v_s| = s$ for some $s > 1$, $|v_i| = 2$ for all $1 < i < s$, and $S_{n'}$ is built from $S_s$ by a sequence of expansions; or
% II+

\item $t > 1$, $|v_i| = 2$ for all $i < t$, and $S_{n'}$ is built from $S_t$ by a sequence of expansions.
% I- part 2

\end{enumerate}

\end{prop}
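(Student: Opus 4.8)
The plan is to induct on the index, building $S_{n'}$ up from a small base by a sequence of expansions, after first disposing of gappy vectors entirely. The overall shape of the argument mirrors the just-right analysis of Section \ref{s: just right} (in particular Proposition \ref{p: just right 3}), with the tight vector $v_t$ playing a controlled role thanks to the hypothesis $z_i \notin T_t$.

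First I would rule out gappy vectors among $v_1,\dots,v_{n'}$ altogether. Suppose $v_g$ is gappy with $g$ minimal. Since every vector is unbreakable, Lemma \ref{l: one gappy} applies and forces $v_k$ and $v_{k+1}$ into distinct components of $G(S_{g-1})$, so $G(S_{g-1})$ is disconnected. The strengthened form of Proposition \ref{p: decomposable structure} noted after its proof then shows $S_{g-1}$ is obtained from one of $A_{s,m}$, $B_m$, $C_{s,m}$, or $\varnothing$ by a sequence of expansions; in particular every element of $S_{g-1}$ is just right. But the preceding Lemma shows $S_{t-1}$ is built from $\varnothing$ by expansions, so no $v_i$ with $i<t$ is gappy and hence $g>t$; this places $v_t$ inside $S_{g-1}$, contradicting that all its elements are just right. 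Thus no gappy vector occurs, and every element of $S_{n'}$ is either just right or the tight vector $v_t$. Note this step does not yet use $z_i \notin T_t$.

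Next I would pin down the base. When $t>1$, the standing assumption $|v_i|=2$ for $i<t$ gives $v_i=-e_i+e_{i-1}$, so $G(S_t)$ is the path $v_{t-1}-\cdots-v_1-v_t$ ending in the heavy vector $v_t$; this is the base $S_t$ of conclusion (3). When $t=1$, set $s=\min\{i>1 : |v_i|\ge 3\}$, the continuation being all norm $2$ (hence trivially a sequence of expansions) if no such $i$ exists. Writing $v_s=-e_s+e_{s-1}+\cdots+e_k$, I would show $k\in\{0,1\}$, giving $|v_s|=s+1$ or $s$: if $k\ge 2$ then $v_s$ meets $\{v_1,\dots,v_{s-1}\}$ only in $v_k$, and a short computation of pairings shows $(v_k;v_{k-1},v_{k+1},v_s)$ is an induced claw, contradicting Lemma \ref{l: no claw}. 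For either value of $k$ one checks directly that $G(S_s)$ is a triangle-free path with $v_1,v_s$ its two heavy vectors, giving the bases of conclusions (1) and (2).

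Finally I would run the induction: for each $m$ past the base, $S_m$ is an expansion of $S_{m-1}$. Here the hypothesis $z_i\notin T_t$ is essential, since by Lemma \ref{l: tight 2} it forces $|\L v_t,v_j\R|\le 1$ for every heavy $v_j$, so $v_t$ behaves toward heavy vectors like a norm-$2$ vector and cannot pair strongly. If $v_m$ had two smaller neighbors then, as $G(S_{m-1})$ is connected and $G(S)$ has no cycle of length $>3$, they would span a triangle $(v_a,v_b,v_m)$; Lemma \ref{l: cycle} forces $|v_l|=2$ for $l\le a$, $v_b=-e_{a+2}+e_{a+1}+e_a$, and $v_m=-e_m+\cdots+e_a$. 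Since $|v_i|=2$ for $i<t$ this pins $a\in\{t-2,t-1\}$ (with the analogous constraint from $v_1$ when $t=1$), and tracing the pairings with $v_t$ — using $z_m\notin T_t$ to bound $\L v_t,v_m\R$ — shows in each case that either $(v_t,v_b,v_m)$ is a heavy triple or $v_t$ would have to be a norm-$3$ just-right vector, both absurd. Thus $v_m$ has a unique smaller neighbor; the remaining bookkeeping rules out $\min(\supp v_m)=0$ when $t>1$ via $\L v_t,v_m\R=t\ge 2$, and excludes the non-expansion attachments exactly as in the proof of Proposition \ref{p: just right 3}, leaving an expansion. The main obstacle is precisely this last step: one must track the supports and the distinguished vertices $z_i$ carefully to verify, uniformly across $t=1$ and $t>1$, that the only configurations compatible with $z_i\notin T_t$, claw-freeness, and the no-heavy-triple lemma are expansions.
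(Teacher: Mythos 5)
Your proposal follows essentially the same route as the paper's proof: eliminate gappy vectors via Lemma \ref{l: one gappy}, rule out a vector with multiple smaller neighbors by the triangle/heavy-triple analysis, and show every subsequent attachment is an expansion, with the two exceptional attachments ($\min(\supp(v_m))=0$ or $|v_{k+1}|=2$) forcing $t=1$ and yielding the bases of cases (1) and (2). The only differences are cosmetic: you set up the base before inducting where the paper folds the base cases into the induction step, and in the two-smaller-neighbors analysis the subcase where the tight vector plays the role of $v_b$ is dispatched in the paper by deriving $k=1$, $t=3$ and a negative triangle rather than via the just-right form you ascribe to $v_b$.
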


\noindent Notice that Proposition \ref{p: tight unbreakable}(1) allows the possibility that $s=2$, a slight divergence from our convention on the use of $s$ stated at the outset of Section \ref{s: just right}.  Under the assumption that $n = n'$, Proposition \ref{p: tight unbreakable} produces three broad families of examples.  Assuming instead that $n > n'$, Propositions \ref{p: breakable 1}, \ref{p: breakable 2}, and \ref{p: breakable 3} utilize this result to produce even more.

\begin{proof}

By Lemma \ref{l: one gappy}, $S_{n'}$ does not contain a gappy vector.  Choose any $m > t$, and suppose by way of contradiction that $v_m$ had multiple smaller neighbors.  Since $G(S_{m-1})$ is connected and $G(S_m)$ does not contain a cycle of length $>3$, it follows that $v_m$ has exactly two smaller neighbors $v_k$ and $v_j$, $k < j$, and $v_k \sim v_j$.  Therefore, $|v_i| = 2$ for all $k+1 < i < m$, $i \ne j$, and since $G(S_m)$ does not contain a heavy triple, it follows that $|v_i| = 2$ for all $i \leq k$.  Hence $t \in \{k+1, j\}$.  However, if $t = k+1$, then $(v_t,v_j,v_m)$ forms a heavy triple, while if $t = j$, then $k = 1, t=3$, and $(v_k,v_t,v_m)$ forms a negative triangle.
Therefore, $v_m$ has exactly one smaller neighbor.

Set $k: = \min(\supp(v_m))$ and suppose that $k = 0$.  Then $\L v_t, v_m \R = t$, so it follows that $t = 1$.  Since $v_m$ has no other smaller neighbor, it follows that $|v_i| = 2$ for all $1 < i < m$.  Thus, $S_m$ takes the form stated in (2) with $m = s$.  Suppose instead that $|v_{k+1}| = 2$.  Then $v_k$ has no smaller neighbor $v_i$, since then $(v_k;v_i,v_{k+1},v_m)$ induces a claw.  As $t \leq k$, $G(S_k)$ is connected, so $k = t = 1$.  Thus, $S_m$ takes the form stated in (3) with $m = s$.  If neither $k = 0$ nor $|v_{k+1}|=2$, then it follows that $S_m$ is an expansion on $S_{m-1}$.  By induction, it follows that $S$ takes one of the forms stated in the Lemma.

\end{proof}

%%%
%%%
%%%

\subsection{A tight, breakable vector.}\label{ss: tight breakable}

Now we treat the case that $v_t$ is breakable.  This is the final and most arduous step in the case analysis, resulting in Propositions \ref{p: breakable 1}, \ref{p: breakable 2}, and \ref{p: breakable 3}.

\begin{lem}\label{l: breakable gappy}

Suppose that $v_t$ is breakable, $g \ne t$, $|v_g| \geq 3$, and $z_g \in T_t$.  Then $g > t+1$ and either $t > 1$, $v_g = -e_g+e_{g-1}+e_{t-1}$, and $T_g \pitchfork T_t$, or else $v_g = -e_g+e_{g-1}+e_{t-1}+\cdots+e_0$ and $T_g \prec T_t$.

\end{lem}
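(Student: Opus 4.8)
The plan is to play the geometric dichotomy forced by breakability against a coordinate analysis of $v_g$, using maximality to constrain its support and claw-freeness to finish. \emph{Reductions.} Since $g \ne t$, Corollary \ref{c: one breakable} shows $v_g$ is unbreakable, so $z_g$ is its unique vertex of degree $\ge 3$ and $z_g \in T_t$ by hypothesis; thus $T_g \cap T_t \ne \varnothing$, so $T_g$ and $T_t$ are neither distant nor consecutive. As $v_t$ is breakable, $T_t$ carries at least two vertices of degree $\ge 3$ (Corollary \ref{c: linear irred}(4)) while $T_g$ carries only $z_g$, which excludes $T_g = T_t$ and $T_t \prec T_g$; hence either $T_g \prec T_t$ or $T_g \pitchfork T_t$. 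Throughout I use the standing normalization $|v_i| = 2$ for $i < t$, equivalently $v_i = -e_i + e_{i-1}$, so that $\sigma_0 = \cdots = \sigma_{t-1} = 1$ and $\sigma_t = t+1$; since $|v_g| \ge 3$ and $g \ne t$, this already gives $g > t$. Writing $v_g = -e_g + \sum_{i \in A} e_i$, Lemma \ref{l: basic}(1) gives $g-1 \in A$, while Lemma \ref{l: basic}(3) shows no $c \le t-2$ can be a gappy index, so $A \cap \{0,\dots,t-1\}$ is a terminal segment $\{c_0,\dots,t-1\}$ (possibly empty).

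\emph{The case $T_g \prec T_t$.} Here Lemma \ref{l: tight 2}(1) gives $\L v_t, v_g\R = |v_g| - 1$. Evaluating $\L v_t, v_g\R$ from the coordinate form and comparing with $|v_g| - 1 = (t - c_0) + |A \cap \{t,\dots,g-1\}|$ forces, after a short computation, $c_0 = 0$, $t \notin A$, and $A \cap \{t,\dots,g-1\} = \{g-1\}$; in particular $g-1 > t$, i.e. $g > t+1$. This yields the second alternative $v_g = -e_g + e_{g-1} + e_{t-1} + \cdots + e_0$.

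\emph{The case $T_g \pitchfork T_t$.} Now Lemma \ref{l: tight 2} gives $\L v_t, v_g\R = |v_g| - 2$ when $|v_g| > 3$, and $\L v_t, v_g\R = \pm 1$ when $|v_g| = 3$. The same coordinate computation admits several candidate supports for $A$: besides $\{t-1,g-1\}$, the spurious families obtained by (i) taking $1 \le c_0 \le t-2$ with $A \cap \{t,\dots,g-1\} = \{g-1\}$, (ii) taking $c_0 = 0$ with one interior entry $d \in \{t+1,\dots,g-2\}$, and (iii) taking $A \cap \{0,\dots,t-1\} = \varnothing$ with $A = \{t,g-1\}$. I would eliminate each spurious family using claw-freeness (Lemma \ref{l: no claw}), and where appropriate the absence of heavy triples (Lemma \ref{l: no triangle}): for (i) the degree-two vector $v_{c_0}$ has the pairwise non-adjacent neighbours $v_g$, $v_{c_0+1}$, and $v_{c_0-1}$ (or the tight vector $v_t$ when $c_0 = 1$); families (ii) and (iii) produce an analogous obstruction built from $v_g$, the tight vector $v_t$, and a degree-two vertex flanking the interior entry. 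The surviving support $A = \{t-1,g-1\}$ gives $|v_g| = 3$, $\L v_t,v_g\R = 1 = |v_g| - 2$, and $g > t+1$, hence the first alternative $v_g = -e_g + e_{g-1} + e_{t-1}$; finally $t = 1$ cannot occur, since for $t = 1$ this support returns $\L v_t, v_g\R = 2 = |v_g| - 1$, placing it in the case $T_g \prec T_t$ rather than $T_g \pitchfork T_t$, so $t > 1$.

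I expect the transverse case to be the main obstacle. Its spurious supports satisfy every pairing identity with $v_t$, so they cannot be ruled out by the value of $\L v_t, v_g\R$ alone and must be eliminated geometrically. The delicate points are that the breakable vertex $v_t$ contributes edges to $G(S)$ by the exceptional rule following Definition \ref{d: int graph}, so the non-adjacency of the claw legs must be checked against the correct criterion, and that families (ii) and (iii) require locating the right degree-two vertex to centre the claw; once these are in place, claw-freeness forces the support down to $\{t-1,g-1\}$.
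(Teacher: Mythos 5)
Your setup is sound and agrees with the paper's: the dichotomy $T_g \prec T_t$ versus $T_g \pitchfork T_t$, the terminal-segment structure of $A \cap \{0,\dots,t-1\}$ forced by Lemma \ref{l: basic}(3), the clean coordinate identity $\L v_t, v_g\R = v_{g0} + |A \cap \{0,\dots,t-1\}| - v_{gt}$ played against Lemma \ref{l: tight 2}, and the claw $(v_{c_0}; v_{c_0-1}\text{ or }v_t, v_{c_0+1}, v_g)$ that kills family (i) is exactly the paper's final step. The $\prec$ case is complete as you describe it.

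The gap is in the transverse case, and it is the part you flagged as the main obstacle. First, your enumeration misses one solution of the pairing identity: $g = t+1$ with $A = \{0,\dots,t\}$, i.e.\ $v_{t+1} = -e_{t+1}+e_t+\cdots+e_0$, which satisfies $\L v_t,v_{t+1}\R = t = |v_{t+1}|-2$ (and, for $t=1$, gives $|v_2|=3$, $\L v_1,v_2\R = 1$, so it also evades your ``$t=1$ forces $\prec$'' argument). The paper excludes it with Lemma \ref{l: tight}(3). Second, and more seriously, claw-freeness cannot eliminate families (ii) and (iii). In family (iii), $A=\{t,g-1\}$, there is no interior entry and hence no degree-two vertex on which to center a claw; the paper's exclusion is Lemma \ref{l: tight}(2), whose content is the irreducibility of $v_g+v_t$ (Lemma \ref{l: more irred}(2)) pitted against the reducibility of $[T_g-T_t]-[T_t-T_g]$. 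In family (ii), the interior index $d$ is (generically) a gappy index of $v_g$, so Lemma \ref{l: basic}(3) forces $|v_{d+1}| \ge 3$ — the lemma makes no minimality assumption on $g$, so nothing is known about the norms of $v_{d-1}, v_d, v_{d+1}$, and the ``flanking degree-two vertex'' you want need not exist. The paper instead computes that $v_g - v_t = -e_g+e_{g-1}+e_m+e_t-e_0$ is irreducible (a direct changemaker argument using $\sigma_t = t+1 > 1$) while $\epsilon_g([T_g-T_t]-[T_t-T_g])$ is reducible by Corollary \ref{c: linear irred}(3). These reducibility comparisons are the real content of the lemma and are not replaceable by the graph-theoretic obstructions of Subsection \ref{ss: int graph}; without them your argument does not close.
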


\noindent Note that we do not assume {\em a priori} that $v_g$ is gappy.

\begin{proof}

\noindent (a) $g > t+1$.

\noindent Otherwise, $\L v_t, v_g \R \in \{ |v_g| - 3, |v_g|-2 \}$, with the second possibility iff $\min(\supp(v_g)) = 0$.  Lemma \ref{l: tight 2} rules out the first possibility and Lemma \ref{l: tight}(3) the second.

It follows that $\supp(v_g)$ contains at least two values $> t$.  

\noindent (b) $v_{gt}=0$.

\noindent Otherwise, $1 \leq \L v_t, v_g \R \leq |v_g|-3$.  By Lemma \ref{l: tight 2}, we must have $\L v_t, v_g \R = 1$ and $|v_g| = 3$, so $v_g = -e_g + e_{g-1} + e_t$.  Now Lemma \ref{l: tight}(2) implies that $z_g \notin T_t$, a contradiction.

As $z_g \in T_t$, it follows that $\L v_t, v_g \R > 0$, so $v_g$ is gappy and there exists a gappy index $k < t$.  Since $|v_{k+1}| \geq 3$, it follows that $k = t-1$, and $\supp(v_g) \cap \{0,\dots,t-1\}$ consists of consecutive integers.

\noindent (c) $\supp(v_g)$ contains exactly two values $>t$.

\noindent Otherwise, $0 \leq \L v_t, v_g \R \leq |v_g| -2$, where the latter inequality is attained precisely when $v_g = -e_g+e_{g-1}+e_m+e_{t-1}+\cdots+e_0$ for some $t < m < g-1$.  Thus, $T_g \pitchfork T_t$, $\epsilon_g = \epsilon_t$, and $\epsilon_g([T_g - T_t] - [T_t - T_g])$ is reducible.  However, this equals  $v_g - v_t = -e_g + e_{g-1} + e_m + e_t -e_0$.  Since every non-zero entry in this vector is $\pm 1$, a decomposition $v_g - v_t = x + y$ with $\L x,y \R = 0$ satisfies $x_i y_i = 0$ for all $i$, and both $x$ and $y$ have a negative coordinate.  Without loss of generality, $x_g = -1$ and $y_0 = -1$.  Then $0 = \L y, \sigma \R \geq -1 + \sigma_i$ for some $i \in \{ t, m, g-1 \}$; but $\sigma_i \geq \sigma_t = t+1 > 1$, a contradiction.

It follows that $v_g = -e_g + e_{g-1} + e_{t-1} + \cdots + e_l$ for some $0 \leq l \leq t-1$.  Suppose by way of contradiction that $0 < l < t-1$.  Then $(v_l;v_i,v_{l+1},v_g)$ induces a claw in $G(S)$, where $i = l-1$ if $l > 1$, and $i = t$ if $l = 1$.  Therefore, $l \in \{0, t-1\}$, and the statement of the Lemma follows on consideration of $\L v_t, v_g \R$.

\end{proof}

Observe that if $v_t$ is breakable, $z_i \notin T_t$ for all $i < t$, and $g$ is chosen minimally as in Lemma \ref{l: breakable gappy}, then $S_{g-1}$ takes one of the forms stated in Proposition \ref{p: tight unbreakable}.  We assume henceforth that this is the case, and $g > t+1$ is chosen minimally with $z_g \in T_t$.

\begin{lem}\label{l: engulf}

Suppose that $T_t$ is breakable, $T_i \prec T_t$, and let $C = \{ v_t \} \; \cup \; \{ v_j \; | \; T_j \dagger T_i,T_t \}$.  Then $C$ separates $v_i$ in $G(S)$ from every other $v_l$ of norm $\geq 3$ for which $z_l \notin T_t$.

\end{lem}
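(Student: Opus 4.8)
The natural language here is that of intervals, so I would first record each $v_u$ as $\epsilon_u[T_u]$, legitimate because $v_t$ is the only breakable vector (Corollary \ref{c: one breakable}), and translate adjacency in $G(S)$ into the concrete statement that two intervals are joined exactly when they share an endpoint or sit end to end. Reversing the vertex basis if necessary, I may assume $T_i$ and $T_t$ share their \emph{left} endpoint, so $T_t=[a,b]$ and $T_i=[a,c]$ with $a\le c<b$. Unwinding $\dagger$, the only intervals consecutive to both $T_i$ and $T_t$ are those whose right end is $x_{a-1}$; thus $C=\{v_t\}\cup\{v_j : \max T_j = x_{a-1}\}$, i.e. $v_t$ together with every interval abutting the shared endpoint $x_a$ from the left. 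The common-right-endpoint case is identical after reversal, so I would treat only this one.

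The engine of the argument is breakability: by Corollary \ref{c: linear irred}(4), $T_t$ contains two vertices of degree $\ge 3$, say $z<z'$ with $a\le z<z'\le b$. Since every vertex lying on a path in $G(S)-C$ is unbreakable (it is not $v_t$), Corollary \ref{c: linear irred}(4) forbids \emph{any} such interval from containing both $z$ and $z'$. As $T_i$ is itself unbreakable and anchored at $x_a$, it misses at least one of them, so $c<z'$. I would phrase reachability on the ``gap'' line: to have its degree-$\ge 3$ vertex outside $T_t$, an interval must poke out past $x_b$ on the right or past $x_a$ on the left, and the component of $v_i$ in $G(S)-C$ escapes precisely when it contains such an interval.

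Supposing some $v_l$ with $|v_l|\ge 3$ and $z_l\notin T_t$ lay in that component, I would fix a shortest (hence induced) path $v_i=u_0,\dots,u_r=v_l$ in $\overline{S}$ and examine the first interval that pokes out of $T_t$. The cleanest kills come immediately: any interval that straddles a boundary of $T_t$ and reaches a degree-$\ge 3$ vertex on each side would contain two such vertices, hence be breakable --- impossible; in particular no reachable interval can reach past $z'$ on the right or engulf $T_t$. For the surviving configurations I would bring in the pairing of $v_t$ with a norm-$\ge 3$ vector (Lemma \ref{l: tight 2}), which shows an interval $T_w$ that overhangs a boundary while keeping $z_w\in T_t$ satisfies $\L v_t,v_w\R = |v_w|-2\ge 1$, so $v_w\sim v_t$. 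Combined with $v_i\sim v_t$ and the reachability path (which avoids $v_t$), this produces a cycle through $v_t$; since $G(S)$ has no induced cycle of length $\ge 4$ (Corollary \ref{c: cycle}), $v_t$ must in fact be adjacent to an interior vertex of the path, and iterating this with claw-freeness (Lemma \ref{l: no claw}) and the absence of a heavy triple (Lemma \ref{l: no triangle}) on the escaping interval, the $z'$-carrying interval, and a third interior interval would pin the first escaping interval to have right end $x_{a-1}$, i.e. to lie in $C$ --- the desired contradiction.

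The hard part, and the step I expect to be most laborious, is exactly the ``leftward long jump'': an interval $T_w$ whose right end is \emph{not} $x_{a-1}$ but which nonetheless spans the gap $x_{a-1}\,|\,x_a$ (sharing a right endpoint with an interior interval and extending left), since such a $T_w$ evades $C$ on purely combinatorial grounds while potentially keeping $z_w\in T_t$ and opening the door to genuinely left-escaping intervals beyond it. Ruling this out cannot be done by a single inequality; it requires combining the no-double-heavy-vertex barrier with Lemma \ref{l: cool}, the cycle restriction of Corollary \ref{c: cycle}, and the explicit description of $S_{g-1}$ furnished by Proposition \ref{p: tight unbreakable} (available by the reduction stated just before this Lemma), to enumerate the few interval shapes that can abut the far side of $T_t$ and force each into $C$. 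Assembling these contradictions across the left and right cases shows no escaping interval can occur, so $C$ separates $v_i$ from every such $v_l$.
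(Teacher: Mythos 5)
Your setup is correct and matches the paper's: the coordinates $T_t=[a,b]$, $T_i=[a,c]$, the identification of $C$ as $v_t$ together with the intervals whose right endpoint is $x_{a-1}$, and the key barrier that $T_t$ contains at least two vertices of degree $\geq 3$ while every other interval in play is unbreakable and so contains at most one. But the execution has two genuine gaps. First, the step ``$\L v_t,v_w\R = |v_w|-2\ge 1$, so $v_w\sim v_t$'' is wrong: adjacency in $G(S)$ is defined by abutment of intervals, not by nonvanishing pairing, and $G(S)$ and the pairing graph $\widehat{G}(S)$ differ \emph{precisely} at a breakable $v_t$ (see the remark following Definition \ref{d: int graph}). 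When $T_w\pitchfork T_t$ the two intervals do not abut, so $v_w\not\sim v_t$ in $G(S)$ even though $\L v_t,v_w\R=|v_w|-2$; the cycle through $v_t$ that you want to close via Corollary \ref{c: cycle} therefore never exists, and the chain of deductions built on it (claw-freeness, heavy triples applied to that cycle) does not get started. Second, you explicitly leave the ``leftward long jump'' --- an interval spanning the gap $x_{a-1}\mid x_a$ whose right endpoint is not $x_{a-1}$, hence not in $C$ --- as an unexecuted enumeration that would import Proposition \ref{p: tight unbreakable}. That is exactly the configuration a first-escaping-interval argument must exclude, so the proposal is incomplete at its crux.

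The missing idea is the paper's global counting argument, which avoids analyzing the first escaping interval altogether. One chooses the path $P$ from $v_i$ to $v_l$ in $G(S)-C$ to be induced with every \emph{interior} vertex of norm $2$, and sets $T=\bigcup_{v_k\in V(P)}T_k$; since consecutive intervals along $P$ abut, $T$ is a single interval, and using $V(P)\cap C=\varnothing$ and $z_l\notin T_t$ one gets $T_t\subset T$. Now count where the degree-$\geq 3$ vertices of $T_t$ can sit: the interior intervals have norm $2$ and contain none; $T_l$ is unbreakable with its unique such vertex $z_l\notin T_t$; so they all lie in $T_i$, of which there is at most one since $T_i$ is unbreakable. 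Corollary \ref{c: linear irred}(4) then forces $T_t$ to be unbreakable, a contradiction. Your plan contains the raw ingredient (no unbreakable interval holds both heavy vertices of $T_t$) but never assembles it into this count over the whole path, and instead routes through machinery that does not apply to the breakable vector.
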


\begin{proof}

For suppose the contrary, and choose an induced path $P$ in $G(S) - C$ with distinct endpoints $v_i, v_l$ such that $|v_l| \geq 3$ and every vector interior to $P$ has norm $2$.  Set $T = \bigcup_{v_k \in V(P)} T_k$.  Since $V(P) \cap C = \varnothing$ and $z_l \notin T_t$, it follows that $T_t \subset T$ and $T_t - T_j$ contains no vertex of degree $\geq 3$.  But then $T_t$ is  unbreakable, a contradiction.  

\end{proof}

\begin{prop}\label{p: breakable 1}

Suppose that $S_{g-1}$ is as in Proposition \ref{p: tight unbreakable}(1).  Then $s=2$, $n \geq g = 3$, and $S$ takes one of the following forms (up to truncation):

\begin{enumerate}

\item $|v_m| = m-1$ for some $m \geq 4$;
% III(b)+

\item $v_4 = -e_4+e_3+e_0$ and $|v_m| = m-1$ for some $m \geq 5$; or
% IV(a)+

\end{enumerate}

\end{prop}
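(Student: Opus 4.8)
The plan is to read off the rigid shape of the first few standard basis vectors from the two hypotheses and then propagate forward by induction. First I would unpack Proposition~\ref{p: tight unbreakable}(1): it gives $t=1$, so the breakable tight vector is $v_1 = 2e_0-e_1$, together with $|v_s|=s+1$, $|v_i|=2$ for $1<i<s$, and the fact that $S_{g-1}$ is obtained from $S_s$ by a sequence of expansions. Since $t=1$, the first alternative of Lemma~\ref{l: breakable gappy} (which requires $t>1$) is vacuous, so $v_g = -e_g+e_{g-1}+e_0$ with $T_g \prec T_t$ and $g>2$. Throughout I would exploit the coordinate formula $\langle v_1, w\rangle = 2w_0 - w_1$ for the $e_0,e_1$ coordinates $w_0,w_1$ of $w$, reading off adjacencies in $G(S)$ that involve $v_1$ from Lemma~\ref{l: tight 2} rather than from the sign of a pairing, since $v_1$ is breakable.

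The crux is to force $s=2$ and $g=3$. Computing $\langle v_1,v_s\rangle = 1$ and $\langle v_1,v_g\rangle = 2$ shows $v_1\sim v_s, v_g$. Suppose $s\geq 3$; then $v_2 = e_1-e_2$, $g\geq 4$, and a direct check gives $\langle v_2,v_s\rangle = \langle v_2,v_g\rangle = 0$ while $v_1\sim v_2$. If $g=s+1$ then also $\langle v_s,v_g\rangle = 0$, so $(v_1;v_2,v_s,v_g)$ is an induced claw, contradicting Lemma~\ref{l: no claw}. If $g>s+1$, I would first identify the initial expansion as $v_{s+1} = -e_{s+1}+e_s+e_{s-1}$: the only other admissible value $\min(\supp(v_{s+1}))=0$ would give $\langle v_s,v_{s+1}\rangle = s-1\geq 2$, impossible for unbreakable vectors of norm $\geq 3$ by Lemma~\ref{l: cool}. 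I would then lean on the breakability of $T_t$: its second region of degree $\geq 3$, taken together with the minimality of $g$ and Lemma~\ref{l: breakable gappy}, cannot be accommodated given the separation of $v_g$ furnished by Lemma~\ref{l: engulf} applied to $T_g \prec T_t$. This eliminates $s\geq 3$, so $s=2$; the same engulfing argument with $s=2$ rules out $g\geq 4$, leaving $g=3$. Hence $v_1=2e_0-e_1$, $v_2 = -e_2+e_1+e_0$, $v_3=-e_3+e_2+e_0$, and $n\geq g=3$.

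It remains to determine $v_4,\dots,v_n$. I would proceed by induction on $m>3$, classifying each new vector by whether $z_m\in T_t$. If $z_m\in T_t$, Lemma~\ref{l: breakable gappy} pins it to the norm-$3$ form $-e_m+e_{m-1}+e_0$; if $z_m\notin T_t$, the vector is unbreakable and must attach to the already-constructed path without creating a claw (Lemma~\ref{l: no claw}) or a heavy triple (Lemma~\ref{l: no triangle}), which forces it to be of norm $2$, an expansion in the sense of Definition~\ref{d: expansion}, or the unique admissible second gappy vector. Bookkeeping the two ways the sequence can close off then yields exactly the two families: either a single long second gappy vector realizing $|v_m|=m-1$ with all intervening entries of norm $2$ (case (1)), or the short second gappy vector $v_4=-e_4+e_3+e_0$ followed by a just-right vector with $|v_m|=m-1$ (case (2)). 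Allowing truncation gives the stated families.

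I expect the main obstacle to be the subcase $g>s+1$ (and its $s=2$ analogue $g\geq 4$): unlike the clean claw available when $g=s+1$, eliminating it requires converting the changemaker coordinate data into statements about the intervals $T_i$ and exploiting the breakability of $T_t$ through Lemma~\ref{l: engulf}. The tail classification is routine in spirit but delicate in execution, precisely because $v_t$ breakable means $G(S)$ can differ from the pairing graph, so every adjacency involving $v_1$ must be justified through Lemma~\ref{l: tight 2} rather than simply from a nonzero pairing.
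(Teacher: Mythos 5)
Your skeleton is right (pin down $v_g$ via Lemma \ref{l: breakable gappy}, force $s=2$ and $g=3$, then classify the tail), and the claw $(v_1;v_2,v_s,v_g)$ for $s\geq 3$ with $g=s+1$ is exactly the paper's move. But the step you yourself flag as the main obstacle --- eliminating $g>s+1$ --- is not actually an argument in your write-up. ``The second region of degree $\geq 3$ of $T_t$ cannot be accommodated given the separation furnished by Lemma \ref{l: engulf}'' does not compute: Lemma \ref{l: engulf} separates a sub-interval of $T_t$ from norm-$\geq 3$ vectors whose $z$-vertex lies \emph{outside} $T_t$, and it gives no obvious obstruction to a configuration such as $v_1=2e_0-e_1$, $v_s=-e_s+\cdots+e_0$, a few expansions, and then $v_g=-e_g+e_{g-1}+e_0$ with $g>s+1$. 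The paper kills this case with Lemma \ref{l: triangle}/Corollary \ref{c: cycle} instead: one computes $\L v_1,v_g\R=2$, $\L v_s,v_g\R=1$, $\L v_{g-1},v_g\R=-1$, so $v_g$ attaches to the path $G(S_{g-1})$ at three vertices and $(v_1,\dots,v_g)$ spans a cycle missing the edge $(v_1,v_{g-1})$, whereas every cycle must induce a complete subgraph. (Incidentally, your extra caution about $G(S)\neq\widehat{G}(S)$ is unnecessary here: since $t=1$ forces $T_g\prec T_t$ rather than $T_g\pitchfork T_t$ in Lemma \ref{l: breakable gappy}, the exceptional case after Definition \ref{d: int graph} does not arise and the two graphs coincide.)

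The tail classification also has a concrete hole: nothing in your sketch forces the \emph{second} vector with $z_h\in T_1$ to occur at index $h=4$ rather than at some $h\geq 5$. Lemma \ref{l: breakable gappy} only gives its shape $-e_h+e_{h-1}+e_0$, not its position. The paper's point is that for $h>4$ one has $\L v_g,v_h\R=1>0$, while $T_g,T_h\prec T_1$ are unbreakable with $\epsilon_g=\epsilon_h$, so $\L v_g,v_h\R=\L[T_g],[T_h]\R\leq 0$, a contradiction; this is what pins down case (2) to $v_4=-e_4+e_3+e_0$. One then still needs the intermediate claims $v_{m0}=v_{m1}=0$ for all $m>h$ (via negative-triangle arguments) before Lemma \ref{l: engulf} can be applied to limit the tail to a single further norm-$\geq 3$ vector adjacent to $v_2$. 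Finally, a small but telling slip: the vector $v_m$ with $|v_m|=m-1$ in case (1) is $-e_m+e_{m-1}+\cdots+e_2$, which is just right, not gappy.
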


\begin{proof}

Lemma \ref{l: breakable gappy} implies that $v_g = -e_g+e_{g-1}+e_0$, $T_g \prec T_t$, and furthermore that $G(S) = \widehat{G}(S)$ in this case (see the end of the paragraph following Definition \ref{d: int graph}).

\noindent (a) {\em $g = s+1$, and $s=2$.}

\noindent If $g > s+1$, then $G(S_{g-1})$ is a path, and $v_g$ neighbors $v_1,v_s,$ and $v_{g-1}$, so $(v_1,\dots,v_g)$ spans a cycle in $G(S)$ missing the edge $(v_1,v_{g-1})$, in contradiction to Lemma \ref{l: triangle}.  If $s > 2$, then $(v_1;v_2,v_s,v_g)$ induces a claw.  

Let $h$ denote the maximum index of a vector $v_h$ for which $|v_h| \geq 3$ and $z_h \in T_1$.  

\noindent (b) {\em $v_h = -e_h + e_{h-1}+e_0$ and $h \in \{3,4\}$.}

\noindent The first statement follows from Lemma \ref{l: breakable gappy}, which also implies that $\epsilon_h = \epsilon_1 = \epsilon_g$.  If $h > 4$, then $\L v_g, v_h \R = 1$.  But both $v_g$ and $v_h$ are unbreakable, so $\L v_g, v_h \R = \epsilon_g \epsilon_h \L [T_g], [T_h] \R = \L [T_g], [T_h] \R \leq 0$, a contradiction.

\noindent (c) {\em $v_{m0}=1$ for all $m>h$.}

\noindent For suppose that $v_{m0} = 1$ for some $m > h$.  Then $v_{m1} = 1$ by Lemma \ref{l: breakable gappy} and the definition of $h$, which implies that $\L v_m, v_2 \R \geq 1$.  It follows that $T_1,T_2,T_m$ abut in pairs.  But this cannot occur, since $T_m \not\prec T_1$ by assumption, and $T_2$ and $T_m$ are both unbreakable.

\noindent (d) {\em $v_{m1}=0$ for all $m > h$.}

\noindent For suppose that $v_{m1} = 1$ for some $m > h$.  Thus, $T_m \dagger T_1$.  If $v_{m2} = 0$, then $(v_1,v_2,v_m)$ forms a negative triangle. If $v_{m2}=1$, then $T_m$ abuts $T_1$ at the same end as $T_3$, so $v_{m3} = 0$, and then $(v_1,v_3,v_m)$ forms a negative triangle.

It follows that $\min(\supp(v_m)) \geq 2$ for all $m > h$.  In particular, $\L v_t, v_m \R = 0$.  

\noindent (e) {\em There is no $m > h$ for which $v_m$ is gappy.}

\noindent Suppose by way of contradiction that $v_m$ is gappy for some smallest $m > h$, and choose a minimal gappy index $k$ for $v_m$.  Take $i = 3$ in Lemma \ref{l: engulf}.  Then $C = \{v_1\}$, $v_k \not\sim v_3$, and so $k > 2$.  If $h = 4$, then take $i=4$ in Lemma \ref{l: engulf}.  Then $C = \{v_1,v_2\}$, $v_k \not\sim v_h$, and so $k > 3$.  In any event, it follows that $k > h-1$, so $v_{k+1}$ is not gappy.  It follows as in the proof of Lemma \ref{l: one gappy} that $v_m \sim v_k, v_{k+1}$.  Now either $v_k \sim v_{k+1}$, in which case $(v_k,v_{k+1},v_m)$ forms a heavy triangle, or else $v_k \not\sim v_{k+1}$, and then the connectivity of $G(S_{m-1})$ implies that $G(S_m)$ contains an induced cycle of length $> 3$.  Either case results in a contradiction.

Thus, if $|v_m| \geq 3$ for some $m > h$, then Lemma \ref{l: engulf} implies that $v_m$ does not lie in the same component of $G(S) - \{v_1,v_2\}$ as $v_g$ or $v_h$.  It quickly follows that there is at most one index $m > h$ for which $|v_m| \geq 3$, and if so, then $v_m \sim v_2$.   It then follows that $S$ takes one of the forms stated in the Proposition.

\end{proof}

\begin{prop}\label{p: breakable 2}

Suppose that $S_{g-1}$ is as in Proposition \ref{p: tight unbreakable}(2).  Then $n \geq g = s+1$, and $S$ takes one of the following forms (up to truncation):

\begin{enumerate}

\item $s=2$, $v_4 = -e_4+e_3+e_0$, and $|v_m| = m-1$ for some $m \geq 5$;
% III(a)+

\item $|v_m| = m-g+2$ for some $m \geq g$; or
% V(a)+

\item $|v_m| = m-g+3$ for some $m \geq g$.
% V(b)+

\end{enumerate}

\end{prop}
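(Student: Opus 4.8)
The plan is to follow the architecture of the proof of Proposition \ref{p: breakable 1}, adjusting each step to the single feature that distinguishes Proposition \ref{p: tight unbreakable}(2) from (1): here $|v_s|=s$ rather than $s+1$, so $e_0\notin\supp(v_s)$ and $T_s$ merely abuts $T_t$ (with $\langle v_t,v_s\rangle=-1$) instead of being engulfed by it. First I would apply Lemma \ref{l: breakable gappy}. Since $t=1$ (from Proposition \ref{p: tight unbreakable}(2)), the ``$t>1$'' alternative is unavailable, so we land in the second alternative: $v_g=-e_g+e_{g-1}+e_0$ and $T_g\prec T_t$. Because $T_g\prec T_t$ (and not $T_g\pitchfork T_t$), the exceptional case flagged in the paragraph following Definition \ref{d: int graph} does not occur, so $G(S)=\widehat{G}(S)$ and the combinatorial Lemmas of Subsection \ref{ss: int graph} may be applied to pairings directly.

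Next I would establish $n\geq g=s+1$. As $S_{g-1}$ is built from $S_s$ by expansions it spans a path in $G(S)$, and $v_g$ is adjacent to $v_1$ (from $z_g\in T_t$, $T_g\prec T_t$) and to $v_{g-1}$. The key contrast with Proposition \ref{p: breakable 1} is that now $v_g\not\sim v_s$ whenever $g>s+1$, since then $\supp(v_g)\cap\supp(v_s)=\varnothing$. Thus if $g>s+1$ the two adjacencies of $v_g$ to points determined by the position of the breakable $v_1$ in the path force either a claw (Lemma \ref{l: no claw}) or an induced cycle whose vertex set fails to be complete, contradicting Corollary \ref{c: cycle}. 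This simultaneously locates $v_1$ in the path and yields $g=s+1$. The degenerate possibility $s=2$ — where $v_s$ has norm $2$ and a second $T_t$-engulfing interval can appear at index $4$, namely $v_4=-e_4+e_3+e_0$ — is exactly what is peeled off as case (1).

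With $S_g$ pinned down I would run the tail analysis on $v_m$, $m>g$, following parts (c)--(e) of the proof of Proposition \ref{p: breakable 1} together with Lemma \ref{l: engulf}. Let $h$ be the largest index with $|v_h|\geq 3$ and $z_h\in T_t$; Lemma \ref{l: breakable gappy} again dictates the shape of $v_h$ and bounds $h$, producing the finite list of engulfing vectors and hence the $s=2$ split. For $m>h$ I would show $\min(\supp(v_m))$ is bounded below, so $\langle v_t,v_m\rangle=0$, and argue as in Lemma \ref{l: one gappy} that no such $v_m$ is gappy, using the heavy-triangle and claw obstructions (Lemmas \ref{l: no triangle}, \ref{l: no claw}). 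Lemma \ref{l: engulf} then separates any surviving norm-$\geq 3$ vector from the $T_t$-engulfing part, leaving a single expansion chain whose attaching interval either reaches $e_0$ (contributing an extra unit to the norm) or does not; this dichotomy yields precisely the two formulas $|v_m|=m-g+2$ and $|v_m|=m-g+3$ of cases (2) and (3).

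I expect this tail analysis to be the main obstacle: distinguishing cases (2) and (3), and ruling out any further gappy or $T_t$-engulfing vector past $h$, requires simultaneously tracking the support of each candidate $v_m$, its pairing with the breakable $v_t$, and its position relative to $T_g$ and $T_s$. The difference of $1$ between the two norm formulas reflects exactly whether the terminal interval is $\prec T_t$ or merely abuts the preceding chain, and the delicate point is to verify that these are the only two attachment modes that survive the claw, heavy-triangle, and long-cycle constraints.
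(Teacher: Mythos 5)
Your overall architecture matches the paper's proof: Lemma \ref{l: breakable gappy} with $t=1$ forces $v_g=-e_g+e_{g-1}+e_0$ and $T_g\prec T_t$, whence $G(S)=\widehat{G}(S)$; the claw and long-cycle obstructions at the breakable $v_1$ give $g=s+1$ (the paper uses the claw $(v_1;v_2,v_s,v_g)$, and your induced-cycle alternative is indeed what is needed when $s=2$ and that claw degenerates); case (1) is peeled off exactly as you say, from the existence of a second vector $v_j$, $j>g$, with $v_{j0}=1$, which forces $s=2$, $j=4$, $v_4=-e_4+e_3+e_0$; and the tail analysis (no gappy vectors, no multiple smaller neighbors, Lemma \ref{l: engulf} isolating at most one further vector of norm $\geq 3$) is carried out in the paper in essentially the form you describe.

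The one step that would fail is your account of how cases (2) and (3) arise. You attribute the difference of $1$ between the two norm formulas to whether the terminal interval is $\prec T_t$, i.e., whether its support reaches $e_0$. But the branch in which a vector past $v_g$ reaches $e_0$ \emph{is} case (1): it forces $s=2$ and the specific $v_4$. Cases (2) and (3) both live entirely in the complementary branch where $v_{m0}=0$ for every $m>g$ (and, for $s>2$, also $v_{m1}=0$, so $\L v_t,v_m\R=0$ and $z_m\notin T_t$ in either case). What actually distinguishes them is the value of $k=\min(\supp(v_m))$ for the unique surviving vector of norm $\geq 3$: the paper shows $k\in\{s-1,s\}$ --- equivalently, the unique smaller neighbor of $v_m$ is $v_{s-1}$ or $v_s$ --- and these give $|v_m|=m-g+3$ and $m-g+2$ respectively. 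Ruling out every other starting point for $\supp(v_m)$ is where the claw, heavy-triple, and Lemma \ref{l: engulf} arguments do their real work. As written, your dichotomy would collapse (2) and (3) into a single case and re-derive (1) as the other, so you would not recover the correct case division.
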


\begin{proof}

As in Proposition \ref{p: breakable 1}, Lemma \ref{l: breakable gappy} implies that $G(S) = \widehat{G}(S)$.  In particular, if $\L v_j,v_t \R = \pm 1$, then $T_j$ abuts $T_t$.  Observe that $v_1 \sim v_2, v_s$, but $v_2 \not\sim v_s$.  It follows that $T_1 \dagger T_2, T_s$, and $T_2$ and $T_s$ are distant.  If $g > s+1$, then $(v_1;v_2,v_s,v_g)$ induces a claw.  Hence $g = s+1$, $T_g \prec T_1$, and $T_g$ abuts $T_1$ at the same end as $T_s$.  

\noindent (I)  {\em Suppose that $v_{j0} = 1$ for some $j > g$.}

\noindent (a) {\em $v_{j1} = 0$.}

\noindent  If $v_{j1}=1$, then $T_j \dagger T_1$.  As $T_s$ and $T_g$ abut $T_1$ at the same end, it follows that either $v_j \not\sim v_s, v_g$, or else $v_j \sim v_s, v_g$ and $s = |v_s| = 2$.  Furthermore, $v_{ji} = 1$ for all $1 \leq i \leq s-1$, since otherwise $v_j \ll v_j - v_i$ for any such $i$ with $v_{ji} = 0$.  Now, if $v_j \not\sim v_g$, then $v_{js} = 0$, which implies that $\L v_s, v_j \R = s-1 \geq 1$ and $v_j \sim v_s$, a contradiction.    If instead $v_j \sim v_s$ and $s =2$, then $(v_1,v_2,v_j)$ forms a negative triangle.  Therefore, $v_{j1} = 0$.  

It follows that $T_j \prec T_1$, so $v_j = -e_j + e_{j-1} + e_0$ according to Lemma \ref{l: breakable gappy}.  Now, $T_j$ and $T_g$ abut $T_1$ at opposite ends, so $v_j \not\sim v_g$.  It follows that $j = g+1$.  Furthermore, $s = 2$, since otherwise $T_2$ abuts $T_1$ at the same end as $T_j$, but $v_j \not\sim v_2$.  In summary, $s =2$, $g=3$, $j = 4$, $v_4 = -e_4+e_3 + e_0$, $T_4 \prec T_1$, and $T_4$ abuts $T_1$ at the opposite end as do $T_2$ and $T_3$.  In particular, $v_{m0} = 0$ for all $m > 4$.

Now suppose that there exists $m > 4$ with $|v_m| \geq 3$.

\noindent (b) {\em $v_{m1}=0$, and $v_{m2} = v_{m3} = v_{m4}$.}

\noindent For suppose that $v_{m1}=1$.  Then $v_{m2}=1$ since $|v_2|=2$.  Thus, $v_m \sim v_1$ and $v_m \not\sim v_2$.  It follows that $T_m$ abuts $T_1$ at the same end as $T_4$.  Thus, $v_m \not\sim v_3$, so $v_{m3}=1$, and $v_m \sim v_4$, so $v_{m4}=0$.  But now $(v_1,v_4,v_m)$ forms a negative triangle, a contradiction.  Thus, $v_{m1} = 0$.  It follows that $v_m \not\sim v_1$, so $v_m \not\sim v_3, v_4$.  Thus, $v_{m4} = v_{m3} = v_{m2}$.

Let us further suppose that $m > 4$ is minimal subject to $|v_m| \geq 3$.   If $k := \min(\supp(v_m)) > 4$, then $(v_k;v_{k-1},v_{k+1},v_m)$ induces a claw.  Hence $k = 2$.  Since $|v_i|=2$ for $4 < i < m$, it follows that $v_m$ is not gappy, and $v_m = -e_m + e_{m-1} + \cdots + e_2$.

\noindent (c) {\em There does not exist  $m' > m$ such that $|v_{m'}| \geq 3$.}

For suppose otherwise, and choose $m'$ minimal with this property.  Thus, (b) implies that $v_{m'1} = 0$ and $v_{m'2} = v_{m'3} = v_{m'4}$.  If these values all equal $1$, then $\L v_m, v_{m'} \R \geq 2$, a contradiction.  Hence $k' := \min(\supp(v_{m'})) > 4$.  Now, Lemma \ref{l: engulf} implies that $k' \geq m$, taking $i = 4$ and $l = m'$ therein.  But now $|v_{k'+1}|=2$ and $v_{k'}$ has a smaller neighbor $v_i$, so $(v_{k'};v_i,v_{k'+1},v_{m'})$ induces a claw.  This is a contradiction.

In summary, (I) leads to case (1) of the Proposition.

\noindent (II)  {\em Suppose that $v_{m0} = 0$ for all $m > g$.}

\noindent (d) {\em If $s > 2$, then $v_{m1} = 0$ for all $m > g$.}

\noindent Assume the contrary, and choose $m$ accordingly.  It follows that $v_m \sim v_1$, and moreover that $T_m \dagger T_1$.  Since $v_s$ is unbreakable, it follows that $T_m$ abuts $T_1$ at the same end as $T_2$.  Thus, $v_m \not\sim v_s, v_g$.  From $v_m \not\sim v_s$ it follows that $v_{m2}= \cdots = v_{m,s-1} = 0$ and $v_{ms}=1$, and from $v_m \not\sim v_g$ it subsequently follows that $v_{m0}=0$ and $v_{mg} = 1$.  But then $(v_1,v_2,v_m)$ forms a negative triangle.

It follows that if $v_{m1} = 1$, then $s = 2$ and $T_m \dagger T_1$; otherwise $v_m \not\sim v_1$.  We henceforth drop any assumption about $s$.

\noindent (e) {\em $v_m$ is not gappy for any $m > g$.}

\noindent  For suppose some $v_m$ were, choose $m$ minimal with this property, and choose a minimal gappy index $k$ for $v_m$.   If $g = k+1$, then $v_m \sim v_g$.  If $v_{m1}=0$, then Lemma \ref{l: engulf} implies a contradiction with $i = g$ and $l = m$, while if $v_{m1}=1$, then $s=2$, $\L v_3, v_m \R \geq 0$ and $T_m \dagger T_1$ implies that $(v_1,v_3,v_m)$ is a negative triangle.  It follows in either case that $g \ne k+1$, and since $m$ is chosen minimal, it follows that $v_{k+1}$ is not gappy.  It follows at once that $v_{m,k-1} = 0$, whence $v_m \sim v_k, v_{k+1}$.  Furthermore, $k \ne 1$ since $|v_{k+1}| \geq 3$.  Since $k \geq 2$, both $v_k$ and $v_{k+1}$ are unbreakable.  Now, if $v_k \sim v_{k+1}$, then $|v_k| \geq 3$, and $(v_k,v_{k+1},v_m)$ forms a heavy triangle.  Hence $v_k \not\sim v_{k+1}$; but then a shortest path between them in $G(S_{k+1})$, together with $v_m$, results in an induced cycle of length $> 3$, a contradiction.  It follows that $v_m$ is not gappy.

\noindent (f) {\em $v_m$ does not have multiple smaller neighbors for any $m > g$.}

\noindent For suppose that $v_m \sim v_j$ for some $j > k := \min(\supp(v_m)) \geq 1$.  Note that $j \ne g$ because of the form $v_g$ takes, so $v_j$ is not gappy, and it follows that $j = k+2$ is uniquely determined.  In particular, it follows that $k > 1$. Hence $v_j \sim v_k$, since otherwise $G(S)$ contains an induced cycle of length $> 3$.  As $|v_j| \geq 3$, it follows that $|v_i| = 2$ for all $1 < i \leq k$, since otherwise $(v_i,v_j,v_m)$ forms a heavy triple for some such $i$.  Moreover, $|v_{k+1}| \geq 3$, since otherwise $(v_k;v_{k-1},v_{k+1},v_m)$ induces a claw.  It follows that $k = s-1$, but then $j = g$ and $v_j \not\sim v_m$.  Therefore, $v_m$ does not have multiple smaller neighbors.

Thus, $v_m \sim v_k$ and $v_m$ has no other smaller neighbor.  Furthermore, $|v_{k+1}| \geq 3$, as argued in the last paragraph.  If $|v_m| \geq 3$ for some smallest $m > g$, then $k \in \{s-1, s\}$, and $v_{m-1}$ lies in the same component of $G(S) - \{v_1,v_s\}$ as $v_g$.  Suppose by way of contradiction that there exists some smallest $m' > m$ for which $|v_{m'}| \geq 3$.  It follows from the foregoing that $\min(\supp(v_{m'})) + 1 = m$.  But then $v_{m'}$ lies in the same component of $G(S)-\{v_1,v_s\}$ as $v_g$, in contradiction to Lemma \ref{l: engulf}.

Therefore, $|v_m| \geq 3$ for at most one value $m > g$, and in this case, $v_m$ is not gappy, and $\min(\supp(v_m)) \in \{s-1,s\}$.  The two possibilities lead to cases (2) and (3), respectively.

\end{proof}

\begin{prop}\label{p: breakable 3}

Suppose that $S_{g-1}$ is as in Proposition \ref{p: tight unbreakable}(3).  Then $n \geq g = t+2$, $v_{t+2} = -e_{t+2}+e_{t+1}+e_{t-1}+\cdots+e_0$, and $S$ takes one of the following forms (up to truncation):

\begin{enumerate}

\item $|v_{t+1}| = 2$, $v_{t+3} = -e_{t+3}+e_{t+2}+e_{t-1}$, and $|v_m| = m-t$ for some $m \geq t+4$;
% III(a)+

\item $|v_{t+1}| = 3$, $v_{t+3} = -e_{t+3}+e_{t+2}+e_{t-1}$, and $|v_m| = m-t$ for some $m \geq t+4$;
% IV(a)+

\item $|v_{t+1}|=2$ and $|v_m| = m-t$ for some $m \geq t+3$.
% IV(b)+

\item $|v_{t+1}|=3$ and $|v_m| = m-t$ for some $m \geq t+3$.
% III(b)+

\end{enumerate}

\end{prop}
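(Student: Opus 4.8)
The plan is to follow the template of Propositions \ref{p: breakable 1} and \ref{p: breakable 2}: first pin down the gappy vector $v_g$ and the index $g$ exactly, then analyze the vectors $v_m$ with $m > g$ by excluding forbidden configurations, and finally read off the admissible forms. Throughout I keep the notation of the discussion preceding Lemma \ref{l: engulf}, so $v_t$ is tight and breakable, $|v_i| = 2$ for all $i < t$, and $g > t+1$ is chosen minimally with $z_g \in T_t$, while $S_{g-1}$ is built from $S_t$ by a sequence of expansions (Proposition \ref{p: tight unbreakable}(3)). Since $|v_i|=2$ for $i<t$ we have $\sigma_0 = \cdots = \sigma_{t-1} = 1$, a fact I would use repeatedly.

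First I would determine $v_g$ and show $g = t+2$. Because $t > 1$, Lemma \ref{l: tight}(3) forbids $v_{t+1} = -e_{t+1}+e_t+\cdots+e_0$, and the requirement that a gappy index be followed by a norm-$\geq 3$ vector (Lemma \ref{l: basic}(3)) forces every expansion vector below $v_g$ that meets $\{0,\dots,t-1\}$ to bottom out at $e_{t-1}$; in particular $v_{t+1}$ is either $-e_{t+1}+e_t$ or $-e_{t+1}+e_t+e_{t-1}$, so $|v_{t+1}|\in\{2,3\}$, and in both cases $\langle v_t,v_{t+1}\rangle = 0$, whence $z_{t+1}\notin T_t$. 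Lemma \ref{l: breakable gappy} then gives the two possible shapes of the minimal $v_g$: the norm-$3$ form $-e_g+e_{g-1}+e_{t-1}$ with $T_g\pitchfork T_t$, or the form $-e_g+e_{g-1}+e_{t-1}+\cdots+e_0$ with $T_g\prec T_t$. I expect the former cannot be \emph{minimal}: it is exactly the special case flagged after Definition \ref{d: int graph}, where $(v_t,v_g)\notin E$ despite $\langle v_t,v_g\rangle = 1$, so $v_g$ would attach to $G(S_{g-1})$ only through $v_{t-1}$ and $v_{g-1}$, producing a claw at $v_{t-1}$ or a short cycle against the expansion chain (Lemmas \ref{l: no claw}, \ref{c: cycle}). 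Thus the minimal $g$ realizes the second shape, and a further claw/cycle argument, using that $T_g\prec T_t$ must share the correct endpoint with $T_t$ and sit immediately above $v_t$, forces $g = t+2$ and $v_{t+2} = -e_{t+2}+e_{t+1}+e_{t-1}+\cdots+e_0$, as claimed.

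Next I would analyze $v_m$ for $m > t+2$. By Lemma \ref{l: one gappy} no new gappy vector can arise from the unbreakable part, so I would bound $\langle v_t, v_m\rangle$ via Lemma \ref{l: tight 2} and invoke Lemma \ref{l: tight}(2,3) to show that $v_m$ cannot dip below $e_{t-1}$ except in one permitted way. The crucial point is that exactly one further vector with $z_m\in T_t$ is allowed, namely the norm-$3$ gappy vector $v_{t+3} = -e_{t+3}+e_{t+2}+e_{t-1}$ with $T_{t+3}\pitchfork T_t$; this is the single occurrence of the special case of the intersection graph, and it must be \emph{admitted} rather than excluded. For the remaining vectors, which satisfy $z_m\notin T_t$, I would apply Lemma \ref{l: engulf} with an index $i$ satisfying $T_i\prec T_t$ (e.g.\ $i=t+2$) to show that the component of $G(S)$ on the far side of the separating set contains at most one vector of norm $\geq 3$; every other $v_m$ with $m>t+2$ then has norm $2$, and the unique survivor is a terminal expansion vector $v_m = -e_m+e_{m-1}+\cdots+e_{t+1}$, whose norm is computed directly to be $m-t$.

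Finally I would assemble the cases. Beyond $v_{t+2}$ the only possible norm-$\geq 3$ vectors are the special vector $v_{t+3}$ (present in forms (1),(2), absent in (3),(4)) and the terminal vector $v_m$ with $|v_m| = m-t$; the two values $|v_{t+1}|\in\{2,3\}$ then split each alternative in two, yielding exactly the four stated forms (up to truncation, i.e.\ discarding the norm-$2$ tail after $v_m$). The main obstacle, I expect, is the bookkeeping forced by the breakability of $v_t$: because $G(S)\ne\widehat G(S)$ here, each adjacency claim must be routed through Lemma \ref{l: tight 2} rather than read off from the pairing, and the special-case vector $v_{t+3}$ must be correctly recognized and carried through the claw, heavy-triple, and negative-triangle arguments (Lemmas \ref{l: no claw}, \ref{l: no triangle}, \ref{l: signs}) instead of being discarded. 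Checking that these four forms are exhaustive, and that no fifth configuration slips through the special case, is where the care is needed.
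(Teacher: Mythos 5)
Your plan follows the paper's proof essentially step for step: Lemma \ref{l: breakable gappy} plus a claw argument pins down $v_g = -e_g+e_{g-1}+e_{t-1}+\cdots+e_0$ and $g=t+2$; the lone $\pitchfork$-type vector $v_{t+3}=-e_{t+3}+e_{t+2}+e_{t-1}$ is the unique admitted instance of the special case of the intersection graph; and Lemma \ref{l: engulf} together with the claw/heavy-triple/cycle lemmas leaves exactly one further norm-$\geq 3$ vector $v_m$ with $\min(\supp(v_m))=t+1$, so the four cases fall out of $|v_{t+1}|\in\{2,3\}$ and the presence or absence of $v_{t+3}$, exactly as in the paper. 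The only slip is the aside that $\L v_t,v_{t+1}\R=0$ ``in both cases'': when $|v_{t+1}|=2$ one has $\L v_t,v_{t+1}\R=-1$, but this is harmless since $z_{t+1}$ is only defined (and only needs to lie outside $T_t$) when $|v_{t+1}|=3$.
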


\begin{proof}  \noindent (a) {\em $v_g = -e_g + e_{g-1} + e_{t-1} + \cdots + e_0$ for some $g \geq t+2$.} 

\noindent By Lemma \ref{l: breakable gappy}, it follows that $v_g = -e_g+e_{g-1} + e_{t-1}$ or $-e_g + e_{g-1} + e_{t-1} + \cdots + e_0$, and $g \geq t+2$.  Let us rule out the first possibility.  Thus, assume by way of contradiction that this is the case.  It follows that $v_g \not\sim v_t$ in $G(S)$.  Note that $G(S_{g-1})$ is a path, and that $v_g \sim v_{t-1}$.  Suppose that $v_g \sim v_{g-1}$.  It follows that $v_{g-1} \sim v_{t-1}$, since otherwise $G(S)$ contains an induced cycle of length $> 3$.  However, since $S_{g-1}$ is built from $S_t$ by a sequence of expansions, it follows that $t-1 = \min(\supp(v_{g-1}))$.  However, this implies that $v_g \not\sim v_{g-1}$, a contradiction.  Hence $v_g \not\sim v_{g-1}$.  But then $(v_{t-1};v_i,v_{g-1},v_g)$ induces a claw, where $i = t $ if $t = 2$, and $i = t-2$ if $t > 2$.  This contradiction shows that $v_g = -e_g + e_{g-1} + e_{t-1} + \cdots + e_0$, as desired.

\noindent (b) {\em $g = t+2$.}

\noindent Observe that $|v_{t+1}| \in \{2,3\}$ since $S_{t+1}$ is an expansion on $S_t$.  If $|v_{t+1}| = 2$, then $v_{t+1} \sim v_g$ since otherwise $(v_t;v_1,v_{n+1},v_g)$ induces a claw.  If $|v_{t+1}|=3$, then $v_{t+1} \not\sim v_g$, since otherwise $(v_g,v_t,v_1,\cdots,v_{t-1})$ induces a cycle of length $>3$ in $G(S)$.  It follows in either case that $g = t+2$, as desired.

\noindent (c) {\em If $v_h = -e_h + e_{h-1} + e_{t-1} + \cdots + e_0$, then $h = g$.}

\noindent For if $h \ne g$, then $T_g, T_h \prec T_t$ and $T_g$ and $T_h$ are distant, and $(v_t;v_1,v_g,v_h)$ induces a claw.

\noindent (d) {\em If $v_h = -e_h + e_{h-1} + e_{t-1}$, then $h = g+1 = t+3$.}

\noindent For if $h > g+1$, then $(v_h,v_g,v_t,v_1,\dots,v_{t-1})$ spans a cycle in $G(S)$ that is missing the edge $(v_h,v_t)$.

Henceforth we write $h = g+1$ if $v_{g+1}$ takes the form in article (d), and $h = g$ otherwise.
It follows from Lemma \ref{l: breakable gappy} and articles (c) and (d) that if $|v_m| \geq 3$ for some $m > h$, then $z_m \notin T_t$.  In particular, $v_m \sim v_i$ iff $T_m$ and $T_i$ abut for all $m > h$.

\noindent (e) {\em If $m > h$, then $v_m$ is not gappy.}

\noindent For suppose that $v_m$ were gappy for some minimal $m > h$, let $k = \min(\supp(v_m))$, and let $j$ denote a minimal gappy index for $v_m$.  Then $k > 0$, since otherwise $|v_{j+1}| \geq 3$ implies that $\L v_{j+1},v_g \R \geq 2$, and then $t = j+1$ and $z_g \in T_t$, a contradiction.  Now $\L v_m,v_k \R = -1$ and $\L v_m, v_{j+1} \R =1$, so $v_m \sim v_k, v_{j+1}$, and since $G(S_{m-1})$ is connected, it follows that $v_k \sim v_{j+1}$.  Furthermore, $T_g \dagger T_{j+1}$ since $|v_g|, |v_{j+1}| \geq 3$, and since $(v_k,v_{l+1},v_g)$ is a positive triangle, it follows that $\L v_k,v_{l+1} \R = -1$.  Thus, $v_{l+1,k} = 1$, and since $\L v_{l+1},v_g \R \leq 1$, it follows that $l = k$.  Now, $v_{k,k-1} \ne 0$, so it follows that $v_{k+1,k-1} = 0$.  Consequently, $v_{k+1}$ is gappy.  Since $m > h$ was chosen minimal, it follows that $k+1 \in \{g,h\}$.  However, the only way that this can occur and satisfy $\L v_k, v_{k+1} \R = -1$ is if $k+1 = g = t+2$ and $|v_{t+1}|=2$.  However, in this case, $(v_t,v_{t+1},v_m,v_{t+2})$ spans a cycle that is missing the edge $(v_t,v_m)$, a contradiction.  It follows that no such $m$ exists, as desired.

Thus, $z_m \notin T_t$.

\noindent (f) {\em $\min(\supp(v_m)) = t+1$ or $\geq t+3$ for all $m > h$.}

\noindent Let $k = \min(\supp(v_m))$.  Since $\L v_g, v_m \R \leq 1$, it follows that $k \geq t-1$.  Lemma \ref{l: engulf} with $i = g$ and $l = m$ implies that $k \notin \{t-1,t+2\}$.  Finally, $k \ne t$, since otherwise $(v_t;v_1,v_g,v_m)$ induces a claw.

Suppose that there exists a minimal $m > h$ such that $|v_m| \geq 3$.

\noindent (g) {\em $k =t+1$.}

\noindent For if $k \geq t+3$, then $v_k$ has a smaller neighbor $v_i$, and $(v_k;v_i,v_{k+1},v_m)$ induces a claw.

\noindent (h) {\em There does not exist $m' > m$ for which $|v_{m'}| \geq 3$.}

\noindent Suppose otherwise, and let $k' = \min(\supp(v_{m'}))$.  If $k' = t+1$, then either $(v_{t+1},v_m,v_{m'})$ or $(v_g,v_m,v_{m'})$ forms a heavy triple, depending on $|v_{t+1}| \in \{2,3\}$.  From (f) it follows that $k' \geq t+3$.  Thus, $k'+1 = m$, since otherwise $|v_{k'+1}|=2$, $v_{k'}$ has a smaller neighbor $v_i$, and $(v_{k'},v_i,v_{k'+1},v_{m'})$ induces a claw.  Thus, $(v_h,\dots,v_{m-1},v_{m'})$ induces a path.  If $h = t+2$, then we obtain a contradiction to Lemma \ref{l: engulf} with $i = g$ and $l = m'$.  If $h = t+3$, then we obtain a similar contradiction with a bit more work.  Specifically, considering the path $(v_t,v_1,\dots,v_{t-1},v_{t+3},\dots,v_{m-1},v_{m'})$, it follows that the interval $T_{t+3} \cup \bigcup_{i=1}^{t-1}T_i$ contains one endpoint of $T_t$ and the interval $T_{m'} \cup \bigcup_{i=t+3}^{m-1} T_i$ contains the other.  As $z_{m'} \notin T_t$, it follows that $z_{t+3}$ is the unique vertex of degree $\geq 3$ in $T_t$, a contradiction, since $z_{t+2} \in T_t$ as well.

The four cases stated in the Proposition now follow from the possibilities $|v_{t+1}| \in \{2,3\}$ and $h \in \{t+2,t+3\}$.

\end{proof}

%%%%%%%
%%%%%%%
%%%%%%%
%%%%%%%
%%%%%%%

\section{Producing the Berge types}\label{s: cont fracs}

The goal of this section is to show how the families of linear lattices enumerated in the structural Propositions of Sections \ref{s: just right}, \ref{s: gappy, no tight}, and \ref{s: tight} give rise to the homology classes of Berge knots tabulated in Subection \ref{ss: list}. 
Subsection \ref{ss: methodology} describes the methodology and Table \ref{table: A} collects the results.  Subsection \ref{ss: cont fracs} contains the necessary background material about continued fractions, and Subsections \ref{ss: large} and \ref{ss: small} carry out the details.

\subsection{Methodology}\label{ss: methodology}

Given a standard basis $S$ expressed in one of the structural Propositions, we show that the changemaker lattice it spans is isomorphic to a linear lattice $\Lambda(p,q)$ by converting $S$ into a vertex basis $B = \{x_1,\dots,x_n\}$ for it.  Letting $\nu$ denote the sequence of norms $(|x_1|,\dots,|x_n|)$, we recover $p$ as the numerator $N[\nu]^- = N[|x_1|,\dots,|x_n|]^-$ of the continued fraction.  We recover the value $k$ of Proposition \ref{p: homology}, and hence $q \equiv - k^2 \pmod p$, in the following way.  Let $B^\star$ denote the elements in $B$ that pair non-trivially with $e_0$, let $\nu_i = (|x_1|,\dots,|x_i|)$, and let $p_i = N[\nu_i]^-$.  Then 
\begin{equation}\label{e: k}
k = \sum_{x_i \in B^\star} p_{i-1} \L x_i, e_0 \R
\end{equation}
according to \eqref{e: x}, Proposition \ref{p: homology}, and Lemma \ref{l: cont frac basics}(1).  In practice, $B^\star$ contains at most three elements, and each value $\L x_i, e_0 \R$ is typically $\pm 1$ (in case of Proposition \ref{p: tight unbreakable}, it can equal $\pm 2$).

\medskip

\noindent {\em Example.}  As an illustrative example, consider a standard basis $S$ as in Proposition \ref{p: just right 1}(1).  By inspection, $\widehat{G}(S)$ is nearly a path, which suggests that $S$ is not far off from a vertex basis.  Indeed, a little manipulation shows that 
\[ B = \{-v_s^\star,-v_{s+2},v_{s+3},v_{s-1},\dots,v_1^\star, -(v_{s+1}+v_{s-1}+\cdots+v_1)^\star\} \]
is a vertex basis for the lattice spanned by $S$.  The elements denoted by a star ($\star$) belong to $B^\star$.  From $B$ we obtain the sequence of norms
\[ \nu = (s+1,3,5,2^{[s-1]},3), \]
using $2^{[t]}$ as a shorthand for a sequence of $t$ 2's. In order to determine $p$, we calculate
\[p = N[s+1,3,5,2^{[s-1]},3]^- = N[s+1,3,4,-s,2]^- = N[s+1,-3,4,s,-2]^+ = 22s^2+31s+11,\]
using Lemma \ref{l: cont frac basics 2}(1) for the second equality and Mathematica \cite{mathematica} for the last one.  In order to determine $k$, we consider the substrings
\[ \nu_0 = \varnothing, \quad \nu_{s+1} = (s+1,3,5,2^{[s-2]}), \quad \nu_{s+2} = (s+1,3,5,2^{[s-1]}). \]
Weight the numerator of each $[\nu_{i-1}]^-$ by $\L x_i, e_0 \R$, which equals the sign $\pm 1$ appearing on the leading term in the starred expression, and add them up to obtain the value $k$.  Thus,
\begin{eqnarray*}
k &=& -N[\varnothing]^- + N[s+1,3,5,2^{[s-2]}]^- - N[s+1,3,5,2^{[s-1]}]^- \\
&=& -1 + N[s+1,3,4,-(s-1)]^- - N[s+1,3,4,-s]^- \\
&=& -1 + N[s+1,-3,4,s-1]^+ - N[s+1,-3,4,s]^+ \\
&=& -1 + (11s^2-s-5) - (11s^2+10s+2) \\
&=& 11(-s-1)+3.
\end{eqnarray*}
Since $s \geq 2$ and $p = (2k^2+k+1)/11$, it follows that the standard bases of \ref{p: just right 1}(1) correspond to Berge type X with $k \leq 11(-3)+3$.  The result of this example appears in Table \ref{table: A} and as the first entry in Table \ref{table: 1}.

\medskip

In this manner we extract a linear lattice, described by the pair $(p,k)$, from each standard basis expressed in the structural Propositions.  In the process, we show that these values account for precisely the pairs $(p,k)$ tabulated in Subsection \ref{ss: list}.  Table \ref{table: A} displays the results.  Note that Proposition \ref{p: gappy structure}(2) gets reported in terms of its constituents,  Lemmas \ref{l: S_g nothing}(1,3,4,5) and \ref{l: S_g not nothing}.

\begin{table}[h]
\caption{Structural Propositions sorted by Berge type.}
\label{table: A}
% see p 147 of MAR 25 2010 ntbk

\begin{center}

{\small

\begin{tabular}{l l l l}

{\bf I$_+$} \ref{p: just right 3}(1,3) &
{\bf I$_-$} \ref{p: tight unbreakable}(1,3) &
{\bf II$_+$}  \ref{p: tight unbreakable}(2) &
{\bf II$_-$} \ref{p: just right 1}(3) \\

\cr

{\bf III}(a)$_+$ \ref{p: breakable 2}(1), \ref{p: breakable 3}(1) &
(a)$_-$ \ref{p: just right 3}(2), \ref{l: S_g not nothing}(1,2), \ref{p: gappy structure}(3) &
(b)$_+$ \ref{p: breakable 1}(1), \ref{p: breakable 3}(4) &
(b)$_-$ \ref{l: S_g not nothing}(3) \\

\cr

{\bf IV}(a)$_+$ \ref{p: breakable 1}(2), \ref{p: breakable 3}(2) &
(a)$_-$ \ref{l: S_g nothing}(3) &
(b)$_+$ \ref{p: breakable 3}(3) &
(b)$_-$ \ref{p: just right 2}(5), \ref{l: S_g nothing}(1), \ref{p: gappy structure}(3) \\

\cr

{\bf V}(a)$_+$ \ref{p: breakable 2}(2) &
(a)$_-$ \ref{p: just right 2}(5), \ref{p: just right 3}(2), \ref{l: S_g nothing}(4) &
(b)$_+$ \ref{p: breakable 2}(3) &
(b)$_-$ \ref{l: S_g nothing}(5) \\

\cr

{\bf VII} \ref{p: gappy structure}(1) &
{\bf VIII} \ref{p: berge viii} &
{\bf IX} \ref{p: just right 1}(2), \ref{p: just right 2}(2,3) &
{\bf X} \ref{p: just right 1}(1), \ref{p: just right 2}(1,4) 

\end{tabular}

} % end font size

\end{center}

\end{table}

We mention one caveat, which amounts to the overlap between different Berge types.  For example, in type III(a)$_+$, \ref{p: breakable 2}(1) and \ref{p: breakable 3}(1) account for the cases that $d =2$, $(k+1)/d \geq 5$ and $d=3$, $(k+1)/d \geq 3$, respectively (Table \ref{table: 2}).  What happens when $(d, (k+1)/d) \in \{ (2,3), (1, *), (*,1) \}$?  For $(2,3)$, notice that we obtain the same family of examples by setting $d = 3, (k+1)/d = 2$ in type V(a)$_+$.  This is covered by \ref{p: breakable 2}(2).  Moreover, \ref{p: breakable 2}(2) fills out most of V(a)$_+$, while only this sliver of it applies to III(a)$_+$.  For that reason, we only report \ref{p: breakable 2}(2) next to V(a)$_+$ in Table \ref{table: A}.  Similarly, the cases of $(1,*)$ and $(*,1)$ correspond to II$_-$ with $i=2$ and I$_-$ with $i=1$, respectively.  In general, it is not difficult to identify the overlaps of this sort and use Table \ref{table: A} to obtain a complete correspondence between structural Propositions and Berge types.  In a few places the overlap is explicit: \ref{p: just right 2}(5), \ref{p: just right 3}(2), and \ref{p: gappy structure}(3) each appear twice in Table \ref{table: A}.

The correspondence between structural Propositions and Berge types exhibits some interesting features.  For example, amongst the ``small" families (defined just below), and excluding the special cases of \ref{p: just right 2}(5) and \ref{p: just right 3}(1),
\begin{itemize}

\item all elements of $S$ are just right iff $L$ is an exceptional type (IX or X);

\item $S$ has a gappy vector but no tight one iff $L$ is of $-$ type;

\item $S$ has a tight vector iff $L$ is of $+$ type.

\end{itemize}
It would be interesting to examine the geometric significance of this correspondence.

In determining the values $(p,k)$ from the structural Propositions, it is useful to partition these families into two broad classes: {\em large families}, those that involve a sequence of expansions, and {\em small families}, those that do not.  The large families (along with \ref{p: just right 3}(1)) correspond to Berge types I, II, VII, and VIII in Table \ref{table: A}. Determining the relevant values $(p,k)$ for these families occupies Subsection \ref{ss: large}.  The small families, while more numerous, are considerably simpler to address. We take them up in Subsection \ref{ss: small}.  Excluding \ref{p: just right 3}(1), they correspond to Berge types III, IV, V, IX, and X in Table \ref{table: A}.

Lastly, we remark that the determination of the isomorphism types of the sums of linear lattices enumerated in Proposition \ref{p: decomposable structure} follows as well, and involves far fewer cases.  As it turns out, they correspond precisely to the sums of lens spaces that arise by surgery along a torus knot or a cable thereof.  For example, Proposition \ref{p: expansion} enumerates the sums of linear lattices spanned by standard bases built from $\varnothing$ by a sequence of expansions.  They correspond to the connected sums $- (L(p,q) \# L(q,p))$ that result from $pq$-surgery along the positive $(p,q)$-torus knots.  In fact, \cite[Theorem 1.5]{greene:cabling} asserts a much stronger conclusion: if surgery along a knot produces a connected sum of lens spaces, then it is either a torus knot or a cable thereof.  We refer to \cite{greene:cabling} for further details.

%%%
%%%
%%%

\subsection{Minding $p$'s and $q$'s.}\label{ss: cont fracs}

Given a basis $C = \{v_1,\dots,v_n\}$ built from $\varnothing$ by a sequence of expansions, augment $C$ by a vector $v'_{n+1} := \sum_{i=k}^n e_i$, where $k = 0$ if $|v_i| = 2$ for all $v_i \in C$; $k = n-1$ if $|v_n| \geq 3$; and $k$ is the maximum index of a vector in $C$ with norm $\geq 3$ otherwise.  Observe that $C':= C \cup \{v'_{n+1}\}$ spans a lattice isomorphic to a sum of two non-zero linear lattices for which $C'$ is a vertex basis.  More precisely, partition $C' = \{v_{i_1},\dots,v_{i_l}\} \cup \{v_{j_1},\dots,v_{j_m}\}$ into vertex bases for the two summands, where $i_1 > \cdots > i_l$ and $n+1=j_1 > \cdots > j_m$, and write $(a_1,\dots,a_l) = (|v_{i_1}|,\dots,|v_{i_l}|)$ and $(b_1,\dots,b_m) = (|v_{j_1}|,\dots,|v_{j_m}|)$.  Then $\L C' \R \cong \Lambda(p,q) \oplus \Lambda(p',q')$, where $p/q = [a_1,\dots,a_l]^-$ and $p'/q' = [b_1,\dots,b_m]^-$.  The following result sharpens this statement.

\begin{lem}\label{l: aug exp}

The lattice spanned by $C$ is isomorphic to $\Lambda(p,q) \oplus \Lambda(p,p-q)$ for some $p > q > 0$.

\end{lem}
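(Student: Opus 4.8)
The plan is to leverage the decomposition $\L C' \R \cong \Lambda(p,q) \oplus \Lambda(p',q')$ already recorded in the paragraph preceding the Lemma, together with Gerstein's theorem, so that proving the Lemma reduces to two numerical assertions: $p = p'$, and $q' = p-q$ (up to replacing $q'$ by its inverse $\pmod p$). Writing $(a_1,\dots,a_l)$ and $(b_1,\dots,b_m)$ for the norm sequences of the two sub-bases into which $C'$ partitions, so that $p/q = [a_1,\dots,a_l]^-$ and $p'/q' = [b_1,\dots,b_m]^-$, these two assertions together amount to saying that $(a_i)$ and $(b_j)$ are Hirzebruch--Jung dual, i.e.\ that $[b_1,\dots,b_m]^- = p/(p-q)$. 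By Proposition \ref{p: gerstein} this is precisely the statement $\Lambda(p',q') \cong \Lambda(p,p-q)$.

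First I would pin down the discriminant. Since $\L C \R = (\sigma)^\perp$, Lemma \ref{l: change disc} gives $\disc \L C \R = |\sigma|$; and since $C' = C \cup \{v'_{n+1}\}$ is obtained by adjoining a single vector whose orthogonal projection onto the complement of $\L C \R$ in $\Z^{n+1}\otimes\Q$, namely $\Q\sigma$, equals $\tfrac{\L v'_{n+1},\sigma\R}{|\sigma|}\,\sigma$, the Gram-determinant formula for the discriminant of $\L C \R \oplus \Z v'_{n+1}$ yields
\[ \disc \L C' \R \;=\; |\sigma| \cdot \frac{\L v'_{n+1},\sigma\R^2}{|\sigma|} \;=\; \L v'_{n+1},\sigma\R^2 \;=\; \Big(\sum_{i=k}^{n}\sigma_i\Big)^2. \]
On the other hand $\disc \L C' \R = p\,p'$. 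Hence $p\,p'$ is a perfect square, equal to $\big(\sum_{i=k}^{n}\sigma_i\big)^2$; this is the key quantitative input, and it isolates the common value that $p$ and $p'$ ought to share.

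The substantive step is to upgrade ``$p\,p'$ is a square'' to the exact duality $p = p'$, $q' = p-q$. I would carry this out by induction on the number of expansions used to build $C$ from $\varnothing$, tracking at each stage the pair of continued fractions carried by the two chains of $C'$ and the cut index $k$ defining $v'_{n+1}$. The base case ($C$ with every $|v_i| = 2$, so $k=0$ and $v'_{n+1} = e_0 + \cdots + e_n$) is immediate: there $\L v'_{n+1}, v_i \R = -1+1 = 0$ for each $i$, so $v'_{n+1} \perp \L C \R$ and $\L C' \R = \Lambda(n+1,n) \oplus \L v'_{n+1} \R = \Lambda(n+1,n)\oplus\Lambda(n+1,1)$, which is $\Lambda(p,q)\oplus\Lambda(p,p-q)$ with $(p,q) = (n+1,n)$. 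For the inductive step I would verify that appending one expansion vector transforms the pair $((a_i),(b_j))$ by an operation preserving Hirzebruch--Jung duality, using the continued-fraction identities of Lemma \ref{l: cont frac basics} and Lemma \ref{l: cont frac basics 2}(1) to rewrite the affected terms.

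I expect the main obstacle to be precisely this inductive bookkeeping: an expansion alters both continued fractions simultaneously, lengthening one chain while re-weighting the junction of the other, and one must check that the duality relation is maintained term-by-term rather than merely that the discriminants continue to agree. An alternative that sidesteps the explicit continued-fraction manipulation is to observe that the finite-index inclusion $\L C' \R \hookrightarrow \Z^{n+1}$ into the unimodular lattice realizes $\Z^{n+1}/\L C' \R$ as a metabolizer of the discriminant form of $\L C' \R$, of order $\sqrt{p\,p'}$; the existence of such a metabolizer forces the linking forms of $L(p,q)$ and $L(p',q')$ to be mutually inverse, whence $L(p',q') \cong -L(p,q) = L(p,p-q)$ and the conclusion follows once more through Proposition \ref{p: gerstein}. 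I would present the inductive argument as primary, since it stays within the elementary continued-fraction framework of this section, and reserve the metabolizer argument as a conceptual cross-check.
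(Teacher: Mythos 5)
Your primary plan is the same induction the paper itself uses, and the framing is sound: the reduction to showing the two norm strings of $C'$ are Hirzebruch--Jung dual is exactly right, your base case (when every $|v_i|=2$, the vector $v'_{n+1}=e_0+\cdots+e_n$ is orthogonal to each $v_i=e_{i-1}-e_i$, giving $\Lambda(n+1,n)\oplus\Lambda(n+1,1)$) is correct, and the discriminant identity $\disc \L C' \R = \L v'_{n+1},\sigma\R^2$ is a valid (though ultimately unused) observation. The problem is that the inductive step is the entire mathematical content of the lemma, and you have only announced it --- indeed you flag it yourself as ``the main obstacle.'' It is in fact a two-line computation: a single expansion passes from $C'_{n-1}$ to $C'$ either by $v_n = v'_n - e_n$, $v'_{n+1}=e_{n-1}+e_n$, or by $v_n=e_{n-1}-e_n$, $v'_{n+1}=v'_n+e_n$; in either case one of the two chains acquires a leading $2$ while the leading entry of the other is incremented by $1$, and (after possibly swapping which string carries $q$ and which carries $p-q$) one computes $[2,a_1,\dots,a_l]^- = 2-(p-q)/p = (p+q)/p$ and $[b_1+1,\dots,b_m]^- = 1+p/q=(p+q)/q$, so the new pair is $\Lambda(p+q,q)\oplus\Lambda(p+q,p)$, which again has the form $\Lambda(P,Q)\oplus\Lambda(P,P-Q)$. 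Until that verification is written down, the proof is a plan rather than a proof.

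The fallback ``metabolizer'' argument does not rescue the gap, for two reasons. First, an isotropic subgroup $M\subset A_1\oplus A_2$ with $|M|^2=|A_1||A_2|$ need not be the graph of an anti-isometry: it may meet the factors nontrivially. For example, $\Z/4\oplus\Z/4$ with linking form $\tfrac14\oplus\tfrac14$ contains the metabolizer generated by $(2,0)$ and $(0,2)$, yet the two forms are not mutually inverse; so the existence of a metabolizer does not even force $p=p'$, let alone identify $q'$. Second, even if one knew the linking forms were mutually inverse, the linking form of $L(p,q)$ determines $q$ only up to multiplication by a square mod $p$, which is strictly coarser than the equivalence $q\mapsto q^{\pm1}$ governing isomorphism of the lattices $\Lambda(p,q)$ (e.g.\ $\Lambda(25,1)\not\cong\Lambda(25,4)$ although the linking forms agree). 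So the cross-check could at best constrain, not establish, the conclusion; the continued-fraction induction has to be carried out.
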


Note that Lemma \ref{l: aug exp} implies a relationship between the Hirzebruch-Jung continued fraction expansions of $p/q$ and $p/(p-q)$. This is nicely expressed by the Riemenschneider point rule (see the German original, \cite[pp. 222-223]{riemenschneider}; or \cite[pp. 2158-2159]{lisca:lens2}).

\begin{proof}

We proceed by induction on $n=|C|$.  When $n=1$, we have $C' = \{e_0-e_1,e_0+e_1\}$, and $C'$ spans a lattice isomorphic to $\Lambda(2,1) \oplus \Lambda(2,1)$, from which the Lemma follows with $p = 2$ and $q = 1$.

For $n > 1$, observe that $C'$ is constructed from $C'_{n-1}$ by either setting $v_n = v'_n-e_n$ and $v'_{n+1} = e_{n-1}+e_n$, or else $v_n = e_{n-1}-e_n$ and $v'_{n+1} = v'_n + e_n$. 
By induction, $C'_{n-1}$ determines two strings of integers $(a_1,\dots,a_l)$ and $(b_1,\dots,b_m)$, and $\L C'_{n-1} \R \cong \Lambda(p,q) \oplus \Lambda(p,p-q)$, where $p/q = [a_1,\dots,a_l]^-$ and $p/(p-q) = [b_1,\dots,b_m]^-$.  Swapping the roles of $q$ and $p-q$ if necessary, $C'$ determines the strings $(2,a_1,\dots,a_l)$ and $(b_1+1,\dots,b_m)$, for which we calculate 
$[2,a_1,\dots,a_l]^- = 2 - 1/(p/(p-q)) = (p+q)/p$ and $[b_1+1,\dots,b_m]^- = 1 + p/(p-(p-q)) = (p+q)/p$.  Therefore, $\L C' \R = \Lambda(p+q,q) \oplus \Lambda(p+q,p)$, which takes the desired form and completes the induction step. 

\end{proof}

\begin{prop}\label{p: expansion}

Suppose that $C$ is built from $\varnothing$ by a sequence of expansions.  If $|v_i|=2$ for all $v_i \in C$, then $L \cong \Lambda(n+1,n)$.  Otherwise, $L \cong \Lambda(p,q) \oplus \Lambda(r,s)$ for some $p > q > 0$, where $r = p-q$ and $s$ denotes the least positive residue of $-p \pmod r$.

\end{prop}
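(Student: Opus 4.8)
The plan is to read off the structure of $L = \L C \R$ from that of the augmented lattice $\L C' \R$, which Lemma \ref{l: aug exp} identifies with $\Lambda(p,q) \oplus \Lambda(p,p-q)$ for suitable $p > q > 0$. What I actually need is not just the abstract isomorphism but the explicit vertex basis furnished by the proof of that Lemma: $C' = C \cup \{v'_{n+1}\}$ partitions into vertex bases $\{v_{i_1},\dots,v_{i_l}\}$ and $\{v_{j_1},\dots,v_{j_m}\}$ for the two summands, with $[|v_{i_1}|,\dots,|v_{i_l}|]^- = p/q$, $[|v_{j_1}|,\dots,|v_{j_m}|]^- = p/(p-q)$, and $n+1 = j_1 > \cdots > j_m$. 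Thus the augmenting vector $v'_{n+1} = v_{j_1}$ is the \emph{leading} vertex of the second chain, and since passing from $C'$ to $C$ merely deletes it, $L$ is spanned by $\{v_{i_1},\dots,v_{i_l}\} \cup \{v_{j_2},\dots,v_{j_m}\}$.

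First I would invoke orthogonality: the two groups lie in orthogonal summands of $\L C' \R$, so deleting $v'_{n+1}$ from the second group preserves this splitting and gives $L \cong \Lambda(p,q) \oplus M$, where $M = \L v_{j_2},\dots,v_{j_m}\R$ is the linear lattice whose vertex-norm sequence is the tail $(|v_{j_2}|,\dots,|v_{j_m}|) = (b_2,\dots,b_m)$ of $(b_1,\dots,b_m)$. Writing $r/s = [b_2,\dots,b_m]^-$, so that $M \cong \Lambda(r,s)$, the continued-fraction recursion $p/(p-q) = [b_1,\dots,b_m]^- = b_1 - 1/[b_2,\dots,b_m]^- = (b_1 r - s)/r$ yields $r = p-q$ and $p = b_1 r - s$ upon comparing these reduced fractions, using $\gcd(p,p-q) = \gcd(p,q) = 1$. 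Hence $s = b_1 r - p \equiv -p \pmod r$, and since $0 < s < r$ for a Hirzebruch--Jung tail with all entries $\geq 2$, the integer $s$ is exactly the least positive residue of $-p \pmod r$. This settles the case in which $C$ contains a vector of norm $\geq 3$.

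The all-norm-$2$ case is the degenerate instance of the same picture, which I would dispatch last. There $v'_{n+1} = \sum_{i=0}^n e_i$ is orthogonal to every vector of $C$, so the second summand $\Lambda(p,p-q)$ is the rank-one lattice it alone spans; that is, $m = 1$ and $r = p-q = 1$, so deleting $v'_{n+1}$ leaves $M = (0)$ and $L \cong \Lambda(p,q)$. Since $[2,\dots,2]^-$ with $n$ entries equals $(n+1)/n$, this gives $L \cong \Lambda(n+1,n)$. The one point that genuinely needs justification---rather than routine bookkeeping---is that $v'_{n+1}$ occupies an \emph{endpoint} of its chain (the leading vertex corresponding to $b_1$), for only then does its deletion leave a connected, hence linear, lattice $M$ rather than disconnecting the chain; this is precisely what the ordering $n+1 = j_1 > \cdots > j_m$ established in the proof of Lemma \ref{l: aug exp} secures.
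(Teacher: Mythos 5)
Your proposal is correct and follows essentially the same route as the paper: augment $C$ to $C'$, invoke Lemma \ref{l: aug exp}, observe that the augmenting vector $v'_{n+1}$ sits at an endpoint of the second chain so that deleting it leaves the linear lattice with norm string $(b_2,\dots,b_m)$, and then extract $r$ and $s$ from $p/(p-q)=[b_1,\dots,b_m]^-$. Your explicit comparison of the reduced fractions $(b_1 r - s)/r = p/(p-q)$ simply spells out what the paper compresses into ``$b_1 = \lceil p/(p-q)\rceil$, from which it easily follows that $[b_2,\dots,b_m]^- = r/s$.''
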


\begin{proof}

Augment $C$ to $C'$ as above and write $\L C' \R \cong \Lambda(p,q) \oplus \Lambda(p,p-q)$ according to Lemma \ref{l: aug exp}.  Then $C$ determines the strings $(a_1,\dots,a_l)$ and $(b_2,\dots,b_m)$, where the second string is empty in case $m = 1$.  We have $b_1 = \lceil p/(p-q) \rceil$, from which it easily follows that $[b_2,\dots,b_m]^- =r/s$ when $m > 1$, with the values $r$ and $s$ as above.  Thus, $L$ takes the desired form in this case.  Furthermore, when $m=1$, it follows easily that $L \cong \Lambda(n+1,n)$.  This establishes the Proposition.

\end{proof}

\begin{defin}\label{d: cont frac}

Given integers $a_1,\dots,a_l \geq 2$, write $p_j / q_j = [a_1,\dots,a_j]^-$, $r_j = p_j - q_j$, and $p_0 = 1$, and define integers $b_1,\dots,b_m \geq 2$ by $[b_1,\dots,b_m]^- = p_l / r_l$.

\end{defin}

\noindent Thus, Definition \ref{d: cont frac} relates the strings $(a_1,\dots,a_l)$ and $(b_1,\dots,b_m)$ preceding Lemma \ref{l: aug exp}.

\begin{lem}\label{l: cont frac basics}

Given integers $a_1,\dots,a_n \geq 2$ and an indeterminate $x$, the following hold:

\begin{enumerate}

\item $p_j = p_{j-1} a_j - p_{j-2}$ and $q_j = q_{j-1} a_j - q_{j-2}$;

\item $[a_1,\dots,a_n,x]^- = (p_n x-p_{n-1})/(q_n x - q_{n-1})$;

\item $[a_j,\dots,a_1]^- = p_j / p_{j-1}$;

\item $p_{j-1}$ is the least positive residue of $q_j^{-1} \pmod{p_j}$;

\item $q_{j-1}$ is the least positive residue of $-p_j^{-1} \pmod{q_j}$;

\item $r_{j-1}$ is the least positive residue of $p_j^{-1} \equiv q_j^{-1} \pmod{r_j}$.

\end{enumerate}

\end{lem}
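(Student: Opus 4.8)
The plan is to package all six identities into a single $2\times2$ matrix computation, treating the recurrence of part (1) as a temporary definition and only afterwards matching it to the continued fraction. I would introduce the boundary data $p_{-1}=0$, $p_0=1$, $q_{-1}=-1$, $q_0=0$, define $p_j,q_j$ by the recurrences asserted in (1), and set
\[ M_i = \begin{pmatrix} a_i & -1 \\ 1 & 0\end{pmatrix}, \qquad B_j = M_1 M_2 \cdots M_j. \]
A one-step induction shows $B_j = \begin{pmatrix} p_j & -p_{j-1} \\ q_j & -q_{j-1}\end{pmatrix}$, because right-multiplying $B_{j-1}$ by $M_j$ reproduces exactly the two recurrences. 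Since each $M_i$ acts on the projective line by $z \mapsto a_i - 1/z$, a second induction gives the Möbius evaluation $B_j\cdot z = (p_j z - p_{j-1})/(q_j z - q_{j-1})$ and $B_j\cdot\infty = [a_1,\dots,a_j]^-$. From $\det M_i = 1$ we get $\det B_j = 1$, that is,
\[ p_{j-1} q_j - p_j q_{j-1} = 1, \]
so $\gcd(p_j,q_j)=1$; and a short induction using $a_i\ge 2$ shows the sequences are strictly increasing, with $0<p_{j-1}<p_j$ and $0\le q_{j-1}<q_j$. Hence the recurrence-defined $p_j/q_j$ is genuinely the reduced fraction $[a_1,\dots,a_j]^-$, which proves (1); and (2) is nothing but the Möbius formula $B_n\cdot x$ read at the indeterminate $x$, since $[a_1,\dots,a_n,x]^- = M_1\cdots M_n\cdot x$.

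For the reversal identity (3) I would exploit the conjugation $M_i^{\mathsf T} = J M_i J$ with $J = \mathrm{diag}(1,-1)$, which yields
\[ M_j M_{j-1}\cdots M_1 = J\,B_j^{\mathsf T}\,J = \begin{pmatrix} p_j & -q_j \\ p_{j-1} & -q_{j-1}\end{pmatrix}, \]
so that evaluating at $\infty$ gives $[a_j,\dots,a_1]^- = p_j/p_{j-1}$, as claimed.

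Finally, (4)--(6) are immediate reductions of the determinant relation $p_{j-1}q_j - p_j q_{j-1}=1$. Reducing mod $p_j$ gives $p_{j-1}q_j\equiv 1$, hence $p_{j-1}\equiv q_j^{-1}$; reducing mod $q_j$ gives $-p_j q_{j-1}\equiv 1$, hence $q_{j-1}\equiv -p_j^{-1}$. For (6) I note $r_j = p_j - q_j$ forces $p_j\equiv q_j\pmod{r_j}$, with $\gcd(p_j,r_j)=\gcd(p_j,q_j)=1$, and a direct rearrangement of the determinant relation yields $p_j r_{j-1} = p_{j-1} r_j + 1$, so $r_{j-1}\equiv p_j^{-1}\equiv q_j^{-1}\pmod{r_j}$. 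In each case the bounds already in hand ($0<p_{j-1}<p_j$, $0\le q_{j-1}<q_j$, and $0<r_{j-1}\le r_j$ from $p_j r_{j-1}=p_{j-1}r_j+1$) pin the stated quantity down as the least positive residue.

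The entire argument is routine once the matrix dictionary is set up; the only places demanding care are the sign bookkeeping of the initial data (to force $\det B_j = +1$ rather than some $\pm1$) and the degenerate boundary cases where the ambient modulus is $1$ and the congruence is vacuous, namely $q_0=0$ in (5) and $r_j=1$ in (6). I expect the reversal (3) to be the only step with content beyond a direct induction, but the transpose–conjugation trick collapses even that onto the determinant computation, so I do not anticipate a genuine obstacle.
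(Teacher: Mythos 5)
Your proof is correct, and it proves the same chain of facts as the paper (the two recurrences, the unimodularity relation $p_{j-1}q_j - p_jq_{j-1}=1$, and the monotonicity bounds coming from $a_i \geq 2$), but it packages them differently. The paper's proof sketch runs a direct induction on the identity $[a_1,\dots,a_j,a_{j+1}]^- = [a_1,\dots,a_j - 1/a_{j+1}]^-$ to get (1), reads off (2), proves (3) by a second induction on the reversed string via $[a_{j+1},\dots,a_1]^- = a_{j+1} - p_{j-1}/p_j$, and then establishes the determinant identity by yet another induction before reducing to get (4)--(6). You instead set up the standard correspondence with products of the matrices $M_i$, after which the recurrences, the M\"obius evaluation of (2), and the determinant identity all fall out of a single computation, and the reversal (3) comes from the transpose--conjugation $M_i^{\mathsf T} = JM_iJ$ rather than a fresh induction. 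The matrix dictionary buys you economy (one induction instead of three) and makes the sign bookkeeping transparent; it also forces you to specify the initial data $q_{-1}=-1$, which the paper leaves implicit but which is genuinely needed for (1) to hold at $j=1$. Your explicit handling of the degenerate moduli ($q_1=1$ in (5), $r_j=1$ in (6)) is likewise more careful than the paper's blanket claim $0<q_{j-1}<q_j$ and $0<r_{j-1}<r_j$, the latter of which fails as stated when all $a_i=2$; in those cases the congruence statement is vacuous, as you note, so nothing is lost. There is no gap in your argument.
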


\begin{proof}[Proof sketch.]

Item (1) follows by induction on $k$, using the identity \[[a_1,\dots,a_j,a_{j+1}]^- = [a_1,\dots,a_j - 1/a_{j+1}]^-.\]  Item (2) follows at once from (1).  Item (3) follows from $[a_{j+1},\dots,a_1]^- = a_{j+1}-q_j/p_j$ and (1).  The identity \[p_{j-1} q_j - p_j q_{j-1} = 1\] follows from (1) and induction; the inequalities $0 < p_{j-1} < p_j$ and $0 < q_{j-1} < q_j$ follow from the fact that $a_j \geq 2$; and items (4) and (5) follow from these observations.  From the preceding identity we obtain \[ p_j r_{j-1} - p_{j-1} r_j  = 1 \quad \text{and} \quad q_j r_{j-1}-  q_{j-1} r_j = 1,\] and (1) implies that $0 < r_{j-1} < r_j$.  Item (6) now follows as well.

\end{proof}

We collect a few more useful facts whose routine proofs follow from Lemma \ref{l: cont frac basics}.  Following Lisca, we use the shorthand \[(\dots,2^{[t]},\dots) := (\dots,\underbrace{2,\dots,2}_t,\dots).\] 

\begin{lem}\label{l: cont frac basics 2} The following identities hold:

\begin{enumerate}

\item $[\dots,b+1,2^{[a-1]},c+1,\dots]^- = [\dots,b,-a,c,\dots]^-$;

\item $[2^{[a-1]},b+1,\dots]^- = p/q \implies [-a,b,\dots]^- = -p/(p-q)$;

\item $[\dots,b+1,2^{[a-1]}]^- = [\dots,b,-a]^-$;

\item $[b_m,\dots,b_2]^- = r_l/(r_l -  r_{l-1})$;

\item $[a_1,\dots,a_l,t+1,b_m,\dots,b_2]^- = (p_l r_l t+1)/(q_l r_l t + 1)$;

\item $[a_1,\dots,a_l+1,2^{[t-2]},b_m+1,\dots,b_2]^- = (p_l r_l t - 1)/(q_l r_l t -1)$;

\item $[a_1,\dots,a_l,b_m,\dots,b_1]^- = (p_l^2 -p_l p_{l-1} + p_{l-1}^2)/(p_l q_l -p_l q_{l-1} + p_{l-1} q_{l-1})$;

\item $[a_1,\dots,a_l,b_m,\dots,b_2]^- = (p_l r_l - p_{l-1} r_l + p_{l-1} r_{l-1})/(q_l r_l -q_{l-1} r_l + q_{l-1} r_{l-1})$;

\item $[a_1,\dots,a_l+b_m+1,\dots,b_1]^- = (p_l^2 + p_l p_{l-1} - p_{l-1}^2)/(q_l p_l + q_{l-1} p_l - q_{l-1} p_{l-1} - 1)$;

\item $[a_1,\dots,a_l+b_m+1,\dots,b_2]^- = (p_l r_l + p_{l-1} r_l - p_{l-1} r_{l-1} -1)/(q_l r_l + q_{l-1} r_l - q_{l-1} r_{l-1} - 1)$.

\qed

\end{enumerate}

\end{lem}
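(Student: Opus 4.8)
The plan is to reduce every identity to bookkeeping with the $2\times 2$ integer matrices that encode Hirzebruch--Jung continued fractions. Set $M(a) = \begin{pmatrix} a & -1 \\ 1 & 0 \end{pmatrix}$, so that $\det M(a) = 1$ and, by Lemma \ref{l: cont frac basics}(1), $M(a_1)\cdots M(a_j) = \begin{pmatrix} p_j & -p_{j-1} \\ q_j & -q_{j-1} \end{pmatrix}$; the value $[a_1,\dots,a_j]^-$ is then the ratio of the two entries in the first column, and Lemma \ref{l: cont frac basics}(2) is precisely the statement that appending a final entry $x$ acts by the M\"obius transformation attached to this matrix. Two further structural facts make the reversed $b$-string tractable: first, $M(a)^{T} = JM(a)J$ with $J = \mathrm{diag}(1,-1)$, so that reversing a string transposes its matrix up to conjugation by $J$; and second, $M(2)^{k} = \begin{pmatrix} k+1 & -k \\ k & -(k-1) \end{pmatrix}$, which packages the blocks $2^{[t]}$.

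With this dictionary, identities (1)--(3) become the single matrix identity $M(b+1)\,M(2)^{a-1}\,M(c+1) = M(b)\,M(-a)\,M(c)$ (together with its truncations), verified by multiplying out using the closed form for $M(2)^{a-1}$; as both sides act identically as M\"obius transformations, the continued fractions agree. Next I would pin down the convergents of the $b$-string, which is the hinge of the whole lemma. Writing $P_j/Q_j = [b_1,\dots,b_j]^-$, the relation $\gcd(p_l,r_l)=1$ forces $P_m=p_l$ and $Q_m=r_l$; Lemma \ref{l: cont frac basics}(4) together with $r_l \equiv -q_l \pmod{p_l}$ gives $P_{m-1}=p_l-p_{l-1}$, and the relation $P_{m-1}Q_m - P_m Q_{m-1}=1$ combined with $p_l r_{l-1}-p_{l-1}r_l=1$ gives $Q_{m-1}=r_l-r_{l-1}$. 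Feeding these through the transpose identity yields $N := M(b_m)\cdots M(b_1) = \begin{pmatrix} p_l & -r_l \\ p_l-p_{l-1} & -(r_l-r_{l-1}) \end{pmatrix}$, from which identity (4) is read off directly as $r_l/(r_l-r_{l-1})$ and the companion value $[b_m,\dots,b_1]^- = p_l/(p_l-p_{l-1})$ comes from the first column.

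The remaining identities then follow by substituting these tail values into Lemma \ref{l: cont frac basics}(2) and simplifying with the determinant relations $p_lr_{l-1}-p_{l-1}r_l = 1$ and $q_lr_{l-1}-q_{l-1}r_l=1$. For (7) and (8) I would take the tail to be $[b_m,\dots,b_1]^-$ and $[b_m,\dots,b_2]^-$ respectively; for (5) I would first compute $[t+1,b_m,\dots,b_2]^- = (tr_l+r_{l-1})/r_l$ and then apply (2), at which point the two determinant relations collapse the numerator and denominator to $p_lr_lt+1$ and $q_lr_lt+1$. Identity (6) reduces to (5) after using (1) to rewrite $(\dots,a_l+1,2^{[t-2]},b_m+1,\dots)$ as $(\dots,a_l,-(t-1),b_m,\dots)$, the sign change $-(t-1)$ in place of $t+1$ producing $-1$ in place of $+1$. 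Identities (9) and (10) are the fused versions, where $a_l$ and $b_m$ merge into the single entry $a_l+b_m+1$; here I would multiply $M(a_1)\cdots M(a_{l-1})$ by $M(a_l+b_m+1)$ and by the once-further-truncated reversed block $M(b_{m-1})\cdots M(b_1) = M(b_m)^{-1}N$, then read off the first column.

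I expect the main obstacle to be organizational rather than conceptual: correctly identifying the four convergent data $P_m,P_{m-1},Q_m,Q_{m-1}$ of the $b$-string with the quantities $p_l,q_l,r_l,p_{l-1},q_{l-1},r_{l-1}$ of the $a$-string (the step that transports information across the Riemenschneider duality), and then carrying the fused entry $a_l+b_m+1$ through the matrix product in (9)--(10) without sign errors. Once the matrix $N$ is in hand each identity is a one- or two-line substitution, so no single computation is deep; the care lies in keeping the truncations $(b_m,\dots,b_1)$, $(b_m,\dots,b_2)$ and the insertions straight.
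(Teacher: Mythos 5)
Your proposal is correct, and it is essentially the route the paper intends: the paper states these identities with only the remark that their ``routine proofs follow from Lemma \ref{l: cont frac basics}'', and your transfer-matrix formalism is precisely a systematic bookkeeping of the recursion \ref{l: cont frac basics}(1), the M\"obius rule (2), and the determinant relations $p_{j-1}q_j - p_jq_{j-1} = p_jr_{j-1}-p_{j-1}r_j = q_jr_{j-1}-q_{j-1}r_j=1$. In particular your identification of the convergents of the $b$-string as $P_m=p_l$, $P_{m-1}=p_l-p_{l-1}$, $Q_m=r_l$, $Q_{m-1}=r_l-r_{l-1}$ (via \ref{l: cont frac basics}(4) and the determinant relations) is exactly the hinge the paper's Definition \ref{d: cont frac} sets up, and I verified that your matrix $N$ and the resulting first-column computations reproduce all ten identities, including the $-1$'s in (9) and (10).
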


%%%
%%%
%%%

\subsection{Large families}\label{ss: large}  

For each standard basis $S$ occurring in a large family, we alter at most one $v_i \in S$ to another $\overline{v}_i$ such that $S - \{v_i\} \cup \{ \overline{v}_i \}$ is a vertex basis, up to reordering and negating some elements.  In each case, there exists a unique partition $\{1,\dots,n\} = \{i_1,\dots,i_\lambda \} \cup \{j_1,\dots,j_\mu \}$ with the following properties:

\begin{itemize}

\item $i_1 = n$, and in the case of Propositions \ref{l: S_g nothing}(2) and \ref{p: berge viii}, $j_1 = n-1$;

\item $i_1 > \cdots > i_\lambda$ and $j_1 > \cdots > j_\mu$;

\item $\{i_1,j_1\} = \left\{1, \min\{ j > 1 \; | \; |v_j| > 2 \} \right\}$;

\item the subgraphs of $\widehat{G}(S - \{v_i\} \cup \{ \overline{v}_i \})$ induced on $\{v_{i_1},\dots,v_{i_\lambda }\}$ and $\{v_{j_1},\dots,v_{j_\mu}\}$ are paths with vertices appearing in consecutive order (replacing $v_i$ by $\overline{v}_i$).

\end{itemize}

For the first five families below, we modify $S$ to a related subset $C$ built from $\varnothing$ by a sequence of expansions.  We obtain a pair of strings $(a_1,\dots,a_l)$, $(b_1,\dots,b_m)$ from $C'$, and we express the sequence of norms $\nu$ in terms of them.  The values $l$ and $m$ are related to the values $\lambda$ and $\mu$. To determine $p$ and $k$ from $\nu$, we apply Lemmas \ref{l: cont frac basics} and \ref{l: cont frac basics 2}.  Frequently it is easier to recover $k'$ instead of $k$ by reversing the order of the basis.

\smallskip

{\bf Proposition \ref{p: just right 3}(3).}  

\noindent $B = \{v_{i_1},\dots,v_{i_\lambda},v_{j_\mu},\dots,v_{j_1} \}$ and $B^\star = \{ v_1 \}$;

\noindent $C = S - \{v_1\} \subset \text{span}\L e_1,\dots,e_n \R$;

\noindent $\nu = (a_1,\dots,a_l,2,b_m,\dots,b_2)$;

\noindent $p = p_l r_l+1$ by \ref{l: cont frac basics 2}(5) with $t=1$;

\noindent $k = p_l$ if $j_\mu=1$; $k' = r_l$ if $i_\lambda = 1$.

\noindent Note that $p_l \geq 3$ and $r_l \geq 2$.  With $\{i,k\} = \{p_l,r_l\}$, it follows that $p = ik +1$ with $i,k \geq 2$ and $\gcd(i,k)=1$.   The values $i$ and $k$ are unconstrained besides these conditions.  In summary, \ref{p: just right 3}(3) accounts for Berge type I$_+$ with $i,k \geq 2$.

{\bf Proposition \ref{p: tight unbreakable}(1).}

\noindent $B = \{ -v_{i_1},\dots,-v_{i_\lambda}^\star,v_{j_\mu}^\star,\dots,v_{j_1} \}$;

\noindent $C = S \cup \{v',v_t'\} - \{v_t\} \subset \text{span} \L e', e_0, \dots, e_n \R$, where $v' = -e_0+e'$ and $v_t' = v_t + v'$;

\noindent $\nu = (a_1,\dots,a_l+b_m,\dots,b_2)$;

\noindent $p = p_l r_l - 1$ by \ref{l: cont frac basics 2}(5) with $t=-1$;

\noindent $k = p_l$ if $j_\mu = 1$, using $a_l = 2$ and \ref{l: cont frac basics}(1); $k' = r_l$ if $i_\lambda = 1$ in the same way.

\noindent With $\{i,k\} = \{p_l,r_l\}$, it follows that $p = ik -1$ with $i,k \geq 3$ and $\gcd(i,k) = 1$.  The values $i$ and $k$ obey two further constraints coming from $\max\{a_l, b_m\} = 3$ and $\max\{a_{l-1}, \dots, a_1, b_{m-1}, \dots, b_2\} \geq s+1 \geq 3$.  In summary, \ref{p: tight unbreakable}(1) accounts for part of Berge type I$_-$ with $i,k \geq 3$.

{\bf Proposition \ref{p: tight unbreakable}(3).}

\noindent  The argument is identical to the case of \ref{p: tight unbreakable}(1), switching the conclusions in the case of $i_\lambda = 1$ and $j_\mu = 1$.  Now we have the constraint that $\max\{a_l, b_m\} = t+2 \geq 4$.  In summary, \ref{p: tight unbreakable}(3) accounts for another part of Berge type I$_-$ with $i,k \geq 3$.

{\bf Proposition \ref{p: tight unbreakable}(2).}

\noindent $B = \{ v_{i_1},\dots,v_{i_\lambda},v_{j_\mu},\dots,v_{j_1} \}$ and $B^\star = \{ v_1 \}$;

\noindent $C = S - \{v_1\} \subset \text{span} \L e_1, \dots, e_n \R$;

\noindent $\nu = (a_1,\dots,a_l,5,b_m,\dots,b_2)$;

\noindent $p = 4p_l r_l +1$ by \ref{l: cont frac basics 2}(5) with $t=4$;

\noindent $k = 2p_l$ if $j_\mu = 1$; $k' = 2r_l$ if $i_\lambda = 1$.

\noindent With $\{i,k\} = \{2p_l,2r_l\}$, it follows that $p = ik+1$ with $i,k \geq 2$ and $\gcd(i,k) = 2$.  
The values $i,k$ are unconstrained besides these conditions.  However, the case $\min\{i,k\}=2$ (which occurs when $s=2$) accounts for Berge I$_-$ with $\min\{i,k\}=2$.  In summary, \ref{p: tight unbreakable}(2) accounts for Berge type II$_+$ and this special case of Berge I$_-$.

{\bf Proposition \ref{p: just right 1}(3).}

\noindent $B = \{ -v_{i_1},\dots,-\overline{v}_{i_{\lambda-2}}^{\; \star},v_{i_{\lambda - 1}},v_{i_\lambda}^\star,v_{j_\mu},\dots,v_{j_1} \}$ if $i_\lambda = 1$, in which case $i_{\lambda-2} = m$, 
$\overline{v}_{i_{\lambda-2}} = v_m + v_1 + v_2$, $i_{\lambda-1} = 2$, and $j_\mu = 3$;

\noindent $C = S \cup \{v_3',v_m'\} - \{v_1,v_2,v_3,v_m\} \subset \text{span} \L e_2,\dots,e_n \R$, where $v_3' = v_3 - e_1$ and $v_m' = v_m -e_1$;

\noindent $\nu = (a_1,\dots,a_l+1,2,2,b_m+1,\dots,b_2)$;

\noindent $p = 4p_l r_l -1$ by \ref{l: cont frac basics 2}(6) with $t=4$;

\noindent $k = 2p_l$ if $i_\lambda = 1$; $k' = 2r_l$ if $j_\mu = 1$.

\noindent With $\{i,k\} = \{2p_l,2r_l\}$, it follows that $p = ik - 1$ where $i,k \geq 4$ and $\gcd(i,k) = 2$.  A similar argument applies in case $j_\mu = 1$. In summary, \ref{p: just right 1}(3) accounts for Berge type II$_-$.

\smallskip

For the two remaining large families, we modify $S$ directly into a subset $C'$ as in Subsection \ref{ss: cont fracs}.  Let $(a_1',\dots,a_l')$ and $(b_1',\dots,b_m')$ denote its corresponding strings, and let $(a_1,\dots,a_l)$ and $(b_1,\dots,b_m)$ denote their reversals ($a_i = a_{l+1-i}'$ and $b_j = b_{m+1-j}'$).  This notational hiccup results in cleaner expressions for $p$ and $k$.  Note that these values are still related in the manner of Definition \ref{d: cont frac}, so Lemma \ref{l: cont frac basics 2} applies.

\smallskip

{\bf Proposition \ref{p: gappy structure}(1).}  Here $n = g$.

\noindent $B = \{ -v_{i_\lambda}^\star,\dots,-v_{i_1},\overline{v}_{j_1},\dots,v_{j_\mu}^\star \}$ if $i_\lambda = 1$, where $j_1 = n$ and $\overline{v}_n = \overline{v}_g = v_g + v_{g-1} + \cdots + v_{k+2}$;\footnote{Apology: this is the $k$ of Lemma \ref{l: S_g nothing}, not the homology class of $K$.}

\noindent $C' = S \cup \{v_n'\} - \{v_n\} \subset \text{span} \L e_0,\dots,e_{n-1} \R$, where $v_n' = \overline{v}_n + e_n + e_{n-1}$;

\noindent $\nu = (a_1,\dots,a_l,b_m,\dots,b_1)$;

\noindent $p = p_l^2 -p_l p_{l-1} + p_{l-1}^2$ by \ref{l: cont frac basics 2}(7);

\noindent $k = p_l r_l - p_{l-1} r_l + p_{l-1} r_{l-1} - 1$ by \ref{l: cont frac basics 2}(8); also, observe that the difference between the numerator and denominator in $[\nu]^-$ is $D := r_l^2 - r_l r_{l-1} + r_{l-1}^2$.

\noindent Now we use the identity $(a^2-ab+b^2)(c^2-cd+d^2) = (e^2-ef+f^2)$, where $e= ac-bc+bd$ and $f = ad-bc$.  We apply this identity with $a = p_l, b = p_{l-1}, c = r_l$, and $d = r_{l-1}$, noting that $f = 1$.  It follows that $p \cdot D = (k+1)^2 - (k+1) + 1 = k^2+k+1$.  Up to renaming variables, the same argument applies in case $j_\mu = 1$.  In summary, \ref{l: S_g nothing}(2) accounts for Berge type VII.

{\bf Proposition \ref{p: berge viii}.}  Here $n = t$.

\noindent $B' = \{ v_{i_\lambda}^\star,\dots, v_{i_1},\overline{v}_{j_1},\dots,v_{j_\mu}^\star \}$ if $i_\lambda = 1$, where $j_1 = n$ and $\overline{v}_n = \overline{v}_t = v_t -(v_{t-1} + \cdots + v_1)$;

\noindent $C' = S \cup \{v_n', v_{n+1}' \} - \{ v_n \}$, where $v_n' = \overline{v}_n - e_{n-1}$ and $v_{n+1}' = e_{n-1}+e_n$;

\noindent $\nu = (a_1,\dots,a_l+b_m + 1, \dots, b_1)$;

\noindent $p = p_l^2 + p_l p_{l-1} - p_{l-1}^2$ by \ref{l: cont frac basics 2}(9);

\noindent $k = p_l r_l +p_{l-1} r_l - p_{l-1} r_{l-1}$ by \ref{l: cont frac basics 2}(10); and the difference between the numerator and denominator in $[\nu]^-$ is $D = r_l^2 + r_l r_{l-1} - r_{l-1}^2$.

\noindent Now we use the identity $(a^2+ab-b^2)(c^2+cd-d^2) = (e^2+ef-f^2)$, where $e = ac-bd+bc$ and $f = ad-bc$.   As before, we apply it with $a = p_l, b = p_{l-1}, c = r_l$, and $d = r_{l-1}$, noting that $f = 1$.  It follows that $p \cdot D = k^2 + k - 1$.  Again, the same conclusion holds if instead $j_\mu = 1$.  Replacing $k$ by $-k$, it follows in summary that \ref{p: berge viii} accounts for Berge type VIII.

%%%
%%%
%%%

\subsection{Small families}\label{ss: small}  

{\scriptsize

\begin{table}
\caption{Small families and their Berge types}
\label{table: 1}
\begin{center}

\makebox[0pt]{

\begin{tabular}{c | l l}

Prop$^{\text{n}}$. & Berge type & $B$ and $B^\star$; $\nu$; $k$, $p$ \\[.1cm]
\hline \hline \\[-.3cm]
\ref{p: just right 1}(1)
& X, $k \leq 11(-3)+3$ &
$\{-v_s^\star,-v_{s+2},v_{s+3},v_{s-1},\dots,v_1^\star, -(v_{s+1}+v_{s-1}+\cdots+v_1)^\star$\} \\[.1cm]
&& $(a+1,3,5,2^{[a-1]},3)$, $a=s \geq 2$ \\[.1cm]
&& $k = 11(-a-1)+3$, $p=(2k^2+k+1)/11$ \\[.1cm]
\cline{2-3} \\[-.3cm]
\ref{p: just right 1}(2)
& IX, $k \leq 11(-3)+2$ &      
$\{-v_s^\star, -v_{s+3}, v_{s+2}, v_{s-1}, \dots, v_1^\star, -(v_{s+1}+v_{s-1}+\cdots+v_1)^\star\}$ \\[.1cm]
&& $(a+1,4,4,2^{[a-1]},3)$, $a=s \geq 2$ \\[.1cm]
&& $k=11(-a-1)+2$, $p= (2k^2+k+1)/11$ \\[.1cm]
\cline{1-3} \\[-.3cm]

\ref{p: just right 2}(1)
& X, $k = 11(-2)+3$ & $\{-v_1^\star,-v_3,v_4^\star,-v_2^\star\}$ \\[.1cm]
&& $(2,3,5,3)$ \\[.1cm]
&& $k = -19$, $p=(2k^2+k+1)/11=64$ \\[.1cm]
\cline{2-3} \\[-.3cm]
\ref{p: just right 2}(2)
& IX, $k = 11(-2)+2$ & $\{-v_1^\star,-v_4,v_3^\star, -v_2^\star\}$ \\[.1cm]
&& $(2,4,4,3)$ \\[.1cm]
&& $k=-20$, $p=(2k^2+k+1)/11=71$ \\[.1cm]
\cline{2-3} \\[-.3cm] 
\ref{p: just right 2}(3)
& IX, $k \geq 11(2)+2$ &
$\{-v_1^\star, \dots, -v_{s-1}, -v_{s+3}, v_{s+2}, v_s^\star, v_{s+1}\}$ \\[.1cm]
&& $(2^{[a-1]},5,3,a+1,2), a=s \geq 2$ \\[.1cm]
&& $k=11a+2$, $p= (2k^2+k+1)/11$ \\[.1cm]
\cline{2-3} \\[-.3cm] 
\ref{p: just right 2}(4)
& X, $k \geq 11(2)+3$ &
$\{-v_1^\star,\dots,-v_{s-1},-v_{s+2},  v_{s+3},  v_s^\star,v_{s+1}\}$ \\[.1cm]
&& $(2^{[a-1]},4,4,a+1,2)$, $a=s \geq 2$ \\[.1cm]
&& $k=11a+3$, $p=(2k^2+k+1)/11$ \\[.1cm]
\cline{2-3} \\[-.3cm] 
\ref{p: just right 2}(5)
& IV(b)$_-, d = 3, {2k-1 \over d} \geq 5$ 
& $\{v_2, v_1^\star, v_m, -v_3^\star, \dots, -v_{m-1},-v_{m+1},  \dots,-v_n\}$ \\[.1cm]
& {\em and} & $(2,2,a+3, 4, 2^{[a-1]}, 3,  2^{[b-1]})$,  $a = m-3 \geq 1, b = n-m \geq 0$ \\[.1cm]
& V(a)$_-, d = 3, {k+1 \over d} \geq 3$ & $k=3a+5$, $p=(b+1)k^2-3(k+1)$ \\[.1cm]
\cline{1-3} \\[-.3cm]

\ref{p: just right 3}(1)
& any type with $k=1$ & $\{v_1^\star, \dots,v_n\}$ \\[.1cm]
& & $(2^{[n]})$ \\[.1cm]
&& $k=1$, $p=n+1$ \\[.1cm]
\cline{2-3} \\[-.3cm]
\ref{p: just right 3}(2)
& III(a)$_-$, $d=2, {k+1 \over d}=3$ & $\{-v_2,-v_1^\star,v_3,v_4^\star,\dots,v_n\}$ \\[.1cm]
& {\em and} & $(2,2,3,5,2^{[a-1]})$, $a \geq 1$ \\[.1cm]
& V(a)$_-$, $d=3, {k+1 \over d} = 2$& $k = 5$, $p = 25(a+1)-18$ \\[.1cm]
\cline{1-3} \\[-.3cm] 
\ref{l: S_g nothing}(1)
& IV(b)$_-, d \geq 5, {2k-1 \over d} \geq 5$ &
$\{v_s^\star,v_{s+1},-(v_g+v_1+\cdots+v_{s-1}+v_{s+1})^\star, v_1^\star,\dots,v_{s-1},v_{s+2},\dots,v_{g-1},  v_{g+1},\dots,v_n\}$ \\[.1cm]
&& $(a+1,2,b+3, 2^{[a-1]}, 4,2^{[b-1]},3,2^{[c-1]})$,  $a=s \geq 2, b=g-s-2 \geq 1, c=n-g \geq 0$  \\[.1cm]
&& $k=2ab+3a+b+2$, $p=(c+1)k^2-  (2a+1)(k+1)$ \\[.1cm]
\cline{2-3} \\[-.3cm]
\ref{l: S_g nothing}(3)
& IV(a)$_-, d \geq 5, {2k+1 \over d} \geq 5$ &
$\{-v_1^\star,\dots,-v_{s-1},-v_{s+1},v_g,v_s^\star, v_{s+2},  \dots,v_{g-1},v_{g+1}, \dots, v_n\}$ \\[.1cm]
&& $(2^{[a-1]},3,b+2,a+1, 3,2^{[b-1]},3,2^{[c-1]})$,  $a=s \geq 2, b=g-s-2 \geq 1, c=n-g \geq 0$ \\[.1cm]
&& $k=2ab+3a+b+1$, $p=(c+1)k^2 -  (2a+1)(k-1)$ \\[.1cm]
\cline{2-3} \\[-.3cm]
\ref{l: S_g nothing}(4)
& V(a)$_-, d \geq 5, {k+1 \over d} \geq 3$ &
$\{-v_j,-v_1^\star,-v_g,v_2^\star,\dots,v_{j-1}, v_{j+1}, \dots, v_{g-1},v_{g+1},\dots,v_n\}$ \\[.1cm]
&& $(a+2,2,b+3,3,2^{[a-1]}, 3,2^{[b-1]},3,2^{[c-1]})$,  $a = j-2 \geq 1, b= g-j-1 \geq 1,  c=n-g \geq 0$ \\[.1cm]
&& $k=2ab+4a+3b+5$, $p=(c+1)k^2-  (2a+3)(k+1)$ \\[.1cm]
\cline{2-3} \\[-.3cm]
\ref{l: S_g nothing}(5)
& V(b)$_-, d \geq 3, {k-1 \over d} \geq 2$ &
$\{-v_{j-1},\dots,-v_2^\star,v_g,v_1^\star,v_j,  \dots,v_{g-1},v_{g+1},\dots,v_n\}$ \\[.1cm]
&& $(2^{[a-1]},3,b+2,2,a+2,2^{[b-1]},3,2^{[c-1]})$,  $a=j-2 \geq 1, b=g-j \geq 1, c=n-g \geq 0$ \\[.1cm]
&& $k=2ab+2a+b+2$, $p=(c+1)k^2-  (2a+1)(k-1)$

\end{tabular}

} % end \makebox

\end{center}

\end{table}

} % end font size

% for Berge IX (j=-1,0) and X (j=-1,0,1), see p 162 of MAR 15 2010 ntbk

{\scriptsize

\begin{table}
\caption{Small families (cont$^\text{d}$)}
\label{table: 2}
\begin{center}

\makebox[0pt]{

\begin{tabular}{c | l l}

\hline \hline \\[-.3cm]

\ref{l: S_g not nothing}(1)
& III(a)$_-, d \geq 3, {k+1 \over d} \geq 5$ &
$\{v_s^\star,v_g,-v_{s+1},-v_{s-1},\dots,-v_1^\star, (v_{s+2}+v_1+\cdots+v_{s-1})^\star, \dots,  v_{g-1}, v_{g+1},\dots,v_n\}$ \\[.1cm]
&& $(a+1,b+2,3,2^{[a-1]}, 4,2^{[b-1]},3,2^{[c-1]})$,  $a=s \geq 2, b=g-s-2 \geq 1, c=n-g \geq 0$ \\[.1cm]
&& $k=2ab+3a+2b+2$, $p=(c+1)k^2 -  (a+1)(2k-1)$ \\[.1cm]
\cline{2-3} \\[-.3cm]
\ref{l: S_g not nothing}(2)
& III(a)$_-, d=2, {k+1 \over d} \geq 5$ &
$\{v_1^\star,v_g,-v_2^\star,v_3^\star,\dots,v_{g-1},v_{g+1},\dots,v_n\}$ \\[.1cm]
&& $(2,a+2,3,4,2^{[a-1]}, 3,2^{[b-1]})$,  $a = g-3\geq 1, b = n-g\geq 0$ \\[.1cm]
&& $k=4a+5$, $p=(b+1)k^2 -  2(2k-1)$ \\[.1cm]
\cline{2-3} \\[-.3cm]
\ref{l: S_g not nothing}(3)
& III(b)$_-, d \geq 2, {k-1 \over d} \geq 5$ &
$\{-v_1^\star,\dots,-v_{s-1},-v_g,-v_{s+1}, (v_s+v_{s+1})^\star, v_{s+2},\dots,v_{g-1}, v_{g+1},\dots, -v_n\}$ \\[.1cm]
&& $(2^{[a-1]},b+3,2,a+1,3, 2^{[b-1]},3,2^{[c-1]})$,  $a=s \geq 2, b=g-s-2 \geq 1, c=n-g \geq 0$ \\[.1cm]
&& $k=2ab+3a+1$, $p=(c+1)k^2 -  a(2k+1)$ \\[.1cm]
\cline{1-3} \\[-.3cm]

\ref{p: gappy structure}(3)
& III(a)$_-, d \geq 2, {k+1 \over d} = 3$ &
$\{-v_s^\star,-v_{s+1},-(v_{s+2}-v_1-\cdots-v_{s-1})^\star, v_1^\star, \dots, v_{s-1}, v_{s+3}, \dots, v_n\}$ \\[.1cm]
& {\em and} & $(a+1,2,3,2^{[a-1]},5,2^{[b-1]})$,  $a=s \geq 2, b=n-s-2 \geq 0$ \\[.1cm]
& IV(b)$_-, d \geq 3, {2k-1 \over d} =3$ & $k=3a+2$, $p=(b+1)k^2-  (k+1)(2k-1)/3$ \\[.1cm]
\cline{1-3} \\[-.3cm]

\ref{p: breakable 1}(1)
& III(b)$_+$, $d = 2$, ${k-1 \over d} \geq 3$ &
$\{-v_3^\star,\dots,-v_{m-1},-(v_1-v_3-\cdots-v_{m-1})^\star, v_2^\star,v_m,\dots,v_n\}$ \\[.1cm]
&  &  $(3,2^{[a-1]},4,3,a+2,2^{[b-1]})$,  $a=m-3 \geq 1, b=n-m+1 \geq 0$ \\[.1cm]
&  & $k=4a+3$, $p=bk^2 + 2(2k+1)$ \\[.1cm]
\cline{2-3} \\[-.3cm]
\ref{p: breakable 1}(2)
&  IV(a)$_+$, $d =5$, ${2k+1 \over d} \geq 5$ &
$\{-v_3^\star,-(v_1-v_3-v_4-\cdots-v_{m-1})^\star,-v_{m-1},\dots,-v_4^\star,v_2^\star,v_m,\dots,v_n\}$ \\[.1cm]
&  & $(3,3,2^{[a-1]},3,3,a+3, 2^{[b-1]})$,  $a=m-4 \geq 1, b=n-m+1 \geq 0$ \\[.1cm]
&  & $k=5a+7$, $p=bk^2 +  5(k-1)$ \\[.1cm]
\cline{1-3} \\[-.3cm]

\ref{p: breakable 2}(1)
& III(a)$_+$, $d = 2$, ${k+1 \over d} \geq 5$ &
$\{v_4,\dots,v_{m-1},(v_1-v_3 -v_4-\cdots - v_{m-1}), (v_3+v_2)^\star,-v_2,v_m,\dots,v_n\}$ \\[.1cm] 
&  & $(3,2^{[a-1]},3,3,2,a+3,2^{[b-1]})$,  $a = m-4 \geq 1, b=n-m+1 \geq 0$ \\[.1cm]
&  & $k=4a+5$, $p=bk^2 +   2(2k-1)$ \\[.1cm]
\cline{2-3} \\[-.3cm] 
\ref{p: breakable 2}(2)
& V(a)$_+$, $d \geq 3$, ${k+1 \over d} \geq 2$ &
$\{v_s,\dots,v_2,(v_1-v_2-\cdots-v_s-v_{s+2})^\star, v_{m-1},\dots,v_{s+2}^\star,v_{s+1},v_m,\dots, v_n\}$ \\[.1cm]
&  & $(2^{[a-1]},4,2^{[b-1]},3,a+1, b+2,2^{[c-1]})$,  $a=s \geq 1, b=m-s-2 \geq 1,  c=n-m+1 \geq 0$ \\[.1cm]
&  & $k=2ab+2a+b$, $p=ck^2 +   (2a+1)(k+1)$ \\[.1cm]
\cline{2-3} \\[-.3cm]
\ref{p: breakable 2}(3)
& V(b)$_+$, $d \geq 3$, ${k-1 \over d} \geq 2$ &
$\{v_{s+1},v_{s+2}^\star,\dots,v_{m-1}, (v_1-v_{s+2}-\cdots-v_{m-1})^\star, \dots, v_s,v_m,\dots,v_n\}$ \\[.1cm]
&  & $(a+1,3,2^{[b-1]}, 4, 2^{[a-1]},b+3, 2^{[c-1]})$,  $a=s \geq 1, b=m-s-2 \geq 1,  c=n-m+1 \geq 0$ \\[.1cm]
&  & $k=2ab+2a+b+2$, $p=ck^2 +   (2a+1)(k-1)$ \\[.1cm]
\cline{1-3} \\[-.3cm]

\ref{p: breakable 3}(1)
& III(a)$_+$, $d \geq 3$, ${k+1 \over d} \geq 3$ &
$\{v_1^\star,\dots,v_{t-1},v_{t+3},\dots,v_{m-1},  v_t-v_1-\cdots-v_{t-1}-v_{t+2}-\cdots-v_{m-1},$ \\[.1cm]
&& $v_{t+2}^\star,v_{t+1},v_m,\dots,v_n\}$ \\[.1cm]
&   & $(2^{[a-1]},3,2^{[b-1]},3,a+2,2,b+3,2^{[c-1]})$,  $a = t \geq 2, b = m-t-3 \geq 0,  c=n-m+1 \geq 0$ \\[.1cm]
&  & $k=2ab+3a+2b+2$, $p=ck^2 +   (a+1)(2k-1)$ \\[.1cm]
\cline{2-3} \\[-.3cm]
\ref{p: breakable 3}(2)
& IV(a)$_+$, $d \geq 7$, ${2k+1 \over d} \geq 3$ &
$\{-v_{t+2}^\star,-v_t + v_1 + \cdots +v_{t-1}+v_{t+2}+\cdots+v_{m-1}, -v_{m-1},\dots,$ \\[.1cm]
&& $(-v_{t+3}+v_1+\cdots+v_{t-1})^\star,v_1^\star,\dots,v_{t-1}, v_{t+1},v_m,\dots,v_n\}$ \\[.1cm]
&  & $(a+2,3,2^{[b-1]},3,2^{[a-1]},3,b+3,2^{[c-1]})$,  $a=t \geq 2, b=m-t-3 \geq 0,  c = n-m+1\geq 0$\\[.1cm]
&  & $k=2ab+3a+3b+4$, $p=ck^2 + (2a+3)(k-1)$ \\[.1cm]
\cline{2-3} \\[-.3cm]
\ref{p: breakable 3}(3)
& IV(b)$_+$, $d \geq 5$, ${2k-1 \over d} \geq 3$ &
$\{-v_{t-1},\dots,-v_1^\star, (-v_t+v_{t+2}+ \cdots+v_{m-1})^\star, v_{m-1}, \cdots, v_{t+2}^\star,v_{t+1}, v_m,\dots, v_n\}$ \\[.1cm]
&  & $(2^{[a-1]},4,2^{[b-1]},a+2,2,b+2,2^{[c-1]})$,  $a=t \geq 2, b = m-t-2 \geq 1,  c=n-m+1 \geq 0$ \\[.1cm]
&  & $k=2ab+a+b+1$, $p=ck^2 + (2a+1)(k+1)$ \\[.1cm]
\cline{2-3} \\[-.3cm]
\ref{p: breakable 3}(4)
& III(b)$_+$, $d \geq 3$, ${k-1 \over d} \geq 3$ &
$\{-v_{t+2}^\star,\dots,-v_{m-1}, (-v_t+v_{t+2}+\cdots+v_{m-1})^\star, v_1^\star, \dots, v_{t-1},v_{t+1},v_m, \dots, v_n\}$ \\[.1cm]
&   & $(a+2,2^{[b-1]},4,2^{[a-1]},3,b+2,2^{[c-1]})$,  $a=t \geq 2, b=m-t-2 \geq 1,  c = n-m+1 \geq 0$ \\[.1cm]
&  & $k=2ab+2a+b+2$, $p=ck^2+(a+1)(2k+1)$

\end{tabular}

} % end \makebox

\end{center}

\end{table}

} % end font size

The 26 small families fall to a straightforward, though somewhat lengthy, analysis.  In each case, converting the standard basis $S$ into a vertex basis $B$ usually involves altering just one element from $S$ into a sum of several such, and then permuting these elements and replacing some of them by their negatives.  In two cases (\ref{p: breakable 2}(1) and \ref{p: breakable 3}(2)) there are two such alterations, and in a handful there are none.

From the sequence of norms $\nu$, it is straightforward to obtain the values $p$ and $k$ as in the example of Subsection \ref{ss: methodology}.  Lemmas \ref{l: cont frac basics 2}(1,2,3) help reduce the number of terms appearing in the continued fraction expansions under consideration; note that although Lemma \ref{l: cont frac basics 2}(3) relates two different fractions, their numerators are opposite one another.  In this way, we reduce each string to one with at most three variables ($a,b,c$) and eight entries, which a computer algebra package or a tenacious person can evaluate.  We used Mathematica \cite{mathematica} to perform these evaluations, relying on the command {\tt FromContinuedFraction} and the conversion $[\dots,a_i, \dots]^- = [\dots,(-1)^{i+1}a_i, \dots]^+$.  Tables \ref{table: 1} and \ref{table: 2} report the results.  We use variables $a,b,c$ (instead of $g,m,n,s,t$) to keep notation uniform across different families.  As in Table \ref{table: A}, we report Lemmas \ref{l: S_g nothing}(1,3,4,5) and \ref{l: S_g not nothing} in place of  Proposition \ref{p: gappy structure}(2).

Certain degenerations in our notation deserve mention.  The string $(\dots,x,y,2^{[-1]})$ should be understood as $(\dots,x)$.  Thus, in \ref{p: just right 2}(5), taking $b=0$, we obtain the string $(2,2,a+3,4,2^{[a-1]})$.  Furthermore, it follows that $n = m-1$ in this case, so the vertex basis truncates to $\{v_2,v_1,v_m,-v_3,\dots,-v_{m-1}\}$.  In \ref{p: breakable 3}(1) and (2), there are two degenerations that can occur.  In these cases, the degeneration $b=0$ can occur only if $c=0$.
If $b = c = 0$, then we obtain the strings $(2^{[a-1]},4,a+2,2)$ and $(a+2,4,2^{[a-1]},3)$, respectively.

%%%%%%%
%%%%%%%
%%%%%%%
%%%%%%%
%%%%%%%

\section{Proofs of the main results}\label{s: completion}

Recall that we established Theorem \ref{t: main technical} in Subsection \ref{ss: changemaker} using \cite[Theorem 3.3]{greene:cabling}.  

\begin{proof}[Proof of Theorem \ref{t: linear changemakers}]

Suppose that $(p,q)$ appears on Berge's list.  Then $\Lambda(p,q)$ embeds as the orthogonal complement to a changemaker by Theorem \ref{t: main technical}.  On the other hand, suppose that $\Lambda(p,q)$ is isomorphic to a changemaker lattice $L =(\sigma)^\perp \subset \Z^{n+1}$.  Then $L$ has a standard basis $S$ appearing in one of the structural Propositions of Sections \ref{s: just right} - \ref{s: tight}.  Section \ref{s: cont fracs} in turn exhibits an isomorphism $L \cong \Lambda(p',q')$, where the pair $(p',q')$ appears on Berge's list.  By Proposition \ref{p: gerstein}, $p' = p$, and either $q' = q$ or $q q' \equiv 1 \pmod p$.  Hence at least one of the pairs $(p,q)$, $(p,q')$ appears on Berge's list.

\end{proof}

\begin{proof}[Proof of Theorem \ref{t: main}]

This follows from Theorems \ref{t: main technical} and \ref{t: linear changemakers}, using Proposition \ref{p: homology} and the analysis of Section \ref{s: cont fracs} to pin down the homology class of the knot $K$.

\end{proof}

We note that the statement of Theorem \ref{t: main} holds with $S^3$ replaced by an arbitrary L-space homology sphere $Y$ with $d$-invariant $0$.  The only modification in the set-up is to use the 2-handle cobordism $W$ from $L(p,q)$ to $Y$.  The space $X(p,q) \cup W$ is negative-definite and has boundary $Y$.  By \cite[Corollary 9.7]{os:absgr} and Elkies' Theorem \cite{elkies}, its intersection pairing is diagonalizable, so \cite[Theorem 3.3]{greene:cabling} and the remainder of the proof go through unchanged.

\begin{proof}[Proof of Theorem \ref{c: main}]

Suppose that $K_p = L(p,q)$, and let $K' \subset L(p,q)$ denote the induced knot following the surgery.  By Theorem \ref{t: main}, $[K'] = [B'] \in H_1(L(p,q);\Z)$ for some Berge knot $B \subset S^3$.  By \cite[Theorem 2]{r:Lspace}, it follows that $\widehat{HFK}(K') \cong \widehat{HFK}(B')$.  By \cite[Proposition 3.1 and the remark thereafter]{r:Lspace}, it follows that $\Delta_{K'} = \Delta_{B'}$, where $\Delta$ denotes the Alexander polynomial.  Since $\Delta$ depends only on the knot complement, it follows that $\Delta_K= \Delta_B$.  By \cite[Theorem 1.2]{os:lens}, $\Delta_K$ and $\Delta_B$ determine $\widehat{HFK}(K)$ and $\widehat{HFK}(B)$; therefore, these groups are isomorphic.

Next, suppose that $K$ is doubly primitive.  As remarked in the introduction, both $K'$ and $B'$ are simple knots, and since they are homologous, they are isotopic.  Thus, the same follows for $K$ and $B$, whence every doubly primitive knot is a Berge knot.

\end{proof}

\begin{proof}[Proof sketch of Theorem \ref{t: berge bound}]  The main idea is to analyze the changemakers implicit in the structural Propositions and apply Proposition \ref{p: genus}, which restates the essential content of \cite[Proposition 3.1]{greene:cabling}.  The use of weight expansions draws inspiration from \cite{ms:ellipsoids}.

\begin{prop}\label{p: genus}

Suppose that $K_p =L(p,q)$, and let $\sigma$ denote the corresponding changemaker.  Then
\[ 2g(K) = p - |\sigma|_1. \] \qed

\end{prop}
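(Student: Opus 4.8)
The plan is to recover the Seifert genus of $K$ from the correction terms of $L(p,q)$, which the changemaker $\sigma$ already encodes, and then to reduce the stated identity to an elementary computation with the coordinates of $\sigma$. I work with the surgery-dual knot $K \subset S^3$ satisfying $K_p \cong L(p,q)$. Since $L(p,q)$ is an L-space, $K$ is an L-space knot, and it is fibered by Ni \cite{ni:fibered}; by the structure theorem of Ozsv\'ath--Szab\'o \cite{os:lens} its symmetrized Alexander polynomial is monic of degree $g(K)$, so $g(K)=\deg \Delta_K$, and its torsion coefficients $V_i = t_i(K)$ form a non-increasing, non-negative sequence with $V_{g(K)-1}=1$ and $V_i = 0$ for $i \geq g(K)$. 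Since the sequence is non-increasing, $V_i > 0$ precisely for $0 \le i < g(K)$, whence
\[ g(K) = \#\{\, i \ge 0 : V_i > 0 \,\}. \]

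Next I would recall the two computations of the correction terms of $L(p,q)$ whose comparison produces the changemaker in the first place. On one side, the Ozsv\'ath--Szab\'o surgery formula \cite{os:absgr} expresses each $d(L(p,q),i)$ in terms of the unknot values $d(L(p,1),i)$ and the invariants $V_j$. On the other side, $X(p,q)$ is sharp, so Donaldson's theorem together with the embedding $\Lambda(p,q) \cong (\sigma)^\perp \subset \Z^{n+1}$ of Theorem \ref{t: main technical} computes the very same correction terms directly from the lattice $(\sigma)^\perp$. Equating the two descriptions is exactly the content of \cite[Proposition 3.1]{greene:cabling}; translated into the present notation it yields an explicit formula for each $V_i$ in terms of the entries of $\sigma$. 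This identification is the step on which everything rests, and it is where I expect the main obstacle to lie: one must pin down the precise dictionary between the torsion coefficients and the changemaker, keeping careful track of the labeling of $\spc$ structures across the excision isomorphisms and of the normalization $|\sigma| = \disc\big((\sigma)^\perp\big) = p$ from Lemma \ref{l: change disc}.

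With that dictionary in place, the proof concludes by a counting argument. Using the changemaker property of $\sigma$, the formula for the $V_i$ shows that the blocks of positive torsion coefficients contributed by the individual coordinates $\sigma_j$ assemble, without gaps or overlaps, to fill out the full range $0 \le i < g(K)$; a direct count gives
\[ \#\{\, i \ge 0 : V_i > 0 \,\} = \sum_{j=0}^{n} \binom{\sigma_j}{2}. \]
Combining this with the displayed expression for $g(K)$ and expanding, I obtain
\[ 2 g(K) = 2 \sum_{j=0}^{n} \binom{\sigma_j}{2} = \sum_{j=0}^{n} \sigma_j(\sigma_j - 1) = \sum_{j=0}^{n} \sigma_j^2 - \sum_{j=0}^{n} \sigma_j = |\sigma| - |\sigma|_1 = p - |\sigma|_1, \]
where $|\sigma|_1 = \sum_{j=0}^{n} \sigma_j$ and the penultimate equality again uses $|\sigma| = p$. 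As a sanity check, the right-hand trefoil with $p = 5$ has changemaker $\sigma = (1,2)$, for which $p - |\sigma|_1 = 5 - 3 = 2 = 2g(K)$, consistent with $g(K) = 1$. The remaining care needed is purely bookkeeping: verifying the counting identity in the second display directly from the changemaker inequalities $\sigma_j \le \sigma_0 + \cdots + \sigma_{j-1} + 1$, which guarantee that the positive-$V_i$ blocks tile the interval $[0, g(K))$ exactly.
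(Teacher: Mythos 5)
The first thing to note is that the paper offers no proof of this Proposition: the \qed{} is attached directly to the statement, and the surrounding text says only that it ``restates the essential content of [Proposition 3.1]\cite{greene:cabling}.'' So you are reconstructing an argument that the paper imports wholesale from elsewhere. Your outline does identify the correct mechanism, which is indeed the one underlying the cited result: compare the correction terms of $L(p,q)$ as computed from the sharp filling $X(p,q)$ together with Donaldson's theorem against the integer surgery formula in terms of the invariants $V_i = t_i(K)$, and then count the indices with $V_i > 0$, using the fact that for an L-space knot $g(K) = \#\{i \geq 0 : t_i(K) > 0\}$.

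The gap is that the entire content of the proposition lives in the step you defer. You never write down the formula for the $V_i$ in terms of $\sigma$, and the counting identity $\#\{i \geq 0 : V_i > 0\} = \sum_{j}\binom{\sigma_j}{2}$ is asserted to follow from ``a direct count'' that is not performed. Since $\sum_j \binom{\sigma_j}{2} = \tfrac{1}{2}\big(|\sigma| - |\sigma|_1\big) = \tfrac{1}{2}\big(p - |\sigma|_1\big)$, that identity is arithmetically equivalent to the conclusion, so at the critical juncture you have restated the claim rather than proved it. The actual argument runs roughly as follows: sharpness of $X(p,q)$ and Donaldson's theorem show that $V$ vanishes in a given $\spc$ structure if and only if the corresponding coset of characteristic covectors of $\Z^{n+1}$ contains a vector $c$ with all coordinates $\pm 1$; the changemaker condition is precisely what guarantees that the pairings $\langle c, \sigma \rangle$ of such vectors sweep out every value in $\{-|\sigma|_1, -|\sigma|_1+2, \dots, |\sigma|_1\}$, so exactly $|\sigma|_1 + 1$ of the $p$ $\spc$ structures have vanishing $V$, and the count $2g(K) - 1 = p - (|\sigma|_1 + 1)$ then follows from the symmetry $i \leftrightarrow p-i$. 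Your alternative framing, in which ``blocks of positive torsion coefficients contributed by the individual coordinates $\sigma_j$ tile $[0, g(K))$,'' is not how the count naturally falls out of the correction-term comparison and would itself require justification. The peripheral steps --- fiberedness, monicity, the identification $V_i = t_i(K)$, the closing arithmetic, and the trefoil sanity check --- are all fine.
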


\begin{defin}\label{d: wt expansion}

A {\em weight expansion} is a vector of the form 

\[w = (\underbrace{a_0,\dots,a_0}_{m_0}, \underbrace{a_1,\dots,a_1}_{m_1}, \dots, \underbrace{a_j,\dots,a_j}_{m_j}) \] where each $m_i \geq 1$, $a_{-1}:=0, a_0=1$, and $a_i = m_{i-1} a_{i-1} + a_{i-2}$ for $i = 1,\dots,j$.

\end{defin}

It is an amusing exercise to show that the entries of $w$ form the sequence of side lengths of squares that tile an $a_j \times a_{j+1}$ rectangle.
\begin{figure}[h]
\centering
\includegraphics[width=2.8in]{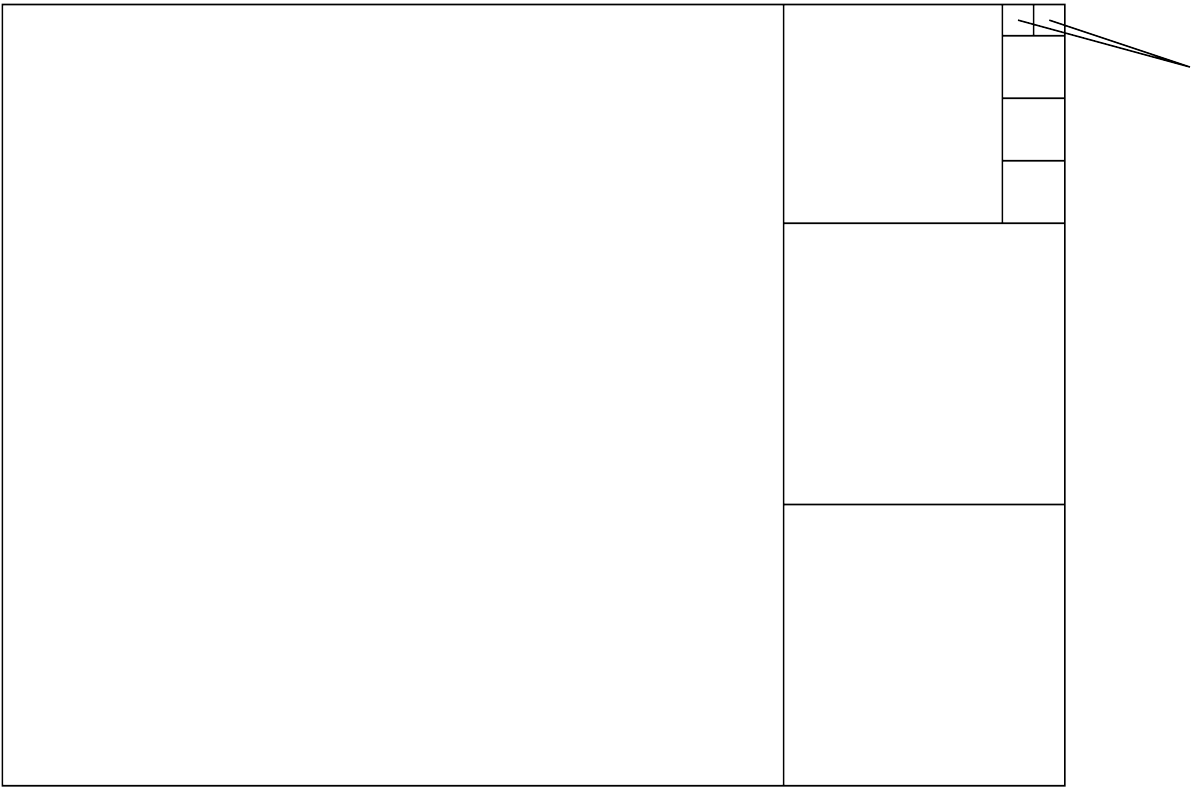}
\put(-135,65){$25$}
\put(-48,70){$9$}
\put(-48,20){$9$}
\put(-53,110){$7$}
\put(-30,118){$2$}
\put(-30,107.5){$2$}
\put(-30,97){$2$}
\put(0,118){$1$}
\caption{The tiling specified by the weight expansion $w = (1,1,2,2,2,7,9,9,25)$.} \label{f: rectangle}
\end{figure}
%

%\noindent Another useful observation is \[a_t = 1 + \sum_{i=0}^{t-1} m_i a_i - a_{t-1}, \text{ for all } t \geq 1.\]  Thus, 

\noindent Another useful observation is $a_t = 1 + \sum_{i=0}^{t-1} m_i a_i - a_{t-1}$, for all $t \geq 1$.  Thus, 
\begin{equation}\label{e: wt exp}
|w| = a_j \cdot a_{j+1} \quad \text{and} \quad |w|_1 = a_j + a_{j+1} - 1,
\end{equation}
which together with $a_{j+1} \geq a_j+1$ leads to the bound
\begin{equation}\label{e: wt bound}
(|w|_1 + 1)^2 \geq 4 |w| + 1.
\end{equation}

Observe that a weight expansion is a special kind of changemaker.  In fact, it is easy to check that 
a changemaker $\sigma$ is a weight expansion iff the changemaker lattice $L = (\sigma)^\perp \subset \Z^{n+1}$ is built from $\varnothing$ by a sequence of expansions.  Such lattices occur as one case of Proposition \ref{p: decomposable structure}.  Indeed, by inspection, for each changemaker lattice that appears in one of the structural Propositions of Sections \ref{s: decomposable} - \ref{s: tight}, the changemaker $\sigma$ is just a slight variation on a weight expansion.  For example, the changemakers implicit in Proposition \ref{p: berge viii} are obtained by augmenting a weight expansion by $a_j + a_{j+1}$, while those in Proposition \ref{p: tight unbreakable}(1,3) are obtained by deleting the first entry in a weight expansion with $m_0 \geq 2$.

Using Proposition \ref{p: genus}, we obtain estimates on the genera of knots appearing in these families.  For the changemakers $\sigma$ specified by Proposition \ref{p: berge viii}, \eqref{e: wt exp} easily leads to the inequality
\begin{equation}\label{e: jimmy}
(|\sigma|_1 + 1)^2 \geq (4/5) \cdot (4|\sigma|+1)
\end{equation}
in the same way as \eqref{e: wt bound}.  Furthermore, equality in \eqref{e: jimmy} occurs precisely for changemakers $\sigma$ of the form $(1,\dots,1,n,2n+1)$, with $1$ repeated $n$ times.  By Proposition \ref{p: genus}, it follows that the bound \eqref{e: berge bound} stated in Theorem \ref{t: berge bound} holds for type VIII knots, with equality attained precisely by knots $K$ specified by the pairs $(p,k) = (5n^2+5n+1,5n^2-1)$.

Similarly, for the changemakers $\sigma$ specified by Proposition \ref{p: tight unbreakable}(1,3), \eqref{e: wt exp} easily leads to the bound $(|\sigma_1| + 2)^2 \geq 4|\sigma|+5$.  Furthermore, equality occurs precisely for changemakers $\sigma$ of the form $(1,\dots,1,n+1)$, with $1$ repeated $n$ times.  By Proposition \ref{p: genus}, it follows that the bound
\begin{equation}\label{e: I- bound}
2g(K)-1 \leq p + 1 - \sqrt{4p+5}
\end{equation}
holds for type I$_-$ knots, with equality attained precisely by knots $K$ specified by the pairs $(p,k) = (n^2 + 3n + 1, n+1)$.  In fact, \eqref{e: jimmy} holds for all the changemakers of Proposition \ref{p: tight unbreakable}(1,3) with the single exception of $(1,2)$.  This corresponds to $5$-surgery along a genus one L-space knot, which must be the right-hand trefoil by a theorem of Ghiggini \cite{ghiggini}.  Thus, the bound \eqref{e: berge bound} holds for all the type I$_-$ knots with the sole exception of $5$-surgery along the right-hand trefoil.

The changemakers in the other structural Propositions fall to the same basic analysis.  Due to the abundance of cases, we omit the details, and instead happily report that the bound \eqref{e: berge bound} is strict for the remaining lens space knots.  This completes the proof sketch of Theorem \ref{t: berge bound}.

\end{proof}

%%%%%%%
%%%%%%%
%%%%%%%
%%%%%%%
%%%%%%%

\nocite{ni:erratum}

\bibliographystyle{plain}
\bibliography{/Users/Josh/Desktop/Papers/References}

\end{document}